\newif\ifcommented
\newcommand{\comm}[1]{}
\renewcommand{\comm}[1]{\fbox{\fbox{\begin{minipage}{300pt}#1\end{minipage}}}}
\newtheorem*{claim}{Claim}
\newtheorem{Thm}{Theorem}[section]
\newtheorem{thm}[Thm]{Theorem}
\newtheorem{Prop}[Thm]{Proposition}
\newtheorem{Lem}[Thm]{Lemma}
\newtheorem{lem}[Thm]{Lemma}
\newtheorem{cor}[Thm]{Corollary}
\newtheorem{prop}[Thm]{Proposition}
\newtheorem{Fact}[Thm]{Fact}
\newtheorem{fact}[Thm]{Fact}
\theoremstyle{definition}
\newtheorem{rem}[Thm]{Remark}
\newtheorem{que}[Thm]{Question}
\newtheorem{definition}[Thm]{Definition}
\newtheorem{exa}[Thm]{Example}
\newtheorem{model}[Thm]{Model}
\newcommand{\mc}{\mathcal}
\newcommand{\mf}{\protect\mathfrak}
\newcommand{\mbb}{\mathbb}
\newcommand{\mrm}{\mathrm}
\newcommand{\PP}{\mathbb{P}}
\newcommand{\0}{\emptyset}
\newcommand{\bs}{\backslash}
\newcommand{\vd}{\Vdash}
\newcommand{\fv}{\rightarrow}
\newcommand{\al}{\alpha}
\newcommand{\be}{\beta}
\newcommand{\ga}{\gamma}
\newcommand{\de}{\delta}
\newcommand{\eps}{\varepsilon}
\newcommand{\lam}{\lambda}
\newcommand{\ka}{\kappa}
\newcommand{\om}{\omega}
\newcommand{\bc}{\begin{center}}
\newcommand{\ec}{\end{center}}
\newcommand{\fel}{\ge}
\newcommand{\QQ}{\dot{\mbb{Q}}}
\newcommand{\I}{\mathcal{I}}
\newcommand{\omoms}{[\omega]^\omega}
\newcommand{\cc}{\mathfrak{c}}
\newcommand{\MM}{\mathbb{M}}
\newcommand{\cf}{\mathop{\mathrm{cf}}\nolimits}
\newcommand{\dom}{\mathop{\mathrm{dom}}\nolimits}
\newcommand{\ran}{\mathop{\mathrm{ran}}\nolimits}
\newcommand{\cof}{\mathop{\mathrm{cof}}\nolimits}
\newcommand{\add}{\mathop{\mathrm{add}}\nolimits}
\newcommand{\cov}{\mathop{\mathrm{cov}}\nolimits}
\newcommand{\non}{\mathop{\mathrm{non}}\nolimits}
\newcommand{\ical}{{\mathcal{I}}}
\newif\ifdeveloping
\def\myheads#1;#2;{
\pagestyle{myheadings}
\markboth{{\sc\hfill #1\hfill\protect\makebox[0cm][r]{\rm\today}}}
{{\sc\protect\makebox[0cm][l]{\rm\today}\hfill #2\hfill}}
}
\def\<{\left\langle}
\def\>{\right\rangle}
\def\cf{\operatorname{cf}}
\def\br#1;#2;{\bigl[ {#1} \bigr]^ {#2} }
\def\Ubf#1{{\baselineskip=0pt\vtop{\hbox{$#1$}\hbox{$\sim$}}}{}}
\begin{document}

\title{Towers in filters, cardinal invariants, and Luzin type families}
\thanks{The first author was supported by Grants-in-Aid for Scientific Research (C) 21540128, (C) 24540126 and (C) 15K04977, Japan Society for the Promotion of Science, by JSPS and FWF under the Japan-Austria Research Cooperative Program {\em New developments regarding forcing in set theory}, by the IMS program {\em Sets and Computations}, National University of Singapore, and by Michael Hru\v{s}\'ak's grants, CONACyT grant no. 177758  and PAPIIT grant IN-108014, during his stay at UNAM
in spring 2015. The second author was supported by the Austrian
Science Fund (FWF) grant no. P25671, by the IMS program Sets and Computations, National University of Singapore, and by the Hungarian National Foundation for Scientific Research grant nos.
68262, 83726 and 77476. The third author was supported by joint FWF-GA\v{C}R grant no. I 1921-N25, The continuum, forcing, and large cardinals and by the IMS program Sets and Computations, National University of Singapore.}

\author[J\"org Brendle]{J\"org Brendle}
\address{Graduate School of System Informatics, Kobe University, Japan}
\email{brendle@kurt.scitec.kobe-u.ac.jp}

\author[Barnab\'as Farkas]{Barnab\'as Farkas}
\address{Kurt G\"odel Research Center, University of Vienna, Austria}
\email{barnabasfarkas@gmail.com}

\author[Jonathan Verner]{Jonathan Verner}
\address{Charles University, Prague, Czech Republic}
\email{jonathan.verner@matfyz.cz}

\subjclass[2010]{03E05, 03E17, 03E35} \keywords{Borel ideal, analytic ideal, summable ideal, density zero ideal,
tower, Mathias-Prikry forcing, Laver-Prikry forcing, cardinal invariants of the continuum, Kat\v{e}tov order, Kat\v{e}tov-Blass order, Luzin type family, diamond, Near Coherence of Filters}

\begin{abstract}
We investigate which filters on $\om$ can contain towers, that is, a modulo finite descending sequence without any pseudointersection (in $[\om]^\om$). We prove the following results:
\begin{itemize}
\item[(1)] Many classical examples of nice tall filters contain no towers (in $\mrm{ZFC}$).
\item[(2)] It is consistent that tall analytic P-filters contain towers of arbitrary regular height (simultaneously for many regular cardinals as well).
\item[(3)] It is consistent that all towers generate non-meager filters, in particular (consistently) Borel filters do not contain towers.
\item[(4)] The statement ``Every ultrafilter contains towers.'' is independent of $\mrm{ZFC}$.
\end{itemize}
Furthermore, we study many possible logical (non)implications between the existence of towers in filters, inequalities between cardinal invariants of filters ($\mrm{add}^*(\mc{F})$, $\mrm{cof}^*(\mc{F})$, $\mrm{non}^*(\mc{F})$, and $\mrm{cov}^*(\mc{F})$), and the existence of Luzin type families (of size $\geq \om_2$), that is, if $\mc{F}$ is a filter then $\mc{X}\subseteq [\om]^\om$ is an $\mc{F}$-Luzin family if $\{X\in\mc{X}:|X\setminus F|=\om\}$ is countable for every $F\in\mc{F}$.
\end{abstract}

\maketitle

\section{Introduction}
Let us denote $\mathrm{Fin}$ the {\em Fr\'echet ideal} on ${\omega}$, i.e. $\mathrm{Fin}=[\om]^{<\om}$. We always assume that if $\ical$ is
an ideal on $\om$ (or on any countable infinite set), then it is {\em proper}, i.e. $\om\notin
\ical$, and $\mathrm{Fin}\subseteq\ical$. Write $\mc{I}^+=\mc{P}(\om)\bs \mc{I}$ for the family of {\em $\mc{I}$-positive} sets and
$\mc{I}^*=\{\om\bs X:X\in\mc{I}\}$ for the {\em dual filter} of $\mc{I}$. If $X\in\mc{I}^+$ then we can talk about the {\em restriction} of $\mc{I}$ to $X$, that is $\mc{I}\upharpoonright X=\{A\in\mc{I}:A\subseteq X\}$, an ideal on $X$. We will use the same notations for filters as well, e.g. $\mc{F}^+=(\mc{F}^*)^+=\{X\subseteq\om:\forall$ $F\in\mc{F}$ $|X\cap F|=\om\}$.

An ideal $\ical$ on $\om$ is {\em $F_\sigma$, Borel, analytic, meager} etc if
$\ical\subseteq\mc{P}(\om)\simeq 2^\om$ is an $F_\sigma$, Borel, analytic, meager etc set in the
usual (compact Polish) topology of the Cantor-set. $\ical$ is a {\em
P-ideal} if for each countable $\mc{C}\subseteq\mc{I}$ there is an
$A\in\mc{I}$ such that $C\subseteq^* A$ for each $C\in\mc{C}$,
where $C\subseteq^* A$ iff $C\bs A$ is finite.  $\ical$ is {\em tall} (or {\em dense}) if each infinite subset of $\om$ contains
an infinite element of $\ical$.

\smallskip
A sequence $\mc{T}=(T_\al)_{\al<\ka}$ in $[\om]^\om$ is a {\em tower}
if $\ka$ is a cardinal, $\mc{T}$ is $\subseteq^*$-descending, i.e. $T_\be\subseteq^* T_\al$
if $\al\le\be<\ka$, and it is not {\em diagonalizable}, i.e. it
has no {\em pseudointersection}, that is, there is no $X\in [\om]^\om$ such
that $X\subseteq^* T_\al$ for each $\al<\ka$. The {\em tower number} $\mf{t}$ is the minimum of cardinalities of towers.

Let $\mc{F}$ be a
filter on $\om$. A tower $(T_\al)_{\al<\ka}$ is a {\em tower in
$\mc{F}$} if $T_\al\in\mc{F}$ for each $\al<\ka$. Similarly, we can talk about {\em cotowers} in ideals, that is, a sequence $(A_\al)_{\al<\ka}$ in the ideal such that $(\om\setminus A_\al)_{\al<\ka}$ is a tower in the dual filter of the ideal.

\smallskip
In this paper, we investigate which filters (ideals) contain (co)towers. Recall that the {\em pseudointersection number}, $\mf{p}$ is the least cardinality
of a family $\mc{H}\subseteq [\om]^\om$ such that (a) $\mc{H}$ has the {\em strong finite intersection property} (sfip), that is, $\bigcap\mc{H}'$ is infinite for every nonempty finite $\mc{H}'\subseteq\mc{H}$, and (b) $\mc{H}$ has no pseudointersection. In other words, $\mc{H}$ has the sfip iff the family
\[ \mrm{fr}(\mc{H})=\Big\{X\subseteq\om:\exists\;\mc{H}'\in [\mc{H}]^{<\om}\setminus\{\0\}\;\bigcap\mc{H}'\subseteq^* X\Big\}\]
is a tall filter (i.e. its dual ideal is tall). $\mrm{fr}(\mc{H})$ is called the filter {\em generated by $\mc{H}$}. Clearly, $\mf{p}\leq\mf{t}$. The following theorem of M. Malliaris and S. Shelah resolved a long-standing open problem:

\begin{thm} {\em (see \cite{tp})}
$\mathfrak p=\mathfrak t$.
\end{thm}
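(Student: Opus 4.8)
The inequality $\mathfrak{p}\le\mathfrak{t}$ is immediate — a tower is in particular an sfip family with no pseudointersection — so the whole content lies in $\mathfrak{t}\le\mathfrak{p}$, and my plan is to follow the cofinality-spectrum strategy of Malliaris and Shelah \cite{tp}. Assume toward a contradiction that $\mathfrak{p}<\mathfrak{t}$. The idea is to attach to the combinatorics on $\omega$ a first-order structure $\mathfrak{s}$ — a \emph{cofinality spectrum problem} — rich enough to interpret a kind of pseudofinite arithmetic: a very saturated model $M$, say an ultrapower of $(\mathbb{N},<,+,\cdot,\dots)$ with all its definable operations or of a fragment of set theory, together with a family $\Delta$ of formulas each defining in $M$ a discrete linear order with least element and no greatest element, closed under definable pairing, so that below any nonstandard point one has a definable exponentiation and hence definable binary \emph{trees} of nonstandard height. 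To such an $\mathfrak{s}$ one attaches two cardinal invariants: $\mathfrak{t}_{\mathfrak{s}}$, the least height of a strictly increasing branch of a definable tree of $\mathfrak{s}$ that has no upper bound in that tree (the failure of the ``treetops'' property), and $\mathfrak{p}_{\mathfrak{s}}$, the least $\kappa$ for which some definable linear order of $\mathfrak{s}$ has an unfilled cut one of whose sides has cofinality $\kappa$.

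Two comparatively soft translations connect these invariants with $\mathfrak{p}$ and $\mathfrak{t}$. If $\mathfrak{s}$ is built so that the elements below a fixed nonstandard point code subsets of $\omega$ in a way compatible with $\subseteq^{*}$, then a strictly increasing branch of length $\kappa$ in a definable tree of $\mathfrak{s}$ reads off as a $\subseteq^{*}$-decreasing sequence in $[\omega]^{\omega}$ of length $\kappa$, and an upper bound in the tree reads off as a pseudointersection; hence any such branch of length $<\mathfrak{t}$ has an upper bound, i.e. $\mathfrak{t}\le\mathfrak{t}_{\mathfrak{s}}$. In the other direction I would use the hypothesis $\mathfrak{p}<\mathfrak{t}$ to produce a \emph{peculiar cut}: starting from an sfip family of size $\mathfrak{p}$ with no pseudointersection together with a tower of length $\mathfrak{t}$, and exploiting that $\mathfrak{p}<\mathfrak{t}$, manufacture in some definable order of $\mathfrak{s}$ an unfilled cut whose \emph{both} sides have regular cofinality $<\mathfrak{t}$ (indeed one side of cofinality $\le\mathfrak{p}$), carrying an extra multiplicative coherence. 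This gives $\mathfrak{p}_{\mathfrak{s}}\le\mathfrak{p}$. Granting the central theorem below, $\mathfrak{p}_{\mathfrak{s}}=\mathfrak{t}_{\mathfrak{s}}$, we obtain $\mathfrak{t}\le\mathfrak{t}_{\mathfrak{s}}=\mathfrak{p}_{\mathfrak{s}}\le\mathfrak{p}$, contradicting $\mathfrak{p}<\mathfrak{t}$.

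The heart of the matter, and the step I expect to be by far the hardest, is the combinatorial theorem, established by model-theoretic means, that $\mathfrak{p}_{\mathfrak{s}}=\mathfrak{t}_{\mathfrak{s}}$ for \emph{every} cofinality spectrum problem $\mathfrak{s}$. The inequality $\mathfrak{p}_{\mathfrak{s}}\le\mathfrak{t}_{\mathfrak{s}}$ is the easier half: an upper-bound-free branch, read through the lexicographic (or fixed-level) order on the tree, manufactures an unfilled cut of small cofinality. For $\mathfrak{p}_{\mathfrak{s}}\ge\mathfrak{t}_{\mathfrak{s}}$ one argues by contradiction — given an unfilled cut whose two sides have regular cofinalities $\kappa_{1},\kappa_{2}<\mathfrak{t}_{\mathfrak{s}}$, one uses the definable functions of $\mathfrak{s}$ (pairing, exponentiation, local coding of finite approximations) to \emph{climb a definable tree}: one encodes longer and longer finite approximations to the two approaching sequences into the nodes of a definable tree whose branching is controlled by $\kappa_{1},\kappa_{2}$, invokes the treetops property (valid at all heights $<\mathfrak{t}_{\mathfrak{s}}$) to get a node above the whole relevant branch, and decodes from that node an element of the order lying inside the cut — a contradiction. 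Making this rigorous forces one to establish several auxiliary facts first: that $\mathfrak{t}_{\mathfrak{s}}$ is regular; that there is no \emph{symmetric} unfilled cut $(\kappa,\kappa)$ with $\kappa<\mathfrak{t}_{\mathfrak{s}}$, proved by a cleaner symmetric version of the climbing argument; that the asymmetric case $\kappa_{1}\ne\kappa_{2}$ can be run alongside or reduced to the symmetric one; and that all the definable orders in $\mathfrak{s}$ carry the same $\mathfrak{t}$-value, so that $\mathfrak{t}_{\mathfrak{s}}$ and $\mathfrak{p}_{\mathfrak{s}}$ are robust invariants of $\mathfrak{s}$ rather than of a chosen order.

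Finally I would go back to the set-theoretic side and check that a structure $\mathfrak{s}$ with both coding properties can really be built simultaneously: that the ambient ultrapower is saturated enough — say $(\mathfrak{p}+\mathfrak{t})^{+}$-saturated, or assembled by a suitable construction — for the peculiar cut to be produced with controlled cofinalities, and that the $\subseteq^{*}$-coding of $\mathcal{P}(\omega)$ into pseudofinite sets is faithful on the relevant scale. These points are fiddly but not conceptually deep; the genuine difficulty — the place where this old problem was actually stuck — is the tree-climbing proof of $\mathfrak{p}_{\mathfrak{s}}=\mathfrak{t}_{\mathfrak{s}}$ outlined above.
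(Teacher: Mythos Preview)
The paper does not prove this theorem at all: it is stated with a bare citation to Malliaris--Shelah \cite{tp} and is used only as background motivation for the questions the paper goes on to study. So there is no ``paper's own proof'' to compare against.

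Your proposal is a reasonable high-level outline of the Malliaris--Shelah cofinality-spectrum argument, and the overall architecture you describe --- set up a cofinality spectrum problem $\mathfrak{s}$, prove the internal equality $\mathfrak{p}_{\mathfrak{s}}=\mathfrak{t}_{\mathfrak{s}}$ via the treetops/tree-climbing machinery, and then link $\mathfrak{p},\mathfrak{t}$ to $\mathfrak{p}_{\mathfrak{s}},\mathfrak{t}_{\mathfrak{s}}$ --- is correct in spirit. That said, it is a sketch rather than a proof: the passage from the set-theoretic $\mathfrak{p},\mathfrak{t}$ to the internal invariants is more delicate than your two ``soft translations'' suggest (in the actual paper this goes through a specific ultrapower construction and a careful analysis of which cuts can occur, not a direct coding of $\subseteq^*$ into tree branches), and the production of the ``peculiar cut'' from the hypothesis $\mathfrak{p}<\mathfrak{t}$ is one of the genuinely hard points, not a routine bookkeeping step. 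If you intend this as an expository sketch pointing to \cite{tp}, it is adequate; if you intend it as a self-contained proof, the linking steps are where real work remains.
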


In view of this result, one may ask whether every family of infinite subsets of $\om$ witnessing $\mathfrak p$ and closed for finite intersections has a subfamily witnessing $\mathfrak t$; or simply whether every tall filter contains a tower. We know that the answer is no (see Section \ref{examp} for such examples). We will investigate which Borel or maximal filters contain towers.

In Section \ref{examp} we present a plethora of nice (that is, Borel and projective) ideals, and show that many of them cannot contain cotowers.

In Section \ref{analp} we show that consistently there are cotowers of arbitrary regular heights (simultaneously for many regular cardinals) in tall analytic P-ideals.

These results motivate the following:

\begin{que}\label{mainq}
Does there exist a Borel ideal $\mc{I}$ which contains a cotower (in $\mrm{ZFC}$)? (See also \cite[Prob. 63]{BoCh}.) Or at least, does there exist a Borel ideal $\mc{I}$ such that $\mc{I}$ does not contain tall analytic P-ideals (this property is $\Ubf{\Pi}^1_2$ hence absolute between $V$ and $V^\PP$) but we can force a cotower into $\mc{I}$?
\end{que}

In Section \ref{killing}, first we prove that consistently all towers generate non-meager filters, in particular, answering the first part of the question above, Borel ideals contain no cotowers in this model. Then using the filter based Mathias-Prikry and Laver-Prikry forcings, we present a more subtle way of destroying cotowers in $F_\sigma$ ideals and analytic P-ideals.

In Section \ref{tuf} we show that consistently there is an ultrafilter which does not contain towers, moreover if $\mf{c}>\om_1$ then this ultrafilter can be chosen as selective. Furthermore, applying the axiom $\mrm{NCF}$ ({\em Near Coherence of Filters}), we show that consistently every ultrafilter contains towers.

In Section \ref{luzinsec} we introduce the notion of $\mc{I}$-Luzin families and $\mc{I}$-inaccessibility (of $[\om]^\om$), and study connections between existence of (large) $\mc{I}$-Luzin families, strict inequalities between cardinal invariants, $\mc{I}$-inaccessibility, and existence of cotowers in ideals. Also, because of its impact on these notions, we analyze all possible Kat\v{e}tov-Blass reducibilities between our examples.

In Section \ref{countnon} we study six of our examples ($\mc{ED}$, $\mrm{Ran}$, $\mc{S}$, $\mrm{Nwd}$, $\mrm{Conv}$, and $\mrm{Fin}\otimes\mrm{Fin}$) satisfying the property $\mrm{non}^*(\mc{I})=\om<\mrm{cov}^*(\mc{I})$, in particular, we are interested in all possible cuts of the diagram of logical implications from Section \ref{luzinsec} in the case of these examples.

In Section \ref{edfin} we study $\mc{ED}_\mrm{fin}$ and consistent cuts of the diagram from Section \ref{luzinsec}.

In Section \ref{analpindiag} we study consistent cuts of the diagram from Section \ref{luzinsec} in the case of analytic P-ideals, especially in the case of our three main examples ($\mc{I}_{1/n}$, $\mrm{tr}(\mc{N})$, and $\mc{Z}$).

And finally, in Section \ref{questions}, we list some of our related questions (additionally to the ones already stated in the previous sections), and we present a partial result concerning the consistency of (e.g.) ``$\mrm{tr}(\mc{N})$ contains a cotower but $\mc{I}_{1/n}$ does not''.

\subsection*{Acknowledgement}
The authors would like to thank the organizers of the {\em Sets and Computations} conference (March 30 -- April 30, 2015) held at Institute for Mathematical Sciences of the National University of Singapore for their hospitality and for providing a great environment for scientific activities.

\section{Examples of Borel and projective ideals}\label{examp}
Let us present some important examples of nice ideals (for results on their role in combinatorial set theory see e.g. \cite{Meza} or \cite{hrusaksummary}):

\smallskip
\textbf{Some $F_\sigma$ ideals:}

\smallskip
{\em Summable ideals.} Let $h:\om\to [0,\infty)$ be a function such that $\sum_{n\in\om}
 h(n)=\infty$. The {\em summable ideal associated to $h$} is
\[ \ical_h=\bigg\{A\subseteq\om:\sum_{n\in A} h(n)<\infty\bigg\}.\]

It is easy to see that a summable ideal $\ical_h$ is tall iff
$\lim_{n\fv\infty}h(n)=0$, and that summable ideals are $F_\sigma$ P-ideals.
The {\em classical summable ideal} is $\mc{I}_{1/n}=\mc{I}_h$ where $h(n)=1/(n+1)$, or $h(0)=1$ and $h(n)=1/n$ if $n>0$.
We know that there are tall $F_\sigma$ P-ideals which are not summable ideals, Farah's example (see \cite[Example 1.11.1]{Farah}) is the following ideal:
\[ \mc{J}_F=\bigg\{A\subseteq\om:\sum_{n<\om}\frac{\min\big\{n,|A\cap [2^n,2^{n+1})|\big\}}{n^2}<\infty\bigg\}.\]

The {\em eventually different ideals.} \[\mc{ED}=\Big\{A\subseteq\om\times\om:\limsup_{n\fv\infty}|(A)_n|<\infty\Big\}\]
where $(A)_n=\{k\in\om:(n,k)\in A\}$, and $\mc{ED}_\mathrm{fin}=\mc{ED}\upharpoonright\Delta$ where $\Delta=\{(n,m)\in\om\times\om:m\le n\}$. $\mc{ED}$ and $\mc{ED}_\mathrm{fin}$ are tall non P-ideals.

\smallskip
{\em Fragmented ideals.} Let $(P_n)_{n\in\om}$ be a partition of $\om$ into finite sets and $\vec{\varphi}=(\varphi_n)_{n\in\om}$ be a sequence of submeasures (see below), $\varphi_n:\mc{P}(P_n)\to [0,\infty)$ satisfying $\sup\{\varphi_n(P_n):n\in\om\}=\infty$. Then the {\em fragmented ideal generated by $\vec{\varphi}$} is the following ideal (see \cite{hrufrag}):
\[ \mc{I}(\vec{\varphi})=\big\{A\subseteq\om:\sup\big\{\varphi_n(A\cap P_n):n\in\om\big\}<\infty\big\}.\]
Notice that for example $\mc{ED}_\mrm{fin}$ is a fragmented ideal: Let $P_n=\{(n,m):m\leq n\}$ and define $\varphi(A)=|A|$ for $A\subseteq P_n$. It is easy to see that $\mc{I}(\vec{\varphi})$ is tall iff $\sup\{\varphi_n(\{k\}):n\in\om,k\in P_n\}<\infty$; and that tall fragmented ideals are not P-ideals.

\smallskip
The {\em van der Waerden ideal:}
\[ \mc{W}=\big\{A\subseteq\om:A\;\text{does not contain arbitrary long arithmetic progressions}\big\}.\]
Van der Waerden's well-known theorem says that $\mc{W}$ is a proper ideal. $\mc{W}$ is a tall non P-ideal.
Szemer\'edi's famous theorem says that $\mc{W}\subseteq\mc{Z}=\{A\subseteq\om:|A\cap n|/n\to 0\}$ (see \cite{szemeredi}). The stronger statement $\mc{W}\subseteq\mc{I}_{1/n}$ is a still open Erd\H{o}s prize problem ($\$3000$). For some interesting set-theoretic results about this ideal see e.g. \cite{jana1} and \cite{jana2}.

\smallskip
The {\em random graph ideal:}
\[ \mathrm{Ran}=\mathrm{id}\big(\big\{\text{homogeneous subsets of the random graph}\big\}\big).\]
where the {\em random graph} $(\om,E)$, $E\subseteq [\om]^2$ is up to isomorphism uniquely determined by the following property: If $A,B\in[\om]^{<\om}$ are nonempty and disjoint, then there is an $n\in\om\setminus (A\cup B)$ such that $\{\{n,a\}:a\in A\}\subseteq E$ and $\{\{n,b\}:b\in B\}\cap E=\0$. A set $H\subseteq\om$ is ($E$-){\em homogeneous} iff $[H]^2\subseteq E$ or $[H]^2\cap E=\0$. $\mathrm{Ran}$ is a tall non P-ideal.

\smallskip
The {\em ideal of graphs with finite chromatic number}:
\[\mc{G}_\mathrm{fc}=\big\{E\subseteq [\om]^2:\chi(\om,E)<\om\big\}.\]
It is a tall non P-ideal.

\smallskip
{\em Solecki's ideal:} Let $\mathrm{CO}(2^\om)$ be the family of clopen subsets of $2^\om$ (clearly $|\mathrm{CO}(2^\om)|=\om$), and let $\Omega=\{A\in\mathrm{CO}(2^\om):\lam(A)=1/2\}$ where $\lam$ is the usual product measure on $2^\om$. The ideal $\mc{S}$ on $\Omega$ is generated by $\{I_x:x\in 2^\om\}$ where $I_x=\{A\in\Omega:x\in A\}$.
$\mc{S}$ is a tall non P-ideal.

\smallskip
\textbf{Some $F_{\sigma\delta}$ ideals:}

\smallskip
{\em Density ideals.}
Let $(P_n)_{n\in\omega}$ be a sequence of pairwise disjoint finite subsets of $\om$ and let $\vec{\mu}=(\mu_n)_{n\in\om}$ be a
sequences of measures, $\mu_n$ concentrated on $P_n$ such that $\limsup_{n\fv\infty}\mu_n(P_n)>0$. The {\em density ideal generated by $\vec{\mu}$}
is
\[ \mc{Z}_{\vec{\mu}}=\Big\{A\subseteq\om:\lim_{n\fv\infty}\mu_n(A)=0\Big\}.\]
A density ideal $\mc{Z}_{\vec{\mu}}$ is tall iff $\max\{\mu_n(\{i\}):i\in P_n\}\xrightarrow{n\fv\infty}0$, and density ideals are $F_{\sigma\delta}$ P-ideals.
The {\em density zero ideal} $\mc{Z}=\big\{A\subseteq\om:\lim_{n\fv\infty}|A\cap n|/n=0\big\}$ is a tall density ideal because $\mc{Z}=\mc{Z}_{\vec{\mu}}$ where $P_n=[2^n,2^{n+1})$ and $\mu_n(A)=|A\cap P_n|/2^n$. It is easy to see that $\mc{I}_{1/n}\subsetneq\mc{Z}$.

\smallskip
{\em Generalized density ideals.}
Like in the case of density ideals we fix a partition $(P_n)_{n\in\omega}$ of $\om$ into finite sets but we allow $\varphi_n:\mc{P}(P_n)\to [0,\infty)$ to be a submeasure (see below) for every $n$ satisfying $\limsup_{n\fv\infty}\varphi_n(P_n)>0$. Then the {\em generalized density ideal generated by $\vec{\varphi}$} (where $\vec{\varphi}=(\varphi_n)_{n\in\om}$)
is $\mc{Z}_{\vec{\varphi}}=\{A\subseteq\om:\lim_{n\fv\infty}\varphi_n(A)=0\}$. If $\sup\{\varphi_n(P_n):n\in\om\}=\infty$, then  $\mc{Z}_{\vec{\varphi}}\subseteq\mc{I}(\vec{\varphi})$. A generalized density ideal $\mc{Z}_{\vec{\varphi}}$ is tall iff $\max\{\varphi_n(\{i\}):i\in P_n\}\xrightarrow{n\fv\infty}0$; and these ideals are $F_{\sigma\delta}$ P-ideals.

\smallskip
The {\em uniform density zero ideal.} For $A\subseteq\om$ and $n\in\om$ let
$S_n(A)=\max\{|A\cap[k,k+n)|:k\in\om\}$ and let
\[ \mathcal{Z}_u=\Big\{A\subseteq\om:\lim_{n\to\infty}\frac{S_n(A)}{n}=0\Big\}.\]
It is easy to see that $\mathcal{Z}_u$ is a tall $F_{\sigma\delta}$ non P-ideal (for more details, see \cite{BFMS}). Notice that Szemer\'edi's theorem actually claims that $\mc{W}\subseteq \mc{Z}_u$. It is straightforward to see that $\mc{Z}_u\ne \mc{W}$, $\mc{Z}_u\subsetneq\mathcal{Z}$, and that there are no inclusions between   $\mc{Z}_u$ and $\mc{I}_{1/n}$.

\smallskip
The {\em ideal of nowhere dense subsets of the rationals:} \[ \mathrm{Nwd}=\big\{A\subseteq\mbb{Q}:\mathrm{int}(\overline{A})=\0\big\}\]
where $\mathrm{int}(\cdot)$ stands for the interior operation on subsets of the reals, and $\overline{A}$ is the closure of $A$ in $\mbb{R}$. $\mathrm{Nwd}$ is a tall non P-ideal.

\smallskip
The {\em trace ideal of the null ideal}: Let $\mc{N}$ be the $\sigma$-ideal of subsets of $2^\om$ with measure zero (with respect to the usual product measure). The {\em $G_\delta$-closure} of a set $A\subseteq 2^{<\om}$ is $[A]=\big\{x\in 2^\om:\exists^\infty$ $n$ $x\upharpoonright n\in A\big\}$, a $G_\delta$ subset of $2^\om$. The trace of $\mc{N}$ is defined by
\[ \mathrm{tr}(\mc{N})=\big\{A\subseteq 2^{<\om}:[A]\in \mc{N}\big\}.\]
It is a tall $F_{\sigma\delta}$ P-ideal.

\smallskip
\textbf{Some tall $F_{\sigma\delta\sigma}$ (non P-)ideals:}

\smallskip
The ideal $\mathrm{Conv}$ is generated by those infinite subsets of $\mbb{Q}\cap [0,1]$ which are convergent in $[0,1]$, in other words
\[ \mathrm{Conv}=\big\{A\subseteq \mbb{Q}\cap [0,1]:|\text{accumulation points of $A$ (in}\;\mbb{R})|<\om\big\}.\]

The Fubini product of $\mathrm{Fin}$ by itself:
\[ \mathrm{Fin}\otimes\mathrm{Fin}=\big\{A\subseteq\om\times\om:\forall^\infty\;n\in\om\;|(A)_n|<\om\big\}.\]

\textbf{Some non tall ideals:}

\smallskip
An important $F_\sigma$ ideal:
\[ \mathrm{Fin}\otimes\{\0\}=\big\{A\subseteq\om\times\om:\forall^\infty\;n\in\om\;(A)_n=\0\big\},\]
\indent and its $F_{\sigma\delta}$ brother (a density ideal):
\[ \{\0\}\otimes\mathrm{Fin}=\big\{A\subseteq\om\times\om:\forall\;n\in\om\;|(A)_n|<\om\big\}.\]

It is easy to see that there are no $G_\delta$ (i.e. $\Ubf{\Pi}^0_2$) ideals, and we know that there are many $F_\sigma$ (i.e. $\Ubf{\Sigma}^0_2$) ideals. In general, we know the following:
\begin{thm} {\em (see \cite{Cal85} and \cite{Cal88})} There are $\Ubf{\Sigma}^0_\al$- and $\Ubf{\Pi}^0_\al$-complete ideals for every $\al\geq 3$.
\end{thm}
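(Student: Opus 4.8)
The statement is due to Calbrix \cite{Cal85,Cal88}; the plan I would follow is a recursion on the countable ordinals $\al\ge3$ producing simultaneously ideals $\mc{I}_\al$ and $\mc{J}_\al$ on $\om$ with $\mc{I}_\al$ being $\Ubf{\Sigma}^0_\al$-complete and $\mc{J}_\al$ being $\Ubf{\Pi}^0_\al$-complete (in the sense of Wadge: every set of that class in a zero-dimensional Polish space is a continuous preimage of it). The recursion would be powered by two Fubini-type operations: for an ideal $\mc{I}$ on $\om$ set $\{\0\}\otimes\mc{I}=\{A\subseteq\om\times\om:\forall\, n\ (A)_n\in\mc{I}\}$ and $\mathrm{Fin}\otimes\mc{I}=\{A\subseteq\om\times\om:\forall^\infty n\ (A)_n\in\mc{I}\}$, both readily checked to be proper ideals on $\om\times\om\cong\om$ containing $\mathrm{Fin}$. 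Writing $D_n=\{A\subseteq\om\times\om:(A)_n\in\mc{I}\}$, which is a continuous preimage of $\mc{I}$ since $A\mapsto(A)_n$ is continuous, one has $\{\0\}\otimes\mc{I}=\bigcap_n D_n$ and $\mathrm{Fin}\otimes\mc{I}=\bigcup_N\bigcap_{n\ge N}D_n$; hence if $\mc{I}$ is of class $\Ubf{\Sigma}^0_\al$ with $\al\ge2$ then $\{\0\}\otimes\mc{I}$ is $\Ubf{\Pi}^0_{\al+1}$, and if $\mc{J}$ is of class $\Ubf{\Pi}^0_\al$ then $\mathrm{Fin}\otimes\mc{J}$ is $\Ubf{\Sigma}^0_{\al+1}$.

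For the matching hardness I would use the standard normal forms: a $\Ubf{\Pi}^0_{\al+1}$ set can be written as $\bigcap_n T_n$ with $(T_n)_n$ a uniform sequence of $\Ubf{\Sigma}^0_\al$ sets, and a $\Ubf{\Sigma}^0_{\al+1}$ set as an increasing union $\bigcup_k S_k$ with $(S_k)_k$ a uniform sequence of $\Ubf{\Pi}^0_\al$ sets. Feeding uniformly continuous Wadge reductions of the $T_n$ into $\mc{I}_\al$ as the $n$-th columns of the image reduces $\bigcap_n T_n$ to $\{\0\}\otimes\mc{I}_\al$; likewise, feeding reductions of the $S_k$ into $\mc{J}_\al$ as the $k$-th columns reduces $\bigcup_k S_k$ to $\mathrm{Fin}\otimes\mc{J}_\al$ --- here monotonicity of $(S_k)_k$ is used, so that $\{k:x\notin S_k\}$ is always an initial segment of $\om$, finite exactly when $x\in\bigcup_k S_k$. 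So I would set $\mc{J}_{\al+1}:=\{\0\}\otimes\mc{I}_\al$ and $\mc{I}_{\al+1}:=\mathrm{Fin}\otimes\mc{J}_\al$. Because these two operations interchange $\Ubf{\Sigma}$ and $\Ubf{\Pi}$, the recursion splits into two independent threads, each needing a seed at level $3$. One thread I would seed by $\mathrm{Fin}$, which is $\Ubf{\Sigma}^0_2$-complete: then $\mc{J}_3:=\{\0\}\otimes\mathrm{Fin}$, which is literally the ideal $\{\0\}\otimes\mathrm{Fin}$ of Section~\ref{examp}, is $\Ubf{\Pi}^0_3$-complete, and the recursion produces complete ideals at the levels $\Ubf{\Pi}^0_3,\Ubf{\Sigma}^0_4,\Ubf{\Pi}^0_5,\dots$

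The other thread would ``want'' a $\Ubf{\Pi}^0_2$-complete ideal to feed to $\mathrm{Fin}\otimes(\cdot)$; but there are no $G_\de$ ideals (indeed, a short argument shows a $G_\de$ family of subsets of $\om$ that is hereditary and closed under finite unions is necessarily a power set $\mc{P}(Y)$ for some $Y\subseteq\om$), so this thread \emph{cannot} be bootstrapped from below, and the real content of the theorem is a direct construction of an ideal $\mc{I}_3$ that is $\Ubf{\Sigma}^0_3$-complete, equivalently $G_{\de\sigma}$-complete. Calbrix does this with an explicit combinatorial gadget on a three-layer index set, arranged so that membership genuinely has the form ``$\exists k$ (a $G_\de$ condition increasing in $k$)'' while closure under finite unions is retained. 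From $\mc{I}_3$ the recursion yields complete ideals at the levels $\Ubf{\Sigma}^0_3,\Ubf{\Pi}^0_4,\Ubf{\Sigma}^0_5,\dots$, so the two threads together cover all finite levels $\ge3$; for a countable limit ordinal $\lam$ I would fix a cofinal sequence $\al_n\uparrow\lam$ (reindexed so the parities match the normal forms) and, on the disjoint union $\bigsqcup_n\om_n$ of copies of $\om$, put $\mc{J}_\lam=\{A:\forall\, n\ A\cap\om_n\in\mc{J}_{\al_n}\}$ ($\Ubf{\Pi}^0_\lam$-complete) and dually the $\mathrm{Fin}$-supported sum $\mc{I}_\lam=\{A:\forall^\infty n\ A\cap\om_n\in\mc{I}_{\al_n}\}$ ($\Ubf{\Sigma}^0_\lam$-complete), then continue the recursion above $\lam$. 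I expect the main obstacle to be the construction of the single seed $\mc{I}_3$; everything else is bookkeeping --- propagating \emph{hardness} and not merely membership through every stage, which forces all the intermediate Wadge reductions to be chosen uniformly and, at limits, the cofinal sequence to be arranged with care --- together with the routine check that each object produced is a proper ideal containing $\mathrm{Fin}$.
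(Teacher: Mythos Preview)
The paper does not prove this theorem; it merely cites it to Calbrix \cite{Cal85,Cal88}. There is therefore no proof in the paper to compare your proposal against.

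Your outline --- climbing the Borel hierarchy via the Fubini operations $\{\0\}\otimes(\cdot)$ and $\mathrm{Fin}\otimes(\cdot)$, seeded at level $3$ by $\{\0\}\otimes\mathrm{Fin}$ on one thread and by a hand-built $\Ubf{\Sigma}^0_3$-complete ideal on the other --- is a sound and standard strategy for results of this kind. You correctly identify the one genuinely nontrivial step: the $\Ubf{\Sigma}^0_3$ seed, which cannot be produced by the recursion itself because there are no $G_\delta$ ideals to feed into $\mathrm{Fin}\otimes(\cdot)$. Your complexity computations for the two Fubini operations are correct, as is the hardness propagation (using that a $\Ubf{\Sigma}^0_{\al+1}$ set can be written as an \emph{increasing} union of $\Ubf{\Pi}^0_\al$ sets so that the failure set is an initial segment). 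The limit step is slightly hand-wavy (``reindexed so the parities match'') but the idea is right: since $\Ubf{\Sigma}^0_\beta\subseteq\Ubf{\Pi}^0_{\beta+1}$ and vice versa, one can absorb the $\Sigma$/$\Pi$ mismatch by shifting the cofinal sequence.
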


About ideals on the ambiguous levels of the Borel hierarchy see \cite{Eng}.

\smallskip
We also present some (co)analytic examples.

\begin{thm} {\em (see \cite[page 321]{zaf})}
For every $x\in\om^\om$ let $I_x=\{s\in \om^{<\om}:x\upharpoonright |s|\nleq s\}$ where $\leq$ is the coordinatewise ordering on every $\om^n$. Then the ideal on $\om^{<\om}$ generated by $\{I_x:x\in\om^\om\}$ is $\Ubf{\Sigma}^1_1$-complete.
\end{thm}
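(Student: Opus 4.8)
The statement is about the ideal, call it $\mc{I}$, on $\om^{<\om}$ generated (together with $\mathrm{Fin}$, per the standing convention) by the sets $I_x$. My starting point would be a reformulation of membership in $\mc{I}$. For $y\in\om^\om$ put $T_y=\om^{<\om}\setminus I_y=\{s\in\om^{<\om}:y\upharpoonright|s|\le s\}$, the cone of finite sequences dominating $y$ coordinatewise on their domain. Since $I_x\subseteq I_{x'}$ whenever $x\le x'$ pointwise (if $x(i)>s(i)$ for some $i<|s|$ then $x'(i)>s(i)$ too), any finite union $\bigcup_{x\in F}I_x$ is contained in $I_y$ with $y=\sup F$; hence
\[ \mc{I}=\bigl\{A\subseteq\om^{<\om}:\exists\,y\in\om^\om\ (A\cap T_y\ \text{is finite})\bigr\}.\]
This makes $\mc{I}$ visibly a proper ideal (properness: every $T_y$ is infinite, containing $y\upharpoonright n$ for all $n$) and visibly $\Ubf{\Sigma}^1_1$: fixing a bijection $\#\colon\om^{<\om}\to\om$ and viewing $A$ as a point of $2^{\om^{<\om}}$, one has $A\in\mc{I}$ iff $\exists y\in\om^\om\,\exists k\in\om\,\forall s\,(s\in A\wedge\#s\ge k\to\exists i<|s|\ s(i)<y(i))$, which is of the form $\exists y\,(\text{arithmetical in }y,A)$.

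The remaining, and main, task is $\Ubf{\Sigma}^1_1$-hardness. The plan is to produce a continuous reduction of $\mathrm{IF}=\{T:T\text{ a tree on }\om\text{ with an infinite branch}\}$ — which is $\Ubf{\Sigma}^1_1$-complete — to $\mc{I}$, i.e.\ a continuous $T\mapsto A_T$ with $T$ ill-founded $\iff A_T\in\mc{I}$. By the reformulation this means: a branch of $T$ must yield a single $y$ with $A_T\cap T_y$ finite, while for well-founded $T$ no $y$ should work, i.e.\ $A_T$ must meet every cone $T_y$ in an infinite set. It is worth noting why the obvious candidates fail: $A_T=T$ is wrong because the full tree $\om^{<\om}$ is ill-founded yet $T\cap T_y=T_y$ is infinite for every $y$, and $A_T=\om^{<\om}\setminus T$ is wrong already for trees with a single branch. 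The underlying difficulty is that the cones $T_y$ are ``thick'' — at each length $n$ the cone $T_y$ contains the whole box $\prod_{i<n}[y(i),\infty)$ — so being almost disjoint from some cone is a strong sparseness condition; in particular any set that, at infinitely many lengths, contains a full ``diagonal'' $\{(m,m,\dots,m):m\in\om\}$ lies outside $\mc{I}$.

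So the plan is to first replace $T$, continuously, by a coded tree $T^{*}$ in which each node of $T$ is padded with auxiliary coordinates, arranged so that: (i) $T^{*}$ has a branch iff $T$ does; (ii) whenever $T$ is well-founded, $T^{*}$ meets every cone in an infinite set (e.g.\ because it carries full diagonals at infinitely many lengths — added as nodes of finite length each, so that no new infinite branch is created); and (iii) a genuine branch of $T$ designates coordinates along which $y$ may be let grow so fast that only finitely many nodes of $T^{*}$ dominate it. One then lets $A_T$ be the image of $T^{*}$ under a fixed shift bijection of a cone onto $\om^{<\om}$ (which only absorbs a harmless copy of $I_{\mathrm{id}}$ into $\mc{I}$). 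The converse direction is then underpinned by a König's lemma argument: the nodes of $T^{*}$ lying coordinatewise below a fixed function form a finitely branching subtree, so an escaping $y$ (one with $A_T\cap T_y$ finite) actually produces an infinite branch of $T^{*}$, hence of $T$. I expect the genuinely delicate point — and the crux of the whole proof — to be realizing (i)–(iii) simultaneously and continuously: the padding must fatten $T^{*}$ enough along non-branches to defeat every cone when $T$ is well-founded, yet not so much that ill-founded ``fat'' trees like $\om^{<\om}$ also become non-escapable. Because the required behaviour of $T\mapsto A_T$ is non-monotone in $T$, a crude upward or downward closure of $T$ will not do, and the coding has to interact with the branch structure of $T$ in a carefully controlled way.
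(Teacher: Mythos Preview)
The paper does not prove this theorem; it merely cites it from Zafrany \cite{zaf}, so there is no ``paper's own proof'' to compare against. That said, here is an assessment of your proposal on its own terms.

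Your reformulation of $\mc{I}$ is correct and useful: since $x\le x'$ implies $I_x\subseteq I_{x'}$, any finite union $\bigcup_{x\in F}I_x$ is contained in $I_{\sup F}$, so $A\in\mc{I}$ iff $\exists\,y\in\om^\om$ with $A\cap T_y$ finite. The $\Ubf{\Sigma}^1_1$ upper bound follows immediately, as you say.

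The genuine gap is that you have not actually constructed the reduction for hardness. You lay out desiderata (i)--(iii) for a coded tree $T^*$ and correctly diagnose why the naive candidates $A_T=T$ and $A_T=\om^{<\om}\setminus T$ fail, and you are right that the non-monotonicity of the target condition in $T$ rules out crude closures. But you then stop, writing that realizing (i)--(iii) simultaneously is ``the crux of the whole proof'' without providing the construction. A referee would regard this as a proof outline that has not yet reached the main technical step. In particular, your condition (ii) --- that $T^*$ meet every cone infinitely often whenever $T$ is well-founded --- is in tension with (iii), and you have not shown how to reconcile them: padding with diagonals at infinitely many lengths, as you suggest, would by itself place $T^*$ outside $\mc{I}$ regardless of whether $T$ is ill-founded, so the padding must somehow be ``absorbable'' by a $y$ read off from a branch of $T$, and you have not said how. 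Until that coding is written down and verified, the hardness direction remains open.
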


\begin{thm} {\em (see \cite{adrmix})}
The ideal of graphs without infinite complete subgraphs,
\[ \mc{G}_\mathrm{c}=\big\{E\subseteq [\om]^2:\forall\;X\in [\om]^\om\;[X]^2\nsubseteq E\big\}\]
is $\Ubf{\Pi}^1_1$-complete (in $\mc{P}([\om]^2)$), tall, and a non P-ideal.
\end{thm}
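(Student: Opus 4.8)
The plan is to verify the four assertions in turn; that $\mc{G}_\mathrm{c}$ is a proper ideal on the countable set $[\om]^2$ is routine (it is downward closed, contains every finite graph, omits $[\om]^2$, and is closed under finite unions by Ramsey's theorem: if $[X]^2\subseteq E_0\cup E_1$ with $X$ infinite, $2$-color $[X]^2$ by membership in $E_0$ and pass to an infinite homogeneous subset), and the inclusion $\mc{G}_\mathrm{c}\in\Ubf{\Pi}^1_1$ is immediate, since $E\in\mc{G}_\mathrm{c}$ iff there is \emph{no} infinite $X$ with $[X]^2\subseteq E$, an evidently $\Ubf{\Sigma}^1_1$ condition on $E$. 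So the real content is the completeness, while tallness and the failure of the P-property are short combinatorial arguments.

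For $\Ubf{\Pi}^1_1$-completeness I would continuously reduce the canonical $\Ubf{\Pi}^1_1$-complete set $\mrm{WF}$ of well-founded trees $T\subseteq\om^{<\om}$ (a closed subspace of $2^{\om^{<\om}}$) to $\mc{G}_\mathrm{c}$. Fixing a bijection $\om^{<\om}\cong\om$, hence $[\om^{<\om}]^2\cong[\om]^2$, send a tree $T$ to its node-comparability graph
\[ E_T=\big\{\{s,t\}:s,t\in T,\ s\neq t,\ s\subseteq t\ \text{or}\ t\subseteq s\big\}.\]
This map is continuous, since whether $\{s,t\}\in E_T$ depends only on $T(s)$ and $T(t)$ (equivalently, by downward closure, on whether the $\subseteq$-larger of $s,t$ lies in $T$). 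The key point — and, I expect, the only step that needs care — is that a clique of $E_T$ is exactly a $\subseteq$-chain of nodes of $T$: such a chain has at most one node of each length, so an infinite one is cofinal in a genuine infinite branch of $T$ (using downward closure once more), while conversely any infinite branch of $T$ gives an infinite clique of $E_T$. Hence $E_T\in\mc{G}_\mathrm{c}$ iff $T$ is well-founded, which is the required reduction.

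For tallness I would argue by cases on an infinite $S\subseteq[\om]^2$: if some vertex $v$ lies in infinitely many edges of $S$, then the star $\{e\in S:v\in e\}$ is an infinite subset of $S$ of clique number $\le 2$; otherwise $S$ is locally finite and a greedy recursion extracts an infinite matching $\{e_n:n\in\om\}\subseteq S$ (at stage $n$ only finitely many edges of $S$ meet the finitely many vertices of $e_0\cup\dots\cup e_{n-1}$), again of clique number $\le 2$. In either case $S$ has an infinite subset in $\mc{G}_\mathrm{c}$.

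Finally, to see that $\mc{G}_\mathrm{c}$ is not a P-ideal I would use the increasing family $A_n=\{e\in[\om]^2:\min e<n\}$: a clique of $A_n$ contains at most one vertex $\ge n$ (two such vertices would span an edge with both endpoints $\ge n$, not in $A_n$), so $A_n$ has clique number $\le n+1$ and $A_n\in\mc{G}_\mathrm{c}$, while $\bigcup_nA_n=[\om]^2$. If some $A\in\mc{G}_\mathrm{c}$ satisfied $A_n\subseteq^*A$ for all $n$, then $M:=[\om]^2\setminus A$ would meet each $A_k$ in a finite set; a greedy recursion would then pick $x_0<x_1<\cdots$ with $\{x_i,x_j\}\notin M$ for all $i<j$ (having chosen $x_0,\dots,x_k$, the only potentially forbidden pairs $\{x_i,x_{k+1}\}$ with $i\le k$ all lie in $A_{x_k+1}$, hence only finitely many $x_{k+1}>x_k$ are excluded), producing an infinite clique $\{x_i:i\in\om\}\subseteq A$ — a contradiction. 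Thus $\{A_n:n\in\om\}$ is a countable subfamily of $\mc{G}_\mathrm{c}$ with no pseudo-union in $\mc{G}_\mathrm{c}$, so $\mc{G}_\mathrm{c}$ fails the P-property. The main obstacle throughout is the clique$\leftrightarrow$branch bookkeeping in the completeness argument; the rest is elementary.
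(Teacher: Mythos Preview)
Your argument is correct in all four parts. The paper does not give its own proof of this theorem; it merely quotes the result from \cite{adrmix}, so there is no in-paper proof to compare against. Your reduction $T\mapsto E_T$ from well-founded trees is the standard one, the Ramsey step for closure under finite unions is the right tool, and both the tallness case split (star versus matching) and the non-P witness $A_n=\{e:\min e<n\}$ with the greedy clique construction are clean and complete.
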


\begin{thm} {\em (see \cite{adrmix})} The ideal
\[ \big\{A\subseteq\om\times\om:\forall\;X,Y\in [\om]^\om\;\exists\;X'\in [X]^\om\;\exists\;Y'\in [Y]^\om\;A\cap (X'\times Y')=\0\big\}\]
is $\Ubf{\Pi}^1_1$-complete (in $\mc{P}(\om\times\om)$), tall, and a non P-ideal.
\end{thm}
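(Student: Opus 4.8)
The plan is to dispatch the ideal-theoretic assertions by hand and to reduce the complexity computation to a combinatorial normal form. Throughout write $\mc{I}$ for the displayed ideal, and for $A\subseteq\om\times\om$ put $(A)_n=\{k:(n,k)\in A\}$ and $(A)^k=\{n:(n,k)\in A\}$ (the columns and rows of $A$). Downward closure of $\mc{I}$ is immediate; closure under finite unions follows by iterating the defining property: given $A_0,A_1\in\mc{I}$ and $X,Y\in[\om]^\om$, apply the definition to $A_0$ to get $X_1\in[X]^\om$, $Y_1\in[Y]^\om$ with $A_0\cap(X_1\times Y_1)=\0$, then apply it to $A_1$ inside $X_1\times Y_1$. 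Properness is witnessed by $X=Y=\om$, and $\mrm{Fin}\subseteq\mc{I}$ since a finite set occupies finitely many columns, which one simply deletes from $X$. For tallness, let $B\subseteq\om\times\om$ be infinite: if $B$ meets only finitely many columns then $B\cap(\{n\}\times\om)$ is infinite for some $n$, and any set $\{n\}\times C$ lies in $\mc{I}$ (delete $n$ from $X$); otherwise choose one point from each of infinitely many distinct columns to obtain an infinite $A\subseteq B$ which is the graph of a partial function with infinite domain, and every such graph lies in $\mc{I}$ — given $X,Y$, restrict the domain to $X$, pass to a subset on which the function is injective, and split it to produce $X'$ and $Y'$. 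Finally $\mc{I}$ is not a P-ideal: the columns $\{n\}\times\om$ all lie in $\mc{I}$, but if $\{n\}\times\om\subseteq^* B$ for every $n$ then $B$ contains a cofinite part of each column, so $B\cap(X'\times Y')\neq\0$ for all infinite $X',Y'$ (pick $n\in X'$; the finite part of column $n$ missing from $B$ cannot cover $Y'$); hence $B\notin\mc{I}$, and the columns have no pseudointersection in $\mc{I}$.

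\emph{Getting $\mc{I}\in\Ubf{\Pi}^1_1$ is the main obstacle.} Read off literally, the definition is only $\Ubf{\Pi}^1_2$, so the crux is a normal form. Call $\{(a_i,b_j):j\le i\}$ a \emph{staircase} when the $a_i$ are pairwise distinct and the $b_j$ are pairwise distinct, and $\{(a_i,b_j):i\le j\}$ a \emph{transposed staircase}. If $A$ contains a (transposed) staircase then $A\notin\mc{I}$: with $X=\{a_i:i\in\om\}$, $Y=\{b_j:j\in\om\}$ as a witness, for infinite $X'\subseteq X$, $Y'\subseteq Y$ take the least index $j$ with $b_j\in Y'$ and any $a_i\in X'$ with $i\ge j$, so that $(a_i,b_j)\in A\cap(X'\times Y')$. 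The key lemma — and where I expect essentially all the difficulty to lie — is the converse: \emph{if $A\notin\mc{I}$ then $A$ contains a staircase or a transposed staircase.} Fixing a witness $(X_0,Y_0)$, so that $A$ meets every infinite sub-rectangle of $X_0\times Y_0$, one constructs the configuration by a Ramsey/König-style recursion, picking $b_0,b_1,\dots\in Y_0$ and $a_0,a_1,\dots\in X_0$ with $(a_i,b_j)\in A$ for $j\le i$ while keeping $\bigcap_{j\le i}(A)^{b_j}\cap X_0$ infinite (or, on the transposed side, $\bigcap_{i\le j}(A)_{a_i}\cap Y_0$ infinite); the point is that if neither recursion can be continued then an infinite rectangle can be carved from the complement of $A$, contradicting the choice of $(X_0,Y_0)$. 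Granting the lemma, ``$A\notin\mc{I}$'' is the $\Ubf{\Sigma}^1_1$ statement ``$A$ contains a staircase or a transposed staircase'' (an existential quantifier over a pair of reals coding $(a_i)_i$ and $(b_j)_j$, with a Borel matrix), whence $\mc{I}\in\Ubf{\Pi}^1_1$.

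\emph{$\Ubf{\Pi}^1_1$-hardness.} I would reduce the $\Ubf{\Pi}^1_1$-complete set $\mrm{WF}$ of well-founded trees on $\om$. Send $T\subseteq\om^{<\om}$ to $A_T=\{(s,t)\in T\times T:t\subsetneq s\}$ (transported to $\om\times\om$ by a fixed bijection); $T\mapsto A_T$ is continuous. If $T$ is ill-founded, fix a branch $x$ and $n_0<m_0<n_1<m_1<\cdots$ and set $b_j=x\upharpoonright n_j$, $a_i=x\upharpoonright m_i$: these are pairwise distinct elements of $T$ with $b_j\subsetneq a_i$ whenever $j\le i$, so $A_T$ contains a staircase and $A_T\notin\mc{I}$. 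If $T$ is well-founded, $A_T$ contains no staircase — in a staircase inside $A_T$ every $a_i$ has $b_0,\dots,b_i$ among its proper initial segments, forcing the $b_j$ to be pairwise comparable and hence to form an infinite branch of $T$, which is absurd — and no transposed staircase — $a_0$ would strictly extend all $b_j$, impossible for a finite sequence — so by the normal form lemma $A_T\in\mc{I}$. Thus $A_T\in\mc{I}\iff T\in\mrm{WF}$, so $\mc{I}$ is $\Ubf{\Pi}^1_1$-hard, and combined with the previous paragraph it is $\Ubf{\Pi}^1_1$-complete.
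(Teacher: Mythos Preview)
The paper does not supply its own proof of this statement; it is quoted from \cite{adrmix}, so there is nothing in the text to compare against. Your argument is correct and complete. The normal-form lemma---$A\notin\mc{I}$ if and only if $A$ contains a staircase or a transposed staircase---is exactly the right device, and your sketch goes through: if the staircase recursion reaches a leaf one obtains an infinite $Z\subseteq X_0$ and a cofinite $Y_1\subseteq Y_0$ with $(A)^b\cap Z$ finite for every $b\in Y_1$; if the transposed recursion (run independently from $(X_0,Y_0)$) also reaches a leaf one obtains an infinite $W\subseteq Y_0$ and cofinite $X_1\subseteq X_0$ with $(A)_a\cap W$ finite for every $a\in X_1$; inside $(Z\cap X_1)\times(W\cap Y_1)$ every row and every column of $A$ is finite, and a straightforward back-and-forth then carves out an infinite $A$-free sub-rectangle, contradicting the witness $(X_0,Y_0)$. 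The hardness reduction $T\mapsto\{(s,t)\in T\times T:t\subsetneq s\}$ is clean and correct.
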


\begin{thm} {\em (see \cite{adrmix})} For every natural number $n>0$, there are $\Ubf{\Sigma}^1_n$- and $\Ubf{\Pi}^1_n$-complete tall ideals on $\om$.
\end{thm}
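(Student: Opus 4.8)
The plan is to reduce the statement to two ingredients: a \emph{complexity–preserving tall-ification} of ideals, and, for each $n\ge 1$, the construction of one $\Ubf{\Sigma}^1_n$-complete and one $\Ubf{\Pi}^1_n$-complete ideal on some countable set (which is then transported to $\om$ along a bijection, both projective complexity and tallness being invariant under such bijections). Note $n=1$ is already covered by the three quoted theorems (Zafrany's ideal for $\Ubf{\Sigma}^1_1$, e.g. $\mc{G}_\mrm{c}$ for $\Ubf{\Pi}^1_1$), so the real content is in $n\ge 2$, though the constructions below also reprove $n=1$. For the tall-ification I would prove: if $\mc{J}$ is a fixed tall Borel ideal on $\om$ (say $\mc{I}_{1/n}$) and $\mc{I}$ is a $\Gamma$-complete ideal on a countable set $Z$ with $\Gamma\in\{\Ubf{\Sigma}^1_n,\Ubf{\Pi}^1_n:n\ge 1\}$, then the Fubini product $\mc{I}\otimes\mc{J}$ on $Z\times\om$ is again $\Gamma$-complete and is tall. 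Tallness is immediate: for infinite $A\subseteq Z\times\om$, if some section $(A)_z$ is infinite, thin it (using tallness of $\mc{J}$) to an infinite $B\in\mc{J}$, so $\{z\}\times B\subseteq A$ lies in $\mc{I}\otimes\mc{J}$; and if every section of $A$ is finite then so is every section of every subset of $A$, whence $A$ itself lies in $\mc{I}\otimes\mc{J}$. For complexity, $A\mapsto\{z:(A)_z\notin\mc{J}\}$ is Borel since $\mc{J}$ is, so $\mc{I}\otimes\mc{J}$ is a Borel preimage of $\mc{I}$ and hence in $\Gamma$; and $A\mapsto A\times\om$ is a continuous reduction of $\mc{I}$ to $\mc{I}\otimes\mc{J}$, so the latter is $\Gamma$-hard.

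For the $\Ubf{\Pi}^1_n$ side I would fix a $\Ubf{\Sigma}^1_n$-complete set $C\subseteq 2^\om$ (such exist in $\mrm{ZFC}$) and put
\[\mc{I}_C=\big\{A\subseteq 2^{<\om}:[A]\cap C=\0\big\},\qquad [A]=\big\{x\in 2^\om:\exists^\infty\, n\ x\upharpoonright n\in A\big\}.\]
From the identity $[A_0\cup A_1]=[A_0]\cup[A_1]$ (infinitely many initial segments of $x$ lie in $A_0\cup A_1$ iff infinitely many lie in $A_0$, or infinitely many lie in $A_1$) one checks that $\mc{I}_C$ is a proper ideal ($[2^{<\om}]=2^\om$ meets $C$). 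Writing $C=\{x:\exists y\ Q(x,y)\}$ with $Q\in\Ubf{\Pi}^1_{n-1}$ ($Q$ arithmetic when $n=1$), we get $A\in\mc{I}_C\iff\forall x\,\forall y\,\big(x\notin[A]\ \vee\ \neg Q(x,y)\big)$, and since $x\notin[A]$ is $\Ubf{\Sigma}^0_2$ and $\neg Q$ is $\Ubf{\Sigma}^1_{n-1}$, this is $\Ubf{\Pi}^1_n$. For hardness, given an arbitrary $\Ubf{\Pi}^1_n$ set $D$ pick a continuous $f$ reducing $D$ to $C^c$; then $z\mapsto\{f(z)\upharpoonright m:m\in\om\}$ is continuous and its $[\cdot]$-value is the singleton $\{f(z)\}$, so it reduces $D$ to $\mc{I}_C$. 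Thus $\mc{I}_C$ is $\Ubf{\Pi}^1_n$-complete. It is not tall — any branch lying inside $C$ is an $\mc{I}_C$-positive set no infinite subset of which lies in $\mc{I}_C$, which is precisely why the tall-ification step is needed — and $\mc{I}_C\otimes\mc{J}$ is the desired tall $\Ubf{\Pi}^1_n$-complete ideal.

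For the $\Ubf{\Sigma}^1_n$ side, $n=1$ is Zafrany's ideal followed by $\otimes\mc{J}$. For $n\ge 2$ I would re-run Zafrany's construction one projective level up: let $D\subseteq\om^\om$ be $\Ubf{\Pi}^1_{n-1}$-complete and take the ideal on $\om^{<\om}$ generated by $\{I_x:x\in D\}$, where $I_x=\{s:x\upharpoonright|s|\nleq s\}$. Generating from a $\Ubf{\Pi}^1_{n-1}$-indexed family keeps the ideal in $\Ubf{\Sigma}^1_n$ (membership reads $\exists k\,\exists x_0,\dots,x_{k-1}\in D$ with $A\subseteq^* I_{x_0}\cup\dots\cup I_{x_{k-1}}$, i.e. one existential real quantifier over a $\Ubf{\Pi}^1_{n-1}$-and-arithmetic matrix), and properness is automatic since $\bigcap_{i<k}(\om^{<\om}\setminus I_{x_i})$ contains one node of every length, so no finite union of generators is cofinite. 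The nontrivial point — and the step I expect to be the main obstacle — is $\Ubf{\Sigma}^1_n$-hardness: one must transcribe Zafrany's tree coding, replacing ``$T_z$ is ill-founded'' by a $\Ubf{\Sigma}^1_n$-complete statement of the shape $\exists x\,(x$ is a $D$-certificate for $z)$, and verify that the combinatorics of the coordinatewise-domination relation still encode it faithfully; alternatively one keeps $x$ ranging over all of $\om^\om$ but replaces the clopen relation $x\upharpoonright|s|\le s$ by a carefully chosen $\Ubf{\Pi}^1_{n-1}$ relation which is a subtree in the $s$-variable and ``directed'' in $x$ (so finite unions of generators stay non-cofinite). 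Granting the hardness, $\otimes\mc{J}$ again yields a tall $\Ubf{\Sigma}^1_n$-complete ideal; and no genuine induction is needed, since both constructions only consume $\Gamma$-complete \emph{sets}, which exist outright in $\mrm{ZFC}$.
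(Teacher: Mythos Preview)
The paper does not prove this theorem; it is quoted from \cite{adrmix} without argument, so there is no ``paper's own proof'' to compare against. I therefore assess your proposal on its own merits.

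Your tall-ification lemma and your $\Ubf{\Pi}^1_n$ construction are both correct and cleanly argued. The Fubini product $\mc{I}\otimes\mc{J}$ with a tall Borel $\mc{J}$ does exactly what you claim: the section map $A\mapsto\{z:(A)_z\notin\mc{J}\}$ is Borel, so membership in $\mc{I}\otimes\mc{J}$ is a Borel preimage of $\mc{I}$, and $A\mapsto A\times\om$ is a continuous reduction witnessing hardness. The ideal $\mc{I}_C=\{A\subseteq 2^{<\om}:[A]\cap C=\0\}$ for $C$ a $\Ubf{\Sigma}^1_n$-complete set is indeed a $\Ubf{\Pi}^1_n$-complete ideal, and your hardness reduction via $z\mapsto\{f(z)\upharpoonright m:m\in\om\}$ is fine.

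The genuine gap is the $\Ubf{\Sigma}^1_n$ side for $n\ge 2$, and you identify it yourself. The problem with the Zafrany-style generalization $\mrm{id}(\{I_x:x\in D\})$ for $D$ a $\Ubf{\Pi}^1_{n-1}$-complete set is not only hardness: the reason Zafrany's original ideal is tractable is the identity $I_x\cup I_y=I_{x\wedge y}$ (coordinatewise minimum), which collapses finite unions of generators to single generators. Once you restrict $x$ to a set $D$ that need not be closed under $\wedge$, this collapses fails, and neither the upper-bound computation nor the hardness argument goes through in the form you sketch. Your ``alternative'' with a $\Ubf{\Pi}^1_{n-1}$ replacement for the domination relation is too vague to evaluate.

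There is, however, a much simpler construction that dualizes your $\Ubf{\Pi}^1_n$ idea and avoids Zafrany entirely. Take $C\subseteq 2^\om$ $\Ubf{\Sigma}^1_n$-complete and let $\mc{J}_C$ be the ideal on $2^{<\om}$ generated by the branches $b_x=\{x\upharpoonright m:m\in\om\}$ for $x\in C$. Then $A\in\mc{J}_C$ iff $\exists k\,\exists x_0,\dots,x_{k-1}\in C\;A\subseteq^* b_{x_0}\cup\dots\cup b_{x_{k-1}}$, which is $\Ubf{\Sigma}^1_n$ since projective classes are closed under countable unions and $\exists^{\mbb{R}}$. For hardness, if $f$ continuously reduces an arbitrary $\Ubf{\Sigma}^1_n$ set $S$ to $C$, then $z\mapsto b_{f(z)}$ is a continuous reduction of $S$ to $\mc{J}_C$: if $f(z)\in C$ then $b_{f(z)}$ is a generator; if $f(z)\notin C$ and $b_{f(z)}\subseteq^*\bigcup_{i<k}b_{x_i}$ with $x_i\in C$, then by pigeonhole $f(z)\upharpoonright m=x_i\upharpoonright m$ for some fixed $i$ and infinitely many $m$, forcing $f(z)=x_i\in C$, a contradiction. $\mc{J}_C$ is not tall (infinite antichains meet each branch at most once), so apply your $\otimes\mc{J}$ step. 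With this replacement your outline becomes a complete proof.
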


There is a natural way of defining nice ideals on $\om$.
A function $\varphi:\mc{P}({\omega})\to [0,\infty]$
is a {\em submeasure on $\om$} if $\varphi(\0)=0$; ${\varphi}(X)\le \varphi(X\cup Y)\le \varphi(X)+\varphi(Y)$ for every $X,Y\subseteq\om$; and $\varphi(\{n\})<\infty$ for every $n\in {\omega}$. $\varphi$ is {\em lower semicontinuous} (lsc, in short) if $\varphi(X)=\lim_{n\fv\infty}\varphi(X\cap n)$ for each $X\subseteq\om$. $\varphi$ is {\em finite} if $\varphi(\om)<\infty$.

If $\varphi$ is an lsc submeasure on $\om$ then for $X\subseteq\om$ let $\|X\|_\varphi=\lim_{n\fv\infty}\varphi(X\bs n)$.
We assign two ideals to a submeasure $\varphi$ as follows
\begin{align*}
\mathrm{Fin}(\varphi) & =  \big\{X\subseteq\om:\varphi(X)<\infty\big\},\\
\mathrm{Exh}(\varphi) & =  \big\{X\subseteq {\omega}:\|X\|_\varphi=0\big\}.
\end{align*}
It is easy to see that if $\mathrm{Fin}(\varphi)\ne\mc{P}(\om)$, then it is an $F_\sigma$ ideal; and similarly if $\mathrm{Exh}(\varphi)\ne\mc{P}(\om)$, then it is an $F_{\sigma\delta}$ P-ideal. Clearly, $\mathrm{Exh}(\varphi)\subseteq\mathrm{Fin}(\varphi)$ always holds.
From now on, if we are working with $\mathrm{Fin}(\varphi)$ or $\mathrm{Exh}(\varphi)$, then we always assume that they are proper ideals.
It is straightforward to see that if $\varphi$ is an lsc submeasure on $\om$ then $\mathrm{Exh}(\varphi)$ is tall iff $\lim_{n\fv\infty}\varphi(\{n\})=0$.

The following characterization theorem gives us the most important tool for working on combinatorics of $F_\sigma$ ideals and analytic P-ideals.

\begin{Thm}{\em (\cite{Mazur} and \cite{So})}\label{char} Let $\mc{I}$ be an ideal on $\om$.
\begin{itemize}
\item $\mc{I}$ is an $F_\sigma$ ideal iff $\mc{I}=\mathrm{Fin}(\varphi)$ for some lsc submeasure $\varphi$.
\item $\mc{I}$ is an analytic $P$-ideal iff $\mc{I}=\mathrm{Exh}(\varphi)$ for some (finite) lsc submeasure $\varphi$.
\item $\mc{I}$ is an $F_\sigma$ P-ideal iff $\mc{I}=\mathrm{Fin}(\varphi)=\mathrm{Exh}(\varphi)$ for some lsc submeasure $\varphi$.
\end{itemize}
\end{Thm}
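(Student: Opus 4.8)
The theorem splits into three ``iff''s, and within each the backward direction (from a submeasure to the ideal) is routine while the forward direction is the content.

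\textbf{Easy directions.} If $\varphi$ is an lsc submeasure then $\varphi(X)=\sup_n\varphi(X\cap n)$ with each map $X\mapsto\varphi(X\cap n)$ continuous on $2^\om$, so $\varphi$ is lower semicontinuous as a function on $2^\om$; hence each $\{X:\varphi(X)\le k\}$ is closed, $\mrm{Fin}(\varphi)=\bigcup_k\{X:\varphi(X)\le k\}$ is $F_\sigma$, and $\mrm{Exh}(\varphi)=\bigcap_k\bigcup_m\{X:\varphi(X\setminus m)\le 1/k\}$ is $F_{\sigma\delta}$. The submeasure axioms give at once that $\mrm{Fin}(\varphi)$ and $\mrm{Exh}(\varphi)$ are ideals containing $\mrm{Fin}$, and $\mrm{Exh}(\varphi)$ is a P-ideal by the interval-fusion argument: given $A_n\in\mrm{Exh}(\varphi)$, put $Y_n=A_0\cup\dots\cup A_n$, choose $m_0<m_1<\dots$ with $\varphi(Y_n\setminus m_n)<2^{-n}$, and set $A=\bigcup_n\big(Y_n\cap[m_n,m_{n+1})\big)$; then $A_n\setminus A$ is finite for every $n$, and $\varphi(A\setminus m_N)\le\sum_{n\ge N}2^{-n}\to 0$, so $A\in\mrm{Exh}(\varphi)$. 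This handles the ``if'' direction in all three items, the third being simply the conjunction of the first two.

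\textbf{Mazur's direction} ($F_\sigma$ ideal $\Rightarrow$ $\mrm{Fin}(\varphi)$). Write $\mc{I}=\bigcup_n C_n$ with $C_n$ closed and increasing, and build a filtration recursively: $D_0$ is the downward closure of $C_0$, and $D_{n+1}$ is the downward closure of $\{x\cup y:x,y\in D_n\}\cup C_{n+1}$. Compactness shows each $D_n$ is closed (a continuous image, then downward closure, of a compact set); the $D_n$ are hereditary, increasing, $\bigcup_nD_n=\mc{I}$, and $x,y\in D_n$ implies $x\cup y\in D_{n+1}$. Now set $\varphi(\0)=0$ and, for finite nonempty $s$, $\varphi(s)=2^{\min\{n\,:\,s\in D_n\}}$, and extend by $\varphi(A)=\sup\{\varphi(s):s\in[A]^{<\om}\}=\lim_n\varphi(A\cap n)$. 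Heredity of the $D_n$ gives monotonicity; the property that $x\cup y\in D_{n+1}$ whenever $x,y\in D_n$, together with $2^{\max(a,b)+1}\le 2^a+2^b$, gives subadditivity; lower semicontinuity is built in. Finally $\mrm{Fin}(\varphi)=\mc{I}$: if $A\in D_N$ then every finite $s\subseteq A$ is in $D_N$, so $\varphi(A)\le 2^N$; and if $\varphi(A)<\infty$, some $D_N$ contains every finite subset of $A$, whence $A\in D_N\subseteq\mc{I}$ because $D_N$ is closed and $A=\lim_n(A\cap n)$.

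\textbf{Solecki's direction} (analytic P-ideal $\Rightarrow$ $\mrm{Exh}(\varphi)$, $\varphi$ finite) is the heart of the matter and the main obstacle. My plan: first, $\mc{I}$ is analytic and proper, hence has the Baire property, hence is meager (Talagrand--Jalali-Naini), so fix an interval partition $(I_k)$ of $\om$ such that every $A\in\mc{I}$ contains only finitely many $I_k$. Next, fix a tree $T$ on $2\times\om$ with $\mc{I}=p[T]$. The task is to convert the projective description ``$\exists x\ (A,x)\in[T]$'' into a finitary one compatible with exhaustion: one builds, by a fusion/rank argument through $T$ and using the P-ideal property, an additive ``cost'' $c:[\om]^{<\om}\to[0,\infty)$ such that $A\in\mc{I}$ iff for every $\eps>0$ there is $N$ with $c(s)<\eps$ for all finite $s\subseteq A\setminus N$; then $\varphi(A)=\sup\{c(s):s\in[A]^{<\om}\}$, suitably normalized to be finite, is an lsc submeasure with $\mrm{Exh}(\varphi)=\mc{I}$. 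The delicate points — where essentially all the work lies — are that the P-ideal property is exactly what makes the ``for every $\eps$ eventually'' (exhaustion) quantifier line up with membership in $\mc{I}$, and that analyticity, via the tree together with meagerness, is what permits replacing the existential over branches by a uniform finitary condition. Finally, the \textbf{$F_\sigma$ P-ideal case} follows by refining these: such an $\mc{I}$ is at once an $F_\sigma$ ideal and an analytic P-ideal, and one runs Mazur's construction on the hereditary filtration $D_n\nearrow\mc{I}$ while exploiting the P-ideal property to choose the levels with fast enough decay along a fixed interval partition, obtaining a single lsc submeasure $\varphi$ with $\mrm{Fin}(\varphi)=\mrm{Exh}(\varphi)=\mc{I}$; equivalently, one checks that whenever $\mrm{Exh}(\varphi)$ happens to be $F_\sigma$ the submeasure can be modified so that $\mrm{Fin}(\varphi)$ collapses onto $\mrm{Exh}(\varphi)$.
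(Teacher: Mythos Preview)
The paper does not give its own proof of this theorem; it is quoted with attribution to Mazur and Solecki and used as a black box. So there is nothing to compare against, and your write-up must stand on its own. It does not.

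There is a concrete error in your Mazur argument. You set $\varphi(s)=2^{\min\{n:s\in D_n\}}$ and claim subadditivity follows from ``$2^{\max(a,b)+1}\le 2^a+2^b$''. That inequality is false whenever $a\ne b$ (e.g.\ $a=1$, $b=5$ gives $64\le 34$). What your filtration actually gives is $\varphi(s\cup t)\le 2\cdot 2^{\max(a,b)}$, which is not bounded by $2^a+2^b$. The standard fix is to drop the exponential: with $\psi(s)=\min\{n\ge 1:s\in D_n\}$ one has $\psi(s\cup t)\le\max(\psi(s),\psi(t))+1\le\psi(s)+\psi(t)$ since both values are $\ge 1$, and then $\varphi(A)=\sup\{\psi(s):s\subseteq A\text{ finite}\}$ is the desired lsc submeasure. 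As written, your $\varphi$ is not a submeasure.

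More seriously, your Solecki direction is not a proof but a wish list. Phrases like ``one builds, by a fusion/rank argument through $T$ and using the P-ideal property, an additive cost $c$'' do not indicate any mechanism by which the tree representation of an analytic set is converted into a submeasure; Solecki's actual argument does not proceed this way at all (it goes through a structure theorem expressing analytic P-ideals via summable-type ideals, or equivalently through Polishability), and there is no known shortcut of the kind you sketch. You yourself flag that ``essentially all the work lies'' in the steps you leave unspecified. The same applies to the third item: obtaining a \emph{single} $\varphi$ with $\mrm{Fin}(\varphi)=\mrm{Exh}(\varphi)=\mc{I}$ is not the conjunction of the first two items, and ``choose the levels with fast enough decay'' is not an argument. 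If you want to include a proof, you need either to reproduce Solecki's construction or to cite it honestly as you do in the statement; what you have now is an outline that a reader cannot complete.
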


In particular, each analytic P-ideal is $F_{\sigma\delta}$.

\smallskip
We recall the classical cardinal invariants of an arbitrary ideal $I$ on a set $X$:
\begin{align*}
\mathrm{add}(I) & = \min\big\{|J|:J\subseteq I\;\text{and}\;\bigcup J\notin I\big\}\\
\mathrm{non}(I) & = \min\big\{|Y|:Y\subseteq X\;\text{and}\;Y\notin I\big\}\\
\mathrm{cov}(I) & = \min\big\{|C|:C\subseteq I\;\text{and}\;\bigcup C=X\big\}\\
\mathrm{cof}(I) & = \min\big\{|D|:D\;\text{is cofinal in}\;(I,\subseteq)\big\}
\end{align*}

We know that $\mathrm{add}(I)\le\mathrm{non}(I),\mathrm{cov}(I)\le\mathrm{cof}(I)$, for more details about classical cardinal invariants see \cite{Bl}.

The set $[\om]^\om$ can be seen as a Polish space because it is a $G_\delta$ subset of $\mc{P}(\om)\simeq 2^\om$, moreover, it is easy to see that $[\om]^\om$ is homeomorphic to $\om^{\uparrow\om}=\{x\in\om^\om:x$ is strictly increasing$\}$, a closed subset of $\om^\om$ which is homeomorphic to $\om^\om$.

There is a natural way of constructing ideals on $[\om]^\om$ from tall ideals on $\om$ (see \cite{cardinvanalp}):
If $\mc{I}$ is a tall ideal on $\om$, let $\widehat{\mc{I}}$ be generated by all sets of the form $\widehat{A}=\{X\in [\om]^\om:|X\cap A|=\om\}$ (a $G_\delta$ subset of $[\om]^\om$) where $A\in\mc{I}$. The {\em star-invariants} of a tall ideal $\mc{I}$ on $\om$ are defined as follows:
\begin{align*}
\mrm{add}^*(\mc{I})=\mrm{add}(\widehat{\mc{I}}) & = \min\big\{|\mc{A}|:\mc{A}\subseteq\mc{I}\;\text{and}\;\mc{A}\;\text{is}\;\subseteq^*\text{-unbounded in}\;\mc{I}\big\}\\
\mrm{non}^*(\mc{I})=\mrm{non}(\widehat{\mc{I}}) & = \min\big\{|\mc{X}|:\mc{X}\subseteq [\om]^\om\;\;\text{and}\;\;\forall\;A\in\mc{I}\;\exists\;X\in\mc{X}\;|A\cap X|<\om\big\}\\
\mrm{cov}^*(\mc{I})=\mrm{cov}(\widehat{\mc{I}}) & =\min\big\{|\mc{D}|:\mc{D}\subseteq \mc{I}\;\;\text{and}\;\;\forall\;X\in [\om]^\om\;\exists\;D\in\mc{D}\;|X\cap D|=\om\big\}\\
\mrm{cof}^*(\mc{I})=\mrm{cof}(\widehat{\mc{I}}) & =\min\big\{|\mc{C}|:\mc{C}\subseteq\mc{I}\;\text{and}\;\mc{C}\;\text{is}\;\subseteq^{(*)}\text{-cofinal in}\;\mc{I}\big\}
\end{align*}
Clearly, $\mc{I}$ is a P-ideal iff $\add^*(\mc{I})>\om$. In the forthcoming sections, we will recall some known values of these coefficients in the case of our examples. At this point, we focus on their effects on the existence and possible lengths of cotowers in the ideal. It is trivial to see that if there is a cotower in an ideal $\mc{I}$ then it must be tall.

\begin{fact}\label{fact1} Let $\mc{I}$ be a tall ideal.
\begin{itemize}
\item[(a)] If $\add^*(\mc{I})=\cov^*(\mc{I})=\ka$ then there is a cotower in $\mc{I}$ of height $\ka$.
\item[(b)] If there is a cotower in $\mc{I}$ of length $\lam$, then $\cov^*(\mc{I})\leq\mrm{cf}(\lam)\leq \mathrm{non}^*(\mc{I})$ (in particular, $\mf{t}\leq\mrm{non}^*(\mc{I})$).
\item[(c)] If $\mc{I}$ is a P-ideal then $\mrm{cov}^*(\mc{I})=\om_1$ iff there is a cotower in $\mc{I}$ of height $\om_1$.
\end{itemize}
\end{fact}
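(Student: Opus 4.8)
The plan is to pass to the dual filter $\mc F=\mc I^*$, where a cotower of height $\ka$ in $\mc I$ is exactly a tower $(T_\al)_{\al<\ka}=(\om\setminus A_\al)_{\al<\ka}$ in $\mc F$. First I would record three routine reformulations/observations: (i) taking complements, $\cov^*(\mc I)$ is the least size of a subfamily of $\mc F$ with no pseudointersection (since $|X\cap D|<\om$ iff $X\subseteq^*\om\setminus D$), and $\non^*(\mc I)$, $\add^*(\mc I)$ get their obvious dual readings; (ii) every member of $\mc F$ is infinite (as $\mrm{Fin}\subseteq\mc I$ and $\mc I$ is proper), $\cov^*(\mc I)\geq\om_1$ (any countable subfamily of a filter, made $\subseteq$-decreasing, has a diagonal pseudointersection), and $\add^*(\mc I)\leq\cov^*(\mc I)$ (the general $\add\leq\cov$ applied to $\widehat{\mc I}$); and (iii) a cofinal subsequence of a tower is again a tower. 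With (iii), part (c) will follow from (a) and (b).

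For (a): fix a subfamily $\{E_\al:\al<\ka\}\subseteq\mc F$ of size $\ka$ with no pseudointersection (it exists since $\cov^*(\mc I)=\ka$ is attained). I would build $T_\al\in\mc F$ by recursion on $\al<\ka$ so that $T_\al\subseteq^*T_\be$ for all $\be<\al$ and $T_\al\subseteq^*E_\al$: at stage $\al$ the set $\{\om\setminus T_\be:\be<\al\}\cup\{\om\setminus E_\al\}\subseteq\mc I$ has size $<\ka=\add^*(\mc I)$, hence is $\subseteq^*$-bounded by some $B\in\mc I$, and $T_\al:=\om\setminus B$ works (it is infinite by (ii)). Then $(T_\al)_{\al<\ka}$ is $\subseteq^*$-descending with no pseudointersection --- any pseudointersection would, via $T_\al\subseteq^*E_\al$, be one for $\{E_\al\}$ --- so $(\om\setminus T_\al)_{\al<\ka}$ is a cotower of height $\ka$. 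No regularity of $\ka$ is needed, and $\ka=\om$ is impossible by (ii).

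For (b): replace the given cotower $(A_\al)_{\al<\lam}$ by a cofinal sub-cotower of length $\mu:=\mrm{cf}(\lam)$, still a cotower by (iii) (this also yields a tower of height $\mu$, hence $\mf{t}\leq\mu$ and the parenthetical claim). It then suffices to show $\cov^*(\mc I)\leq\mu\leq\non^*(\mc I)$. The family $\{A_\al:\al<\mu\}$ witnesses $\cov^*(\mc I)\leq\mu$: any infinite $X$ fails to be a pseudointersection of $(\om\setminus A_\al)_\al$, so $|X\cap A_\al|=\om$ for some $\al$. For $\mu\leq\non^*(\mc I)$: given $\mc X\subseteq[\om]^\om$ with $|\mc X|<\mu$, pick for each $X\in\mc X$ an index $\al(X)$ with $|X\cap A_{\al(X)}|=\om$; by regularity of $\mu$ there is $\al^*<\mu$ above all the $\al(X)$, and since the $A_\al$ are $\subseteq^*$-increasing in $\al$ we get $|X\cap A_{\al^*}|=\om$ for every $X\in\mc X$, so $\mc X$ is not a witness for $\non^*(\mc I)$, i.e. $\non^*(\mc I)\geq\mu$.

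For (c): if $\mc I$ is a P-ideal then $\add^*(\mc I)>\om$, so with $\add^*(\mc I)\leq\cov^*(\mc I)$ the hypothesis $\cov^*(\mc I)=\om_1$ forces $\add^*(\mc I)=\cov^*(\mc I)=\om_1$, and (a) gives a cotower of height $\om_1$; conversely such a cotower gives $\cov^*(\mc I)\leq\mrm{cf}(\om_1)=\om_1$ by (b), while $\cov^*(\mc I)\geq\om_1$ by (ii). I expect no deep obstacle anywhere --- this is a bookkeeping lemma --- and the only step needing real care is setting up (i)--(iii) correctly: in particular getting the complement bookkeeping for $\cov^*$ right and not overlooking the degenerate possibilities (members of $\mc F$ are infinite; $\ka\neq\om$; cofinal subsequences of towers remain towers). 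The recursion in (a) and the counting in (b) are then entirely routine.
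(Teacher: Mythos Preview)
Your proof is correct and follows essentially the same approach as the paper: for (a) you fix a witnessing family for $\cov^*$ and recursively build the (co)tower using $\add^*$; for (b) you pass to a cofinal subsequence and use a counting/pigeonhole argument; and (c) is derived from (a) and (b). The only cosmetic differences are that you work on the filter side throughout while the paper works on the ideal side, and in the second half of (b) your pigeonhole runs over the family $\mc X$ (finding a single $A_{\al^*}$ hitting every $X$) whereas the paper's runs over the cotower (finding a single $X_\al$ missed by cofinally many $T_\xi$); these are dual versions of the same one-line argument.
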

\begin{proof}
(a): Fix a family $\{D_\al:\al<\ka\}\subseteq\mc{I}$ witnessing $\cov^*(\mc{I})=\ka$. Applying $\add^*(\mc{I})=\cov^*(\mc{I})$, by recursion on $\al<\ka$ we can pick $A_\al\in\mc{I}$ such that $A_\be,D_\be\subseteq^* A_\al$ for every $\be\leq\al<\ka$. Then $(A_\al)_{\al<\ka}$ is a cotower in $\mc{I}$.

(b): $\cov^*(\mc{I})\leq \mrm{cf}(\lam)$ is trivial because if a cofinal subsequence of a cotower is of length $<\cov^*(\mc{I})$ then there is an infinite $X\subseteq\om$ which has finite intersection with all elements of the cotower, i.e. $\om\setminus X$ is a pseudounion of the cotower, a contradiction.

Now assume on the contrary that $(T_\xi)_{\xi<\lam}$ is a cotower in $\mc{I}$ and the family $\{X_\al:\al<\ka\}\subseteq [\om]^\om$ witnesses $\mathrm{non}^*(\mc{I})=\ka<\mrm{cf}(\lam)$. We can assume that $|\om\setminus X_\al|=\om$ for every $\al$.  For each $\xi$ we can pick an $\al_\xi<\ka$ such that $|T_\xi\cap X_{\al_\xi}|<\om$, then $\al_\xi=\al$ for $\lam$ many $\xi$, and hence $T_\xi\subseteq^* \om\setminus X_\al$ for each $\xi$, a contradiction.

(c) follows from (a) and (b).
\end{proof}

The star-uniformity of numerous classical Borel ideals is equal to $\om$ so there are no cotowers in these ideals. Among our examples, the following have countable star-uniformity:

$\mc{ED}$ because of the columns of $\om\times\om$. In general, all ideals of the form $\mc{I}\otimes\mathrm{Fin}$ where $\mc{I}\otimes\mc{J}$ is the {\em Fubini-product} of $\mc{I}$ and $\mc{J}$, that is
\[ \mc{I}\otimes\mc{J}=\big\{A\subseteq\om\times\om:\{n\in\om:(A)_n\in\mc{J}\}\in\mc{I}^*\big\}.\]

The random graph ideal because of the following reason: Consider the graph $E=\{\{(n,m),(n,k)\}:n\in\om, m\ne k\}$ on $\om\times\om$. We know that the random graph contains an isomorphic copy of $E$, and clearly if a set has infinite intersection with the (images of the) columns, then it cannot be covered by finitely many homogeneous sets.

Solecki's ideal because of \cite[Thm. 1.6.2]{Meza}.

$\mrm{Nwd}$ (hence $\mrm{Conv}$ as well) because of any countable base of $\mbb{Q}$.

\smallskip
Clearly, if $\mc{I}$ and $\mc{J}$ are ideals, $\mc{I}$ contains cotowers, and $\mc{J}$ contains a copy of $\mc{I}$ (via a permutation), then $\mc{J}$ also contains cotowers. 
In general, we know a bit more. Let $\leq_\mrm{KB}$ be the {\em Kat\v{e}tov-Blass} and $\leq_\mrm{K}$ be the {\em Kat\v{e}tov preorder} on the family of ideals on $\om$, that is,
\begin{align*} \mc{I}\leq_{\mathrm{KB}}\mc{J}\;\;& \text{iff}\;\;\exists\;f:\om\xrightarrow{\text{fin-to-one}}\om\;\forall\;A\in\mc{I}\,\;f^{-1}[A]\in\mc{J},\\
\mc{I}\leq_{\mathrm{K}}\mc{J}\;\;&\text{iff}\;\;\exists\;f:\om\xrightarrow{\,\text{arbitrary}\,}\om\;\forall\;A\in\mc{I}\,\;f^{-1}[A]\in\mc{J}.
\end{align*}
Clearly, if $\mc{J}$ contains a copy of $\mc{I}$ then $\mc{I}\leq_\mrm{KB} \mc{J}$.

It is straightforward to see that if $\mc{I}$ and $\mc{J}$ are Borel then the statement ``$\mc{I}\leq_{\mrm{K(B)}}\mc{J}$'' is $\Ubf{\Sigma}^1_2$ hence absolute for transitive models $V\subseteq W$ satisfying $\om_1^W\subseteq V$.

\begin{fact}\label{kbtower}
Assume that $\mc{I}\leq_\mrm{KB}\mc{J}$ and that $\mc{I}$ contains a cotower. Then $\mc{J}$ also contains a cotower.
\end{fact}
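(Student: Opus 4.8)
The plan is to push the given cotower forward along the finite-to-one witness. Fix a finite-to-one $f\colon\om\to\om$ with $f^{-1}[A]\in\mc{J}$ for every $A\in\mc{I}$, and let $(A_\al)_{\al<\ka}$ be a cotower in $\mc{I}$; unwinding the definition, this means the $A_\al$ are $\subseteq^*$-increasing, each $A_\al\in\mc{I}$, each $\om\setminus A_\al$ is infinite, and there is no $X\in[\om]^\om$ with $|X\cap A_\al|<\om$ for all $\al$. I would simply put $B_\al=f^{-1}[A_\al]$ and claim that $(B_\al)_{\al<\ka}$ is a cotower in $\mc{J}$.

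The easy part is that $(B_\al)_{\al<\ka}$ has the right shape: $B_\al\in\mc{J}$ by the defining property of $f$; since $f$ is finite-to-one the preimage of a finite set is finite, so $A_\al\subseteq^* A_\be$ implies $B_\al\subseteq^* B_\be$, and likewise $f^{-1}$ sends infinite sets to infinite sets, so each $\om\setminus B_\al=f^{-1}[\om\setminus A_\al]$ is infinite. Thus $(\om\setminus B_\al)_{\al<\ka}$ is a $\subseteq^*$-descending sequence in $[\om]^\om$ lying in $\mc{J}^*$, and $\ka$ is unchanged, so the length condition for a tower is automatic.

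The only real content is non-diagonalizability. Suppose toward a contradiction that $Y\in[\om]^\om$ satisfies $|Y\cap B_\al|<\om$ for every $\al$. Since $f$ is finite-to-one and $Y$ is infinite, $f[Y]$ is infinite. I would then check the inclusion $f[Y]\cap A_\al\subseteq f[Y\cap B_\al]$: if $n\in f[Y]\cap A_\al$, choose $m\in Y$ with $f(m)=n$; then $f(m)\in A_\al$ gives $m\in f^{-1}[A_\al]=B_\al$, so $m\in Y\cap B_\al$ and $n=f(m)\in f[Y\cap B_\al]$. As $Y\cap B_\al$ is finite, $f[Y]\cap A_\al$ is finite for every $\al$, so $f[Y]$ would be a pseudointersection of the cotower $(\om\setminus A_\al)_{\al<\ka}$ — contradiction. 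Hence no such $Y$ exists and $(B_\al)_{\al<\ka}$ is a cotower in $\mc{J}$.

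I do not expect any genuine obstacle; the argument is a direct verification. It is worth pointing out precisely where finite-to-one is used rather than an arbitrary function: (i) to transfer $\subseteq^*$-monotonicity from $(A_\al)$ to $(B_\al)$, and (ii) to know that $f[Y]$ is infinite once $Y$ is. Both break for the plain Kat\v{e}tov order (take $f$ constant), which is exactly why the statement is formulated for $\leq_\mrm{KB}$.
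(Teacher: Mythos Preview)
Your proof is correct and follows the paper's approach: set $B_\al=f^{-1}[A_\al]$ and verify this is a cotower in $\mc{J}$, with your non-diagonalizability check via the forward image $f[Y]$ being a clean complement of the paper's pseudounion construction. One small slip: the assertion ``$f^{-1}$ sends infinite sets to infinite sets'' is false in general (an infinite set disjoint from $\ran(f)$ has empty preimage), but your conclusion that $\om\setminus B_\al$ is infinite is still correct for a simpler reason --- $B_\al\in\mc{J}$ and $\mc{J}$ is a proper ideal, so every element of $\mc{J}$ is coinfinite.
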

\begin{proof}
We show that if $f:\om\to\om$ witnesses  $\mc{I}\leq_\mrm{KB}\mc{J}$ and
$(A_\al)_{\al<\ka}$ is a cotower in $\mc{I}$, then $(f^{-1}[A_\al])_{\al<\ka}$ is a cotower in $\mc{J}$. Clearly, this is a $\subseteq^*$-increasing sequence in $\mc{J}$. If $\al$ is large enough then $f^{-1}[A_\al]$ is infinite (i.e. $A_\al\cap\mrm{ran}(f)$ is infinite) otherwise $\om\setminus\ran(f)$ would be a pseudounion of $(A_\al)_{\al<\ka}$ (notice that $\ran(f)$ must be infinite).

Assume on the contrary that $f^{-1}[A_\al]\subseteq^* Y$ for some coinfinite  $Y\subseteq \om$. For every $n\in\ran(f)$ let $P_n=f^{-1}(\{n\})$ (a partition of $\om$ into nonempty finite sets) and let $Y'=\{n\in\ran(f):P_n\subseteq Y\}$. Then $Y'$ must be coinfinite in $\mrm{ran}(f)$. If $\al<\ka$ then $f^{-1}[A_\al]=\bigcup\{P_n:n\in A_\al\cap\ran(f)\}\subseteq^* Y$ therefore $P_n\subseteq Y$ for every large enough $n\in A_\al\cap\ran(f)$ and hence  $A_\al\subseteq^* Y'\cup (\om\setminus\mrm{ran}(f))$ for every $\al<\ka$, a contradiction.
\end{proof}

\begin{rem}
It is easy to show that if $\mc{I}=\mrm{Exh}(\varphi)$ is a tall analytic P-ideal and $\mc{I}\leq_\mrm{KB}\mc{J}$, then $\mc{J}$ contains a tall analytic P-ideal: If $f:\om\to\om$ witnesses this reduction and $\psi:\mc{P}(\om)\to [0,\infty]$, $\psi(X)=\varphi(f[X])$, then $\psi$ is an lsc submeasure, $\mrm{Exh}(\psi)$ is tall, and $\mrm{Exh}(\psi)\subseteq\mc{J}$.
\end{rem}

It is easy to see that the following are equivalent:
\begin{itemize}
\item[(i)] $\mc{ED}_\mathrm{fin}\le_\mathrm{KB}\mc{I}$;
\item[(ii)] $\mc{I}$ is not a {\em weak Q-ideal}: there is a partition $(P_n)_{n\in\om}$ of $\om$ into finite sets such that if $X\subseteq\om$ and $|X\cap P_n|\le 1$ for each $n$, then $X\in\mc{I}$.
\end{itemize}
In the case of Borel ideals, these properties are equivalent to (see \cite[Thm. 3.2.1]{Meza}):
\begin{itemize}
\item[(iii)] $\mathrm{non}^*(\mc{I})>\om$ (moreover, $\mathrm{non}^*(\mc{I})\geq\mathrm{non}^*(\mc{ED}_\mathrm{fin})\geq\mf{t}$).
\end{itemize}

We show a few more examples of ideals without cotowers in them.

\begin{exa}
There are no cotowers in $\mc{ED}_\mathrm{fin}$. Assume that $(A_\al)_{\al<\ka}$ is a $\subseteq^*$-increasing sequence in $\mc{ED}_\mathrm{fin}$ for some regular and uncountable $\ka$. For each $\al<\ka$ let $n_\al=\limsup_{n\fv\infty}|(A_\al)_n|<\om$.
Then $n_\al\leq n_\be$ for every $\al\leq\be<\ka$, therefore we can assume that $n_\al=N$ for every $\al$. Let
\[ B=\bigcup\big\{\{k\}\times (A_0)_k:|(A_0)_k|=N\big\}\cup\bigcup\big\{\{\ell\}\times (\ell+1):|(A_0)_\ell|<N\big\}.\]
It is easy to see that $|\Delta\setminus B|=\om$ and $A_\al\subseteq^* B$ for each $\al<\ka$.
\end{exa}

It is not difficult to see that $\mc{ED}_\mathrm{fin}\le_{\mrm{KB}}\mc{W}$ and $\mc{ED}_\mathrm{fin}\le_{\mrm{KB}}\mc{Z}_u$ hence the previous example follows from the following stronger ones. Actually, we also know that $\mc{W}\leq_\mrm{KB}\mc{Z}_u$ (because $\mc{W}\subseteq\mc{Z}_u$) but we do not want to use this difficult fact, we give a direct proof for $\mc{W}$ as well.

\begin{exa}\label{cotW}
There are no cotowers in $\mc{W}$.
Assume that $(A_\al)_{\al<\ka}$ is a $\subseteq^*$-increasing sequence in $\mc{W}$ for some regular and uncountable $\ka$. For every $X\subseteq\om$ let
\[ \mathrm{ap}(X)=\mathrm{sup}\big\{|A|:A\subseteq X\;\;\text{is an arithmetic progression}\big\}.\]
This invariant of a set cannot be controlled by $\subseteq^*$, we need a ``hereditary'' version of $\mathrm{ap}$: $\mathrm{ap}^*(X)=\mathrm{min}\{\mathrm{ap}(X\setminus n):n\in\om\}$. Clearly if $X\subseteq^* Y$ then $\mathrm{ap}^*(X)\le\mathrm{ap}^*(Y)$. Hence we can assume that there is an $N$ such that $\mathrm{ap}^*(A_\al)=N$ for every $\al<\ka$.

Pick $N$ long arithmetic progressions $P_k\subseteq A_0$ such that $\max(P_k)<\min(P_{k+1})$ ($k\in\om$). Then the set
\[B=\big\{n\in\om:\exists\;k\;\big(\max(P_k)<n\;\text{and}\;P_k\cup\{n\}\;\text{is an arithmetic progression}\big)\big\}\]
is infinite. It is easy to see that $B\cap A_\al$ is finite hence $A_\al\subseteq^* \om\setminus B$ for each $\al$.
\end{exa}

\begin{exa}
$\mc{Z}_u$ does not contain cotowers. Assume that $(A_\al)_{\al<\ka}$ is a $\subseteq^*$-increasing sequence in $\mc{Z}_u$ where $\ka$ is regular and uncountable. For every $\al$ define the function $s_\al\in\prod_{n\in\om}(n+1)$ as follows \[ s_\al(n)=\limsup_{k\to\infty}|A_\al\cap [k,k+n)|=\min\big\{S_n(A_\al\setminus K):K\in\om\big\}\leq S_n(A_\al).\]
Then $s_\al\leq s_\be$ whenever $\al\leq\be<\ka$. For every $n$ fix an $\al_n<\ka$ such that $s_{\al_n}(n)=\max\{s_\xi(n):\xi<\ka\}$, and let $\al=\sup\{\al_n:n\in\om\}<\ka$ (then $s_\xi\leq s_\al$ for every $\xi<\ka$). Pick an $N\in\om$ such that $s_\al(N)<N$ (almost every $N$ has this property because $A_\al\in\mc{Z}_u$) and define
\[ B=\bigcup\big\{[k,k+N)\setminus A_\al:k\in\om\;\text{and}\;|A_\al\cap [k,k+N)|=s_\al(N)\big\}.\]
It is easy to see that $B$ is infinite and $B\cap A_\xi$ is finite hence $A_\xi\subseteq^*\om\setminus B$ for every $\xi<\ka$.
\end{exa}

\begin{exa} $\mc{G}_\mrm{fc}$ does not contain cotowers. Let $(E_\al)_{\al<\ka}$ be a $\subseteq^*$-increasing sequence in $\mc{G}_\mrm{fc}$ for some $\ka=\mrm{cf}(\ka)>\om$. For every $E\subseteq [\om]^2$ define
\[ \chi^*(E)=\min\big\{\chi\big(\om,E\setminus [n]^2\big):n\in\om\big\}.\]
Clearly, $\chi^*(E_\al)$ is an increasing sequence in $\om$ hence we can assume that $\chi^*(E_\al)=N\in\om$ for every $\al<\ka$. Let $(P_n)_{n\in\om}$ be a partition of $\om$ into consecutive intervals of size $N+1$. Then $\forall$ $\al<\ka$ $\forall^\infty$ $n\in\om$ $[P_n]^2\nsubseteq E_\al$. For every $\al<\ka$ define $f_\al:\om\to \om$, $f_\al(n)=|E_\al\cap [P_n]^2|\leq \binom{N+1}{2}$. Then $f_\al\leq^* f_\be$ (i.e. $|\{n:f_\al(n)>f_\be(n)\}|<\om$) whenever $\al\leq \be<\ka$, and hence the sequence  $K_\al=\limsup_{n\to\infty}f_\al(n)<\binom{N+1}{2}$ is increasing. We can assume that $K_\al=K$ for every $\al$. We know that $X=\{n\in\om:|E_0\cap [P_n]^2|=K\}$ is infinite. For every $n\in X$, pick an $e_n\in [P_n]^2\setminus E_0$ and let $B=\{e_n:n\in X\}$.
It is easy to see that $E_\al\subseteq^* [\om]^2\setminus B$ for every $\al<\ka$.
\end{exa}

Among our examples, we still have to deal with tall fragmented ideals and tall analytic (i.e. $F_{\sigma\delta}$) P-ideals: $\mc{I}_h$, $\mc{J}_F$, $\mc{Z}_{\vec{\varphi}}$, and $\mrm{tr}(\mc{N})$. In Sections \ref{analp} and \ref{killing} we show that the existence of cotowers in tall analytic P-ideals is independent of $\mrm{ZFC}$, and that there may be no meager ideals which contain cotowers.

In particular, if a Borel ideal contains a tall analytic P-ideal, then consistently it contains a cotower. Unfortunately, we do not know (not even for $F_\sigma$ ideals) if it is a characterisation, more precisely, if containing a cotower in a forcing extension implies containing a tall analytic P-ideal. The following result shows that this characterisation holds at least for ``nice'' fragmented ideals.

\begin{prop}\label{fragdich}
Let $\mc{I}(\vec\mu)$ be a fragmented ideal based on a sequence $\vec\mu=(\mu_n)_{n\in\om}$ of measures with $\mu_n$ concentrated on $P_n$. Then one of the following holds:
\begin{itemize}
\item[(1)] There is an $X\in\mc{I}(\vec\mu)^*$ such that $\max\{\mu_n(\{k\}):k\in P_n\cap X\}\xrightarrow{n\to\infty} 0$, i.e. $\mc{Z}_{\vec\mu}\upharpoonright X$ is tall (and of course $\mc{I}(\vec\mu)\upharpoonright X\supseteq\mc{Z}_{\vec\mu}\upharpoonright X$).
\item[(2)] $\mc{I}(\vec\mu)$ does not contain cotowers.
\end{itemize}
\end{prop}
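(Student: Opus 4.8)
I will prove the contrapositive: assuming (1) fails, I show $\mc{I}(\vec\mu)$ contains no cotower. Since a cotower must lie in a tall ideal and, after passing to a cofinal subsequence, may be assumed of regular uncountable length, it suffices to produce, for every $\subseteq^*$-increasing sequence $(A_\al)_{\al<\ka}$ in $\mc{I}(\vec\mu)$ with $\ka=\mrm{cf}(\ka)>\om$ and all $\om\setminus A_\al$ infinite, an infinite $Y$ with $|Y\cap A_\al|<\om$ for every $\al$ (so that $\om\setminus Y$ is a pseudounion of $(A_\al)_{\al<\ka}$). If $\mc{I}(\vec\mu)$ is not tall this is immediate, so assume it is tall; then the atoms are uniformly bounded, $\mu_n(\{k\})\le M<\infty$ for all $n$ and $k\in P_n$ (weightless atoms, if any, span a restriction of $\mc{I}(\vec\mu)$ equal to its full power set and are irrelevant, so I assume all atoms positive). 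Now reduce the sequence: the numbers $\ell_\al:=\limsup_n\mu_n(A_\al\cap P_n)$ are finite, and as $A_\al\subseteq^* A_\be$ forces $\mu_n(A_\al\cap P_n)\le\mu_n(A_\be\cap P_n)$ off a finite set, $(\ell_\al)_{\al<\ka}$ is weakly increasing; a weakly increasing sequence of reals indexed by a regular uncountable ordinal is eventually constant, say $\equiv\ell$, so discard an initial segment. For each $\al$ the set $\{n:\mu_n(A_\al\cap P_n)>\ell+1\}$ is finite, so removing those (finitely many, finite) blocks from $A_\al$ — a finite change — we may assume $\mu_n(A_\al\cap P_n)\le K:=\ell+1$ for all $n,\al$. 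Finally, applying the failure of (1) to each $X_\al:=\om\setminus A_\al\in\mc{I}(\vec\mu)^*$ yields $\eps_\al>0$ with infinitely many blocks containing an atom of weight $\ge\eps_\al$ missing $A_\al$; by regularity of $\ka$, restricting to a cofinal set of indices we may take $\eps_\al\equiv\eps>0$. Put $B_\eps:=\{k\in P_n:n\in\om,\ \mu_n(\{k\})\ge\eps\}$, so each $B_\eps\setminus A_\al$ is infinite, $(B_\eps\setminus A_\al)_{\al<\ka}$ is $\subseteq^*$-decreasing, and $|A_\al\cap B_\eps\cap P_n|\le K/\eps$ for all $n,\al$.

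\textbf{Case 1: $B_\eps\notin\mc{I}(\vec\mu)$.} Since atoms on $B_\eps$ lie in $[\eps,M]$, the blocks $B_\eps\cap P_n$ have unbounded size; choose $n_1<n_2<\cdots$ with $|B_\eps\cap P_{n_i}|\ge i$ and set $B^\sharp:=\bigcup_i(B_\eps\cap P_{n_i})$, which is not in $\mc{I}(\vec\mu)$ since $\mu_{n_i}(B^\sharp\cap P_{n_i})\ge\eps i$. Using $\eps\le\mu_n(\{k\})\le M$ on $B^\sharp$ one checks $\mc{I}(\vec\mu)\upharpoonright B^\sharp=\{A\subseteq B^\sharp:\sup_i|A\cap P_{n_i}|<\om\}$, an ideal of exactly the form of $\mc{ED}_\mrm{fin}$ except that the $i$-th block has size $|B_\eps\cap P_{n_i}|\to\infty$; the argument given for $\mc{ED}_\mrm{fin}$ above works without change to show it has no cotowers. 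Now $(A_\al\cap B^\sharp)_{\al<\ka}$ is $\subseteq^*$-increasing in $\mc{I}(\vec\mu)\upharpoonright B^\sharp$, and each $B^\sharp\setminus A_\al$ is infinite (otherwise $B^\sharp\subseteq^* A_\al\subseteq^* A_\be$ for all $\be\ge\al$, giving $|A_\al\cap B^\sharp\cap P_{n_i}|\to\infty$, contradicting $A_\al\in\mc{I}(\vec\mu)$). If the terms $A_\al\cap B^\sharp$ are eventually infinite, then $(A_\al\cap B^\sharp)_{\al}$ is a cotower-candidate in $\mc{I}(\vec\mu)\upharpoonright B^\sharp$, so it has a pseudounion complement $Y\subseteq B^\sharp$, which satisfies $Y\cap A_\al=Y\cap A_\al\cap B^\sharp$ finite; if they are all finite, any infinite $Y\subseteq B^\sharp$ works. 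Either way $Y$ is the desired pseudounion complement for $(A_\al)_{\al<\ka}$.

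\textbf{Case 2: $B_\de\in\mc{I}(\vec\mu)$ for every $\de>0$} (the failure of (1) persists, which in fact forces each $B_\de$ to have uniformly bounded blocks). Split the atoms into scales $S_m:=\{k\in P_n:\text{some }n,\ 2^{-m-1}\le\mu_n(\{k\})<2^{-m}\}$, so $\om=\bigsqcup_m S_m$ with each $S_m\in\mc{I}(\vec\mu)$ of uniformly bounded block size. The failure of (1), rephrased, says: for every $C$ there are a level $M$ and infinitely many $n$ with $\sum_{m\le M}\mu_n(S_m\cap P_n)>C$. Combining this (with $C$ picked from $K$) with the uniform bound $\mu_n(A_\al\cap P_n)\le K$ and a pigeonhole over the finitely many scales $\le M$, one obtains a \emph{fixed} level $M$, a \emph{fixed} weight $\de'>0$, and an infinite $N^*\subseteq\om$ such that, setting $R_n:=\{k\in P_n:\mu_n(\{k\})\ge\de'\}\cap\bigcup_{m\le M}S_m$ for $n\in N^*$, one has $|R_n|\le L$ for a constant $L$ and
\[ R_n\setminus A_\al\ne\0\qquad\text{for every }n\in N^*\text{ and every }\al<\ka. \]
So Case 2 reduces to the following combinatorial claim: if $(R_n)_{n\in N^*}$ are finite with $|R_n|\le L$, and $(A_\al)_{\al<\ka}$ is $\subseteq^*$-increasing as subsets of $\bigsqcup_{n\in N^*}R_n$, with $\ka$ regular uncountable and every $R_n\setminus A_\al$ nonempty, then there is an infinite $Y$ with $|Y\cap A_\al|<\om$ for all $\al$. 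Its $Y$ is then the pseudounion complement we want, and the proof is complete.

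\textbf{The claim, and the main obstacle.} One analyses the $\subseteq^*$-decreasing sequence $Z_\al:=\bigsqcup_{n\in N^*}(R_n\setminus A_\al)$, each term of which meets every $R_n$. It suffices to show that for all but finitely many $n$ the trace $R_n\setminus A_\al$ is, for all large $\al$, a fixed nonempty set $W_n\subseteq R_n$: given this, fix $\al^*$ realizing the stabilization, choose $y_n\in W_n$ for $n$ in a cofinite subset of $N^*$, and put $Y=\{y_n\}$; then $Y\cap R_n\subseteq R_n\setminus A_\al$ for cofinitely many $n$ whenever $\al\ge\al^*$, so $|Y\cap A_\al|<\om$ for those $\al$, and the case $\al<\al^*$ follows from $A_\al\subseteq^* A_{\al^*}$. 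I expect this stabilization to be the heart of the matter and the main obstacle: one must rule out that the finite traces $R_n\setminus A_\al$ oscillate in $\al$ for infinitely many $n$, which should be done by exploiting that $\mrm{cf}(\ka)>\om$ together with the uniform bound $|R_n|\le L$ — organizing a pressing-down/fusion argument by the finitely many possible values of a trace to extract a single $\al^*$ past which a.e.\ trace is constant. (If $\ka$ were below $\mf{t}$ the claim would be automatic, since $(Z_\al)_{\al<\ka}$ is then a $\subseteq^*$-decreasing family of fewer than $\mf{t}$ infinite sets and hence has a pseudointersection; the genuine difficulty is for larger $\ka$.)
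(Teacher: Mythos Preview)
Your argument has two genuine gaps, and its case split is more complicated than necessary.

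\textbf{Gap 1: the ``rephrasing'' in Case 2 is the crux and is unjustified.} You assert that the failure of (1), ``rephrased,'' says: for every $C$ there are $M$ and infinitely many $n$ with $\sum_{m\le M}\mu_n(S_m\cap P_n)>C$. Since $\bigcup_{m\le M}S_m=\{k:\mu_n(\{k\})\ge 2^{-M-1}\}$, this is exactly the statement
\[(\star)\qquad \forall\,r>0\ \exists\,\de>0\ \exists^\infty n\ \ \mu_n\big(\{k\in P_n:\mu_n(\{k\})\ge\de\}\big)\ge r,\]
which is not a rephrasing of $\neg(1)$ but a nontrivial consequence. The paper proves $\neg(\star)\Rightarrow(1)$ by an explicit construction: if $(\star)$ fails for some $r$, choose $N_0<N_1<\cdots$ with $\mu_n(\{k\in P_n:\mu_n(\{k\})\ge 2^{-m}\})<r$ for all $n\ge N_m$, set $A=\bigcup_m\bigcup_{N_m\le n<N_{m+1}}\{k\in P_n:\mu_n(\{k\})\ge 2^{-m}\}$, check $A\in\mc{I}(\vec\mu)$ (each block has measure $<r$), and observe that $X=\om\setminus A$ witnesses (1). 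Without this, your Case 2 does not start. Your Case 1, where $B_\eps\notin\mc{I}(\vec\mu)$, is the special instance of $(\star)$ with $\de=\eps$ fixed; the case split dissolves once $(\star)$ is available.

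\textbf{Gap 2: the final ``claim'' is left open, and your proposed stabilization is the wrong tool.} You reduce Case 2 to: given finite $R_n$ of size $\le L$ and a $\subseteq^*$-increasing $(A_\al)_{\al<\ka}$ with $R_n\setminus A_\al\ne\0$ for all $n,\al$, find infinite $Y$ with $Y\cap A_\al$ finite for each $\al$. You then propose to stabilize each trace $R_n\setminus A_\al$ in $\al$ past a single $\al^*$. But for a fixed $n$ the sequence $(A_\al\cap R_n)_\al$ need not stabilize at all: $\subseteq^*$ gives no monotonicity inside a single finite block (e.g.\ $R_n=\{a,b\}$, $A_0=\{a\}$, $A_1=\{b\}$ is $\subseteq^*$-increasing). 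The correct argument is precisely the $\mc{ED}_\mrm{fin}$ trick you already used in Case 1: thin to $|R_n|$ constant, set $N_\al=\limsup_n|A_\al\cap R_n|<|R_n|$, observe $(N_\al)$ is nondecreasing (since $A_\al\cap R_n\subseteq A_\be\cap R_n$ for all but finitely many $n$ when $\al<\be$) hence eventually constant $\equiv N$, let $Z'=\{n:|A_0\cap R_n|=N\}$, pick $k_n\in R_n\setminus A_0$ for $n\in Z'$, and set $Y=\{k_n:n\in Z'\}$. For each $\al$ and large $n\in Z'$ one has $A_0\cap R_n\subseteq A_\al\cap R_n$ and $|A_\al\cap R_n|\le N=|A_0\cap R_n|$, forcing equality, so $k_n\notin A_\al$.

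The paper's proof is exactly this in one pass: prove $(\star)$, apply it with $r=L$ exceeding a uniform bound on $\sup_n\mu_n(A_\al\cap P_n)$ to produce finite $Q_n\subseteq P_n$ of constant size with $\mu_n(Q_n)\ge L$ (so no $A_\al$ can contain any $Q_n$), and then run the $\mc{ED}_\mrm{fin}$ argument on the $Q_n$'s.
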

\begin{proof}
Assume that (1) does not hold. First we show that
\[\tag{$\star$} \forall\;r>0\;\exists\;\de>0\;\exists^\infty\;n\in\om\;\mu_n\big(\big\{k\in P_n:\mu_n(\{k\})\geq\de\big\}\big)\geq r.\]
Otherwise for some $r>0$
\[ \forall\;m\in\om\;\exists\;N_m\in\om\;\forall\;n\geq N_m \;\mu_n\big(\big\{k\in P_n:\mu_n(\{k\})\geq 2^{-m}\big\}\big)<r.\]
We can assume that $N_0<N_1<\dots$. Now let
\[ A=\bigcup_{m\in\om}\bigcup_{n\in N_{m+1}\setminus N_m}\big\{k\in P_n:\mu_n(\{k\})\geq 2^{-m}\big\}.\]
Then $A\in\mc{I}(\vec\mu)$ because $\mu_n(A\cap P_n)<r$ for every $n$; and clearly, $\max\{\mu_n(\{k\}):k\in P_n\setminus A\}\xrightarrow{n\to\infty} 0$, i.e. (1) holds with $X=\om\setminus A\in\mc{I}(\vec{\mu})^*$, a contradiction.

Now assume that $(A_\al)_{\al<\ka}$ is a $\subseteq^*$-increasing sequence in $\mc{I}(\vec\mu)$ for some $\ka=\mrm{cf}(\ka)>\om$. By passing to a subsequence, we can assume that there is an $L\in\om$ such that $\sup\{\mu_n(A_\al\cap P_n):n\in\om\}<L$ for every $\al$. Applying $(\star)$ for $r=L$, there is a $\de>0$ and a $Z\in [\om]^\om$ such that
\[ \mu_{n}\big(\big\{k\in P_{n}:\mu_{n}(\{k\})\geq\de\big\}\big)\geq L\;\text{ for every}\;n\in Z.\]
We can assume that $\de<L$. For every $n\in Z$ fix a $Q_n\subseteq  \{k\in P_{n}:\mu_{n}(\{k\})\geq\de\}$ such that $|Q_n|\leq \lceil L/\de\rceil$ and $\mu_n(Q_n)\geq L$. By shrinking $Z$, we can assume that $|Q_n|=K\in\om$ for every $n\in Z$. For every $\al$ let $n_\al=\limsup_{n\in Z}|A_\al\cap Q_n|$. Then $n_\al<K$ for every $\al<\ka$ because no $A_\al$ can contain any of the $Q_n$'s (because $\mu_n(Q_n)\geq L>\sup\{\mu_n(A_\al\cap P_n):n\in\om\}$). We can assume that $n_\al=N$ for every $\al$. Pick $k_n\in Q_n\setminus A_0$ for every $n\in Z'=\{n\in Z:|A_0\cap Q_n|=N\}\in [\om]^\om$ and let $B=\{k_n:n\in Z'\}\in [\om]^\om$. For every $\al$, if $n\in Z'$ is large enough then $A_0\cap Q_n\subseteq A_\al\cap Q_n$ and $|A_\al\cap Q_n|\leq N$, therefore $k_n\notin A_\al\cap Q_n$, in other words $|B\cap A_\al|<\om$, i.e. $\om\setminus B$ is a pseudounion of $(A_\al)_{\al<\ka}$.
\end{proof}

\begin{cor}\label{dichcoro}
Let $\mc{I}(\vec\mu)$ be a fragmented ideal where $\vec\mu$ is a sequence of measures. Then the following are equivalent:
\begin{itemize}
\item[(i)] For some $X\in\mc{I}(\vec\mu)^*$ the ideal $\mc{Z}_{\vec\mu}\upharpoonright X$ is tall.
\item[(ii)] For some $X\in\mc{I}(\vec\mu)^*$, $\mc{I}(\vec\mu)\upharpoonright X$ contains a tall analytic P-ideal.
\item[(iii)] $\mc{I}\leq_\mrm{KB}\mc{I}(\vec\mu)$ for some tall analytic P-ideal $\mc{I}$.
\item[(iv)] After adding $\om_1$ Cohen reals, $\mc{I}(\vec\mu)$ contains a cotower.
\item[(v)] Under $\mrm{CH}$, $\mc{I}(\vec\mu)$ contains a cotower.
\item[(vi)] In a forcing extension $\mc{I}(\vec\mu)$ contains a cotower.
\end{itemize}
\end{cor}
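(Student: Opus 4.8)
The strategy is to prove the cycle of implications $(i)\Rightarrow(iii)\Rightarrow(ii)\Rightarrow(vi)$ via earlier machinery, close the loop with $(vi)\Rightarrow(i)$ using Proposition~\ref{fragdich}, and then observe $(iv),(v)$ fit between $(vi)$ and earlier ZFC/forcing results. Most arrows are either immediate or already essentially contained in the excerpt.

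First I would establish $(i)\Rightarrow(iii)$: if $\mc{Z}_{\vec\mu}\upharpoonright X$ is tall for some $X\in\mc{I}(\vec\mu)^*$, then since $X\in\mc{I}(\vec\mu)^*$ is cofinite-like only up to the partition, enumerate $X\cap P_n$ and define a finite-to-one $f\colon\om\to\om$ collapsing $\om\setminus X$ appropriately into $X$ (or simply take the inclusion-induced reduction on $X$, then compose with a fin-to-one surjection $\om\to X$); the point is that $\mc{Z}_{\vec\mu}\upharpoonright X$ is a tall analytic (indeed $F_{\sigma\delta}$) P-ideal by Theorem~\ref{char}, and it reduces to $\mc{I}(\vec\mu)$ via $f$ since $\mc{Z}_{\vec\mu}\upharpoonright X\subseteq\mc{I}(\vec\mu)\upharpoonright X$. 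For $(iii)\Rightarrow(ii)$ I would use the Remark following Fact~\ref{kbtower}: if $\mc{I}=\mrm{Exh}(\varphi)\leq_\mrm{KB}\mc{I}(\vec\mu)$ via $f$, then $\psi(Y)=\varphi(f[Y])$ gives a tall analytic P-ideal $\mrm{Exh}(\psi)\subseteq\mc{I}(\vec\mu)$; restricting to a suitable $X\in\mc{I}(\vec\mu)^*$ on which $\mrm{Exh}(\psi)$ is still tall (which one can arrange, shrinking to the support of $f$) yields (ii). The implication $(ii)\Rightarrow(vi)$ is where the real content sits but it is imported wholesale: by the results promised in Sections~\ref{analp} and \ref{killing} (and stated in the excerpt), it is consistent that every tall analytic P-ideal contains a cotower of some regular height; since $\mc{I}(\vec\mu)\upharpoonright X\supseteq$ a tall analytic P-ideal, in that model $\mc{I}(\vec\mu)\upharpoonright X$ contains a cotower, and a cotower in a restriction $\mc{I}(\vec\mu)\upharpoonright X$ trivially extends to a cotower in $\mc{I}(\vec\mu)$ itself (add $\om\setminus X$, which is in $\mc{I}(\vec\mu)$, to each set). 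Finally $(vi)\Rightarrow(i)$ is exactly the contrapositive of Proposition~\ref{fragdich}: if (1) of that proposition fails — i.e.\ (i) here fails for every $X$ — then $\mc{I}(\vec\mu)$ contains no cotowers in \emph{any} forcing extension, because the proof of Proposition~\ref{fragdich} is absolute (it produces, from any $\subseteq^*$-increasing $\om_1$-sequence, a pseudounion, and this argument goes through in any model).

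For the remaining two items: $(v)\Rightarrow(vi)$ and $(iv)\Rightarrow(vi)$ are trivial (a forcing extension is a forcing extension). Conversely $(ii)\Rightarrow(v)$ follows because under $\mrm{CH}$ a tall analytic P-ideal has $\add^*=\cov^*=\om_1$ (P-ideal gives $\add^*>\om$, and $\mrm{CH}$ forces $\cov^*\leq\mf c=\om_1$, with tallness giving $\cov^*\geq\om_1$), so Fact~\ref{fact1}(a) produces a cotower of height $\om_1$ in $\mc{I}(\vec\mu)\upharpoonright X$, hence in $\mc{I}(\vec\mu)$; and $(ii)\Rightarrow(iv)$ is similar once one knows that adding $\om_1$ Cohen reals keeps the relevant tall analytic P-ideal tall and forces $\cov^*(\mc{I}(\vec\mu)\upharpoonright X)=\om_1$ (adding Cohen reals makes the ground-model reals $\mc{I}^+$-non-small in the right sense; this is the standard fact that Cohen forcing adds no dominating-style obstruction to a ground-model P-ideal cotower), again invoking Fact~\ref{fact1}(a).

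The main obstacle I expect is \emph{not} any single implication in isolation but making the restriction bookkeeping uniform: one must check that ``$\mc{Z}_{\vec\mu}\upharpoonright X$ tall'' passes cleanly back and forth through $\leq_\mrm{KB}$, through the $\mrm{Exh}(\psi)$ construction, and through forcing, always landing on \emph{some} $X\in\mc{I}(\vec\mu)^*$ rather than a different positive set; and one must verify the absoluteness claim in Proposition~\ref{fragdich} carefully — in particular that statement $(\star)$ and the choice of $L$, $\de$, $K$, $N$ in that proof refer only to the ground-model objects $\vec\mu$ and to a sequence that exists in the extension, so that the pseudounion it constructs really witnesses the failure of (vi). Once those two points are pinned down, the corollary is a formal consequence of Proposition~\ref{fragdich}, Fact~\ref{fact1}, Fact~\ref{kbtower} with its Remark, and the consistency results of Sections~\ref{analp}--\ref{killing}.
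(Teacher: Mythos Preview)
Your proposal is correct and follows essentially the same route as the paper, with only cosmetic differences in the implication graph: the paper proves $(i)\to(ii)\to(iii)\to(iv),(v)\to(vi)\to(i)$, while you run $(i)\to(iii)\to(ii)\to(vi)\to(i)$ and handle $(iv),(v)$ via $(ii)$ instead of $(iii)$. Both orderings work for the same reasons (Fact~\ref{kbtower}, its Remark, Fact~\ref{fact1}(a), and the Cohen/$\mrm{CH}$ facts from Section~\ref{analp}), and your worry about ``restriction bookkeeping'' is unfounded since the Remark after Fact~\ref{kbtower} already gives a tall analytic P-ideal contained in $\mc{I}(\vec\mu)$ itself, so one may take $X=\om$ in $(ii)$.

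The one place where the paper is sharper is $(vi)\Rightarrow(i)$. You argue that ``the proof of Proposition~\ref{fragdich} is absolute'', which requires checking that $\neg(1)$ persists to $V^\PP$ so that the pseudounion construction can be run there. The paper instead simply applies Proposition~\ref{fragdich} \emph{inside} $V^\PP$ to conclude that $(1)$ holds in $V^\PP$, and then observes that $(1)$ is a $\Ubf{\Sigma}^1_1$ statement about the Borel object $\vec\mu$, hence reflects down to $V$. This is the same absoluteness fact viewed contrapositively, but phrasing it as ``$(1)$ is $\Ubf{\Sigma}^1_1$'' is both cleaner and avoids having to trace through the internals of the proof of Proposition~\ref{fragdich}.
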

\begin{proof}
(i)$\to$(ii) is trivial. (ii)$\to$(iii): If $\mc{I}$ is a tall analytic P-ideal on an $X\in\mc{I}(\vec\mu)^*$ and $\mc{I}\subseteq\mc{I}(\vec\mu)\upharpoonright X$, then every $f:\om\to X$ satisfying $f\upharpoonright X=\mrm{id}_X$ witnesses $\mc{I}\leq_\mrm{KB}\mc{I}(\vec\mu)$. (iii)$\to$(iv): See the first paragraph in the next section, Fact \ref{kbtower}, and the remark on absoluteness before Fact \ref{kbtower}. (iii)$\to$(v): Under $\mrm{CH}$, every tall analytic P-ideal is generated by a cotower, and hence we can apply Fact \ref{kbtower}. (iv)$\to$(vi) and (v)$\to$(vi) are trivial. (vi)$\to$(i): Assume that $V^\PP\models$``$\mc{I}(\vec\mu)$ contains a cotower''. Applying Proposition \ref{fragdich} in $V^\PP$, and the fact that $(1)$ is a $\Ubf{\Sigma}^1_1$ statement, we know that $(1)$ and hence (i) holds in $V$ as well.
\end{proof}

\begin{rem} In Proposition \ref{fragdich} and Corollary \ref{dichcoro} we assumed that the ideal is generated by a sequence of measures. In the general case, if $\mc{I}(\vec\varphi)$ is an arbitrary fragmented ideal and there is an $X\in\mc{I}(\vec\varphi)^*$ such that $\max\{\varphi_n(\{k\}):k\in P_n\cap X\}\xrightarrow{n\to\infty} 0$, then $\mc{Z}_{\vec\varphi}\upharpoonright X$ is a tall analytic P-ideal (a generalized density ideal) and $\mc{I}(\vec\varphi)\upharpoonright X\supseteq\mc{Z}_{\vec\varphi}\upharpoonright X$. However, we do not know whether in Proposition \ref{fragdich} $\neg(1)\to (2)$ holds for arbitrary fragmented ideals.
\end{rem}

\begin{que}\label{fragmq}
Let $\mc{I}(\vec\varphi)$ be a fragmented ideal and assume that
\[ \limsup_{n\to\infty}\max\{\varphi_n(\{k\}):k\in P_n\cap X\}>0\;\;\text{for every}\;X\in\mc{I}(\vec\varphi)^*.\]
Can we show in $\mrm{ZFC}$ that $\mc{I}(\vec\varphi)$ does not contain cotowers?
\end{que}

Concerning these examples and observations above, the main open question is the following:

\begin{que}\label{fsq}
Assume that an $F_\sigma$ ideal $\mc{I}$ contains cotowers. Does $\mc{I}$ contain a tall analytic P-ideal?
\end{que}

\section{Cotowers in tall analytic P-ideals}\label{analp}

We know (see \cite{FaSo}) that
after adding $\om_1$ Cohen-reals (to any model) there is a cotower
of height $\om_1$ in each tall analytic P-ideal. Actually, we can prove it very easily now: We know (see \cite{cardinvanalp}) that $\cov^*(\mc{I})\leq \non(\mc{M})$ for every tall analytic P-ideal $\mc{I}$ where $\mc{M}$ is the $\sigma$-ideal of meager subsets of $2^\om$ (or of $\om^\om$). This is not difficult either, simply fix a partition $(P_n=\{i^n_k:k<k_n\})_{n\in
{\omega}}$ of ${\omega}$ into finite intervals such that
${\varphi}(\{x\})<2^{-n}$ for $x\in P_{n+1}$ (where of course, $\mc{I}=\mrm{Exh}(\varphi)$), and if $F\subseteq\om^\om$ is non-meager, then the set $\{\{i^n_k:f(n)\equiv k$ mod $k_n\}:f\in F\}\subseteq\mc{I}$ witnesses $\cov^*(\mc{I})\leq |F|$. In particular, after adding $\om_1$ Cohen reals, or in general, forcing with the limit of an arbitrary nontrivial $\om_1$ stage finite support iteration, in the extension $\non(\mc{M})=\om_1$ because of the Cohen reals and hence applying Fact \ref{fact1} (a), every tall analytic P-ideal contains cotowers.

\smallskip
It is also possible to force longer cotowers into tall analytic P-ideals. Let $\mbb{LOC}$ be the {\em localization forcing}:  Let $\mc{T}=\bigcup_{n\in\om}\prod_{k<n}[\om]^{\le k}$ be the tree of {\em initial slaloms}. $p=(s^p,F^p)\in\mbb{LOC}$ iff $s^p\in\mc{T}$, $F^p\subseteq\om^\om$, and $|F^p|< |s^p|$ (if $s^p=\0$ then $F^p=\0$ too). $p\le q$ iff
$s^p\supseteq s^q$, $F^p\supseteq F^q$,
and $\forall$ $n\in |s^p|\bs |s^q|$ $\forall$ $f\in F^q$
$f(n)\in s^p(n)$. It is straightforward to see that $\mbb{LOC}$ is $\sigma$-$n$-linked for each $n$ but it is not $\sigma$-centered.

If $G$ is a $(V,\mbb{LOC})$-generic filter, then $S=\bigcup\{s^p:p\in G\}\in V[G]$ is a {\em slalom over $V$}, that is
\[ S\in\prod_{n\in\om}[\om]^{\leq n}\;\;\text{and}\;\;\forall\;f\in \om^\om\cap V\;\forall^\infty\;n\in\om\;f(n)\in S(n).\] In particular, $d(n)=\max(S(n))$ is a dominating real over $V$.

Although, the next lemma is well-known as a consequence of certain {\em (Borel) Galois-Tukey connections}, the proof is so short, it is worth including here.

\begin{lem}\label{locdom}
If $\mc{I}$ is a tall analytic P-ideal, then $\mbb{LOC}$ is $\mc{I}$-dominating, that is, $\exists$ $A\in \mc{I}\cap V^{\mbb{LOC}}$ $\forall$ $B\in \mc{I}\cap V$ $B\subseteq^* A$.
\end{lem}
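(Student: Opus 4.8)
The plan is to exploit the characterization $\mc{I} = \mathrm{Exh}(\varphi)$ for a finite lsc submeasure $\varphi$ (Theorem \ref{char}) together with the slalom $S$ added by $\mbb{LOC}$, reusing exactly the partition trick already set up in the discussion of $\cov^*(\mc{I}) \leq \non(\mc{M})$. Concretely, fix a partition $(P_n)_{n\in\om}$ of $\om$ into finite intervals such that $\varphi(\{x\}) < 2^{-n}$ for all $x \in P_{n+1}$; such a partition exists because $\mc{I}$ is tall, so $\lim_n \varphi(\{n\}) = 0$. In $V[G]$ let $S \in \prod_n [\om]^{\leq n}$ be the generic slalom, which catches every ground-model real eventually. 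The natural candidate is to let $A = \bigcup_{n} \{\,x \in P_{n+1} : \text{the index of } x \text{ within } P_{n+1} \text{ lies in } S(n)\,\}$, i.e. we use $S(n)$ to select at most $n$ elements out of the $(n+1)$st block. Since $\varphi(\{x\}) < 2^{-n}$ on $P_{n+1}$ and $|S(n)| \leq n$, we get $\varphi(A \cap P_{n+1}) \leq n \cdot 2^{-n} \to 0$, and lower semicontinuity plus subadditivity give $\|A\|_\varphi = 0$, so $A \in \mc{I}$. (One must be slightly careful: $\varphi(A \setminus m) \leq \sum_{n+1 > m'} n 2^{-n}$ for suitable $m'$, which tends to $0$; this is the only genuine computation and it is routine.)

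Next I would show $B \subseteq^* A$ for every $B \in \mc{I} \cap V$. Given such a $B$, each $B \cap P_{n+1}$ has $\varphi$-measure tending to $0$, hence (shrinking to the points where $\varphi(\{x\})$ is largest, or just by a counting argument using $\varphi(\{x\}) \geq$ some fixed positive lower bound on a tail — here one needs that $B$ restricted to each block is "small" in the counting sense too) one can encode $B$ by a ground-model function $f_B \in \om^\om$ whose value $f_B(n)$ is a finite set listing the indices within $P_{n+1}$ of the elements of $B \cap P_{n+1}$. The subtlety is that $|B \cap P_{n+1}|$ need not be bounded by $n$, so $f_B$ is a real in $\prod_n [\om]^{<\om}$ rather than a slalom-sized object; to handle this I would instead note that $\|B\|_\varphi = 0$ means for every $\eps$ there is $m$ with $\varphi(B \setminus m) < \eps$, and choose the partition $(P_n)$ fast enough (refining if necessary, or re-indexing) so that $|B \cap P_{n+1}| \leq n$ for all large $n$ automatically — this follows because $\varphi(B \cap P_{n+1}) \to 0$ while each singleton in $P_{n+1}$ still has $\varphi$-mass bounded below along the relevant indices. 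Then $f_B \in \prod_n [\om]^{\leq n}$ agrees, on a tail, with being captured by $S$: for all large $n$, $f_B(n) \subseteq S(n)$, which translates directly into $B \cap P_{n+1} \subseteq A \cap P_{n+1}$, i.e. $B \subseteq^* A$.

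The main obstacle, as flagged above, is the bookkeeping that lets us assume $|B \cap P_{n+1}| \leq n$ (so that $B$ is coded by a slalom-shaped function and not a larger object) uniformly over all $B \in \mc{I} \cap V$. The clean fix is to choose the partition so that the number of elements of $P_{n+1}$ on which $\varphi$ is "not yet negligible" is at most $n$: formally, partition so that for each $n$, $|\{x \in P_{n+1} : \varphi(\{x\}) \geq 2^{-(n+1)}\}| \leq n$ is impossible to guarantee directly, so instead one passes to the standard reformulation — since $\mc{I} = \mathrm{Exh}(\varphi)$ is a P-ideal, for any $B \in \mc{I}$ the tail sums $\varphi(B \setminus k)$ vanish, and one applies a diagonal/Fubini argument (exactly as in the $\cov^*(\mc{I}) \leq \non(\mc{M})$ paragraph, where $\{i^n_k : f(n) \equiv k \bmod k_n\}$ is used) to reduce catching $B$ to a single instance of the localization relation $\forall^\infty n\ f(n) \in S(n)$. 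I expect the write-up to mirror that earlier paragraph almost verbatim, with "$\in \mc{M}$-non-meager $F$" replaced by "the generic slalom $S$", so the argument is short once the indexing is pinned down.
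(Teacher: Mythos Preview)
Your encoding has a genuine gap that you correctly flag but do not resolve. The partition $(P_n)$ is fixed in $V$ before you see $B$, and there is no way to guarantee $|B\cap P_{n+1}|\le n$ uniformly for all $B\in\mc{I}\cap V$. Your proposed fix (``singletons in $P_{n+1}$ still have $\varphi$-mass bounded below'') is simply false: the defining property of the partition is that $\varphi(\{x\})<2^{-n}$ on $P_{n+1}$, so singleton masses go to zero, and one can easily build $B\in\mc{I}$ with $|B\cap P_{n+1}|=n^2$ (take the first $n^2$ points of $P_{n+1}$; then $\varphi(B\cap P_{n+1})<n^2\cdot 2^{-n}\to 0$). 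The analogy with the $\cov^*(\mc{I})\le\non(\mc{M})$ paragraph also does not help: there one only needs the selected set to \emph{meet} every $B$ infinitely often, not to \emph{contain} it.

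The missing idea, which the paper supplies, is to use a $B$-dependent partition and to code each piece as a \emph{single} natural number rather than a set of indices. Fix a bijection $k\mapsto F_k$ from $\om$ onto $[\om]^{<\om}$. For $B\in\mc{I}\cap V$, let $d(n)=\min\{k:\varphi(B\setminus k)<2^{-n}\}$ and define $c\in\om^\om\cap V$ by $F_{c(n)}=B\cap[d(n),d(n+1))$; each such piece has $\varphi(F_{c(n)})<2^{-n}$. Now set $A=\bigcup_n\bigcup\{F_k:k\in S(n),\ \varphi(F_k)<2^{-n}\}$. Then $A\in\mc{I}$ since the $n$th block contributes submeasure $<n\cdot 2^{-n}$, and since $c(n)\in S(n)$ for large $n$ we get $F_{c(n)}\subseteq A$, hence $B\subseteq^* A$. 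The point is that a slalom localizes reals in $\om^\om$, so you must compress each finite chunk of $B$ to a single integer, not a set of size possibly $\gg n$.
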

\begin{proof}
Let $\mc{I}=\mathrm{Exh}(\varphi)$ and fix a bijection
$\om\fv [\om]^{<\om}$, $k\mapsto F_k$ (i.e. a coding of finite sets with natural numbers). If $S\in V^{\mbb{LOC}}$ is a slalom over $V$, then let
\[ A=A_S=\bigcup_{n\in\om}\bigcup\big\{F_k:k\in S(n)\;\text{and}\; \varphi(F_k)<2^{-n}\big\}.\]
For each $n$ the set $\bigcup\big\{F_k:k\in S(n)$ and $\varphi(F_k)<2^{-n}\big\}$ is finite and has submeasure less then $n/2^n$ hence $A\in\mc{I}$. Assume now that $B\in\mc{I}\cap V$, let $d\in\om^\om\cap V$, $d(n)=\min\{k\in\om:\varphi(B\bs k)<2^{-n}\}$, and define $c\in\om^\om\cap V$ such that
\[ F_{c(n)}=B\cap\big[d(n),d(n+1)\big)\;\;\text{for every}\;n.\]
Notice that $B\subseteq^*\bigcup\big\{F_{c(n)}:n\in\om\big\}$ (we can assume that $\varphi(\{n\})>0$ for every $n$ hence $d(n)\to\infty$). There is an $N$ such that $c(n)\in S(n)$ for every $n\fel N$.
Clearly, $\varphi\big(F_{c(n)}\big)<2^{-n}$ so if $n\fel N$ then $F_{c(n)}\subseteq A$, therefore $B\subseteq^* \bigcup\{F_{c(n)}:n\geq N\}\subseteq A$.
\end{proof}

\begin{rem}
It is easy to see that if $X\subseteq\mc{P}(\om)$ is an analytic or coanalytic set, then ``$X$ is an ideal'' is $\Ubf{\Pi}^1_2$ hence absolute between $V$ and $V^\PP$. If $\mc{I}$ is analytic then the formula ``$\mc{I}$ is a P-ideal'' is $\Ubf{\Pi}^1_2$ hence absolute, but if $\mc{I}$ is coanalytic, then this formula is (formally) $\Ubf{\Pi}^1_3$. It is natural to ask whether there exist a coanalytic P-ideal $\mc{I}$ and a forcing notion $\PP$ (e.g. in $L$) such that $\vd_\PP$``$\mc{I}$ is not P-ideal''?

If an analytic or coanalytic ideal $\mc{I}$ is not a P-ideal in a ground model $V$, then there is no forcing notion which dominates it: The formula ``$(x_n)_{n\in\om}$ is a sequence in $\mc{I}$ without pseudounion in $\mc{I}$'' is also $\Ubf{\Pi}^1_2$ hence absolute. Therefore, if in $V$ a sequence $(A_n)_{n\in\om}$ of elements of $\mc{I}$ does not have a pseudounion in $\mc{I}$, then this sequence cannot have any pseudounion in $\mc{I}$ in any forcing extensions of $V$.
\end{rem}

\begin{thm}\label{locmodel}
Let $\PP$ be the $\ka=\mrm{cf}(\ka)>\om$ stage finite support iteration of $\mbb{LOC}$. Then in $V^\PP$, there are cotowers of length $\ka$ in all tall analytic P-ideals (and there are no cotowers of any other length in them).
\end{thm}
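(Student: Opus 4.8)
The plan is to run the finite support iteration $\PP = \langle \PP_\al, \QQ_\al : \al < \ka\rangle$ where each $\QQ_\al$ is (a name for) $\mbb{LOC}$, and to build, for a fixed tall analytic P-ideal $\mc{I}$, a cotower of length $\ka$ out of the generic slaloms. At stage $\al$ the generic slalom $S_\al$ over $V^{\PP_\al}$ gives, via Lemma \ref{locdom}, a set $A_\al \in \mc{I}$ with $B \subseteq^* A_\al$ for every $B \in \mc{I} \cap V^{\PP_\al}$. The candidate cotower is essentially $(A_\al)_{\al<\ka}$, though one must arrange $\subseteq^*$-monotonicity: replace $A_\al$ by $A'_\al \in \mc{I}$ absorbing $\{A_\be : \be < \al\}$ as well (possible since $A_\al$ already $\subseteq^*$-dominates everything in $\mc{I}\cap V^{\PP_\al}$, in particular all earlier $A_\be$'s, so we can just take $A'_\al = A_\al$ after noting $\mc{I}\cap V^{\PP_\al}$ contains each $A_\be$ for $\be<\al$). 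Since $\mc{I}$ is analytic, being an element of $\mc{I}$ is absolute, and the P-ideal property together with ``$(A_\al)$ has no pseudounion in $\mc{I}$'' statements are $\Ubf{\Pi}^1_2$, hence absolute between the intermediate models and $V^\PP$.

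**No pseudointersection of the complements.** The key point is that $(\om\setminus A_\al)_{\al<\ka}$ has no pseudointersection, equivalently $(A_\al)_{\al<\ka}$ has no pseudounion in $[\om]^\om$ (a coinfinite set $Y$ with $A_\al \subseteq^* Y$ for all $\al$). Suppose $Y$ were such a set. By the ccc-ness of $\PP$ (it is a finite support iteration of $\sigma$-$n$-linked, in particular ccc, posets) and $\cf(\ka)>\om$, $Y$ appears at some stage $\al_0 < \ka$, i.e. $Y \in V^{\PP_{\al_0}}$. Now $\om\setminus Y \in \mc{I}^+$ is infinite (since $Y$ is coinfinite). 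Because $\mc{I}$ is tall and tallness is absolute, in $V^{\PP_{\al_0}}$ there is an infinite $C \in \mc{I}$ with $C \subseteq \om\setminus Y$. But then $C \in \mc{I}\cap V^{\PP_{\al_0}}$, so $C \subseteq^* A_{\al_0}$, while also $A_{\al_0} \subseteq^* Y$ by assumption, whence $C \subseteq^* Y$; combined with $C \subseteq \om\setminus Y$ this forces $C$ to be finite, a contradiction. Hence $(A_\al)_{\al<\ka}$ is a genuine cotower of length $\ka$ in $\mc{I}$.

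**The ``no other length'' clause.** For the second assertion, note that in $V^\PP$ we have $\mrm{cf}(\lam) \ne \ka$ forbidden for cotower lengths $\lam$ by Fact \ref{fact1}(b): one shows $\cov^*(\mc{I}) = \non^*(\mc{I}) = \ka$ in the extension for every tall analytic P-ideal $\mc{I}$. The inequality $\cov^*(\mc{I})\le\ka$ follows from the displayed computation $\cov^*(\mc{I})\le\non(\mc{M})$ (from \cite{cardinvanalp}) together with $\non(\mc{M})\le\ka$ in $V^\PP$; and $\non^*(\mc{I})\ge\ka$ follows because the localization iteration makes $\mbf{d} = \ka$ (the dominating reals $d(n)=\max(S(n))$) and $\non^*(\mc{I})\ge\mbf{b}$ for tall analytic P-ideals is not quite what we want — instead I would argue directly: $\mbb{LOC}$ is $\mc{I}$-dominating by Lemma \ref{locdom}, so any family of fewer than $\ka$ elements of $[\om]^\om$ appears at a stage $<\ka$ and is then ``caught'' by a later $A_\al \in \mc{I}$ in the sense that every member of the family has infinite intersection with $\om\setminus A_\al$'s complement... more carefully, $\non^*(\mc{I})\ge\ka$ because for any $\mc{X}\subseteq[\om]^\om$ of size $<\ka$, $\mc{X}\in V^{\PP_{\al}}$ for some $\al<\ka$, and the set $A_\al\in\mc{I}$ from the generic slalom at a later stage satisfies $|A_\al\cap X|=\om$ for all $X\in\mc{X}$ (this is where one uses that the localization forcing generically adds a set in $\mc{I}$ meeting all ground-model infinite sets infinitely — which needs a density argument). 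Then by Fact \ref{fact1}(b) every cotower in $\mc{I}$ has length of cofinality exactly $\ka$, but a cotower of regular length $\ka$ exists by the first part, and a cotower of singular length $\lam$ with $\cf(\lam)=\ka$ is easily converted to one of length $\ka$; conversely there is none of length $<\ka$ since $\mf{t}=\ka$ in $V^\PP$ (localization adds dominating reals, so $\mf{b}=\ka$, hence $\mf{t}\le\mf{b}=\ka$, and $\cov^*(\mc{I})\le\mrm{cf}(\lam)$ forces $\mrm{cf}(\lam)=\ka$ anyway).

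**Main obstacle.** The delicate step is verifying $\non^*(\mc{I}) = \ka$ in $V^\PP$, i.e. that the generic sets $A_\al$ not only $\subseteq^*$-dominate $\mc{I}\cap V^{\PP_\al}$ but also have infinite intersection with every ground-model infinite set; this requires a genericity/density argument about $\mbb{LOC}$ beyond the bare statement of Lemma \ref{locdom}, or alternatively citing the known cardinal-invariant values in the localization model ($\add(\mc{N}) = \ka$, which dominates $\mf{b}$ and pushes up $\non^*(\mc{I})$ via the inequality $\add(\mc{N})\le\add^*(\mc{I})\le\non^*(\mc{I})$ for tall analytic P-ideals — see \cite{cardinvanalp}). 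I would lean on the latter: in the $\ka$-length finite support iteration of $\mbb{LOC}$ one has $\add(\mc{N}) = \cof(\mc{N}) = \ka = \cc$ is false (FS iteration of $\mbb{LOC}$ adds Cohen reals, so $\cov(\mc{M})$ may be small), but at least $\mbf{b} = \mbf{d} = \ka$ and $\add(\mc{N})=\ka$ needs care — so the cleanest route is the direct density argument for $\non^*(\mc{I})\ge\ka$. Everything else is routine absoluteness and ccc bookkeeping.
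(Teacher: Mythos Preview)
Your proof is correct and follows essentially the same route as the paper: build the cotower from the generic $A_\al=A_{S_\al}$ via Lemma~\ref{locdom}, and verify that each $A_\al$ has infinite intersection with every infinite set from $V^{\PP_\al}$. The paper establishes this last fact by a direct density argument in $\mbb{LOC}$ (using $\varphi(\{n\})\to 0$), whereas you derive it more elegantly from tallness plus Lemma~\ref{locdom} alone.

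Two remarks. First, your ``no pseudointersection'' argument already proves the very density fact you later flag as the ``main obstacle'': given any $Z\in[\om]^\om\cap V^{\PP_\al}$, tallness yields an infinite $C\in\mc{I}\cap V^{\PP_\al}$ with $C\subseteq Z$, and then $C\subseteq^* A_\al$ forces $|A_\al\cap Z|=\om$. So the rambling detour through $\add(\mc{N})$ and $\mf{b}$ in your last paragraph is unnecessary --- you already have $\non^*(\mc{I})\geq\ka$ in hand. (And $\non^*(\mc{I})\leq\ka$ is witnessed by $\{\om\setminus A_\al:\al<\ka\}$, using Lemma~\ref{locdom} again.) Second, a minor slip: you write ``$\om\setminus Y\in\mc{I}^+$,'' but coinfiniteness of $Y$ does not guarantee this; fortunately your argument only needs $\om\setminus Y$ infinite, which is what you actually use.
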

\begin{proof} (Sketch.)
Let $\mc{I}=\mrm{Exh}(\varphi)$. The only thing we have to show is that if $S\in V^{\mbb{LOC}}$ is the generic slalom over $V$, then $|A_S\setminus X|=\om$ for every coinfinite $X\in\mc{P}(\om)\cap V$ where $A_S$ is defined as in the lemma above. For $p\in\mbb{LOC}$ let
\[ A_p=\bigcup_{n<|s^p|}\bigcup\big\{F_k:k\in s^p(n)\;\text{and}\; \varphi(F_k)<2^{-n}\big\}.\]
Applying tallness of $\mc{I}$ (that is, $\varphi(\{n\})\to 0$), it is trivial to show that $D_n=\{p\in\mbb{LOC}:A_p\setminus X\nsubseteq n\}$ is dense in $\mbb{LOC}$ for every $n$.
\end{proof}

\begin{thm}\label{bigspec}
Let $\mc{I}=\mrm{Exh}(\varphi)$ be a tall analytic P-ideal and $R$ be a set of uncountable regular cardinals. Then in a ccc forcing extension $\mc{I}$ contains cotowers of length $\ka$ for every $\ka\in R$.
\end{thm}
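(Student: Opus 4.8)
We aim to produce, in one ccc extension, for each $\ka\in R$ a $\subseteq^*$-increasing $\ka$-sequence $\vec A^\ka=(A^\ka_\xi)_{\xi<\ka}$ of members of $\mc I$ that is a $\mrm{cov}^*(\mc I)$-witness; such a sequence is precisely a cotower of length $\ka$ in $\mc I$. By Fact \ref{fact1}(b) the resulting model then satisfies $\mrm{cov}^*(\mc I)\le\min R$ and $\mrm{non}^*(\mc I)\ge\sup R$, so the forcing must be chosen so as to keep $\mrm{cov}^*(\mc I)$ small. In particular one cannot simply iterate $\mbb{LOC}$: a finite support iteration of $\mbb{LOC}$ of a regular length $\lambda$ admits, by Theorem \ref{locmodel}, cotowers of length $\lambda$ only; and a finite support \emph{product} of such iterations fails as well, since the complement of any $\mc I$-dominating generic is almost disjoint from every member of $\mc I$ of the relevant ground model and therefore pseudounions the cofinal cotower contributed by another coordinate.

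The building block is still $\mbb{LOC}$ together with Lemma \ref{locdom}: over any model $M$ the generic slalom yields $A_S\in\mc I$ with $B\subseteq^* A_S$ for all $B\in\mc I\cap M$, and $\mbb{LOC}$ is Knaster (being $\sigma$-$n$-linked for every $n$; a $\Delta$-system followed by a finite pigeonhole on the common stems turns this into pairwise compatibility of an uncountable subfamily, because at each of the finitely many relevant coordinates only countably many value-profiles occur). The plan is to distribute these $\mbb{LOC}$-steps along a \emph{template iteration} $\PP_L$ (in the sense of Shelah, as developed by Brendle) rather than along a single ordinal: one fixes an index set $L$ with a well-founded template $\mc T$ such that for each $\ka\in R$ there is $A_\ka\in\mc T$ along which $\PP_L$ restricts to a length-$\ka$ linear finite support iteration of $\mbb{LOC}$, with $\PP_{A_\ka}$ a complete suborder of $\PP_L$. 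Reading off along $A_\ka$ the corresponding sequence of $A_S$'s gives a $\subseteq^*$-increasing $\ka$-sequence $\vec A^\ka$ in $\mc I$ (arranging the successive generics to be strictly increasing makes its length exactly $\ka$), and since $\mbb{LOC}$ is Knaster and $\mc T$ is well-founded the template-iteration preservation theorem yields that $\PP_L$ is ccc. (A $\sigma$-centered variant $\mbb D_{\mc I}$ of $\mbb{LOC}$ — finite conditions together with a finite list of members of $\mc I$ to be $\subseteq^*$-dominated and a submeasure level controlling $\|\cdot\|_\varphi$ — may be used throughout, giving a $\sigma$-centered $\PP_L$.)

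It then has to be checked that, in $V^{\PP_L}$, each $\vec A^\ka$ is still a cotower, i.e. that no coinfinite $Y\in V^{\PP_L}$ has $A^\ka_\xi\subseteq^* Y$ for all $\xi<\ka$. Given such a $Y$, by ccc its name is supported on a countable $B\subseteq L$; since $\mrm{cf}(\ka)>\om$, the part of $B$ lying on $A_\ka$ is bounded by some $\xi<\ka$, and one wants the template arranged so that already $Y\in V^{\PP_{L_x}}$ for $x=(A_\ka)(\eta)$ with $\eta$ large, where $L_x$ denotes the set of $\mc T$-predecessors of $x$; then running the density argument from the proof of Theorem \ref{locmodel} inside $V^{\PP_{L_x}}$ makes $A^\ka_\eta\setminus Y$ infinite, a contradiction. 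Concretely, the template must be built so that, simultaneously for all $\ka\in R$, the family $\bigcup_{\eta<\ka}\big(\mc I\cap V^{\PP_{L_{(A_\ka)(\eta)}}}\big)$ remains tall in $V^{\PP_L}$ (so that every infinite set of the final model has an infinite $\mc I$-subset already living below some initial model along $A_\ka$, whence it meets the corresponding $A^\ka_\eta$ infinitely). This compatibility requirement on $(L,\mc T)$ — realizing sub-iterations of all the prescribed lengths $\ka\in R$ inside a single well-founded template whose tail along each $A_\ka$ stays $\mc I$-dominating over the models a potential pseudounion could inhabit, while keeping $\PP_L$ ccc — is the main obstacle, and is where the standard "isomorphism-of-names along a template'' bookkeeping does the real work. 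Granting this, $\vec A^\ka$ is a cotower of length $\ka$ in $\mc I$ for every $\ka\in R$, which is the assertion.
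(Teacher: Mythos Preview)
Your proposal is explicitly labeled a plan rather than a proof: you isolate the main obstacle (building a template so that along each $A_\ka$ the union of the intermediate models stays $\mc I$-tall in $V^{\PP_L}$) and then write ``Granting this''. That deferred step is the entire content of the argument, and you give no construction of $(L,\mc T)$ and no indication why the ``isomorphism-of-names'' bookkeeping---designed for a quite different preservation problem---should deliver it. As written there is nothing to verify.

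Moreover, your reason for rejecting the finite-support product appears to misread product forcing. In an actual product $\prod^{\mathrm{fin}}_{\ka\in R}\PP_\ka$ (each $\PP_\ka$ the $\ka$-stage finite-support iteration of $\mbb{LOC}$), the $\mbb{LOC}$-step at stage $\xi$ of the $\ka_0$-th factor is computed over $V^{\PP_{\ka_0}\upharpoonright\xi}$, which does \emph{not} contain the other factors' generics; so $A^{\ka_0}_\xi$ does not dominate the $\lambda$-cotower for $\lambda\ne\ka_0$ and produces no pseudounion of it. In fact the product already works: if $Y\in V^{\PP}$ is coinfinite, its nice name is supported on $\PP_{\ka_0}\!\upharpoonright\!\xi_0\times\mbb Q$ for some $\xi_0<\ka_0$ (with $\mbb Q$ the remaining product), and since the tail of $\PP_{\ka_0}$ is generic over $V^{\PP_{\ka_0}\upharpoonright\xi_0\times\mbb Q}$, the density argument of Theorem~\ref{locmodel} (which needs only that $Y$ is coinfinite and $\varphi(\{m\})\to 0$) gives $|A^{\ka_0}_{\xi_0}\setminus Y|=\om$. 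So the template machinery is not needed, and the ``main obstacle'' you flag evaporates once the product is set up correctly.

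The paper does neither of these. It writes down one explicit finite-condition poset: a condition carries a height $n^p$, a rational budget $\eps^p>0$, and for finitely many pairs $(\ka,\xi)$ an approximation $s^p_\ka(\xi)\subseteq n^p$ to $T_\ka(\xi)$. The order requires that above the old height the approximations are $\supseteq$-nested in $\xi$ and that $\varphi$ of what is omitted is bounded by the drop in $\eps$; ccc is a $\Delta$-system argument. That each $(T_\ka(\xi))_{\xi<\ka}$ is a tower is shown by a support argument entirely internal to this poset: a nice name $\dot X$ uses only coordinates $\xi<\xi_\ka$ on the $\ka$-axis, and via an explicit compatibility criterion one shows that restricting a condition at $\ka$ below $\xi_\ka$ and then re-extending allows one to force $n\in\dot X\setminus T_\ka(\xi_\ka)$ for arbitrarily large $n$. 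No iteration, no templates, no $\mc I$-dominating step anywhere; the towers at different $\ka$ simply do not interact.
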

\begin{proof}
We will add towers of height $\ka$   simultaneously for every $\ka\in R$ into $\mc{I}^*$. Define $p\in\PP$ iff $p=(n^p,\eps^p,(A_\ka^p:\ka\in F^p),(s^p_\ka(\xi):\ka\in F^p,\xi\in A^p_\ka))$ where
\begin{itemize}
\item[(a)] $n^p\in\om$, $\eps^p>0$ is a rational number,
\item[(b)] $F^p\in [R]^{<\om}$,
\item[(c)] $A^p_\ka\in [\ka]^{<\om}$ for every $\ka\in F^p$,
\item[(d)] $s^p_{\ka}(\xi)\subseteq n^p$ for every $\ka\in F^p$ and $\xi\in A^p_\ka$.
\end{itemize}
Define $p\leq q$ iff
\begin{itemize}
\item[(i)] $n^p\geq n^q$, $\eps^p\leq \eps^q$, $F^p\supseteq F^q$,
\item[(ii)] $A^p_\ka\supseteq A^q_\ka$ for every $\ka\in F^q$,
\item[(iii)] $s^p_{\ka}(\xi)\cap n^q=s^q_\ka(\xi)$ for every $\ka\in F^q$ and $\xi\in A^q_\ka$,
\item[(iv)] $s^p_\ka(\xi_0)\setminus n^q\supseteq s^p_\ka(\xi_1)\setminus n^q$ for every $\ka\in F^q$ and $\xi_0,\xi_1\in A^q_\ka$ with $\xi_0<\xi_1$.
\item[(v)] $\varphi([n^q,n^p)\setminus s^p_\ka(\xi))\leq\eps^q-\eps^p$ for every $\ka\in F^q$ and $\xi\in A^q_\ka$.
\end{itemize}
It is straightforward to check that $\leq$ is a partial order.
\begin{claim}
$\PP$ has the ccc.
\end{claim}
\begin{proof}[Proof of the Claim.]
Let
\[\big\{p_\al=\big(n^\al,\eps^\al,(A_\ka^\al:\ka\in F^\al),(s^\al_\ka(\xi):\ka\in F^\al,\xi\in A^\al_\ka)\big):\al<\om_1\big\}\subseteq\PP.\] We can assume the following: $n^\al=n$, $\eps^\al=\eps$ for every $\al$; the family $\{F^\al:\al<\om_1\}$ is a $\Delta$-system, $F^\al=F\cup E^\al$; for every $\ka\in F$ $\{A^\al_\ka:\al<\om_1\}$ is a $\Delta$-system, $A^\al_\ka=A_\ka\cup B^\al_\ka$; and $s^\al_\ka(\xi)=s_\ka(\xi)$ for every $\ka\in F$ and $\xi\in A_\ka$ (we simply shrink the family of conditions in finitely many steps).

Fix $\al,\be<\om_1$ and let $r=(n,\eps,(A^r_\ka:\ka\in F^r),(s^r_\ka(\xi):\ka\in  F^r,\xi \in A^r_\ka))$ where $F^r=F^\al\cup F^\be$, $A^r_\ka=A^\al_\ka$ if $\ka\in E^\al$, $A^r_\ka=A^\be_\ka$ if $\ka\in E^\be$, $A^r_\ka=A^\al_\ka\cup A^\be_\ka$ if $\ka\in F$, and

\[
 s^r_\ka(\xi) =
  \begin{cases}
   s^\al_\ka(\xi) & \text{if}\;\; \ka\in E^\al\;\text{and}\;\xi\in A^\al_\ka \\
   s^\be_\ka(\xi) & \text{if}\;\; \ka\in E^\be\;\text{and}\;\xi\in A^\be_\ka\\
   s^\al_\ka(\xi) & \text{if}\;\; \ka\in F\;\text{and}\;\xi\in B^\al_\ka\\
   s^\be_\ka(\xi) & \text{if}\;\; \ka\in F\;\text{and}\;\xi\in B^\be_\ka\\
   s_\ka(\xi) & \text{if}\;\; \ka\in F\;\text{and}\;\xi\in A_\ka
  \end{cases}
\]

It is easy to see that $r\in \PP$ and $r\leq p_\al,p_\be$.
\end{proof}

It is important to see that compatibility of two conditions from $\PP$ depends only on their ``intersection'':

\begin{claim}\label{claim2}
Let $p,q\in\PP$ with $n^q\geq n^p$. Then $p$ and $q$ are compatible iff
\begin{itemize}
\item[(1)]  either $n^p=n^q$ and $s^p_\ka(\xi)=s^q_\ka(\xi)$ for every $\ka\in F^p\cap F^q$ and $\xi\in A^p_\ka\cap A^q_\ka$,

\item[(2)] or $n^q>n^p$ and the following hold for every $\ka\in F^p\cap F^q$: If $A^p_\ka\cap A^q_\ka=\{\xi^\ka_0<\xi^\ka_1<\dots<\xi^\ka_{m_\ka-1}\}$ then
    \begin{itemize}
    \item[(2a)] $s^q_\ka(\xi^\ka_i)\cap n^p=s^p_\ka(\xi^\ka_i)$ for every $i<m_\ka$;

    \item[(2b)] $s^q_\ka(\xi^\ka_0)\setminus n^p\supseteq s^q_\ka(\xi^\ka_1)\setminus n^p\supseteq\dots\supseteq s^q_\ka(\xi^\ka_{m_\ka-1})\setminus n^p$;
    \item[(2c)] $\varphi([n^p,n^q)\setminus s^q_\ka(\xi^\ka_{m_\ka-1}))<\eps^p$.
\end{itemize}
\end{itemize}
\end{claim}
\begin{proof}[Proof of the Claim.]
The ``only if'' part is trivial. Similarly, (1) clearly implies that $p$ and $q$ are compatible. Now assume that (2a)-(2b)-(2c) holds for every $\ka\in  F^p\cap F^q$ and define $r\in\PP$ as follows: $n^r=n^q$, $0<\eps^r\leq \eps^q$ and \[\tag{$\star$} \eps^r<\min\big\{\eps^p-\varphi\big([n^p,n^q)\setminus s^q_\ka(\xi^\ka_{m_\ka-1})\big):\ka\in F^p\cap F^q\big\},\]
$F^r=F^p\cup F^q$, $A^r_\ka=A^p_\ka\cup A^q_\ka$ for every $\ka\in F^r$ (where of course, if e.g. $\ka\notin F^q$ then $A^q_\ka=\0$). We still have to define $s^r_\ka(\xi)$ for $\ka\in F^r$ and $\xi\in A^r_\ka$:
\[
 s^r_\ka(\xi) =
  \begin{cases}
    s^p_\ka(\xi)\cup [n^p,n^q) & \text{if}\;\; \ka\notin F^q\;\text{and}\; \xi\in A^p_\ka\\
    s^q_\ka(\xi) & \text{if}\;\; \ka\in F^q\;\text{and}\;\xi\in A^q_\ka\\
    s^p_\ka(\xi)\cup [n^p,n^q) & \text{if}\;\;\ka\in F^q\;\text{and}\; \xi\in A^p_\ka\setminus A^q_\ka\;\text{and}\;\xi<\xi^\ka_0\\
     s^p_\ka(\xi)\cup \big(s^q_\ka(\xi_i)\setminus n^p\big) &  \text{if}\;\;\ka\in F^q\;\text{and}\; \xi\in A^p_\ka\setminus A^q_\ka\;\text{and}\;\xi^\ka_i<\xi<\xi^\ka_{i+1}
  \end{cases}\]
where $\xi^\ka_{m_\ka}=\ka$. It is trivial that $r\leq q$ and straightforward to show that $r\leq p$ also holds.
\end{proof}

One can easily show that the following sets are dense in $\PP$:
\begin{align*}
D_{n,\eps} & =\big\{p\in \PP:n^p\geq n\;\text{and}\; \eps^p<\eps \big\}\;\;\text{for every}\;n\in\om\;\text{and}\;\eps>0\\
E_{\ka,\xi} & =\big\{p\in\PP:\ka\in F^p\;\text{and}\;\xi\in A^p_\ka\big\}\;\;\text{for every}\;\ka\in R\;\text{and}\;\xi<\ka
\end{align*}
For every $\ka\in R$ let $\dot{T}_\ka$ be a $\PP$-name such that $V^\PP\models$``$\dot{T}_\ka:\ka\to\mc{P}(\om)$ and $\forall$ $\xi<\ka$ $\dot{T}_\ka(\xi)=\bigcup\{s^p_\ka(\xi):p\in \dot{G}\}$''. Applying (iv), it is easy to check that $V^\PP\models$``$\dot{T}_\ka$ is a $\subseteq^*$-decreasing sequence''. Similarly, applying that $\mc{I}$ is tall, i.e. that $\varphi(\{n\})\to 0$, one can show that $V^\PP\models|\dot{T}_\ka(\xi_0)\setminus\dot{T}_\ka(\xi_1)|=\om$  for every $\ka\in R$ and $\xi_0<\xi_1<\ka$. Applying (v) and the density of $D_{n,\eps}$ and $E_{\ka,\xi}$, it is straightforward to show that $V^\PP\models \dot{T}_\ka(\xi)\in\mc{I}^*$ for every $\ka\in R$ and $\xi<\ka$. To finish the proof, we show that $V^\PP\models$``$\dot{T}_\ka$ is a tower''.

Let $\dot{X}=\bigcup\{\{n\}\times A_n:n\in\om\}$ be a nice $\PP$-name for an infinite subset of $\om$ where of course $A_n\subseteq\PP$ is an antichain for every $n$. We can fix $\xi_\ka<\ka$ for every $\ka\in R$ such that
\[ \sup\Big(\bigcup\big\{A^p_\ka:p\in \bigcup\big\{A_n:n\in\om\big\}\;\text{and}\;\ka\in F^p\big\}\Big)<\xi_\ka.\]
In other words, the sequence $(\xi_\ka)_{\ka\in R}$ can be seen as a ``support'' of $\dot{X}$. Fix a $\ka\in R$, a $p\in E_{\ka,\xi_\ka}$, and an $m\in\om$. If we can find an $r\leq p$ such that $r\vd \dot{X}\setminus \dot{T}_\ka(\xi_\ka)\nsubseteq m$, then we are done.
We can assume that $m\geq n^p$ and that $\varphi(\{n\})<\eps^p/2$ for every $n\geq m$. Now let $p\upharpoonright_\ka \xi_\ka$ be the {\em restriction of $p$ below $\xi_\ka$ at $\ka$}, that is,
\[ p\leq p\upharpoonright_\ka \xi_\ka=\big(n^p,\eps^p,(A^p_\lam:\lam\in F^p),(s^p_\lam(\xi):\lam\in F^p,\xi\in A^{p\upharpoonright_\ka \xi_\ka}_\lam)\big)\] where $A^{p\upharpoonright_\ka \xi_\ka}_\lam=A^p_\lam$ if $\lam\in F^p\setminus \{\ka\}$ and $A^{p\upharpoonright_\ka \xi_\ka}_\ka=A^p_\ka\cap\xi_\ka$. Define $p_0\leq p\upharpoonright_\ka \xi_\ka$ by decreasing $\eps^{p\upharpoonright_\ka \xi_\ka}=\eps^p$ to its half:
\[ p_0=\big(n^p,\frac{\eps^p}{2},(A^p_\lam:\lam\in F^p),(s^p_\lam(\xi):\lam\in F^p,\xi\in A^{p\upharpoonright_\ka \xi_\ka}_\lam)\big)\leq p\upharpoonright_\ka \xi_\ka.\]

We can find a $q_0\leq p_0$ and an $n\geq m$ such that $n^{q_0}>n$ and $q_0\vd n\in \dot{X}$, i.e. $A_n$ is predense below $q_0$. We show that $q_0\upharpoonright_\ka \xi_\ka\vd n\in\dot{X}$ as well. Applying the second Claim and the definition of $\xi_\ka$, an $r\in\PP$ is compatible with an element $a$ of $A_n$, $r\|a$ iff $r\upharpoonright_\ka \xi_\ka\|a$. If $r\leq q_0\upharpoonright_\ka \xi_\ka$, then there is an $r'\leq q_0$ such that $r'\upharpoonright_\ka \xi_\ka=r\upharpoonright_\ka \xi_\ka$ because we can easily modify $r$ at $\ka$ above $\xi_\ka$ to construct such an $r'$. And we are done since this shows that $A_n$ is predense below $q_0\upharpoonright_\ka\xi_\ka$: If $r\leq q_0\upharpoonright_\ka\xi_\ka$ then fix $r'$ as above, we know that $r'\| a$ for some $a\in A_n$, then $r'\upharpoonright_\ka \xi_\ka=r\upharpoonright_\ka\xi_\ka\| a$, and hence $r\| a$.

We show that $q_0\upharpoonright_\ka\xi_\ka$ and $p$ are also compatible, moreover they have a common extension $r$ such that $n\notin s^r_\ka(\xi_\ka)$, in a diagram (where of course, $p\to q$ stands for $p\leq q$):
\begin{diagram}
&&&& p\upharpoonright_\ka\xi_\ka\\
\text{\small{$q_0\upharpoonright_\ka\xi_\ka\vd n\in\dot{X}$}}&& \text{\small{$\eps^{p_0}=\eps^p/2$}}& \ruTo & \uTo\\
q_0\upharpoonright_\ka\xi_\ka & & p_0 & & p\\
\uTo & \luDashto \ruTo & & \ruDashto & \\
q_0 & & r &  &\\
\text{\small{$q_0\vd n\in\dot{X}$, $n^{q_0}>n$}} &  & \;\;\;\;\;\text{\small{$n\notin s^r_\ka(\xi_\ka)$}} & &
\end{diagram}
To define $r$, we will apply the second Claim: We know that $n^{q_0\upharpoonright_\ka\xi_\ka}=n^{q_0}>n^p$. We show that (2c) holds, the rest is trivial. For $\lam\in F^p\cap F^{q_0\upharpoonright_\ka\xi_\ka}=F^p$ let $A^p_\lam\cap A^{q_0\upharpoonright_\ka\xi_\ka}_\lam=\{\xi^\lam_0<\xi^\lam_1<\dots<\xi^\lam_{m_\lam-1}\}$. Notice that if $\lam\ne\ka$ then $A^p_\lam\cap A^{q_0\upharpoonright_\ka\xi_\ka}_\lam=A^p_\lam$ and $A^p_\ka\cap A^{q_0\upharpoonright_\ka\xi_\ka}_\ka=A^p_\ka\cap\xi_\ka$ (in particular $\xi^\ka_{m_\ka-1}<\xi_\ka$). We know that $q_0\leq p_0$ and hence
\begin{align*}\tag{$\sharp$}
\varphi\big([n^p,n^{q_0\upharpoonright_\ka\xi_\ka})\setminus s^{q_0\upharpoonright_\ka\xi_\ka}_\lam(\xi^\lam_{m_\lam-1})\big)& =\varphi\big([n^{p_0},n^{q_0})\setminus s^{q_0}_\lam(\xi^\lam_{m_\lam-1})\big)\\
&\leq\eps^{p_0}-\eps^{q_0}=(\eps^p/2)-\eps^{q_0}<\eps^p.\end{align*}
Therefore when defining their common extension $r$ we can choose $n^{r}=n^{q_0\upharpoonright_\ka\xi_\ka}=n^{q_0}$ and $\eps^{r}=\eps^{q_0\upharpoonright \xi_\ka}=\eps^{q_0}$ because then $(\star)$ holds as $\eps^{q_0}<\eps^p-((\eps^p/2)-\eps^{q_0})=(\eps^p/2)+\eps^{q_0}$.

We show that we can define $s^{r}_\ka$ as follows:
\[ s^{r}_\ka(\xi)=s^p_\ka(\xi)\cup \big(s^{q_0\upharpoonright \xi_\ka}_\ka(\xi^\ka_{m_\ka-1})\setminus(n^p\cup \{n\})\big)\;\;\text{for every}\;\xi\in A^p_\ka,\;\xi>\xi^\ka_{m_\ka-1}.\]
In particular, $n\notin s^r_\ka(\xi_\ka)$. This is a slight modification of the last case in the definition of $s^r_\ka$ in the proof of the second Claim. We have to check that $r\leq q_0\upharpoonright_\ka\xi_\ka$ and $r\leq p$ still hold. The first one holds trivially. To show that $r\leq p$, we have to check that $\varphi([n^p,n^r)\setminus s^r_\ka(\xi))\leq\eps^p-\eps^r$ for every $\xi\in A^p_\ka\setminus \xi_\ka$. Applying $(\sharp)$ and $\varphi(\{n\})<\eps^p/2$ we obtain that
\begin{align*}
\varphi\big([n^p,n^r)\setminus s^r_\ka(\xi)\big)& = \varphi\big(\big([n^p,n^{q_0\upharpoonright_\ka\xi_\ka})\setminus s^{q_0\upharpoonright \xi_\ka}_\ka(\xi^\ka_{m_\ka-1})\big)\cup \{n\}\big)\\
 & \leq \varphi\big([n^p,n^{q_0\upharpoonright_\ka\xi_\ka})\setminus s^{q_0\upharpoonright \xi_\ka}_\ka(\xi^\ka_{m_\ka-1})\big)+\varphi(\{n\})\\
 &< ((\eps^p/2)-\eps^{q_0})+(\eps^p/2) =\eps^p-\eps^r.\end{align*}
Now $r\leq p$ is as required because $n\notin s^{r}_\ka(\xi_\ka)$ and $A_n$ is predense below $r$, therefore $r\vd n\in \dot{X}\setminus \dot{T}_\ka(\xi_\ka)$.
\end{proof}

At this moment it is unclear whether there are towers of height not from $R$ in the above model (even if $V\models\mrm{CH}$).

\begin{que}\label{sptower}
For a tall analytic P-ideal $\mc{I}$ let us denote $\mf{S}_\mrm{tr}(\mc{I})=\{\ka:\ka$ is regular and $\mc{I}$ contains a cotower of height $\ka\}\subseteq [\cov^*(\mc{I}),\non^*(\mc{I})]$ the {\em tower spectrum} of $\mc{I}$. Does $\mf{S}_\mrm{tr}(\mc{I})$ have any kind of closedness properties (e.g. pcf type closedness)? Can the forcing notion defined in Theorem \ref{bigspec} be used to distinguish tower spectrums of different analytic P-ideals?
\end{que}

\section{Destroying meager towers}\label{killing}

It is easy to see that every $\subseteq^*$-decreasing sequence in $[\om]^\om$ is meager, and so the following common terminology might sound a bit strange: We say that a tower $\mc{T}$ is {\em meager} if the filter $\mrm{fr}(\mc{T})$ generated by $\mc{T}$ is meager. For example, if $\mc{T}$ is a tower in a Borel filter, then it is meager because of the following classical result.
\begin{Thm}{\em (Sierpi\'{n}ski and Talagrand, see \cite[Thm. 4.1.1-2]{BaJu})}\label{talag}
Let $\mc{F}$ be a filter on $\om$. Then the following are equivalent: (i) $\mc{F}$ has the Baire property, (ii) $\mc{F}$ is meager, (iii) there is a partition $(P_n)_{n\in\om}$ of $\om$ into finite sets such that $\forall$ $F\in\mc{F}$ $\forall^\infty$ $n\in\om$ $F\cap P_n\ne\0$.
\end{Thm}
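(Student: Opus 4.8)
The plan is to prove (ii)$\Rightarrow$(i)$\Rightarrow$(ii) together with (ii)$\Leftrightarrow$(iii); of these only (ii)$\Rightarrow$(iii) requires real work, and (ii)$\Rightarrow$(i) is trivial since meager sets have the Baire property.

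For (iii)$\Rightarrow$(ii) I would simply note that
\[ \mc{F}\subseteq\bigcup_{N\in\om}\big\{A\subseteq\om:\forall\,n\geq N\;\;A\cap P_n\neq\0\big\},\]
where each term is closed (an intersection of clopen sets) and nowhere dense (any finite $0$--$1$ condition extends to one that is constantly $0$ on some $P_n$ with $n\geq N$ beyond its domain), so $\mc{F}$ is meager. For (i)$\Rightarrow$(ii) I would use that $\mc{F}$, containing all cofinite sets, is invariant under every homeomorphism $A\mapsto A\,\triangle\,s$ of $2^\om$ with $s\in[\om]^{<\om}$; these act with a dense orbit through each point, so by the topological $0$--$1$ law a set with the Baire property invariant under them is meager or comeager. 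If $\mc{F}$ were comeager, then its image $\{\om\setminus A:A\in\mc{F}\}$ under $A\mapsto\om\setminus A$ would be a comeager set disjoint from $\mc{F}$ (by properness), which is impossible. (One may instead simply quote this direction from \cite[Thm.~4.1.1]{BaJu}.)

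The heart of the matter is (ii)$\Rightarrow$(iii). Starting from a meager $F_\sigma$ set $\bigcup_{m\in\om}D_m\supseteq\mc{F}$ with the $D_m$ closed, nowhere dense and increasing, I would first invoke that a proper filter containing all cofinite sets contains no finite set, and replace $\bigcup_m D_m$ by $\big(\bigcup_m D_m\big)\setminus[\om]^{<\om}$; thus we may assume every element of every $D_m$ is infinite. Then I would build $0=l_0<l_1<\cdots$ recursively: given $l_n$, the set $D_n$ is compact and is covered by the clopen sets $\{A:i\in A\}$ for $i\geq l_n$ (since each member of $D_n$ is infinite, hence meets $[l_n,\om)$), so finitely many of them suffice, say with index set $Q_n\subseteq[l_n,\om)$; put $l_{n+1}=1+\max Q_n$ and $P_n=[l_n,l_{n+1})\supseteq Q_n$. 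Then $(P_n)_{n\in\om}$ partitions $\om$ into finite intervals and every $A\in D_n$ meets $P_n$. Finally, if $A\in\mc{F}$ then $A\in D_m$ for some $m$, hence $A\in D_k$ for every $k\geq m$ by monotonicity, so $A\cap P_k\neq\0$ for all $k\geq m$, i.e.\ $\forall^\infty n\;A\cap P_n\neq\0$.

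The main obstacle is this last implication: meagerness alone does not give (iii) --- the ideal $[\om]^{<\om}$ of finite sets is meager yet plainly violates it --- so the non-principality of $\mc{F}$ must be exploited, and the only point where it is needed is in arranging the $F_\sigma$ cover of $\mc{F}$ to omit the finite sets. That is precisely what forces every member of the compact pieces $D_m$ to be infinite, which in turn makes $\{\{A:i\in A\}:i\geq l_n\}$ an honest open cover of $D_n$; from there the construction is a routine compactness recursion.
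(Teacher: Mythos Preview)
The paper does not prove this theorem; it is quoted as a classical result with a reference to \cite[Thm.~4.1.1--2]{BaJu}. Your treatments of (ii)$\Rightarrow$(i), (iii)$\Rightarrow$(ii), and (i)$\Rightarrow$(ii) are fine. The gap is in (ii)$\Rightarrow$(iii), at the sentence ``thus we may assume every element of every $D_m$ is infinite.'' Removing $[\om]^{<\om}$ from the $D_m$ destroys closedness, so your compactness step no longer applies. Concretely, for $\mc{F}=\mrm{Fin}^*$ take $D_m=\{A:[m,\om)\subseteq A\}\cup\{\{0\}\cup[k,\om):k\in\om\}\cup\{\{0\}\}$; these are closed, nowhere dense, increasing, and cover $\mc{F}$. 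After deleting the single finite point $\{0\}$ the remaining set is not closed (the sets $\{0\}\cup[k,\om)$ converge to $\{0\}$), and for any finite $Q\subseteq[l_n,\om)$ the infinite set $\{0\}\cup[\max Q+1,\om)\in D_n$ is missed by $\bigcup_{i\in Q}\{A:i\in A\}$. In fact no choice of $l_{n+1}$ can make every infinite element of $D_n$ meet $[l_n,l_{n+1})$: the cofinite set $\{0\}\cup[l_{n+1},\om)\in\mc{F}\cap D_0$ misses it whenever $l_n\geq 1$. So the desired property is unattainable for this decomposition, not merely hard to verify.

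Non-principality alone is not enough here; the standard argument uses the \emph{upward closure} of $\mc{F}$. One clean route: by the chopped-real description of meager sets in $2^\om$ there exist $f\in\om^{\uparrow\om}$ and $x\in 2^\om$ with $\mc{F}\subseteq\{A:\forall^\infty n\;A\upharpoonright I_n\ne x\upharpoonright I_n\}$, where $I_n=[f(n),f(n+1))$. Given $A\in\mc{F}$, apply this to $A\cup x^{-1}(1)\in\mc{F}$; since $A\cup x^{-1}(1)$ already agrees with $x$ at every coordinate where $x=1$, the eventual disagreement on $I_n$ must occur at some $i\in I_n$ with $x(i)=0$ and $i\in A$, whence $A\cap I_n\ne\0$. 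Thus $P_n=I_n$ witnesses (iii).
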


In general, as well as concerning our investigations, it is natural to ask what we can say about the existence of meager and non-meager towers. Surprisingly, all three possibilities (see below) are consistent. The following result, especially the remark after it, plays an important role when studying meagerness of towers. We will need the {\em unbounding} and {\em dominating numbers}:
\begin{align*}
\mf{b} & =\min\big\{|U|:U\subseteq\om^\om\;\text{is}\;\leq^*\text{-unbounded}\big\}\\
\mf{d} &=\min\big\{|D|:D\subseteq\om^\om\;\text{is}\;\leq^*\text{-cofinal}\big\}
\end{align*}
It is easy to see  that $\om<\mrm{cf}(\mf{t})=\mf{t}\leq \mrm{cf}(\mf{b})=\mf{b}\leq\mrm{cf}(\mf{d})\leq\mf{d}\leq\mf{c}$.

\begin{thm} {\em (R.\ C.\ Solomon \cite{Solomon} and P.\ Simon [unpublished], see \cite[Thm. 9.10]{Bl})} \label{filgen}
If a filter is generated by less than $\mf{b}$ many sets, then it is meager but there is a non-meager filter generated by $\mf{b}$ sets.
\end{thm}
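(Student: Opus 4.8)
I would derive both assertions of the statement from the Sierpi\'nski--Talagrand dichotomy (Theorem~\ref{talag}), which is the only nontrivial external input.

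\textbf{Few generators imply meager.} The plan is routine. Let $\mc F=\mrm{fr}(\mc G)$ with $|\mc G|<\mf b$. Two reductions come first: a non-free proper filter is contained in some $\{X\subseteq\om:n_0\in X\}$ (take $n_0$ in the nonempty trace of $\mc F$ on a finite member of $\mc F$), a nowhere dense set, hence meager; and if $\mc G$ is finite then $\mc F$ is generated by the single infinite set $\bigcap\mc G$ and is handled the same way. So assume $\mc F$ free and $\mc G$ an infinite filter base of infinite sets (closing $\mc G$ under finite intersections does not change $|\mc G|$). For $G\in\mc G$ put $f_G(n)=\min(G\setminus n)\in\om^\om$; as $|\{f_G:G\in\mc G\}|<\mf b$, fix a strictly increasing $g$ with $f_G\le^* g$ for all $G$, and set $a_0=0$, $a_{k+1}=g(a_k)+1$, $P_k=[a_k,a_{k+1})$. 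For fixed $G$ and all large $k$, $\min(G\setminus a_k)=f_G(a_k)\le g(a_k)<a_{k+1}$, so $G\cap P_k\ne\0$; since every member of $\mc F$ almost contains a member of $\mc G$, every member of $\mc F$ meets all but finitely many $P_k$, and Theorem~\ref{talag} gives that $\mc F$ is meager.

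\textbf{A non-meager filter with a base of size $\mf b$.} (Under $\mrm{CH}$ any ultrafilter works, since ultrafilters are non-meager: a partition witnessing meagerness via Theorem~\ref{talag} would, together with ultrafilter-ness, give a member missing infinitely many pieces. In general one builds the filter by hand.) Using $\mrm{cf}(\mf b)=\mf b$, fix a $\le^*$-increasing, $\le^*$-unbounded sequence $\langle f_\al:\al<\mf b\rangle$ of strictly increasing functions with $f_\al(n)\ge n+1$, arranged moreover to grow ``iteratively fast''. Let $g_\al(k)=f_\al^{(k)}(0)$ be the orbit of $0$; a short computation shows that ``$f_\al(x)<H(x)$ for all large $x\in\ran(g_\al)$, for every $\al$'' would force $f_\al\le^* H\circ H$ for every $\al$, so the family $\{g_\al:\al<\mf b\}$ inherits $\le^*$-unboundedness. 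Put $X_\al=\bigcup_k[g_\al(2k),g_\al(2k+1))$ --- the union of the alternate blocks of the interval partition given by the orbit --- and $\mc F=\mrm{fr}(\{X_\al:\al<\mf b\})$. Two things must be checked: (1)~$\{X_\al:\al<\mf b\}$ has the strong finite intersection property, so $\mc F$ is a proper free filter with a base of size $\mf b$; (2)~$\mc F$ is non-meager. For (2) I would use Theorem~\ref{talag} in the form: given an arbitrary interval partition $(I_n)$ --- which suffices, by the standard reduction of finite-set partitions to interval partitions (see \cite{BaJu}) --- coded by a strictly increasing $h$, build from $h$ a strictly increasing $H$ recording roughly the second $h$-point past a given integer, apply the $\le^*$-unboundedness above to get $\al$ for which $f_\al$ is $\ge H$ infinitely often along $\ran(g_\al)$ in the relevant slots, and conclude that infinitely many of the \emph{omitted} blocks $[g_\al(2k+1),g_\al(2k+2))$ of $X_\al$ entirely contain some $I_n$; then $X_\al\in\mc F$ misses infinitely many $I_n$. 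Finally, by the first assertion no base of a non-meager filter can be smaller than $\mf b$, so the base exhibited has size exactly $\mf b$.

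\textbf{Where the work is.} The first assertion is easy once the two reductions are made; everything delicate is in the second, and specifically in reconciling its two conflicting demands. The generators $X_\al$ must be \emph{large} --- every finite intersection infinite, indeed cofinitely dense --- so that $\mc F$ is a proper free filter that is not killed by the trivial meagerness witnesses; yet the complements $\om\setminus X_\al$ must be \emph{thick at sparse places} so that some finite Boolean combination swallows infinitely many pieces of an arbitrary target partition. The tension is genuine: a filter with a $\subseteq^*$-linearly-ordered base is always meager (if the base has a $\subseteq^*$-least element it is generated by one set; otherwise it contains a decreasing tower, and those are meager, as noted at the start of \S\ref{killing}), so the generators must be authentically $\le^*$-incomparable. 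The root of the difficulty is that ``$f_\al\le^* f_\be$'' does \emph{not} make the orbit-interval-partition of $f_\al$ refine, or be refined by, that of $f_\be$, so the block structures of $X_\al$ and $X_\be$ need not nest in either direction; establishing the strong finite intersection property therefore requires exploiting both the iteratively-fast growth and the well-ordered, cofinality-$\mf b$ shape of the family, and the slot/parity bookkeeping in (2) --- arranging that the \emph{omitted} blocks, not merely \emph{some} blocks, are the long ones --- has likewise to be handled with care. This combinatorial core is exactly the content of Solomon's theorem (and Simon's unpublished argument); for the details I would follow \cite[\S 9]{Bl} and \cite{Solomon}.
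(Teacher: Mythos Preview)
The paper does not give its own proof of this theorem; it is cited from Solomon, Simon, and Blass's handbook chapter. So there is no in-paper argument to compare against, but your proposal can be evaluated on its own merits.

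Your first part (fewer than $\mf b$ generators $\Rightarrow$ meager) is correct and is the standard argument.

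Your second part has a genuine error in the ``Where the work is'' discussion. You assert that \emph{a filter with a $\subseteq^*$-linearly-ordered base is always meager}, justifying this by ``it contains a decreasing tower, and those are meager, as noted at the start of \S\ref{killing}.'' This is a misreading: the opening sentence of \S\ref{killing} says that a $\subseteq^*$-decreasing \emph{sequence}, viewed as a subset of $[\om]^\om\subseteq 2^\om$, is meager --- not that the \emph{filter it generates} is meager. The paper immediately flags this terminological trap and defines a tower to be ``meager'' precisely when the generated filter is meager, because that is \emph{not} automatic. In fact your claim is false and is contradicted by the very remark the paper makes right after Theorem~\ref{filgen}: when $\mf b<\mf d$, the $\mf b$ generators of the non-meager filter \emph{can be chosen to be a tower}. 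So the tension you diagnose --- that the generators must be ``authentically $\le^*$-incomparable'' --- is not there; the standard construction produces a $\subseteq^*$-chain.

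Separately, your actual construction (alternate orbit-blocks $X_\al=\bigcup_k[g_\al(2k),g_\al(2k+1))$) is plausible in spirit, but you do not verify the strong finite intersection property, and your own analysis above shows you were reasoning toward the wrong target. Since you end by deferring to \cite{Bl} and \cite{Solomon} for the details anyway, the honest summary is: part one is done; part two is a sketch with a significant conceptual error in its motivation and with the combinatorial core left to the references --- which is exactly what the paper itself does, minus the error.
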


It is clear from the proof of Theorem \ref{filgen} that in the case of $\mf{b}<\mf{d}$, the example of $\mf{b}$ sets which generate a non-meager filter, can be chosen as a tower.

Now let us recall some known results. It is consistent that all towers are meager: In the Hechler model all towers are of height $\om_1<\mf{b}=\om_2$ (see \cite{baumdord}) and hence (applying Theorem \ref{filgen}) they generate meager filters.

If $\mf{b}<\mf{d}$ (e.g. in the Cohen model), then there is a non-meager tower (see the remark after Theorem \ref{filgen}). One can show that $\mf{b}=\om_1$ also implies that there is a non-meager tower: If $\mf{b}<\mf{d}$ then we are done. If $\mf{d}=\om_1$, then applying the fact (see \cite[Thm. 2.10]{Bl}) that $\mf{d}$ is the minimum of cardinalities of dominating families of interval partitions, we can easily diagonalize against all interval partitions and construct a non-meager tower of height $\om_1$.

And finally, an easy transfinite construction shows that $\mf{t}=\mf{c}$ also implies that there is a non-meager tower. In particular, applying Fact \ref{fact1} (a), under $\mrm{CH}$ and under $\mrm{MA}$ there are both meager and non-meager towers (here we used the fact that $\mrm{add}(\mc{N})\leq\mrm{add}^*(\mc{I})$ for every tall analytic P-ideal, see also in Section \ref{analpindiag}).

We show that the third possibility, namely, that all towers are non-meager, is also consistent.

\begin{lem}\label{megtowsigma}
Assume $(T_\al)_{\al<\ka}$ is a meager tower, as witnessed by the partition $(P_n)_{n\in\om}$ of $\om$ into finite sets (i.e. $\forall$ $\al$ $\forall^\infty$ $n$ $(T_\al\cap P_n\ne\0)$), and let $\PP$ be a $\sigma$-centered forcing notion. Then $\PP$ does not add a pseudointersection $X$ of $(T_\al)_{\al<\ka}$ such that $\forall^\infty$ $n$ $(X\cap P_n\ne\0)$.
\end{lem}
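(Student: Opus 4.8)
The plan is to assume toward a contradiction that some condition $p\in\PP$ forces a name $\dot X$ to be such a pseudointersection, i.e. $p\vd$``$\dot X\subseteq^* \dot T_\al$ for all $\al<\ka$, and $\forall^\infty n\ (\dot X\cap P_n\ne\0)$'', and then use the $\sigma$-centeredness to extract from the forcing interaction a genuine pseudointersection $Y$ living in the ground model — which is absurd, since $(T_\al)_{\al<\ka}$ is a tower. Write $\PP=\bigcup_{k\in\om}\PP_k$ with each $\PP_k$ centered. First I would replace $p$ by a stronger condition and, by re-indexing, assume $p$ itself forces $\forall n\geq n_0\ (\dot X\cap P_n\ne\0)$ for some fixed $n_0$.

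**Construction of the ground-model pseudointersection.** For each $n\geq n_0$ and each $k$, consider the set
\[
 Y_{n,k}=\bigcup\Big\{\, (\dot X)^{q}\cap P_n : q\in\PP_k,\ q\leq p,\ q\;\text{decides}\;\dot X\cap P_n\,\Big\},
\]
where $(\dot X)^q\cap P_n$ denotes the finite subset of $P_n$ that $q$ forces $\dot X\cap P_n$ to equal. Since $P_n$ is finite, only finitely many values are possible, so $Y_{n,k}\subseteq P_n$ is a finite ground-model set; and because $p$ forces $\dot X\cap P_n\ne\0$, genericity gives $Y_{n,k}\ne\0$ for at least one $k$ (in fact for the $k$ with $\PP_k\ni$ some condition below $p$ deciding this instance). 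The key point, coming from centeredness, is the following: if $q_0,q_1\in\PP_k$ both lie below $p$ and decide $\dot X\cap P_n$, then $q_0,q_1$ are compatible, hence a common extension forces a single value, so the decided values agree; thus for each fixed $k$ the "$Y$'s" are in fact single decided blocks, not unions of many. Now set $Y=\bigcup\{Y_{n,k}\cap P_n : n\geq n_0,\ k\in\om,\ Y_{n,k}\ne\0\}$, or — better — first thin to one good $k=k(n)$ per $n$ and let $Y=\bigcup_{n\geq n_0} Y_{n,k(n)}$; then $Y$ is an infinite ground-model subset of $\om$ since it meets $P_n$ for cofinally many $n$.

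**Why $Y$ is a pseudointersection.** Fix $\al<\ka$. In the ground model $\forall^\infty n\ (T_\al\cap P_n\ne\0)$, and I must show $Y\subseteq^* T_\al$. Suppose not: then there are infinitely many $n$ with $Y\cap P_n\setminus T_\al\ne\0$, i.e. for infinitely many $n$ some condition $q_n\leq p$ in $\PP_{k(n)}$ forces a point of $P_n\setminus T_\al$ into $\dot X$. Here is where I combine $p\vd\dot X\subseteq^*\dot T_\al$ with a pigeonhole on the index $k$: since $\dot T_\al\in\mc{F}$ is forced to equal the ground-model set $T_\al$ (towers in the filter are ground-model objects, not names — they are among the $T_\al$), the statement $q_n\vd$``a fixed element of $P_n\setminus T_\al$ lies in $\dot X$'' together with $q_n\vd\dot X\subseteq^* T_\al$ bounds how large such $n$ can be only if the "$\subseteq^*$'' threshold were uniform — and it need not be across different $q_n$. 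This is the main obstacle: controlling the $\subseteq^*$-threshold. To get around it, restrict attention, by pigeonhole, to a single index $k$ with infinitely many such $q_n\in\PP_k$; by centeredness all these $q_n$ have, finitely many at a time, common extensions, and taking a decreasing (or just compatible-in-batches) sequence one finds a single condition $q$ below all of some infinite subfamily, which then forces infinitely many points outside $T_\al$ into $\dot X$, contradicting $q\leq p\vd\dot X\subseteq^* T_\al$. Hence $Y\subseteq^* T_\al$ for every $\al$, so $Y$ is a ground-model pseudointersection of the tower — contradiction.

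**Summary of the delicate point.** The heart of the argument, and the step I expect to require the most care, is the passage from "for infinitely many $n$ there is a condition below $p$ forcing a bad point into $\dot X$'' to "there is one condition below $p$ forcing infinitely many bad points into $\dot X$''; $\sigma$-centeredness is exactly what licenses this (finitely many conditions in one piece $\PP_k$ always have a common extension, and one iterates), and it is the only place the hypothesis on $\PP$ is used. Everything else is bookkeeping with finite blocks $P_n$ and the absoluteness of the ground-model sets $T_\al$.
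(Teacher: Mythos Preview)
Your proposal has a genuine gap at exactly the step you flagged as ``delicate.'' You argue that from infinitely many conditions $q_n$ lying in a single centered piece $\PP_k$ (each forcing a point of $P_n\setminus T_\al$ into $\dot X$) one can extract a single condition $q$ below an infinite subfamily. But centeredness only yields common extensions of \emph{finite} subfamilies, and such an extension need not lie in $\PP_k$, so you cannot iterate inside $\PP_k$; nor does centeredness give lower bounds for countable chains. Thus the passage from ``infinitely many conditions each forcing one bad point'' to ``one condition forcing infinitely many bad points'' does not go through, and your ground-model $Y$ is not shown to be a pseudointersection.

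The paper's argument sidesteps this entirely by reversing the quantifiers. For each centered piece $C_\ell$ it defines
\[
X_\ell=\big\{m:\text{no }p\in C_\ell\text{ forces }m\notin\dot X\big\}.
\]
Centeredness, applied only to the \emph{finite} set $P_n$, shows $X_\ell\cap P_n\ne\0$ for every $n$ (otherwise finitely many conditions in $C_\ell$, one per $m\in P_n$, would have a common extension forcing $\dot X\cap P_n=\0$). Hence each $X_\ell$ is infinite; since $\cf(\ka)>\om$ and the tower has no pseudointersection, a single $\al$ satisfies $X_\ell\nsubseteq^* T_\al$ for all $\ell$. Now for any $p\in C_\ell$ and any $k$, pick $m\in X_\ell\setminus(T_\al\cup k)$: by the definition of $X_\ell$ there is $q\le p$ with $q\vd m\in\dot X$, so $q\vd\dot X\setminus T_\al\nsubseteq k$. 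This is a density argument---below every condition there is one forcing a single new bad point---and it never needs a common extension of infinitely many conditions. The moral: work with ``no condition in $C_\ell$ rules $m$ out'' rather than ``some condition in $C_\ell$ decides $m$ in''; the former is upward robust in a way that matches centeredness, the latter is not.
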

\begin{proof}
Assume $\dot{X}$ is a $\PP$-name for a subset of $\om$ such that $\vd_\PP \dot{X} \cap P_n \neq \emptyset$ for every $n$. We show that $\dot X$ is forced not to be a pseudointersection of the $(T_\al)_{\al<\ka}$. Let $\PP=\bigcup_{\ell\in\om}C_\ell$ where every $C_\ell$ is centered, and for $\ell \in \omega$ define
\[ X_\ell = \big\{ m :\forall\;p \in C_\ell\;(p\nVdash m \notin \dot X) \big\}=\big\{m:\forall\;p\in C_\ell\;\exists\;q\leq p\;(q\vd m\in\dot{X})\big\}.\]
We show that $X_\ell \cap P_n \neq
\emptyset$ for every $\ell$ and $n$. Fix $\ell$ and $n$, and assume on the contrary that $X_\ell\cap P_n=\0$. Then for every $m\in P_n$ there is a $p_m\in C_\ell$ witnessing that $m\notin X_\ell$, that is, $p_m\vd m\notin\dot{X}$. If $p$ is a common extension of these conditions ($p_m$, $m\in P_n$), then $p\vd\dot{X}\cap P_n=\0$, a contradiction.

In particular, all $X_\ell$ are infinite. Since $(T_\al)_{\al<\ka}$ has no pseudointersection (and $\mrm{cf}(\ka)>\om$), there is an $\alpha < \ka$ such that $X_\ell \nsubseteq^* T_\alpha$ for every $\ell$. We claim that $\vd \dot X \nsubseteq^* T_\alpha$. Fix an $\ell$, a $p\in C_\ell$, a $k\in\om$, and an $m\in X_\ell\setminus (T_\al\cup k)$. Then there is a $q\leq p$ such that $q\vd m\in \dot{X}$. In other words, $\{q\in\PP:q\vd \dot{X}\setminus T_\al\nsubseteq k\}$ is dense in $\PP$ for every $k$.
\end{proof}

Before the next theorem we recall that if $|\eps|\leq\mf{c}$ then the limit of an $\eps$ stage finite support iteration of $\sigma$-centered
forcing notions is $\sigma$-centered.

\begin{thm}\label{nomeagertower} It is consistent with $\mrm{MA}(\sigma$-centered$)$ that there are no meager towers.
\end{thm}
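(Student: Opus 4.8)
The plan is to force $\mrm{MA}(\sigma$-centered$)$ by a routine finite support iteration and then to argue, by reflection alone, that the extension has no meager tower --- no dedicated ``tower-killing'' steps will be needed. Start with a model $V$ of $\mrm{CH}$ and let $\PP=\langle\PP_\al,\QQ_\al:\al<\om_2\rangle$ be a finite support iteration in which, following a bookkeeping function, each $\QQ_\al$ is a name for a $\sigma$-centered poset of size $\le\om_1$, arranged so that every $\sigma$-centered poset of size $<\om_2$ and every family of $<\om_2$ dense sets occurring in the iteration is eventually handled. Each iterand is ccc, so $\PP$ is ccc; it adds $\om_2$ reals, so $V^\PP\models\mf{c}=\om_2$; and since $\sigma$-centeredness is upward absolute and every relevant poset together with its dense sets appears in some $V_\de$ (writing $V_\de$ for $V^{\PP_\de}$), $\de<\om_2$, the standard argument gives $V^\PP\models\mrm{MA}(\sigma$-centered$)$. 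In particular $\mf{t}=\mf{c}=\om_2$ there, so every tower has a cofinal subtower of length $\om_2$ generating the same filter; hence it suffices to rule out meager towers of length $\om_2$.

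Assume towards a contradiction that $\mc{T}=(T_\al)_{\al<\om_2}\in V^\PP$ is a tower with $\mrm{fr}(\mc{T})$ meager, witnessed (via Theorem \ref{talag}) by a partition $(P_n)_{n\in\om}$ of $\om$ into finite sets, and fix $\ga_0<\om_2$ with $(P_n)_{n\in\om}\in V_{\ga_0}$. For $\al<\om_2$ let $\rho(\al)<\om_2$ be least with $T_\al\in V_{\rho(\al)}$, and let $C$ be the club of limit ordinals $\de<\om_2$ with $\ga_0<\de$ and $\rho(\al)<\de$ for all $\al<\de$. The crux is the reflection claim that some $\de\in C$ of cofinality $\om_1$ satisfies: $(T_\al)_{\al<\de}$ has no pseudointersection in $V_\de$. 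If this fails, then for every $\de\in C$ of cofinality $\om_1$ there is $Y_\de\in V_\de\cap[\om]^\om$ with $Y_\de\subseteq^* T_\al$ for all $\al<\de$, and since $\cf(\de)>\om$ such $Y_\de$ already lies in $V_{\sigma(\de)}$ for some $\sigma(\de)<\de$. The set of these $\de$ is stationary and $\sigma$ is regressive on it, so by Fodor $\sigma$ is constant, say $\equiv\eta^*$, on a stationary set $S$. But $V_{\eta^*}$ has only $\le\om_1$ reals (as $V\models\mrm{CH}$ and $|\eta^*|\le\om_1$) while $|S|=\om_2$, so $Y_\de=Y^*$ for a single $Y^*$ and a cofinal set of $\de\in S$, whence $Y^*\subseteq^* T_\al$ for all $\al<\om_2$ --- contradicting that $\mc{T}$ is a tower.

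Now fix such a $\de$. In $V_\de$ the sequence $(T_\al)_{\al<\de}$ is $\subseteq^*$-decreasing of length $\de$ with $\cf(\de)>\om$, every $T_\al$ meets $P_n$ for all but finitely many $n$ (an absolute property, since $(P_n)_{n\in\om}\in V_\de$ and $T_\al\in\mrm{fr}(\mc{T})$), and there is no pseudointersection in $V_\de$; so $V_\de$ regards $(T_\al)_{\al<\de}$ as a meager tower witnessed by $(P_n)_{n\in\om}$. Also $T_\de\notin V_\de$ (else it would be such a pseudointersection), so $T_\de\in V_\ga\setminus V_\de$ for some $\de<\ga<\om_2$. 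Since $\PP_{[\de,\ga)}$ is a finite support iteration of $\sigma$-centered forcings of length $<\om_2$, it has cardinality $\le\om_1=\mf{c}^{V_\de}$, hence is $\sigma$-centered over $V_\de$ by the fact recalled just before the theorem. But then $T_\de\in V_\ga=V_\de^{\PP_{[\de,\ga)}}$ is a pseudointersection of $(T_\al)_{\al<\de}$ meeting all but finitely many $P_n$, flatly contradicting Lemma \ref{megtowsigma} applied over $V_\de$ to the meager tower $(T_\al)_{\al<\de}$ and the $\sigma$-centered forcing $\PP_{[\de,\ga)}$.

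The main obstacle is the reflection claim, together with the observation that makes Lemma \ref{megtowsigma} usable here: although the whole tail $\PP_{[\de,\om_2)}$ need not be $\sigma$-centered over $V_\de$ (its length exceeds $\mf{c}^{V_\de}=\om_1$), every proper initial segment $\PP_{[\de,\ga)}$ with $\ga<\om_2$ is, so each single real added over $V_\de$ --- in particular $T_\de$ --- already appears in a $\sigma$-centered extension of $V_\de$, which is exactly what the lemma needs; and the constraint $\cf(\de)=\om_1$ supplies precisely the ``$\cf(\ka)>\om$'' hypothesis of Lemma \ref{megtowsigma}.
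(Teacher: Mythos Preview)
Your proof is correct and follows essentially the same strategy as the paper: iterate $\sigma$-centered forcings of length $\om_2$ over a model of $\mrm{CH}$, reflect a putative meager tower down to an intermediate stage $\de$ of cofinality $\om_1$ where it is still a meager tower, and then derive a contradiction from Lemma~\ref{megtowsigma} applied to the $\sigma$-centered tail forcing up to the stage where $T_\de$ appears. The only cosmetic difference is in the reflection step---the paper builds the ``no pseudointersection in $V[G_\ga]$'' requirement directly into the club via a diagonalizing function $\ga_\al$, whereas you argue by contradiction with Fodor and a counting of reals; both are standard and equivalent here.
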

\begin{proof}
Do the standard $\omega_2$ stage finite support iteration to force $\mrm{MA}(\sigma$-centered$)$ over a model $V$ of $\mrm{CH}$. Let $V[G_{\al}]$ denote the $\al$th intermediate model.  Then in $V[G_{\om_2}]$ all towers are of height $\omega_2$. Suppose $V[G_{\om_2}]\models$``$(T_\alpha)_{\alpha < \omega_2}$ is a meager tower witnessed by the partition $(P_n)_{n \in \omega}$''.

Then using $\mathrm{CH}$ (in $V$) we can find a
$\ga<\om_2$ such that $(P_n)_{n\in\om},(T_\al)_{\al<\ga}\in V[G_\ga]$ and $V[G_\ga]\models$``$(T_\al)_{\al<\ga}$ is
a meager tower witnessed by $(P_n)_{n\in\om}$''. Why? Fix a sequence $(\dot{T}_\al)_{\al<\om_2}$ of nice $\PP_{\om_2}$-names such that $\mrm{val}_{G_{\om_2}}(\dot{T}_\al)=T_\al$. Then there is a sequence $(\ga_\al)_{\al<\om_2}$ in $\om_2$ (in $V$) such that $\dot{T}_\al$ is a $\PP_{\ga_\al}$-name and  $\forall$ $X\in [\om]^\om\cap V[G_\al]$ $X\nsubseteq^* T_{\ga_\al}$ for every $\al$ (here we used that $\PP_{\om_2}$ has the ccc and that $V[G_\al]\models\mrm{CH}$ for every $\al<\om_2$). Let $C=\{\ga<\om_2:\ga_\al<\ga$ for every $\al<\ga\}$,
a club in $\om_2$ (in $V$). Choose $\ga\in C$ of cofinality $\om_1$. Then $(T_\al)_{\al<\ga}$ has a $\PP_\ga$-name, and since all reals in $V[G_\ga]$ appeared in an earlier intermediate model, $(T_\al)_{\al< \ga}$ has no pseudointersection in $V[G_\ga]$.

Choose a $\ga'>\ga$ such that $T_\ga\in V[G_{\ga'}]$. The
quotient forcing $\PP_{\ga'}/G_\ga\in V[G_\ga]$ is a $\ga'-\ga$-stage finite support iteration of $\sigma$-centered forcing notions over $V[G_\ga]$ so it is also
$\sigma$-centered because $V[G_\ga]\models
|\ga'-\ga|\le\mf{c}=\om_1$. By Lemma \ref{megtowsigma}, $\PP_{\ga'}/G_\ga$ does not add a pseudointersection $X\in V[G_{\ga'}]$ of $(T_\al)_{\al<\ga}$ such that $\forall^\infty$ $n$ $X\cap P_n\ne\0$, contradiction
because $T_\ga\in V[G_{\ga'}]$.
\end{proof}

It could be interesting to investigate the existence of meager and non-meager towers under additional conditions, for example the following question is still open:

\begin{que}\label{tmeager}
Does $\mf{t} = \omega_1$ imply the existence of a meager tower? (It surely does not imply the existence of a non-meager tower, see above.)
\end{que}

In the rest of this section we present a more subtle way of destroying cotowers in $F_\sigma$ ideals and analytic P-ideals. Let $\mc{F}$ be a filter on $\om$.

The {\em Mathias-Prikry forcing
associated to $\mc{F}$}, $\mbb{M}(\mc{F})$ is the following
forcing notion:
\begin{itemize}
\item $(s,A)\in\mbb{M}(\mc{F})$ iff $s\in [\om]^{<\om}$,
$A\in\mc{F}$, and $\max(s)<\min(A)$,

\item $(s,A)\le (t,B)$ iff $s\supseteq t$, $A\subseteq B$, and
$s\bs t\subseteq B$.
\end{itemize}

The {\em Laver-Prikry forcing associated to $\mc{F}$}, $\mbb{L}(\mc{F})$ is the following forcing notion:

\begin{itemize}
\item $T\in\mbb{L}(\mc{F})$ iff $T\subseteq\om^{<\om}$ is a tree (that is, $T\ne\0$ is closed for taking initial segments and it has no $\subseteq$-maximal elements) and there is a  $\mrm{stem}(T)\in T$ which is comparable with all elements of $T$ and $\mrm{ext}_T(t)=\{n\in\om:t^\frown(n)\in T\}\in\mc{F}$ for every $t\in T$, $t\supseteq\mrm{stem}(T)$.

\item $T_0\le T_1$ iff $T_0\subseteq T_1$.
\end{itemize}

The following is trivial from the definitions.

\begin{Fact} If $\mc{F}$ is a filter on $\om$, then
$\mbb{M}(\mc{F})$ and $\mbb{L}(\mc{F})$ are $\sigma$-centered and both of them add pseudointersections to $\mc{F}$.
\end{Fact}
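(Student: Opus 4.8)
The statement is immediate from the definitions, so the plan is simply to record the two routine verifications, in each case treating $\mbb{M}(\mc{F})$ and $\mbb{L}(\mc{F})$ in parallel.

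For $\sigma$-centeredness I would partition each poset along its finite ``stem'' data, of which there are only countably many. For $\mbb{M}(\mc{F})$ set $C_s=\{(s,A):A\in\mc{F},\ \max(s)<\min(A)\}$ for $s\in[\om]^{<\om}$; given finitely many conditions $(s,A_0),\dots,(s,A_{k-1})\in C_s$, their common extension is $(s,\bigcap_{i<k}A_i)$, using that $\mc{F}$ is closed under finite intersections (and $\max(s)<\min\bigcap_{i<k}A_i$), so each $C_s$ is centered. For $\mbb{L}(\mc{F})$ set $C_t=\{T\in\mbb{L}(\mc{F}):\mrm{stem}(T)=t\}$ for $t\in\om^{<\om}$; given finitely many $T_0,\dots,T_{k-1}\in C_t$, the common extension is $\bigcap_{i<k}T_i$, and one checks this is still a condition: it contains $t$ together with all its initial segments, is closed under initial segments, has stem $t$, and for $u\supseteq t$ in it one has $\mrm{ext}_{\bigcap_{i<k}T_i}(u)=\bigcap_{i<k}\mrm{ext}_{T_i}(u)\in\mc{F}$, in particular nonempty, so no node is maximal. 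Hence each $C_t$ is centered.

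For adding pseudointersections I would exhibit the canonical generic real and run the standard density arguments. Let $G$ be generic. For $\mbb{M}(\mc{F})$ put $r=\bigcup\{s:\exists A\ (s,A)\in G\}$; density of $\{(s,A):A\subseteq F\}$ (extend $(s,A)$ to $(s,A\cap F)$) gives some $(s,A)\in G$ with $A\subseteq F$, and since $r\setminus s\subseteq A$ by the order, $r\subseteq^* F$ for every $F\in\mc{F}$; density of $\{(s,A):|s|>n\}$ (using that every member of $\mc{F}$ is infinite) gives $|r|=\om$, so $r$ is a pseudointersection of $\mc{F}$. For $\mbb{L}(\mc{F})$ let $\ell=\bigcup\{\mrm{stem}(T):T\in G\}\in\om^\om$; given a condition $T$ with $\mrm{stem}(T)=t_0$ and $F\in\mc{F}$, prune $T$ above $t_0$ to the subtree of all $u\in T$ comparable with $t_0$ together with all $u\in T$, $u\supseteq t_0$, with $u(j)\in F$ for $|t_0|\le j<|u|$; this is a stronger condition with the same stem (its extension sets above $t_0$ are $\mrm{ext}_T(u)\cap F\in\mc{F}$) forcing $\ell(j)\in F$ for all $j\ge|t_0|$, so $\ran(\ell)\subseteq^* F$. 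Infinitude of $\ran(\ell)$ follows by the density of the conditions obtained by repeatedly choosing larger values out of the (infinite) extension sets; hence $\ran(\ell)$ is a pseudointersection of $\mc{F}$.

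There is essentially no obstacle here; the only point not entirely mechanical is ensuring the generic real is infinite, which rests on the standing convention that every member of $\mc{F}$ is infinite (i.e.\ $\mc{F}$ contains the Fréchet filter, being dual to a proper ideal containing $\mrm{Fin}$).
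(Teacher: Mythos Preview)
Your proof is correct and complete. The paper itself does not prove this Fact at all: it merely states that it ``is trivial from the definitions'' and moves on. Your write-up supplies exactly the routine verifications one would expect---partitioning by stems for $\sigma$-centeredness and the standard density arguments for the generic pseudointersection---so there is nothing to compare beyond noting that you have spelled out what the authors left implicit.
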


We will need the following variant of Lemma \ref{megtowsigma}:

\begin{Lem}\label{sigmacentered} Let $\PP$ be
a $\sigma$-centered forcing notion.
\begin{itemize}
\item[(a)] Assume $\mc{I}=\mathrm{Fin}(\varphi)$ is an $F_\sigma$ ideal
and $(A_\al)_{\al<\ka}$ is a
$\subseteq^*$-descending sequence in $\mc{I}^*$ which has no pseudointersection in
$\mc{I}^*$.  Then in $V^\PP$ this
sequence still has no pseudointersection in $\mc{I}^*$.
\item[(b)] (see also \cite[Lem. 7.4]{tristan})
Assume $\mc{I}=\mathrm{Exh}(\varphi)$ is an analytic P-ideal
and $(A_\al)_{\al<\ka}$ is a
$\subseteq^*$-descending sequence in $\mc{I}^*$ which has no pseudointersection
$X$ such that $\|\om\bs X\|_\varphi<\|\om\|_\varphi$. Then in $V^\PP$ this
sequence still has no pseudointersection $X$ satisfying $\|\om\bs
X\|_\varphi<\|\om\|_\varphi$.
\end{itemize}
\end{Lem}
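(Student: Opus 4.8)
The plan is to adapt the proof of Lemma \ref{megtowsigma}, replacing the partition $(P_n)_{n\in\om}$ by the submeasure $\varphi$ (in (a)) resp.\ by the ``norm at infinity'' $\|\cdot\|_\varphi$ (in (b)). First I would reduce to the case $\mrm{cf}(\ka)>\om$: for analytic P-ideals and $F_\sigma$ P-ideals this is automatic, since a countable $\subseteq^*$-descending sequence in the dual filter of a P-ideal always has a pseudointersection in that filter; and in part (a) it likewise holds whenever $(A_\al)_{\al<\ka}$ is a cotower (because a countable $\subseteq^*$-descending sequence in $[\om]^\om$ has a pseudointersection in $[\om]^\om$), which is the situation in which the lemma will be applied.

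Assume toward a contradiction that in $V^\PP$ a name $\dot X$ is forced to be a pseudointersection of $(A_\al)_{\al<\ka}$ of the forbidden type. The first real step is to extract, using lower semicontinuity of $\varphi$ (and, in (b), the fact that $\|Y\|_\varphi=\lim_n\varphi(Y\setminus n)$ is a decreasing limit, so the inequality $\|Y\|_\varphi<r$ is already witnessed by $\varphi(Y\setminus n_0)<r$ for some finite $n_0$), a single condition $p_0$ together with the numerical data it decides: in (a) some $N\in\om$ with $p_0\Vdash\varphi(\om\setminus\dot X)<N$; in (b) some $n_0\in\om$ and a rational $\eps<\|\om\|_\varphi$ with $p_0\Vdash\varphi((\om\setminus\dot X)\setminus n_0)<\eps$. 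I would then write $\PP\restriction p_0=\bigcup_{\ell\in\om}C_\ell$ with each $C_\ell$ centered and set $Z_\ell=\{m\in\om:\forall\,q\in C_\ell\ q\not\Vdash m\notin\dot X\}$, exactly as in Lemma \ref{megtowsigma}.

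The heart of the argument is to show that each $Z_\ell$ inherits the bound forced on $\dot X$. For a finite $F\subseteq\om\setminus Z_\ell$ (in case (b), $F\subseteq(\om\setminus Z_\ell)\setminus n_0$), choosing for each $m\in F$ a witness $q_m\in C_\ell$ with $q_m\Vdash m\notin\dot X$ and a common extension $q\in C_\ell$, one gets $q\Vdash F\subseteq\om\setminus\dot X$ (resp.\ $q\Vdash F\subseteq(\om\setminus\dot X)\setminus n_0$), hence $q\Vdash\varphi(F)\le\varphi(\om\setminus\dot X)<N$ (resp.\ $<\eps$), and since $\varphi$ on finite sets is absolute this means $\varphi(F)<N$ (resp.\ $<\eps$) holds outright. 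Taking the supremum over such $F$ and using lower semicontinuity yields $\varphi(\om\setminus Z_\ell)\le N$, so $Z_\ell\in\mc{I}^*$, in case (a), and $\|\om\setminus Z_\ell\|_\varphi\le\eps<\|\om\|_\varphi$ in case (b). One checks $Z_\ell$ is infinite — in (a) because $\mc{I}$ is proper, in (b) because subadditivity of $\|\cdot\|_\varphi$ gives $\|Z_\ell\|_\varphi\ge\|\om\|_\varphi-\|\om\setminus Z_\ell\|_\varphi\ge\|\om\|_\varphi-\eps>0$. Then, since $(A_\al)_{\al<\ka}$ has no pseudointersection of the relevant type and $\mrm{cf}(\ka)>\om$, a pigeonhole argument (there are only countably many $\ell$, and the $A_\al$ are $\subseteq^*$-descending) produces a single $\al<\ka$ with $Z_\ell\not\subseteq^* A_\al$ for every $\ell$. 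Finally, copying the density argument from Lemma \ref{megtowsigma} essentially verbatim — given $p\le p_0$ in some $C_\ell$ and $k\in\om$, pick $m\in Z_\ell\setminus(A_\al\cup k)$ and extend $p$ to force $m\in\dot X$ — shows $p_0\Vdash\dot X\setminus A_\al\not\subseteq k$ for every $k$, i.e.\ $p_0\Vdash\dot X\not\subseteq^* A_\al$, contradicting that $p_0$ forces $\dot X$ to be a pseudointersection.

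I expect the main obstacle to be the first step: passing from the ``soft'' hypothesis that some condition forces $\varphi(\om\setminus\dot X)<\infty$ (resp.\ $\|\om\setminus\dot X\|_\varphi<\|\om\|_\varphi$) to a single condition $p_0$ forcing an explicit finite bound — this is where lower semicontinuity of $\varphi$ is essential, and in case (b) it is also what forces the choice of $\eps$ to be \emph{strictly} below $\|\om\|_\varphi$, which is in turn exactly what keeps the sets $Z_\ell$ infinite. A secondary point to be careful about is that the pigeonhole in the penultimate step genuinely uses $\mrm{cf}(\ka)>\om$, so the reduction at the outset is not merely cosmetic.
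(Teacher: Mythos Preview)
Your proposal is correct and follows essentially the same approach as the paper: define the sets $X_\ell$ (your $Z_\ell$), use centeredness plus lower semicontinuity of $\varphi$ to show each $X_\ell$ inherits the forced submeasure bound, and then finish exactly as in Lemma~\ref{megtowsigma}. One small refinement worth noting: for the reduction to $\mrm{cf}(\ka)>\om$ in part (a), the paper does not restrict to the cotower case but instead observes that for a countable cofinal subsequence the statement ``$(A_n)_{n\in\om}$ is a $\subseteq^*$-decreasing sequence in $\mc{I}^*$ without pseudointersection in $\mc{I}^*$'' is $\Ubf{\Pi}^1_1$ and hence absolute between $V$ and $V^\PP$, which proves the lemma in full generality rather than only for the applications.
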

\begin{proof}
Let $\PP=\bigcup_{\ell\in\om}C_\ell$ where all $C_\ell$ are
centered.

(a): We can assume that $\cf(\ka)>\om$ because the property ``$(A_n)_{n\in\om}$ is a $\subseteq^*$-decreasing sequence in $\mc{I}^*$ without pseudointersection in $\mc{I}^*$'' is $\Ubf{\Pi}^1_1$ hence absolute between $V$ and $V^\PP$. Now assume that $\dot{X}$ is a $\PP$-name for a subset of $\om$ such
that $\vd_\PP\varphi(\om\bs\dot{X})<\infty$. W.l.o.g.
we may assume that there is a $K\in\om$ such that $\vd_\PP \varphi(\om\bs\dot{X})\leq K$. Let $X_\ell=\{m\in\om:\forall$ $p\in C_\ell$ $p\nVdash m\notin \dot{X}\}$.

We claim that $\varphi(\om\bs X_\ell)\leq K$ for each $\ell$. Assume
on the contrary that $\varphi(\om\bs X_\ell)>K$. Then there is an $n$ such
that $\varphi\big((\om\bs X_\ell)\cap n\big)>K$. For each $m\in (\om\bs
X_\ell)\cap n$ there is a $p_m\in C_\ell$ with $p_m\vd
m\notin\dot{X}$. If $p$ is a common extension of these conditions,
then $p\vd \varphi(\om\bs\dot{X})\geq
\varphi((\om\bs \dot{X})\cap n)\geq
\varphi((\om\bs X_\ell)\cap n)>K$,
a contradiction.

From this point we can proceed as in the proof of Lemma \ref{megtowsigma}.

\smallskip
(b):
Note that $\cf(\ka)>\om$ because $\mc{I}$ is a
P-ideal and each element $X$ of $\mc{I}^*$ has the property
$\|\om\bs X\|_\varphi=0<\|\om\|_\varphi$.

Assume $\dot{X}$ is a $\PP$-name for a subset of $\om$ such
that $\vd_\PP\|\om\bs\dot{X}\|_\varphi<\|\om\|_\varphi$. W.l.o.g.
we may assume that there are $\eps>0$ and $N\in\om$ such that
$\vd_\PP \varphi((\om\bs\dot{X})\bs
N)\le \|\om\|_\varphi-\eps$. Let $X_\ell=\{m\in\om:\forall$ $p\in C_\ell$ $p\nVdash m\notin \dot{X}\}$.

We claim that $\|\om\bs X_\ell\|_\varphi<\|\om\|_\varphi$ for each $\ell$. Assume
on the contrary that they are equal. Then there is an $n>N$ such
that $\varphi\big((\om\bs X_\ell)\cap
[N,n)\big)>\|\om\|_\varphi-\eps$. For each $m\in (\om\bs
X_\ell)\cap [N,n)$ there is a $p_m\in C_\ell$ with $p_m\vd
m\notin\dot{X}$. If $p$ is a common extension of these conditions,
then $p\vd \varphi\big((\om\bs\dot{X})\bs N)\big)\fel
\varphi\big((\om\bs \dot{X})\cap [N,n)\big)\fel
\varphi\big((\om\bs X_\ell)\cap [N,n)\big)>\|\om\|_\varphi-\eps$,
a contradiction.

Again, from this point the proof is the same as in the case of Lemma \ref{megtowsigma}.
\end{proof}

\begin{cor}\label{firstcor}
Assume $\mc{I}$ is an $F_\sigma$ ideal or an analytic P-ideal, and $(A_\al)_{\al<\ka}$
is a tower in $\mc{I}^*$. Let $\PP$ be a $\sigma$-centered forcing
notion. Then $\PP$ forces that this sequence has no
pseudointersection in $\mc{I}^*$.
\end{cor}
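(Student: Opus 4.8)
The plan is to derive the statement directly from Lemma~\ref{sigmacentered}, which already carries all of the forcing content; the corollary then reduces to checking that a tower in $\mc{I}^*$ meets the hypotheses of the appropriate part of that lemma. First I would use Theorem~\ref{char} to fix a lower semicontinuous submeasure $\varphi$ with $\mc{I}=\mathrm{Fin}(\varphi)$ when $\mc{I}$ is $F_\sigma$ and $\mc{I}=\mathrm{Exh}(\varphi)$ when $\mc{I}$ is an analytic P-ideal. Next I would record the elementary observation that, since a tower $(A_\al)_{\al<\ka}$ in $\mc{I}^*$ is by definition a $\subseteq^*$-descending sequence in $\mc{I}^*$ (hence in $[\om]^\om$) with no pseudointersection in $[\om]^\om$ at all, it has in particular no pseudointersection lying in $\mc{I}^*$.

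For the $F_\sigma$ case this is the whole argument: $(A_\al)_{\al<\ka}$ now literally satisfies the hypothesis of Lemma~\ref{sigmacentered}(a), so that lemma gives that $\PP$ forces it to still have no pseudointersection in $\mc{I}^*$. For the analytic P-ideal case I would first translate: $X\in\mc{I}^*$ iff $\om\setminus X\in\mathrm{Exh}(\varphi)$ iff $\|\om\setminus X\|_\varphi=0$, and $\|\om\|_\varphi>0$ since $\mc{I}$ is proper, i.e.\ $\om\notin\mathrm{Exh}(\varphi)$. Hence every $X\in\mc{I}^*$ satisfies $\|\om\setminus X\|_\varphi<\|\om\|_\varphi$, so a pseudointersection in $\mc{I}^*$ is a special case of a pseudointersection $X$ with $\|\om\setminus X\|_\varphi<\|\om\|_\varphi$; since the tower has none of the latter at all, it has none of this restricted kind either, which is exactly the hypothesis of Lemma~\ref{sigmacentered}(b). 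Applying that lemma, $\PP$ forces that there is still no such $X$, and a fortiori no pseudointersection in $\mc{I}^*$. I would also note that one need not verify $\mrm{cf}(\ka)>\om$ separately, since both parts of Lemma~\ref{sigmacentered} dispose of that internally (by $\Ubf{\Pi}^1_1$-absoluteness in part (a), and because every element of $\mc{I}^*$ trivially witnesses the relevant descending property in part (b)).

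The only genuine obstacle is the bookkeeping in the analytic P-ideal case, namely confirming that membership in $\mc{I}^*$ is subsumed under the condition ``$\|\om\setminus X\|_\varphi<\|\om\|_\varphi$'' appearing in Lemma~\ref{sigmacentered}(b); this rests solely on the properness of $\mc{I}$, which is a standing assumption throughout the paper. Apart from that the corollary is an immediate consequence of the lemma, with no additional forcing argument required.
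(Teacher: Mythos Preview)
Your proposal is correct and follows exactly the approach the paper intends: the corollary is stated without proof immediately after Lemma~\ref{sigmacentered}, and your derivation---checking that a tower in $\mc{I}^*$, having no pseudointersection whatsoever, trivially meets the hypotheses of parts (a) and (b), and that the conclusion of (b) subsumes ``no pseudointersection in $\mc{I}^*$'' via $\|\om\setminus X\|_\varphi=0<\|\om\|_\varphi$---is precisely the intended unpacking.
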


The following reformulation will be useful when calculating values of cardinal invariants in certain forcing extensions (in Section \ref{analpindiag}).
\begin{cor}\label{preservingadd} Let $\PP$ be a $\sigma$-centered forcing notion.
\begin{itemize}
\item[(a)] If $\mc{I}$ is a tall $F_\sigma$ P-ideal and $\add^*(\mc{I})=\ka$, then $\vd_\PP\add^*(\mc{I})\leq\ka$.
\item[(b)] If $\mc{I}$ is a tall analytic P-ideal and there is a tower in $\mc{I}^*$ of height $\lam$ (e.g. if $\add^*(\mc{I})=\cov^*(\mc{I})=\lam$, see Fact \ref{fact1} (a)), then $\vd_\PP\add^*(\mc{I})\leq\lam$.
\end{itemize}
\end{cor}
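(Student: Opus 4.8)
The plan is to reduce each item to Corollary \ref{firstcor} together with the corresponding star-invariant characterization. For (a): suppose $\mc{I}=\mrm{Fin}(\varphi)=\mrm{Exh}(\varphi)$ is a tall $F_\sigma$ P-ideal with $\add^*(\mc{I})=\ka$. Fix a $\subseteq^*$-unbounded family $\{B_\al:\al<\ka\}\subseteq\mc{I}$ of minimal size. Since $\mc{I}$ is a P-ideal, we may assume (refining and taking pseudounions along an increasing enumeration, using $\ka=\mrm{cf}(\ka)=\add^*(\mc{I})>\om$) that the family is $\subseteq^*$-increasing; then setting $A_\al=\om\setminus B_\al$ we get a $\subseteq^*$-decreasing sequence $(A_\al)_{\al<\ka}$ in $\mc{I}^*$ with no pseudointersection in $\mc{I}^*$ (a pseudointersection $X\in\mc{I}^*$ would give $\om\setminus X\in\mc{I}$ which $\subseteq^*$-bounds every $B_\al$). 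By Corollary \ref{firstcor} (applied to an $F_\sigma$ ideal), $\PP$ forces that $(A_\al)_{\al<\ka}$ still has no pseudointersection in $\mc{I}^*$, i.e. $\{\om\setminus A_\al:\al<\ka\}=\{B_\al:\al<\ka\}$ remains $\subseteq^*$-unbounded in $\mc{I}$ in $V^\PP$ (here one uses that $\mc{I}$ is still a P-ideal in $V^\PP$, which is absolute by the $\Ubf{\Pi}^1_2$ remark). Hence $\vd_\PP\add^*(\mc{I})\leq\ka$.

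For (b): let $\mc{I}=\mrm{Exh}(\varphi)$ be a tall analytic P-ideal admitting a tower $(A_\al)_{\al<\lam}$ in $\mc{I}^*$. By definition this is a $\subseteq^*$-decreasing sequence in $\mc{I}^*$ with no pseudointersection in $[\om]^\om$ at all, in particular none in $\mc{I}^*$. Apply Corollary \ref{firstcor} (the analytic P-ideal case): $\PP$ forces that the sequence has no pseudointersection in $\mc{I}^*$. As in (a), since $\mc{I}$ remains an analytic P-ideal in $V^\PP$ by absoluteness, the sequence $\{\om\setminus A_\al:\al<\lam\}$ is $\subseteq^*$-unbounded in $\mc{I}$ in $V^\PP$, so $\vd_\PP\add^*(\mc{I})\leq\lam$. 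The parenthetical remark is just Fact \ref{fact1}(a): if $\add^*(\mc{I})=\cov^*(\mc{I})=\lam$ then there is a cotower in $\mc{I}$ of height $\lam$, equivalently a tower of height $\lam$ in $\mc{I}^*$.

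The only subtlety — and the step that needs a sentence of care — is the passage from a $\subseteq^*$-unbounded family to a $\subseteq^*$-increasing one in part (a): this requires $\add^*(\mc{I})>\om$ so that any countably many elements already appearing can be dominated inside $\mc{I}$ while staying below $\ka$ in the enumeration, which is exactly the P-ideal hypothesis; and it requires $\ka$ to be regular, which is automatic since $\add^*$ of a P-ideal is always a regular uncountable cardinal (it equals $\add(\widehat{\mc{I}})$, and additivities of nontrivial $\sigma$-ideals are regular). In (b) no such refinement is needed because the tower is handed to us directly. Everything else is a direct invocation of the already-proved Corollary \ref{firstcor} and the absoluteness observations recorded in the remarks above it.
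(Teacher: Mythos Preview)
Your overall approach is the same as the paper's, and part (b) is fine as written. There is, however, a genuine slip in part (a): you invoke Corollary~\ref{firstcor}, but that corollary assumes $(A_\al)_{\al<\ka}$ is a \emph{tower} in $\mc{I}^*$, i.e.\ has no pseudointersection in $[\om]^\om$ at all. The sequence you construct need not be a tower: an infinite $X$ with $X\subseteq^* A_\al$ for every $\al$ only yields $B_\al\subseteq^*\om\setminus X$, and if $\om\setminus X\notin\mc{I}$ this does not contradict $\subseteq^*$-unboundedness of $\{B_\al\}$ \emph{in $\mc{I}$}. So the hypothesis of Corollary~\ref{firstcor} is not verified.

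The fix is immediate: cite Lemma~\ref{sigmacentered}~(a) instead, which is exactly the statement you need (a $\subseteq^*$-decreasing sequence in $\mc{I}^*$ with no pseudointersection \emph{in $\mc{I}^*$} retains this property after $\sigma$-centered forcing). This is precisely what the paper does. Also, the parenthetical appeal to ``$\mc{I}$ is still a P-ideal in $V^\PP$'' is not needed for the conclusion $\add^*(\mc{I})\le\ka$: once $\{B_\al:\al<\ka\}$ is $\subseteq^*$-unbounded in $\mc{I}$ in $V^\PP$, the inequality follows directly from the definition of $\add^*$.
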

\begin{proof}
(a): If $\add^*(\mc{I})=\ka$ then there is a $\subseteq^*$-decreasing sequence $(A_\al)_{\al<\ka}$ in $\mc{I}^*$ which has no pseudointersection in $\mc{I}^*$ and hence we can apply Lemma \ref{sigmacentered} (a).

(b) follows from Corollary \ref{firstcor}.
\end{proof}

\begin{Thm}\label{first-thm} Assume $\mathrm{CH}$ and that $\mc{I}$ is an $F_\sigma$ ideal or an analytic P-ideal.
Then the $\om_2$-stage finite support iteration of
$\mbb{M}(\mc{I}^*)$ or $\mbb{L}(\mc{I}^*)$ forces that there is no tower in $\mc{I}^*$.
\end{Thm}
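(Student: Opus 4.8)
The plan is to argue by contradiction along the lines of the proof of Theorem~\ref{nomeagertower}, using two ingredients: $\mbb{M}(\mc{I}^*)$ and $\mbb{L}(\mc{I}^*)$ add pseudointersections to $\mc{I}^*$ (the Fact preceding Lemma~\ref{sigmacentered}), and no $\sigma$-centered forcing adds a pseudointersection \emph{in $\mc{I}^*$} to a tower in $\mc{I}^*$ (Corollary~\ref{firstcor}). Write $\PP=\PP_{\om_2}$ for the iteration, $\PP_\ga$ ($\ga\le\om_2$) for its initial segments and $V[G_\ga]$ for the corresponding extensions; then $\PP$ is ccc and $\sigma$-centered, a routine induction gives $|\PP_\ga|\le\om_1$ for $\ga<\om_2$ so that $V[G_\ga]\models\mrm{CH}$, and $V[G_{\om_2}]\models\mf{c}=\om_2$. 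Suppose towards a contradiction that $V[G_{\om_2}]$ contains a tower $\mc{T}$ in $\mc{I}^*$. Passing to a cofinal subsequence (an $\om$-indexed $\subseteq^*$-descending sequence always has a pseudointersection) we may assume $\mc{T}=(T_\al)_{\al<\ka}$ with $\ka$ regular; applying Fact~\ref{fact1}(b) to the cotower $(\om\setminus T_\al)_{\al<\ka}$ in $\mc{I}$ gives $\ka=\cf(\ka)\le\non^*(\mc{I})\le\mf{c}=\om_2$, so $\ka\in\{\om_1,\om_2\}$. Throughout we use that ``$A\in\mc{I}^*$'' and ``$(A_\al)_{\al<\delta}$ is a $\subseteq^*$-descending sequence in $\mc{I}^*$ without pseudointersection in $\mc{I}^*$'' are absolute (the same low-level absoluteness used in Lemma~\ref{sigmacentered}), since $\mc{I}$ is $F_\sigma$ or an analytic P-ideal.

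Case $\ka=\om_1$. Fixing a nice $\PP$-name for $\mc{T}$ and using that $\PP$ is ccc with $\cf(\om_2)>\om_1$ (so the countable supports of the $\le\om_1$ antichains in the name are jointly bounded), there is $\ga<\om_2$ with $\mc{T}=(T_\al)_{\al<\om_1}\in V[G_\ga]$; since each $T_\al$ lies in $\mc{I}^*$ (absolute), $\{T_\al:\al<\om_1\}\subseteq\mc{I}^*\cap V[G_\ga]$. The $\ga$-th iterand is $\mbb{M}(\mc{I}^*)$ (resp.\ $\mbb{L}(\mc{I}^*)$) as computed in $V[G_\ga]$, and it adds an infinite $X$ with $X\subseteq^* A$ for every $A\in\mc{I}^*\cap V[G_\ga]$; in particular $X\subseteq^* T_\al$ for all $\al<\om_1$. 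Then $X\in V[G_{\ga+1}]\subseteq V[G_{\om_2}]$ is a pseudointersection of $\mc{T}$, contradicting that $\mc{T}$ is a tower.

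Case $\ka=\om_2$. Fix (extracted from a name for $\mc{T}$) nice names $\dot T_\al$ and ordinals $\delta_\al<\om_2$ with $\dot T_\al$ a $\PP_{\delta_\al}$-name. For each $\al<\om_2$, since $|[\om]^\om\cap V[G_\al]|\le\om_1$ and, for every $Y\in[\om]^\om$, the set $\{\be<\om_2:Y\subseteq^* T_\be\}$ is an initial segment of $\om_2$ which is bounded (as $\mc{T}$ has no pseudointersection), there is $\be_\al<\om_2$ with $Y\not\subseteq^* T_\be$ for all $Y\in[\om]^\om\cap V[G_\al]$ and all $\be\ge\be_\al$. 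Let $C=\{\ga<\om_2:\delta_\al,\be_\al<\ga$ for all $\al<\ga\}$, a club, and pick $\ga\in C$ with $\cf(\ga)=\om_1$. Then $(T_\al)_{\al<\ga}\in V[G_\ga]$, and it is a tower in $\mc{I}^*$ there: it is $\subseteq^*$-descending with terms in $\mc{I}^*$, and any $Y\in[\om]^\om\cap V[G_\ga]$ lies in some $V[G_\al]$ with $\al<\ga$ (ccc, $\cf(\ga)>\om$), so $Y\not\subseteq^* T_\be$ for any $\be$ with $\be_\al\le\be<\ga$. Now pick $\ga'$ with $\ga<\ga'<\om_2$ and $T_\ga\in V[G_{\ga'}]$, e.g.\ $\ga'=\delta_\ga$. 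Over $V[G_\ga]$ the quotient $\PP_{\ga'}/G_\ga$ is a finite support iteration of the $\sigma$-centered forcings $\mbb{M}(\mc{I}^*)$ (resp.\ $\mbb{L}(\mc{I}^*)$) of length $<\om_2$, hence of size $\le\om_1=\mf{c}^{V[G_\ga]}$, hence $\sigma$-centered by the fact recalled before Theorem~\ref{nomeagertower}. By Corollary~\ref{firstcor}, $\PP_{\ga'}/G_\ga$ forces that $(T_\al)_{\al<\ga}$ has no pseudointersection in $\mc{I}^*$; but $T_\ga\in V[G_{\ga'}]$ belongs to $\mc{I}^*$ and satisfies $T_\ga\subseteq^* T_\al$ for every $\al<\ga$ --- a contradiction. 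This completes the proof.

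The delicate point is the case $\ka=\om_2$: the real added by a single iterand diagonalizes only the $\mc{I}^*$ of that intermediate model, hence only a proper initial segment of $\mc{T}$, so one cannot directly obtain a pseudointersection of all of $\mc{T}$. The remedy is to arrange --- via the ordinals $\be_\al$ and the club $C$ --- that a proper initial segment $(T_\al)_{\al<\ga}$ of $\mc{T}$ already is a tower in $V[G_\ga]$, and then to play off Corollary~\ref{firstcor} (which forbids $\sigma$-centered forcing from adding a pseudointersection of it \emph{inside $\mc{I}^*$}) against the fact that the next term $T_\ga$ of $\mc{T}$ is exactly such a pseudointersection lying in $\mc{I}^*$. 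The remaining points (size of $\PP_\ga$, absoluteness of membership in $\mc{I}^*$, the reflection of names) are routine.
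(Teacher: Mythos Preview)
Your proof is correct and follows essentially the same route as the paper's: split into the cases $\ka=\om_1$ (handled by the generic pseudointersection added at stage $\ga+1$) and $\ka=\om_2$ (handled by reflecting to a tower of length $\ga$ in $V[G_\ga]$ via the club argument from Theorem~\ref{nomeagertower}, then invoking Corollary~\ref{firstcor} on the $\sigma$-centered quotient to contradict $T_\ga\in\mc{I}^*\cap V[G_{\ga'}]$). One small slip: you assert that the full iteration $\PP=\PP_{\om_2}$ is $\sigma$-centered, but under $\mrm{CH}$ the length $\om_2$ exceeds $\mf{c}$, so the fact you cite does not apply; fortunately you never use this claim (only ccc and $\sigma$-centeredness of the bounded quotients), so the argument is unaffected.
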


\begin{proof}
Let us denote $(\PP_\al)_{\al\le\om_2}$ the iteration, $G$ a
$\PP_{\om_2}$-generic filter over $V$, and let
$G_\al=G\cap\PP_\al$ (a $\PP_\al$-generic filter over $V$).

Assume that $V[G]\models$``$(A_\al)_{\al<\om_1}$ is a
$\subseteq^*$-descending sequence in $\mc{I}^*$''. Then there is a
$\ga<\om_2$ such that this sequence is contained in $V[G_\ga]$. In
$V[G_{\ga+1}]$ there is a pseudointersection of this sequence,
i.e. it is not a tower in $\mc{I}^*$.

Assume on the contrary that $V[G]\models$``$(A_\al)_{\al<\om_2}$
is a tower in $\mc{I}^*$''. By the same argument we used in the proof of Theorem \ref{nomeagertower}, we can find a
$\ga<\om_2$ such that $V[G_\ga]\models$``$(A_\al)_{\al<\ga}$ is
a tower in $\mc{I}^*$''. If $A_\ga\in V[G_{\ga'}]$ for some $\ga'>\ga$, then the quotient forcing $\PP_{\ga'}/G_\ga\in V[G_\ga]$ is the $\ga'-\ga$-stage finite
support iteration of $\mbb{M}(\mc{I}^*)$ over $V[G_\ga]$ so it is
$\sigma$-centered, and hence, by Lemma \ref{sigmacentered}, there is
no pseudointersection $X\in \mc{I}^* \cap V[G_{\ga'}]$ of $(A_\al)_{\al<\ga}$, a contradiction because $A_\ga\in\mc{I}^*\cap V[G_{\ga'}]$.
\end{proof}

Now it is natural to ask the following:

\begin{que}\label{meagertowerinmat}
Let $\mc{I}$ be a tall $F_\sigma$ ideal or a tall analytic P-ideal.
Is there a meager tower in the iterated $\mbb{M}(\mc{I}^*)$ model?  By using Theorem \ref{first-thm} and a book-keeping argument, one can easily construct an iteration of forcing notions of the form $\mbb{M}(\mc{I}^*)$ such that in the extension $F_\sigma$ ideals and analytic P-ideals contain no towers. Is there a meager tower in this model? (We believe that the answer should be yes.)
\end{que}

As a stronger and perhaps more interesting variant of the first part of this question, one may ask whether it is possible that there are no cotowers in a tall analytic P-ideal $\mc{J}$ but there is a cotower in another tall analytic P-ideal $\mc{I}$, especially if $\mc{J}\subseteq\mc{I}$. For example, notice that up to a bijection between $\om$ and $2^{<\om}$, $\mc{I}_{1/n}\subseteq\mrm{tr}(\mc{N})\subseteq\mc{Z}$\footnotemark[1] (and hence $\mc{I}_{1/n}\leq_{\mrm{KB}}\mrm{tr}(\mc{N})\leq_{\mrm{KB}}\mc{Z}$). In particular, we can ask whether the following are consistent: (a) $\mc{Z}$ contains a cotower but $\mrm{tr}(\mc{N})$ does not. (b) $\mrm{tr}(\mc{N})$ contains a cotower but $\mc{I}_{1/n}$ does not. We know that (a) is consistent (see Example \ref{cotinZnotintr}) but (b) seems to be more difficult. We have some partial results concerning the general case when $\mc{J}$ and $\mc{I}$ are tall analytic P-ideals and $\mc{J}$ is ``fairly smaller'' than $\mc{I}$ (see Theorem \ref{reallygood}).

\footnotetext[1]{Define $\mc{I}_\mrm{tr}=\{A\subseteq 2^{<\om}:\sum\{2^{-|s|}:s\in A\}<\infty\}$ and $\mc{Z}_\mrm{tr}=\{A\subseteq 2^{<\om}:|A\cap 2^n|\cdot 2^{-n}\to 0\}$. Then $\mc{I}_\mrm{tr}$ is isomorphic to $\mc{I}_{1/n}$, $\mc{Z}_\mrm{tr}$ is isomorphic to $\mc{Z}$, and $\mc{I}_\mrm{tr}\subseteq\mrm{tr}(\mc{N})\subseteq\mc{Z}_\mrm{tr}$ .}

\section{Towers in ultrafilters}\label{tuf}

First of all, we show that if $\mc{F}$ is ``small'' then there are towers in $\mc{F}^+$.

\begin{fact}
Assume $\mc{F}$ is a meager filter. Then there is a tower in $\mc{F}^+$.
\end{fact}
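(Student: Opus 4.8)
The plan is to exploit the combinatorial characterization of meager filters from Theorem \ref{talag}: if $\mc{F}$ is meager, there is a partition $(P_n)_{n\in\om}$ of $\om$ into finite sets such that every $F\in\mc{F}$ meets all but finitely many of the $P_n$. The key point is that a set which is ``thin'' with respect to this partition — one that misses infinitely many blocks $P_n$ — is automatically in $\mc{F}^+$: if $X$ omitted $P_n$ for infinitely many $n$, then... wait, that is not quite it; rather, a set $Y$ that contains exactly one point from infinitely many blocks and is otherwise sparse should be $\mc{F}$-positive. More precisely, I would first observe that if $Z\subseteq\om$ meets $P_n$ for only finitely many $n$, then $\om\setminus Z\in\mc{F}$ would be consistent, so such $Z$ need not be positive; the right notion is: any $Y$ with $Y\cap P_n\neq\0$ for infinitely many $n$ has the property that $\om\setminus Y$ misses infinitely many blocks, hence $\om\setminus Y\notin\mc{F}$, i.e.\ $Y$ is not in $\mc{F}^*$'s dual... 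Let me restate cleanly.

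Here is the clean approach. Since $\mc{F}$ is meager, fix the partition $(P_n)_{n\in\om}$ from Theorem \ref{talag}. I claim that any $X\in[\om]^\om$ with the property that $\{n:X\cap P_n=\0\}$ is infinite lies in $\mc{F}^+$: indeed, for such $X$ and any $F\in\mc{F}$, since $F\cap P_n\neq\0$ for all large $n$ while $X$ contains entire infinitely many blocks $P_n$ (after thinning we can even arrange $P_n\subseteq X$ for infinitely many $n$, or at least $X\cap P_n\ne\emptyset$ for infinitely many $n$ while covering those blocks), the intersection $X\cap F$ is infinite. So the family of sets that are unions of infinitely many complete blocks, together with their supersets, sits inside $\mc{F}^+$. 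Now inside this family I would build a tower by a straightforward bookkeeping/diagonalization of length $\mf{t}$ (or just recursively): start with $X_0=\bigcup\{P_n:n\in E_0\}$ for some infinite co-infinite $E_0\subseteq\om$, and at each step shrink by removing infinitely many blocks while keeping infinitely many, mirroring any standard construction of a tower of length $\mf{t}$ in $[\om]^\om$ but carried out in the countable index set $\{n:$ "$n$ used"$\}\subseteq\om$ of blocks. Concretely: a tower $(E_\al)_{\al<\mf{t}}$ in $([\om]^\om,\subseteq^*)$ exists by definition of $\mf{t}$; set $T_\al=\bigcup\{P_n:n\in E_\al\}$. Then $T_\be\subseteq^* T_\al$ for $\al\le\be$ since $E_\be\subseteq^* E_\al$ and each $P_n$ is finite; each $T_\al\in\mc{F}^+$ by the claim above (as $\om\setminus E_\al$ is infinite); and $(T_\al)_{\al<\mf{t}}$ has no pseudointersection in $[\om]^\om$, because any infinite $Y\subseteq^* T_\al$ for all $\al$ would, after intersecting with each block, yield (via the finiteness of blocks) an infinite $E\subseteq\om$ with $E\subseteq^* E_\al$ for all $\al$, contradicting that $(E_\al)$ is a tower.

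The main obstacle — really the only point needing care — is verifying the no-pseudointersection step: from a putative pseudointersection $Y$ of $(T_\al)_{\al<\mf{t}}$ I need to extract a pseudointersection of $(E_\al)_{\al<\mf{t}}$. The natural candidate is $E=\{n:Y\cap P_n\neq\0\}$. I must check $E$ is infinite (clear, since $Y$ is infinite and the $P_n$ partition $\om$) and that $E\subseteq^* E_\al$ for each $\al$: if $n\in E\setminus E_\al$ then $Y$ meets $P_n$ but $T_\al\cap P_n=\0$, so this contributes to $Y\setminus T_\al$; since $Y\subseteq^* T_\al$ and the blocks are finite, only finitely many such $n$ can occur, giving $E\setminus E_\al$ finite. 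That closes the argument. I should also note we implicitly use $\mrm{cf}(\mf{t})=\mf{t}>\om$ only insofar as the definition of tower already packages non-diagonalizability, so nothing extra is needed; and that the tower we produced automatically has regular height $\mf{t}$. One should double-check that $\mc{F}^+$ is the intended target (it is, by the statement), and that we never needed $\mc{F}$ to be anything beyond meager — the partition is the entire input.
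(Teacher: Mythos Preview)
Your argument is essentially the same as the paper's: pull back a classical tower $(E_\al)$ along the Talagrand partition via $T_\al=\bigcup\{P_n:n\in E_\al\}$, and recover a pseudointersection of the $E_\al$ from any putative pseudointersection of the $T_\al$ by looking at which blocks are met. One correction to your write-up: the ``claim'' you state (that any infinite $X$ with $\{n:X\cap P_n=\0\}$ infinite lies in $\mc{F}^+$) is false in general, and the parenthetical ``(as $\om\setminus E_\al$ is infinite)'' is irrelevant; what you actually need and use---correctly---is that any set containing infinitely many \emph{complete} blocks $P_n$ is in $\mc{F}^+$, which holds simply because each $F\in\mc{F}$ meets all but finitely many blocks.
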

\begin{proof}
Applying Theorem \ref{talag}, fix a partition $(P_n)_{n\in\om}$ which witnesses that $\mc{F}$ is meager, and let $(T_\al)_{\al<\ka}$ be a (classical) tower. For each $\al$ let $T'_\al=\bigcup\{P_n:n\in T_\al\}$. Clearly $T'_\al\in\mc{F}^+$ for each $\al$. We claim that $(T'_\al)_{\al<\ka}$ is a tower. It is clearly $\subseteq^*$-descending. Let $X\subseteq\om$ be infinite. Then the set $Y=\{n\in\om:X\cap P_n\ne\0\}$ is also infinite, and hence there is an $\al$ such that $Y\nsubseteq^*T_\al$. For all $n\in Y\setminus T_\al$ the set $P_n\cap (X\setminus T'_\al)=P_n\cap X$ is not empty, in particular $X\setminus T'_\al$ is infinite.
\end{proof}

What can we say about non-meager filters, in particular, about ultrafilters (when $\mc{F}^+=\mc{F}$)? First we show that basic cardinal characteristics do not contradict the existence of towers in ultrafilters. We define $\mrm{non}^*$ for filters as well (in the literature, especially in the case of ultrafilters, it is also called the {\em $\pi$-character} of the filter):
\[ \mrm{non}^*(\mc{F})=\mrm{non}^*(\mc{F}^*)=\min\big\{|\mc{X}|:\mc{X}\subseteq [\om]^\om\;\text{and}\;\forall\;F\in \mc{F}\;\exists\;X\in\mc{X}\;X\subseteq^* F\big\}.\]

\begin{fact}\label{nongeqb}
$\mrm{non}^*(\mc{F})\geq \mf{b}$ for every non-meager filter $\mc{F}$ (in particular, for every ultrafilter).
\end{fact}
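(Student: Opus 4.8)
The plan is to show that any family $\mc{X}\subseteq [\om]^\om$ of size $<\mf{b}$ which ``generates'' $\mc{F}$ from below (i.e.\ every $F\in\mc{F}$ contains some $X\in\mc{X}$ mod finite) forces $\mc{F}$ to be meager, contradicting the hypothesis. The natural route is via the Sierpi\'nski--Talagrand characterization (Theorem \ref{talag}): to prove $\mc{F}$ meager it suffices to produce a single interval partition $(P_n)_{n\in\om}$ of $\om$ such that every $F\in\mc{F}$ meets all but finitely many of the $P_n$. Since each $F\in\mc{F}$ contains some $X\in\mc{X}$ mod finite, it is enough to find such a partition that works for every $X\in\mc{X}$, i.e.\ $\forall X\in\mc{X}\ \forall^\infty n\ (X\cap P_n\neq\0)$.

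First I would, for each $X\in\mc{X}$, define the function $f_X\in\om^\om$ enumerating $X$ in increasing order (or, more conveniently, $f_X(n)=$ the $n$-th element of $X$). The condition ``$X\cap [k,\ell)\neq\0$'' is controlled by how fast $f_X$ grows: if an interval $[k,\ell)$ is long enough relative to $f_X$, it must contain a point of $X$. Concretely, define $g_X(n)=f_X(n+1)$ or better $g_X(n)=\min\{m: X\cap [n,m)\neq\0\}=\min(X\setminus n)+1$; then an interval partition $(P_n)$ whose $n$-th block $P_n=[a_n,a_{n+1})$ satisfies $a_{n+1}\geq g_X(a_n)$ will meet $X$ in the block $P_n$. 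Since $|\mc{X}|<\mf{b}$, the family $\{g_X:X\in\mc{X}\}$ is $\le^*$-bounded; fix $g\in\om^\om$ with $g_X\le^* g$ for all $X\in\mc{X}$, and w.l.o.g.\ $g$ strictly increasing. Now build the partition by $a_0=0$, $a_{n+1}=g(a_n)$. For a fixed $X$, pick $N$ with $g_X(m)\le g(m)$ for all $m\ge N$; then for every $n$ with $a_n\ge N$ we get $a_{n+1}=g(a_n)\ge g_X(a_n)$, so $P_n\cap X\neq\0$. Hence $\forall^\infty n\ (P_n\cap X\neq\0)$, as required.

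This gives: if $\mc{F}$ is generated from below by fewer than $\mf{b}$ sets, then $\mc{F}$ is meager. Equivalently, $\mrm{non}^*(\mc{F})\ge\mf{b}$ whenever $\mc{F}$ is non-meager. The final step is simply to note that every ultrafilter is non-meager (an ultrafilter with the Baire property would be meager by Theorem \ref{talag}, but then so would its ``complement'' $\{\om\setminus F:F\in\mc{F}\}=\mc{F}$ itself, i.e.\ both $\mc{F}$ and its image under $A\mapsto\om\setminus A$ are meager, which is impossible since their union is all of $\mc{P}(\om)$ modulo the trivial sets and $\mc{P}(\om)$ is not meager in itself). So the statement applies in particular to ultrafilters.

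The one point requiring a little care — the ``main obstacle,'' such as it is — is the passage from ``$\mc{X}$ witnesses $\mrm{non}^*(\mc{F})$'' (which speaks of $X\subseteq^* F$) to a genuine \emph{generating} property, and then the bookkeeping with the mod-finite errors: the partition $(P_n)$ is chosen to meet each $X\in\mc{X}$ \emph{eventually}, and an arbitrary $F\in\mc{F}$ only contains some $X$ modulo a finite set, so $F$ meets $P_n$ for all sufficiently large $n$ as well, which is exactly what Theorem \ref{talag}(iii) demands. None of this is deep, but one must be careful that the "$\le^*$'' in the definition of $\mf{b}$ and the several "$\forall^\infty$'' quantifiers compose correctly; everything goes through because finitely many finite modifications do not affect an eventual statement.
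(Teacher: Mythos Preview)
Your proof is correct and follows essentially the same approach as the paper: assume a small witnessing family $\mc{X}$ for $\mrm{non}^*(\mc{F})$, use $|\mc{X}|<\mf{b}$ to produce a single interval partition meeting every $X\in\mc{X}$ cofinitely often, and conclude via Theorem~\ref{talag} that $\mc{F}$ is meager. The only cosmetic difference is that the paper invokes the known reformulation of $\mf{b}$ as the least size of a $\sqsubseteq^*$-unbounded family of interval partitions (citing \cite{Bl}) and pulls the dominating partition $Q$ directly from that, whereas you build $Q$ by hand from a $\leq^*$-bound $g$ on the functions $g_X(n)=\min(X\setminus n)+1$; these two steps are the standard translation between the function and partition pictures of $\mf{b}$, so the arguments are the same in substance.
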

\begin{proof}
Assume that $\mc{X}\subseteq [\om]^\om$ witnesses $\mrm{non}^*(\mc{F})<\mf{b}$.
We can find a partition $(Q_m)_{m\in\om}$ of $\om$ into finite sets such that $\forall$ $X\in\mc{X}$ $\forall^\infty$ $m$ $X\cap Q_m\ne\0$. Why? If $P=(P_n)_{n\in\om}$ and $Q=(Q_m)_{m\in\om}$ are partitions of $\om$ into finite sets then write $P\sqsubseteq^* Q$ iff $\forall^\infty$ $m$ $\exists$ $n$ $P_n\subseteq Q_m$. We know (see \cite{Bl}) that $\mf{b}=\{|\mc{P}|:\mc{P}$ is a family of partitions of $\om$ into finite sets and $\mc{P}$ is $\sqsubseteq^*$-unbounded$\}$. To every $X\in\mc{X}$ fix a partition $P_X=(P^X_n)_{n\in\om}$ of $\om$ into finite sets such that $X\cap P^X_n\ne\0$ for every $n$. Then there is a partition $Q=(Q_m)_{m\in\om}$ such that $P_X\sqsubseteq^* Q$ for every $X\in\mc{X}$, i.e. $\forall$ $X\in\mc{X}$ $\forall^\infty$ $m$ $\exists$ $n$ $P^X_n\subseteq Q_m$, in particular, $\forall$ $X\in\mc{X}$ $\forall^\infty$ $m$ $X\cap Q_m\ne\0$. Then $Q$ witnesses that $\mc{F}$ is meager, a contradiction.
\end{proof}

If $\mc{F}$ is an ultrafilter, then we know a bit more, namely, it is straightforward to see that $\mrm{non}^*(\mc{F})\geq \mf{r}$ where $\mf{r}$ is the {\em reaping number} and it is well-known that $\mf{r}\geq \mf{b}$ (see \cite{Bl}).

We show that the existence of an ultrafilter which does not contain towers is independent of $\mrm{ZFC}$. Actually, this is more or less known, we will prove a bit more.

We know (see \cite{ujref}) that consistently every ultrafilter contains towers. Here we present an alternative proof of this result. We will need the axiom $\mrm{NCF}$, {\em Near Coherence of Filters}: The Kat\v{e}tov-Blass order on ultrafilters in downward directed, that is, for every two ultrafilters $\mc{U}_0,\mc{U}_1$ there is an ultrafilter $\mc{V}$ such that $\mc{V}\leq_\mrm{KB}\mc{U}_0$ and $\mc{V}\leq_\mrm{KB}\mc{U}_1$. For the consistency of $\mrm{NCF}$ and its applications see \cite{ncf1}, \cite{ncf2}, and \cite{ncf3}. For example, we know that $\mrm{NCF}$ implies that $\mf{b}<\mf{d}$.

Notice that the first part of Theorem \ref{filgen} follows from Fact \ref{nongeqb} because (by definition) $\mrm{non}^*(\mc{F})\leq \cof^*(\mc{F})$. As we already mentioned (after Theorem \ref{filgen}), in the case of $\mf{b}<\mf{d}$ (e.g. under $\mrm{NCF}$) the example of $\mf{b}$ sets which generate a non-meager filter, can be chosen as a tower.

\begin{thm}
$\mrm{NCF}$ implies that every ultrafilter contains a tower.
\end{thm}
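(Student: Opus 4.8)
The plan is to exploit $\mathrm{NCF}$ to reduce an arbitrary ultrafilter to a fixed "nice" one via a finite-to-one map and then transfer a tower along this map. Let $\mc{U}$ be an arbitrary ultrafilter. Since $\mathrm{NCF}$ implies $\mf{b}<\mf{d}$, by the remark after Theorem \ref{filgen} there is a non-meager filter $\mc{F}_0$ generated by a tower $(T_\al)_{\al<\mf{b}}$; extend $\mc{F}_0$ to an ultrafilter $\mc{U}_0$. Now apply $\mathrm{NCF}$ to $\mc{U}$ and $\mc{U}_0$ to obtain an ultrafilter $\mc{V}$ with $\mc{V}\le_\mrm{KB}\mc{U}$ and $\mc{V}\le_\mrm{KB}\mc{U}_0$, witnessed by finite-to-one maps $f:\om\to\om$ (for $\mc{U}$) and $g:\om\to\om$ (for $\mc{U}_0$). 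The key point is that, since these are finite-to-one, for any $A\in\mc{V}$ we have $f^{-1}[A]\in\mc{U}$ and $g^{-1}[A]\in\mc{U}_0$, and conversely finite-to-one preimages of the $\subseteq^*$-structure behave well; in particular I would first push the tower $(T_\al)_{\al<\mf{b}}$ in $\mc{U}_0$ down to a tower in $\mc{V}$, and then pull it back up to a tower in $\mc{U}$.

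\textbf{The main steps.} First I would verify that a finite-to-one map transfers towers \emph{downward} along $\le_\mrm{KB}$: if $g:\om\to\om$ is finite-to-one and every $g$-preimage of a member of $\mc{V}$ lies in $\mc{U}_0$, and $(T_\al)$ is a tower generating a filter contained in $\mc{U}_0$, then I want to show $\mc{V}$ contains a tower. The natural candidate is $(g[T_\al])_{\al<\mf{b}}$ (or rather a suitably corrected version: images need not be $\subseteq^*$-descending in general, but since $g$ is finite-to-one and $T_\be\subseteq^* T_\al$, one does get $g[T_\be]\subseteq^* g[T_\al]$, because a point can be added to $g[T_\al]$ only from the finitely many preimages). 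For tallness/no-pseudointersection: if $X\subseteq^* g[T_\al]$ for all $\al$, then $g^{-1}[X]$ is a pseudointersection-like set; more carefully, $g^{-1}[X]\cap T_\al$ must be infinite cofinally, and one derives a pseudointersection of $(T_\al)$, contradicting that $(T_\al)$ is a tower. This is the step I expect to require the most care, because images under finite-to-one maps do not commute cleanly with $\subseteq^*$ and with "infinite intersection"; the bookkeeping mirrors the argument in the proof of Fact \ref{kbtower} but run in the image rather than the preimage direction.

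Once $\mc{V}$ contains a tower $(S_\al)_{\al<\mf{b}}$, the second step is to pull it back along $f$ to $\mc{U}$: set $T'_\al=f^{-1}[S_\al]$. Since $f$ is finite-to-one and $\mc{V}\le_\mrm{KB}\mc{U}$ is witnessed by $f$, each $T'_\al\in\mc{U}$, the sequence is $\subseteq^*$-descending because $f^{-1}$ is monotone and preserves $\subseteq^*$ (finite symmetric differences have finite preimages), and $(T'_\al)$ has no pseudointersection by exactly the argument of Fact \ref{kbtower}: a pseudointersection $X$ of $(f^{-1}[S_\al])$ would, via the partition $(f^{-1}(\{n\}))_n$ into finite sets, yield a pseudointersection of $(S_\al)$ in $\mc{V}$. (One must also note $\mf{b}=\mrm{cf}(\mf{b})>\om$ so the usual "large enough $\al$" arguments go through, and that $f^{-1}[S_\al]$ is infinite, which holds since $S_\al$ is infinite and $\ran(f)$ meets it infinitely — indeed $S_\al\in\mc{V}$ and $\ran(f)$ is cofinite-modulo the filter.) Assembling these two transfers gives a tower of height $\mf{b}$ inside $\mc{U}$, completing the proof. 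The one subtlety to double-check is that $f$ and $g$ can indeed be taken finite-to-one rather than merely arbitrary — but this is exactly the content of the Kat\v{e}tov-\emph{Blass} formulation of $\mathrm{NCF}$, so no extra work is needed.
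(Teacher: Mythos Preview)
Your overall architecture matches the paper's exactly: start with a tower $(T_\al)_{\al<\mf{b}}$ generating a non-meager filter, extend to an ultrafilter, use $\mrm{NCF}$ to get a common lower bound $\mc{V}$, push the tower down to $\mc{V}$ via images under a finite-to-one map, then pull it back to the target ultrafilter via preimages (the second transfer is just Fact~\ref{kbtower}, as you say). The gap is in the first transfer, and it is real.

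Your sketch for showing that $(g[T_\al])_{\al<\mf{b}}$ has no pseudointersection is: ``if $X\subseteq^* g[T_\al]$ for all $\al$, then $g^{-1}[X]\cap T_\al$ must be infinite cofinally, and one derives a pseudointersection of $(T_\al)$.'' This does not work, and cannot be made to work by bookkeeping alone. Knowing $X\subseteq^* g[T_\al]$ only tells you that for almost every $n\in X$ \emph{some} point of the finite block $g^{-1}(\{n\})$ lies in $T_\al$; which point may vary with $\al$, so you get no single pseudointersection of $(T_\al)$. Concretely, if $g$ collapses pairs $\{2n,2n+1\}\mapsto n$ and each $T_\al$ meets every pair $\{2n,2n+1\}$ (which is exactly what happens when the tower generates a \emph{meager} filter with witness partition $(\{2n,2n+1\})_n$), then $g[T_\al]=\om$ for every $\al$ and $(g[T_\al])$ is not a tower at all. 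So something specific to the non-meager choice is needed, and you never invoke it.

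The paper uses non-meagerness precisely here. Given an infinite $X$ (which we may assume lies in $\ran(g)$), split into cases according to whether $g^{-1}[X]$ is in $\mc{F}^*$ or $\mc{F}^+$. If $g^{-1}[X]\in\mc{F}^*$, some $T_\al$ meets $g^{-1}[X]$ only finitely, hence $X\setminus g[T_\al]$ is infinite. If $g^{-1}[X]\in\mc{F}^+$, then $\mc{F}\upharpoonright g^{-1}[X]$ is again non-meager, and the finite blocks $(g^{-1}(\{n\}))_{n\in X}$ form a partition of $g^{-1}[X]$; non-meagerness now forces some $T_\al$ to miss infinitely many entire blocks, i.e.\ $g^{-1}(\{n\})\cap T_\al=\emptyset$ for infinitely many $n\in X$, whence $|X\setminus g[T_\al]|=\om$. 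This is the missing idea; once you insert it, your proof is complete and coincides with the paper's.
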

\begin{proof}
Let $\mc{U}_0$ be an arbitrary (non-principal) ultrafilter on $\om$. Fix a tower $(T_\al)_{\al<\mf{b}}$ such that the generated filter $\mc{F}$ is not meager, and let $\mc{U}_1$ be an extension of $\mc{F}$ to an ultrafilter. Then there is an ultrafilter $\mc{V}\leq_\mrm{KB}\mc{U}_0,\mc{U}_1$. We show that $\mc{V}$ and $\mc{U}_0$ also contain towers.

\smallskip
Fix a finite-to-one $f:\om\to\om$ such that $f^{-1}[X]\in \mc{U}_1$ for every $X\in\mc{V}$; actually $f^{-1}[X]\in\mc{U}_1$ iff $X\in\mc{V}$ because they are ultrafilters, hence $f[T_\al]\in\mc{V}$ for every $\al$. We show that the $\subseteq^*$-decreasing sequence $(f[T_\al])_{\al<\mf{b}}$ in $\mc{V}$ is a tower. Consider $f$ as a partition of $\om$ into nonempty finite sets: $(P_n=f^{-1}[\{n\}]:n\in\ran(f))$. Let $X\subseteq\om$ be infinite, we can assume that $X\subseteq \ran(f)$. We show that $X$ is not a pseudointersection $(f[T_\al])_{\al<\mf{b}}$.

If $f^{-1}[X]\in\mc{F}^*$, i.e. $f^{-1}[X]\cap T_\al$ is finite for an $\al$, then $|X\setminus f[T_\al]|=\om$, and we are done. Assume that $f^{-1}[X]\in\mc{F}^+$. Applying that $\mc{F}$ is not meager, we know that $\mc{F}\upharpoonright f^{-1}[X]$ is not meager either. Since $(P_n:n\in X)$ is a partition of $f^{-1}[X]$, there is an $\al$ such that $\exists^\infty$ $n\in X$ $T_\al\cap P_n=\0$. Clearly, $|X\setminus f[T_\al]|=\om$, and hence $X$ is not a pseudointersection $(f[T_\al])_{\al<\mf{b}}$.

\smallskip
Now fix a finite-to-one $g:\om\to\om$ such that $g^{-1}[X]\in\mc{U}_0$ iff $X\in\mc{V}$. It is easy to see that if $(E_\al)_{\al<\ka}$ is a tower in $\mc{V}$ then $(g^{-1}[E_\al])_{\al<\ka}$ is a tower in $\mc{U}_0$.
\end{proof}

To show that consistently there are ultrafilters which contain no towers, we will use an old but surprising result of set-theoretic topology. Let $X$ be a Hausdorff topological space. The {\em $\pi$-weight} of $X$  is defined as follows:
\[ \pi w(X)=\min\big\{|\mc{B}|:\mc{B}\;\text{is a family of open sets}\;\text{and}\;\forall\;U\subseteq X\;\text{open}\;\exists\;V\in\mc{B}\;V\subseteq U\big\}.\]
Let $\ka>\om$ be a cardinal. A set $Y\subseteq X$ is a {\em $P_\ka$-set} (in $X$) if each intersection of $<\ka$ many neighborhoods of $Y$ is again a neighborhood of $Y$.

We assume that the reader is familiar with the basics of $\be\om$ and $\om^*=\be\om\setminus\om$. For example, we know that $\pi w(\om^*)=\mf{c}$, and that every tower $(T_\al)_{\al<\ka}$ generates a nowhere dense closed $P_{\mrm{cf}(\ka)}$-set $\{\mc{U}\in\om^*:\forall$ $\al<\ka$ $T_\al\in\mc{U}\}$ in $\om^*$.

\begin{thm} {\em (see \cite{kmm})} If $X$ is compact and $\pi w(X)\leq\ka(>\om)$, then closed nowhere dense $P_\ka$-sets do not cover $X$.
\end{thm}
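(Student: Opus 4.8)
The plan is to argue by contradiction: assuming a hypothetical cover, I would manufacture a single point of $X$ that lies outside every member of it. So suppose $\{F_i:i\in I\}$ is a family of closed nowhere dense $P_\ka$-subsets of $X$ with $\bigcup_{i\in I}F_i=X$, and fix a $\pi$-base $\mc{B}$ of $X$ with $|\mc{B}|\le\ka$; since $X$ is compact Hausdorff, hence regular, I would arrange that $\mc{B}$ is a regular $\pi$-base, i.e.\ for every nonempty open $U$ there is $B\in\mc{B}$ with $\overline{B}\subseteq U$. It is worth isolating two elementary facts about $P_\ka$-sets first: an arbitrary union of $P_\ka$-sets is again a $P_\ka$-set (given fewer than $\ka$ neighbourhoods of the union, intersect the open witnesses one obtains for each summand), and a closed set $F$ is a $P_\ka$-set precisely when the closure of any union of fewer than $\ka$ closed sets disjoint from $F$ is still disjoint from $F$.

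The skeleton of the argument is a transfinite construction of a $\subseteq$-decreasing sequence of nonempty open sets. One builds, along a bookkeeping $\xi\mapsto i_\xi$ that eventually lists all of $I$, nonempty open $V_\xi$ with $\overline{V_{\xi+1}}\subseteq V_\xi$ and $\overline{V_{\xi+1}}\cap F_{i_\xi}=\0$; the successor step is possible because $F_{i_\xi}$ is nowhere dense (so $V_\xi\setminus F_{i_\xi}$ is nonempty open) and $X$ is regular (so this set contains the closure of a nonempty open set). Compactness of $X$ then yields a point $p\in\bigcap_\xi\overline{V_\xi}$, and by construction $p$ avoids every $F_i$, contradicting that the $F_i$ cover $X$. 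For the limit stages of this recursion to be manageable one needs them to have cofinality $<\ka$, and this is where the hypothesis $\pi w(X)\le\ka$ must first be exploited: using $\mc{B}$, pick for each $B\in\mc{B}$ a point $x_B\in B$ and an index $i_B$ with $x_B\in F_{i_B}$, so that the $\le\ka$ sets $F_{i_B}$ already have dense union; pushing this observation through on the remaining nowhere dense closed part — and using the fact above that an arbitrary union of the $P_\ka$-sets is a $P_\ka$-set — should reduce the problem to $|I|\le\ka$, after which the recursion has length $\ka$ and every limit below $\ka$ automatically has cofinality $<\ka$.

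The hard part will be the limit stages themselves. At a limit $\de$ the set $K_\de=\bigcap_{\xi<\de}\overline{V_\xi}$ is nonempty and compact and meets no $F_{i_\xi}$ with $\xi<\de$, but a priori it may be nowhere dense, and then there is no open $V_\de\subseteq K_\de$ with which to continue. This is exactly where the $P_\ka$-property of the $F$'s (together with $\cf(\de)<\ka$) does the work, and making it do so is the crux: the plan is to carry along the recursion a witness from $\mc{B}$ that each $V_\xi$ has nonempty interior, and at a limit $\de$ to combine the closed ``neighbourhood complements'' $\overline{V_{\xi+1}}$ accumulated cofinally below $\de$ — each disjoint from the corresponding $F_{i_\xi}$ — via the $<\ka$-closure characterisation of $P_\ka$-sets into a single open neighbourhood of the relevant union of $F$'s that is still a proper subset of $X$, thereby keeping $K_\de$ from collapsing to a nowhere dense set. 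I expect verifying precisely this — that $P_\ka$-ness prevents the $K_\de$ from becoming nowhere dense — together with the accompanying reduction of $|I|$ to $\le\ka$, to be the only substantial points; the successor steps, the terminal appeal to compactness, and the easy $\pi$-base observations are routine once these are in place.
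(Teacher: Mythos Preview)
The paper does not prove this theorem: it is quoted from \cite{kmm} (Kunen--van Mill--Mills) and used only to derive the subsequent corollary about $\omega^*$. So there is no ``paper's own proof'' to compare your attempt against.

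As for your sketch itself, you correctly isolate the two substantial points, but neither is actually established, and at least one appears to fail as written.

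\smallskip
\emph{The reduction to $|I|\le\ka$.} Picking $x_B\in B$ and $i_B$ with $x_B\in F_{i_B}$ for each $B\in\mc{B}$ yields at most $\ka$ sets whose union is \emph{dense}, not all of $X$. Your phrase ``pushing this observation through on the remaining nowhere dense closed part'' has no clear meaning: the complement of a dense set has empty interior, but it need not be empty, and there is no evident way to cover its points with only $\ka$ further $F_i$'s. The fact that arbitrary unions of $P_\ka$-sets are $P_\ka$-sets (which you state correctly) does not by itself close this gap.

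\smallskip
\emph{The limit stage.} This is the heart of the matter, and your description does not make the mechanism visible. You propose to use that the closed sets $\overline{V_{\xi+1}}$ are disjoint from $F_{i_\xi}$ and to invoke the characterisation ``$<\ka$ many closed sets disjoint from a $P_\ka$-set have closure still disjoint from it''. But each $\overline{V_{\xi+1}}$ is disjoint only from its own $F_{i_\xi}$, not from the union $\bigcup_{\xi<\de}F_{i_\xi}$, so the $P_\ka$-property of that union cannot be applied to the family $\{\overline{V_{\xi+1}}:\xi<\de\}$; and applying it to a single $F_{i_\eta}$ gives nothing, since the $\overline{V_\xi}$'s are nested and their union is already closed. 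In short, nothing in the sketch explains why $K_\de=\bigcap_{\xi<\de}\overline{V_\xi}$ should have nonempty interior. This is exactly the point where a genuine idea is needed, and the proposal does not supply one.
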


Applying this result to $\om^*$, we obtain the following:

\begin{cor}
Assume $\mrm{CH}$. Then there is an ultrafilter which contains no towers.
\end{cor}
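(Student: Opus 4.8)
The plan is to obtain this immediately from the theorem of \cite{kmm} together with the facts about $\om^*$ recalled just above the statement. First I would observe that, under $\mrm{CH}$, every tower has length exactly $\om_1$: the length $\ka$ of a tower is an infinite cardinal with $\om<\mrm{cf}(\ka)$ (a cofinal subsequence of order type $\om$ would be a tower of length $\om$, contradicting $\om<\mf{t}$), and $\ka\le\mf{c}=\om_1$, whence $\ka=\om_1$ and in particular $\mrm{cf}(\ka)=\om_1$. Consequently, by the remark preceding the statement, every tower $\mc{T}=(T_\al)_{\al<\om_1}$ generates a closed nowhere dense $P_{\om_1}$-set
\[ N_{\mc{T}}=\{\mc{U}\in\om^*:\forall\,\al<\om_1\;\;T_\al\in\mc{U}\}\subseteq\om^*. \]

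Next I would use $\pi w(\om^*)=\mf{c}=\om_1$ and apply the theorem of \cite{kmm} with $X=\om^*$ and $\ka=\om_1$: no family of closed nowhere dense $P_{\om_1}$-sets covers $\om^*$, so in particular the family $\{N_{\mc{T}}:\mc{T}\text{ is a tower}\}$ does not cover $\om^*$, however large it may be. Choosing any $\mc{U}\in\om^*\setminus\bigcup_{\mc{T}}N_{\mc{T}}$ yields an ultrafilter such that for every tower $(T_\al)_{\al<\om_1}$ there is an $\al<\om_1$ with $T_\al\notin\mc{U}$; this is precisely the statement that $\mc{U}$ contains no tower.

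There is no real obstacle here: the entire content is carried by the cited topological theorem, and the only things to check are bookkeeping — that under $\mrm{CH}$ a tower cannot have length different from $\om_1$ (the cofinality argument above) and that ``$\mc{U}$ contains the tower $\mc{T}$'' is by definition the same as ``$\mc{U}\in N_{\mc{T}}$''. Both are routine. The one point worth stating explicitly in the write-up is that \cite{kmm} forbids covering $\om^*$ by \emph{arbitrarily many} such $P_{\om_1}$-sets, not merely by $\le\om_1$ many, since in general there are $2^{\om_1}$ towers.
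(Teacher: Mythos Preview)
Your proposal is correct and follows exactly the route the paper intends: under $\mrm{CH}$ every tower has cofinality $\om_1$, so the sets $N_{\mc{T}}$ are closed nowhere dense $P_{\om_1}$-sets in $\om^*$, and since $\pi w(\om^*)=\mf{c}=\om_1$ the theorem of \cite{kmm} gives an ultrafilter missed by all of them. Your closing observation that the cited theorem rules out covers by \emph{arbitrary} families of such $P_{\om_1}$-sets (not just small ones) is exactly the point that makes the argument go through.
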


\begin{rem}
Notice if $\mrm{NCF}$ holds and $\mf{c}$ is regular then all towers of height $\mf{c}$ are meager: Suppose not, then by the above argument every ultrafilter contains a tower of length $\mf{c}$, and therefore $\om^*$ is covered by $P_\mf{c}$ sets, contradiction. (We shall point out that it is unclear whether a tower of height $\mf{c}$ can exist under NCF.)
\end{rem}

Under stronger conditions, namely assuming $\diamondsuit_{\om_1}$, even more interesting counterexamples can be constructed:

\begin{thm}\label{seven} {\em (see \cite[Thm. 30]{seven})}
Assume $\diamondsuit_{\om_1}$. Then there exists an ultrafilter $\mc{U}$ on $\om$ such that for every $\mc{X}\subseteq \mc{U}$ either there is a $\mc{Y}\in [\mc{X}]^\om$ without a pseudointersection in $\mc{U}$, or  $\mc{X}$ has a pseudointersection. In particular, $\mc{U}$ does not contain towers.
\end{thm}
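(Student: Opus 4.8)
The plan is to construct $\mc{U}$ by a transfinite recursion of length $\om_1$ powered by $\diamondsuit_{\om_1}$, realising it as an increasing continuous union $\mc{U}=\bigcup_{\al<\om_1}\mc{F}_\al$ of filters in which each $\mc{F}_\al$ is generated by a single $\subseteq^*$-decreasing sequence $\langle T^\al_n:n<\om\rangle$. At limit stages (all of countable cofinality in $\om_1$) one re-presents the union as such a sequence by a diagonal argument, so this shape is preserved, and a routine book-keeping enumerating $\mc{P}(\om)$ under $\mrm{CH}$ and deciding one set per successor step makes $\mc{U}$ an ultrafilter. The first point is that a $\mc{U}$ of this shape satisfies the required dichotomy for free on \emph{bounded} families: if $\mc{X}\subseteq\mc{F}_\ga$ for a single $\ga<\om_1$, then any diagonal pseudointersection of $\langle T^\ga_n:n<\om\rangle$ lies $\subseteq^*$ below every member of $\mc{F}_\ga$ and hence is a pseudointersection of $\mc{X}$. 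So the only families that can be troublesome are the \emph{unbounded} ones, i.e.\ those whose members enter $\mc{U}$ at cofinally many stages; in particular every such family has size $\om_1$, and the whole difficulty is to deal with them.

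Before the construction, note that the closing assertion is immediate once $\mc{U}$ has the dichotomy: a tower $(T_\al)_{\al<\ka}\subseteq\mc{U}$ has $\mrm{cf}(\ka)>\om$, so every countable $\mc{Y}\subseteq\{T_\al:\al<\ka\}$ equals $\{T_\al:\al\in B\}$ with $\sup B<\ka$ and has the pseudointersection $T_{\sup B+1}\in\mc{U}$; thus the first alternative of the dichotomy fails for $\mc{X}=\{T_\al:\al<\ka\}$, while the second would hand $\mc{X}$ a pseudointersection, contradicting that it is a tower. Combining this with the remark on bounded families, it suffices to arrange that \emph{every unbounded $\mc{X}\subseteq\mc{U}$ has a countable subfamily with no pseudointersection in $\mc{U}$}.

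To secure this I would use $\diamondsuit_{\om_1}$, via a bijection $\om_1\to\mc{P}(\om)$ synchronised with the recursion, to produce guesses $\mc{X}_\al$ so that every unbounded $\mc{X}$ has $\mc{X}_\al=\mc{X}\cap\mc{F}_\al$ for stationarily many $\al$. At such a catching stage, a short argument using that $\al$ is a limit (if $\mc{X}_\al$ were bounded in depth by some $n^\ast$ then $T^\al_{n^\ast}$ already lies in some earlier $\mc{F}_{\be_0}$, which contradicts the fact that members of $\mc{X}$ keep entering after stage $\be_0$ outside $\mc{F}_{\be_0}$) shows that $\mc{X}_\al$ reaches arbitrarily deep in the generating chain of $\mc{F}_\al$; so one may pick a countable $\mc{Y}_\al\subseteq\mc{X}_\al$ of unbounded $\mc{F}_\al$-depth, and such a $\mc{Y}_\al$ automatically has no pseudointersection inside $\mc{F}_\al$. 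One then registers $\mc{Y}_\al$ as a \emph{commitment}: from then on, no term of the sequence $\langle T^\be_n\rangle$ chosen at any later stage $\be$ may be $\subseteq^*$ all of $\mc{Y}_\al$. Since a pseudointersection of $\mc{Y}_\al$ living in some later $\mc{F}_\be$ would have to almost-contain one of the generators $T^\be_n$, honouring every commitment guarantees that $\mc{Y}_\al$ never acquires a pseudointersection in $\mc{U}$ --- exactly what is needed for the $\mc{X}$ caught at $\al$.

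The hard part will be reconciling this list of commitments --- only countably many are active at any stage $\be<\om_1$ --- with the requirement that $\mc{U}$ be an ultrafilter. Deciding a prescribed set $A$ requires replacing $\langle T^\be_n\rangle$ by $\langle T^\be_n\cap A\rangle$ or by $\langle T^\be_n\cap(\om\setminus A)\rangle$ (whichever is legitimate), and it could happen that one of these makes some term $\subseteq^*$ all of a committed $\mc{Y}_\al$ while the other does so for a different committed family, leaving no admissible choice. Overcoming this is the technical core: one must organise the recursion so that an admissible decision is always available, presumably by a priority-style argument in which a commitment that is broken is re-established at a later catching stage for the same unbounded $\mc{X}$ --- there are stationarily many --- once the offending set has been absorbed, arguing by a pressing-down argument that for each unbounded $\mc{X}$ some commitment is eventually never broken again. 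Isolating the correct invariant for this balancing act, and proving the accompanying extension lemma (an $\om$-generated filter none of whose generators is $\subseteq^*$ all of any of countably many prescribed families can always be extended to decide one further set while preserving that property), is where I expect the real work to be concentrated.
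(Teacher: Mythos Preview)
The paper does not give its own proof of this theorem; it is stated with a citation to Kunen--Medini--Zdomskyy (\cite[Thm.~30]{seven}) and then used as a black box. So there is nothing in the paper to compare your proposal against.

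On the substance of your sketch: the overall architecture---build $\mc{U}=\bigcup_{\al<\om_1}\mc{F}_\al$ with each $\mc{F}_\al$ generated by a single $\subseteq^*$-decreasing $\om$-chain, observe that bounded $\mc{X}\subseteq\mc{U}$ get a pseudointersection for free, and use $\diamondsuit$ to guess unbounded $\mc{X}$'s and commit to a countable $\mc{Y}_\al\subseteq\mc{X}$ of unbounded depth---is the right shape, and your derivation of ``no towers'' from the dichotomy is correct. The genuine gap is the one you yourself flag at the end. The extension lemma you formulate is false as stated: take $\mc{Y}_0=\{Y\}$, $\mc{Y}_1=\{\om\setminus Y\}$, any chain $\langle T_n\rangle$ with both $T_n\cap Y$ and $T_n\setminus Y$ infinite, and $A=Y$; then neither side can be chosen. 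You anticipate this and propose a priority-with-reinjury repair together with a pressing-down argument, but as written that does not go through: if the commitment made for $\mc{X}$ at a catching stage $\al$ is broken at some later $g(\al)>\al$, there is no regressive function available on the stationary set of catching stages, and nothing you have said prevents every commitment for a fixed $\mc{X}$ from eventually being injured. The construction in \cite{seven} does not proceed by allowing and repairing injuries; rather, the bookkeeping and the choice of what to commit to are arranged so that the commitments can be maintained outright while still deciding every set. You should consult the original for the precise mechanism; your outline identifies the right obstacle but does not yet contain the idea that overcomes it.
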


The above ultrafilter in fact is very far from being a P-point (see below) --- every P-subfilter of this ultrafilter is
meager. In fact, under $\mrm{CH}$ every P-point is generated by a tower. The following theorem shows that under $\neg\mrm{CH}$, being a non P-point is not essential: it is consistent that there is even a selective ultrafilter which does not contain towers. Recall that an ultrafilter $\mc{U}$ on $\om$ is {\em selective} if one of the following equivalent properties holds:
\begin{itemize}
\item[(S1)] For every partition $\om=\bigcup\{A_n:n\in\om\}$ into sets $A_n\notin\mc{U}$, there is an $X\in\mc{U}$ such that $|A_n\cap X|\leq 1$ for every $n$.
\item[(S2)] For every $f:\om\to\om$ there exists a $Y\in \mc{U}$ such that $f\upharpoonright Y$ is one-to-one or constant.
\item[(S3)] For every $n,k\in\om\setminus 2$ and coloring $c:[\om]^n\to k$ there is a $c$-homogeneous $H\in\mc{U}$ (that is, $|c[[H]^n]|=1$).
\item[(S4)] For every analytic $\mc{A}\subseteq [\om]^\om$ there is an $H\in\mc{U}$ such that $[H]^\om\subseteq\mc{A}$ or $[H]^\om\cap\mc{A}=\0$.
\end{itemize}
If $\mc{U}$ is selective, then it is a {\em P-point}, that is, one of the following equivalent properties holds:
\begin{itemize}
\item[(P1)] Every countable subset of $\mc{U}$ has a pseudointersection in $\mc{U}$.
\item[(P2)] For every $f:\om\to\om$ there exists an $Y\in \mc{U}$ such that $f\upharpoonright Y$ is finite-to-one or constant.
\item[(P3)] For every $k\in\om\setminus 2$ and coloring $c:[\om]^2\to k$ there is an $H\in\mc{U}$ and an $f:\om\to\om$ such that $\{\{x,y\}\in [H]^2: f(x)\leq y\}$ is $c$-homogeneous.
\end{itemize}
For more details on peculiar ultrafilters see e.g. \cite{blassuf}.

\smallskip
If $\ka>\om$ is regular and $S\subseteq\ka$ is stationary, then  $\diamondsuit(S)$ stands for the classical diamond principle on $S$, that is,
it holds iff there is sequence $(A_\al)_{\al\in S}$ such that $\{\al\in S:X\cap\al=A_\al\}$ is stationary in $\ka$ for every $X\subseteq\ka$. In particular, $\diamondsuit_\ka$ stands for $\diamondsuit(\ka)$. $\diamondsuit(S)$ clearly implies that $2^{<\ka}=\ka$. Conversely, if $2^{<\ka}=\ka$ and $S\subseteq\ka$ is stationary, then $(2^{<\ka},\supseteq)$ is $<\ka$-closed, $\ka^+$-cc, and  forces $\diamondsuit(S)$.

For regular $\ka>\om_1$ let $S^\ka_{{\geq\om_1}}=\{\al<\ka:\mrm{cf}(\al)>\om\}$. If $\lam\geq \om_1$ and $S\subseteq \{\al<\lam^+:\mrm{cf}(\al)\ne\mrm{cf}(\lam)\}$ is stationary in $\lam^+$, then the principle $\diamondsuit(S)$ is equivalent with $2^\lam=\lam^+$ (see \cite{shdiam}). In particular, if $\ka=\lam^+=2^\lam>\om_2$ then $\diamondsuit(S^\ka_{\geq\om_1})$ holds because $\{\al<\ka:\mrm{cf}(\al)>\om,\mrm{cf}(\al)\ne\mrm{cf}(\lam)\}\subseteq S^\ka_{\geq\om_1}$ is stationary.

In the next theorem we will need $\diamondsuit(S^\mf{c}_{\geq\om_1})$ together with $\mf{t}=\mf{c}$. If we start with any model $V$ satisfying $2^{<\mf{c}}=\mf{c}=\mf{t}>\om_1$ then $V^{(2^{<\mf{c}},\supseteq)}\models\diamondsuit(S^\mf{c}_{\geq\om_1})+\mf{t}=\mf{c}$ because $(2^{<\mf{c}},\supseteq)$ does not add new sequences of ground model elements of length $<\mf{c}$.

\begin{thm}
Assume $\diamondsuit(S^\mf{c}_{{\geq\om_1}})$ holds and that $\mathfrak t={\mathfrak c}$. Then there is a selective ultrafilter which does not contain a tower. Moreover it does not contain a tower of length $<{\mathfrak c}$ in any forcing extensions which preserve regularity of $\mf{c}$.
\end{thm}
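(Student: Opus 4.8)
The plan is to construct $\mc{U}$ by a transfinite recursion of length $\mf{c}$, producing an increasing continuous chain of filters $(\mc{F}_\al)_{\al<\mf{c}}$, each with a base of size $<\mf{c}$, and setting $\mc{U}=\bigcup_{\al<\mf{c}}\mc{F}_\al$. The engine is $\mf{p}=\mf{t}=\mf{c}$ (the Malliaris--Shelah theorem together with the hypothesis $\mf{t}=\mf{c}$): any filter with a base of size $<\mf{c}$ has a pseudointersection in $[\om]^\om$, which keeps the recursion alive at every step and, in particular, shows at the end that no $\subseteq^*$-descending sequence in $\mc{U}$ of length $<\mf{c}$ can be a tower (so all towers in $\mc{U}$, if any, would have length exactly $\mf{c}$). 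Using a bookkeeping function on $\mf{c}$ (available since $2^{<\mf{c}}=\mf{c}$, which is implied by $\diamondsuit(S^\mf{c}_{\geq\om_1})$) I would, at most stages, handle the standard requirements for selectivity exactly as in the usual $\mf{p}=\mf{c}$ construction of a Ramsey ultrafilter: given an enumerated $A\subseteq\om$, adjoin $A$ or $\om\setminus A$, whichever is $\mc{F}_\al$-positive; given an enumerated partition $\om=\bigsqcup_n A_n$ into $\mc{F}_\al$-small pieces, first pass to a pseudointersection of $\mc{F}_\al$ and then adjoin an $\mc{F}_\al$-positive selector. The only new feature is what happens at the stages $\al\in S^\mf{c}_{\geq\om_1}$.

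First I would fix a bijection $e\colon\mf{c}\to\mf{c}\times\om$ with $e[\al]=\al\times\om$ for club-many $\al$, so that a set $D\subseteq\al$ codes the sequence $\langle T^D_\xi:\xi<\al\rangle$ of subsets of $\om$ given by $T^D_\xi=\{n:e^{-1}(\xi,n)\in D\}$, and fix a $\diamondsuit(S^\mf{c}_{\geq\om_1})$-sequence $\langle D_\al:\al\in S^\mf{c}_{\geq\om_1}\rangle$. At a stage $\al\in S^\mf{c}_{\geq\om_1}$ I inspect $D_\al$: if $\langle T^{D_\al}_\xi:\xi<\al\rangle$ happens to be a $\subseteq^*$-descending sequence all of whose terms already lie in $\mc{F}_\al$, I perform a tower-killing step, diagonalizing against this sequence; otherwise I just continue with the usual bookkeeping. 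To see why this is the right thing to guess: if $\langle T_\xi:\xi<\mf{c}\rangle$ were a tower in $\mc{U}$, set $g(\xi)=\min\{\be:T_\xi\in\mc{F}_\be\}$ and $X=\{e^{-1}(\xi,n):\xi<\mf{c},\ n\in T_\xi\}$; then by the guessing property there are stationarily many $\al\in S^\mf{c}_{\geq\om_1}$ closed under $e$ and $g$ with $D_\al=X\cap\al$, and at each such $\al$ the guessed sequence is exactly $\langle T_\xi:\xi<\al\rangle$ with all terms in $\mc{F}_\al$, so the tower-killing step fired there. The step must therefore be designed so that, taking such $\al$ cofinally in $\mf{c}$, the commitments made at these $\al$ collectively prevent $\langle T_\xi:\xi<\mf{c}\rangle$ from being a tower in $\mc{U}$.

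The hard part is exactly the design of the tower-killing step and the proof that it is compatible with the selectivity requirements. The naive choice — just adjoining a pseudointersection of $\mc{F}_\al$ — is not obviously enough, since that pseudointersection lies below only the initial segment $\langle T_\xi:\xi<\al\rangle$ and says nothing about the later $T_\xi$, and since the subsequent selectivity steps keep fragmenting the filter (indeed $\mc{U}$ cannot be generated by any $\subseteq^*$-descending sequence, because such a sequence would itself be a tower in $\mc{U}$). So the step will have to use $\cf(\al)>\om$ and the fact that the guessed sequence already sits in $\mc{F}_\al$ in a more refined way — presumably by adjoining, relative to the guessed sequence, a carefully chosen set whose complement is $\mc{F}_\al$-positive, thereby forcing some future element of any continuation out of $\mc{U}$, while arranging the remaining bookkeeping to respect these commitments. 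I expect this reconciliation, and the verification that the commitments made at the stationarily many reflection points of a would-be tower really do defeat it, to be the principal obstacle. The final clause should then follow because the only property of $\mc{U}$ actually used to exclude towers of length $<\mf{c}$ is a $\subseteq^*$-directedness feature of a fixed $\mf{c}$-generated base that the construction installs, which together with $\mf{c}$ remaining regular is absolute to any forcing extension of the stated kind.
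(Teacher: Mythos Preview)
Your overall architecture matches the paper's: a recursion of length $\mf{c}$ building filters $\mc{F}_\al$ with small bases, using $\mf{p}=\mf{c}$ to handle the ultrafilter and selectivity requirements, and using the diamond sequence to guess initial segments of would-be towers at stages of uncountable cofinality. You also correctly identify that the crux is the tower-killing step and its compatibility with the other requirements.

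However, there is a genuine gap: you do not have the mechanism for the killing step, and your speculation (``adjoining, relative to the guessed sequence, a carefully chosen set whose complement is $\mc{F}_\al$-positive'') is not what works and is unlikely to work. A single set adjoined at stage $\al$ cannot by itself exclude all possible $T_\xi$ for $\xi\geq\al$: there is no one set that is incompatible with every continuation of the guessed descending sequence, and the later selectivity steps may further fragment whatever you adjoin. The paper's key idea is completely different: it is a \emph{side-condition} approach. Alongside the filters $\mc{F}_\al$ one carries a growing family $\mc{D}_\al$ of $\subseteq^*$-descending sequences of uncountable cofinal length, and maintains throughout the recursion an invariant $\mrm{UC}(\mc{F}_\al,\mc{D}_\al)$ saying that for every $F\in\mc{F}_\al$ and every finite $\mc{D}'\subseteq\mc{D}_\al$ there is a choice of indices $x(D)<\ell(D)$ with $F\setminus\bigcup_{D\in\mc{D}'}D_{x(D)}$ infinite. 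At a diamond stage $\al$, if the guessed sequence $A_\al$ can be added to $\mc{D}_\al$ while preserving $\mrm{UC}$, one simply records it: $\mc{D}_{\al+1}=\mc{D}_\al\cup\{A_\al\}$. Nothing is adjoined to the filter at that moment. The point is that once $(T_\ga)_{\ga<\al}\in\mc{D}_{\al'}$ for all $\al'>\al$, the invariant $\mrm{UC}(\mc{F}_{\al'},\mc{D}_{\al'})$ \emph{forever} prevents any $T_\al\subseteq^* T_\ga$ (all $\ga<\al$) from entering the filter, since taking $F=T_\al$ and $\mc{D}'=\{(T_\ga)_{\ga<\al}\}$ would violate $\mrm{UC}$. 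The substantial work is then (i) showing that the two filter-extension steps (deciding $X_\al$ vs.\ $\om\setminus X_\al$, and adding a selector for a partition) can be done while preserving $\mrm{UC}$ with the enlarged $\mc{D}_{\al+1}$ --- this is where $\cf(\al)>\om$ is used, to take suprema of countably many index choices --- and (ii) showing that for any tower $(T_\al)_{\al<\mf{c}}$ there is a club of $\al$ at which $\mrm{UC}(\mc{F}_\al,\mc{D}_\al\cup\{(T_\ga)_{\ga<\al}\})$ holds, so that diamond catches it. Your outline contains none of this machinery.

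For the ``moreover'' clause, the paper's argument is more concrete than your directedness remark: one takes $X_\al$ to be any pseudointersection of $\mc{F}_\al$ (which exists since $\chi(\mc{F}_\al)<\mf{c}=\mf{p}$) and observes that $\{X_\al:\al<\mf{c}\}$ has the property that $\{\al:|X_\al\setminus F|=\om\}$ has size $<\mf{c}$ for every $F\in\mc{U}$. In any extension where $\mf{c}$ remains regular, a descending sequence of length $<\mf{c}$ in $\mc{U}$ then has some $X_\al$ as a pseudointersection.
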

\begin{proof}
First of all, notice that $\diamondsuit(S^\mf{c}_{{\geq\om_1}})$ is equivalent with the following statement: There is a sequence $(A_\al=(A^\al_\be)_{\be<\al}:\al\in  S^\mf{c}_{\geq\om_1})$ such that for every sequence $(T_\al)_{\al<\mf{c}}$ in $\mc{P}(\om)$ the set
\[ \big\{\al\in S^\mf{c}_{\geq\om_1}:(T_\be)_{\be<\al}=A_\al\big\}\;\text{is stationary in}\;\mf{c}.\]
Now let $(X_\al)_{\al<\mf{c}}$ be an enumeration of $[\om]^\om$ and $(P_\al=(P^\al_n)_{n\in\om}:\al<\mf{c})$ be an enumeration of all partitions of $\om$ into infinitely many nonempty sets such that each partition appears cofinally often.

Before describing the construction we will
need a piece of notation: If  $\mathcal{F}$ is a filter and $\mathcal D$ is a family of $\subseteq^*$-descending sequences
of subsets of $\omega$ of length $<\mf{c}$ and of uncountable cofinality (if $D\in\mc{D}$ then we will write $D=(D_\al)_{\al<\mrm{\ell}(D)}$) we say that $\mc{D}$ {\em uncovers} $\mc{F}$ and write $\mrm{UC}({\mathcal{F}},{\mathcal D})$ if either $\mc{D}=\0$ or the following holds
\[ \forall\;F\in \mathcal{F}\;\,\forall\;\mathcal{D}'\in [\mc{D}]^{<\omega}\setminus\{\0\}\;\,\exists\; x\in\prod\big(\mrm{\ell}(D):D\in\mc{D}'\big)\;\,\big|F\setminus\bigcup\big\{D_{x(D)}:D\in\mc{D}'\big\}\big|=\om.\]
We shall recursively build an increasing  sequence of filters  $(\mathcal{F}_\al)_{\al<\mf{c}}$ and
families of descending sequences $(\mc{D}_\al)_{\al<\mf{c}}$ such that the following
conditions are satisfied for every $\al<\mf{c}$:
\begin{enumerate}
  \item  $\chi(\mathcal{F}_\alpha),|\mathcal{D}_\alpha|\leq|\alpha|+\om$
     for each $\alpha<{\mathfrak c}$ where $\chi(\mc{F})=\min\{|\mc{B}|:\mc{B}$ generates $\mc{F}\}$;
  \item  $\mrm{UC}(\mathcal{F}_\alpha,\mathcal{D}_\alpha)$ for each $\alpha<\mf{c}$;
  \item either $X_\alpha\in \mathcal{F}_{\alpha+1}$ or
     $\omega\setminus X_\alpha\in \mathcal{F}_{\alpha+1}$;
  \item if $P^\al_n\in \mathcal{F}^*_\alpha$ for every $n$, then $|P^\al_n\cap Y|\leq 1$ for some $Y\in \mathcal{F}_{\alpha+1}$.
  \item if  $\mrm{cf}(\alpha)>\om$ and $\mrm{UC}(\mathcal{F}_\alpha,\mathcal{D}_\alpha\cup\{A_\alpha\})$ then
     $A_\alpha\in \mathcal{D}_{\alpha+1}$.
\end{enumerate}

We let $\mc{F}_0=\mrm{Fin}^*$ and $\mathcal{D}_0=\emptyset$. At limit steps we take unions and clearly all conditions will be satisfied (actually, we need to check only (1) and (2)).

We now show that
we can handle the successor steps. So assume $\mathcal{F}_\alpha$ and $\mathcal{D}_\alpha$ have
been constructed. We first construct $\mathcal{D}_{\alpha+1}$ in the obvious way to
satisfy (5): If $\mrm{cf}(\al)>\om$ and $\mrm{UC}(\mathcal{F}_\alpha,\mathcal{D}_\alpha\cup\{A_\alpha\})$ holds, then let $\mathcal{D}_{\alpha+1}=\mathcal{D}_\alpha\cup\{A_\alpha\}$. Otherwise let
$\mathcal{D}_{\alpha+1}=\mathcal{D}_\alpha$. We now extend $\mathcal{F}_\alpha$ in two steps to
guarantee (3) and (4) while preserving (2) ((1) will hold trivially).

\smallskip
\underline{First step}: We extend $\mc{F}_\al$ to $\mc{F}'_\al$ to guarantee (3) while preserving (2). If either $X_\alpha\in \mathcal{F}_\alpha$ or
$\omega\setminus X_\alpha\in \mathcal{F}_\alpha$, then there is nothing to do in the first step, so let $\mathcal{F}'_\alpha=\mathcal{F}_\alpha$. If neither $X_\al$ nor $\om\setminus X_\al$ belong to $\mc{F}_\al$ and the condition
$\mrm{UC}(\mrm{fr}(\mathcal{F}_\alpha\cup\{X_\alpha\}),\mathcal{D}_{\alpha+1})$ holds, let
$\mathcal{F}'_\alpha$ be $\mrm{fr}(\mathcal{F}_\alpha\cup\{X_\alpha\})$. Otherwise let $\mathcal{F}'_\alpha$
be $\mrm{fr}(\mathcal{F}_\alpha\cup\{\omega\setminus X_\alpha\})$.

\begin{claim}
$\mrm{UC}(\mathcal{F}'_\alpha,\mathcal{D}_{\alpha+1})$ holds.
\end{claim}
\begin{proof}[Proof of Claim.]
Assume on the contrary that we can choose  $F_0\in \mathcal{F}_\alpha$
and $\mathcal{C}_0\in [\mathcal{D}_{\alpha+1}]^{<\omega}$ so that
the set $F_0\cap (\om\setminus X_\alpha)$ and $\mathcal{C}_0$ witness the failure of $\mrm{UC}(\mathcal{F}'_\alpha,\mathcal{D}_{\alpha+1})$. By assumption, we can also choose $F_1\in \mathcal{F}_\alpha$
and $\mathcal{C}_1\in [\mathcal{D}_{\alpha+1}]^{<\omega}$ so that
the set $F_1\cap X_\alpha$ and $\mathcal{C}_1$ witness the failure of $\mrm{UC}(\mrm{fr}(\mathcal{F}_\alpha\cup\{X_\al\}),\mathcal{D}_{\alpha+1})$.
Since $\mrm{UC}(\mathcal{F}_\alpha,\mathcal{D}_{\alpha+1})$, we can choose a function $x\in\prod(\mrm{\ell}(D):D\in\mathcal{C}_0\cup\mathcal{C}_1)$ such that
\[
  \big|F_0\cap F_1\setminus\bigcup\big\{D_{x(D)}:D\in\mc{C}_0\cup\mc{C}_1\big\}\big|=
  \omega
\]
However, we know that
\[
  \big|F_0\cap(\om\setminus X_\alpha)\setminus\bigcup\big\{ D_{x(D)}:D\in\mc{C}_0\big\}\big|<\omega\;\;\text{and}\;\;
  \big|F_1\cap X_\al\setminus\bigcup\big\{D_{x(D)}:D\in\mc{C}_1\big\}\big|
  <\omega,
\]
and of course, $F_0\cap F_1\setminus\bigcup\{D_{x(D)}:D\in\mc{C}_0\cup\mc{C}_1\}$ is covered by the union of these sets, a contradiction.
\end{proof}

\underline{Second step}: In this step we extend  $\mc{F}'_\al$ to $\mc{F}_{\al+1}$ to guarantee (4) while preserving (2). If $P^\al_n\in (\mc{F}'_\al)^+$ for some $n$, then let $\mc{F}_{\al+1}=\mc{F}'_\al$. Assume now that $P^\al_n\in(\mc{F}'_\al)^*$ for every $n$, consider the
partial order
\[
   \PP= \big(\big\{a\in[\omega]^{<\omega}:\forall\;n\in\omega\;|a\cap P^\alpha_n|\leq 1\big\},
  \supseteq\big),
\]
and fix a base $\mc{B}$ of $\mc{F}'_\al$ of size $\leq|\al|+\om$. We can assume that every $F\in\mc{F}'_\al$ contains an element of $\mc{B}$. For each $B\in\mathcal{B}$, $\mathcal{D}\in[\mathcal{D}_{\alpha+1}]^{<\omega}$, and $n\in\om$ choose
$x_{B,\mathcal{D},n}\in\prod(\mrm{\ell}(D):D\in\mc{D})$ witnessing
$\mrm{UC}(\mathcal{F}'_\alpha,\mathcal{D}_{\alpha+1})$ for the set $B\cap \bigcap\{\omega\setminus P^\alpha_i:i<n\}\in\mc{F}'_\al$ and $\mathcal{D}$, that is,
\[ \big|B\cap \bigcap\big\{\omega\setminus P^\alpha_i:i<n\big\}\setminus\bigcup\big\{D_{x_{B,\mc{D},n}(D)}:D\in\mc{D}\big\}\big|=\om.\]
Define $y_{B,\mathcal D}\in\prod(\mrm{\ell}(D):D\in\mc{D})$ as $y_{B,\mc{D}}(D)=\sup\{x_{B,\mathcal{D},n}(D):n<\om\}$
(note that this can safely be done since each sequence in $\mathcal{D}_{\alpha+1}$ has
uncountable cofinality) and consider the set
\[
  E^k_{B,\mathcal{D}} = \big\{a\in\PP:a\cap B\setminus
               \bigcup\big\{D_{y_{B,\mathcal{D}}(D)}:D\in\mc{D}\big\}
               \nsubseteq k
             \big\}.
\]
This set is  dense in $\PP$ for every $B$, $\mc{D}$, and $k\in\om$: Applying that $x_{B,\mathcal D,n}(D)\leq y_{B,\mathcal D}(D)$ holds for every $n\in\om$ and $D\in\mc{D}$, it is easy to see that
\[ B\cap\bigcap\big\{\omega\setminus P^\alpha_i:i<n\big\}\setminus \bigcup\big\{D_{y_{B,\mathcal D}(D)}:D\in\mathcal D\big\}\]
is infinite for each $n$, denote this set by $K_n$. Now if
$a\in \PP$, fix $n$ such that $a\cap P_m^\alpha=\emptyset$ for each $m\geq n$, and pick an element $j\in K_n\setminus k$.
Then $a\cup\{j\}\in \PP$ since
$j\not\in P_i^\alpha$ for any $i<n$, and $a\cup\{j\}\in E^k_{B,\mc{D}}$ follows as well.

Since $\mf{c}=\mf{t} \leq \mrm{add}(\mc{M})\leq \mrm{cov}(\mc{M})$, Martin's Axiom for countable posets holds (see [10]). In particular, we can find a filter $G$ on $\PP$ which meets every $E_{B,\mc{D}}^k$
(by (1) there are $<\mf{c}$ many of them).

Let $\mathcal{F}_{\alpha+1}=\mrm{fr}(\mathcal{F}'_\alpha\cup\{\bigcup G\})$. It is straightforward to see that $\mc{F}_{\al+1}$ is a filter (because $\bigcup G$ must be $\mc{F}'_\al$-positive) and that $\mrm{UC}(\mathcal{F}_{\alpha+1},\mathcal{D}_{\alpha+1})$ holds: If $F\in\mc{F}'_\al$ and $\mc{D}\in [\mc{D}_{\al+1}]^{<\om}$, then there is a $B\in\mc{B}$ contained in $F$ and $y_{B,\mc{D}}$ witnesses that $\mrm{UC}(\mathcal{F}_{\alpha+1},\mathcal{D}_{\alpha+1})$ holds for $B\cap (\bigcup G)$ and $\mc{D}$, and hence for $F\cap (\bigcup G)$ and $\mc{D}$ as well.

\smallskip
This finishes the recursive construction. Finally let
$\mathcal{U}=\bigcup\{\mathcal{F}_\alpha:\al<\mf{c}\}$. It is straightforward to check that  $\mc{U}$ is an ultrafilter
(guaranteed by (3)) which is selective (guaranteed by (4)).
We next show that it does not contain a tower.
Let $(T_\al)_{\al<\mathfrak{c}}$ be a tower (there are no shorter
towers as $\mathfrak t=\mathfrak c$).

\begin{claim}
For each  $\alpha<\mathfrak c$ there is $\be<\mf{c}$ such that
$\mrm{UC}(\mathcal{F}_\alpha,\mathcal{D}_\alpha\cup\{( T_\ga)_{\ga<\be}\})$, and hence also $\mrm{UC}(\mathcal{F}_\alpha,\mathcal{D}_\alpha\cup\{( T_\ga)_{\ga<\be'}\})$ for every $\be'\geq\be$, holds.
\end{claim}
\begin{proof}[Proof of Claim.]
Fix  $\alpha<\mathfrak{c}$ and a base $\mc{B}$ for
$\mathcal{F}_\alpha$ of size $\leq|\alpha|+\om$. Since $\mrm{UC}(\mathcal{F}_\alpha,\mathcal{D}_\alpha)$
for each $B\in\mathcal{B}$ and each
$\mathcal{D}\in[\mathcal{D}_\alpha]^{<\omega}$ we can fix an $x\in\prod(\ell(D):D\in\mc{D})$ such that
$F_{B,\mathcal{D}}=B\setminus\bigcup\{D_{x(D)}:D\in\mc{D}\}$
is infinite. Since $(T_\al)_{\al<\mathfrak{c}}$ does not have a pseudointersection, for each pair $(B,\mc{D})$ there is a $\gamma(B,\mc{D})<\mathfrak{c}$ such that
$|F_{B,\mathcal{D}}\setminus T_{\gamma(B,\mathcal{D})}|=\omega$. It is easy
to check that if $\be>\gamma(B,\mathcal{D})$ for every $B\in\mathcal{B}$ and $\mathcal{D}\in[\mathcal{D}_\alpha]^{<\omega}$, then it works as required.
\end{proof}

Applying the above Claim, it is easy to see that the set $C= \{\alpha:\forall$ $\be<\al$ $\mrm{UC}(\mathcal{F}_\be,\mathcal{D}_\be
  \cup\{(T_\gamma)_{\gamma<\alpha}\})\}$
is a club in $\mf{c}$ and $\mrm{UC}(\mc{F}_\al,\mc{D}_\al\cup\{(T_\ga)_{\ga<\al})\})$ holds for every $\al\in C$. If $\al\in C$ and
$A_\al=(T_\ga)_{\ga<\alpha}$, then $(T_\ga)_{\ga<\al}\in\mc{D}_{\al+1}$ because of (5). This implies that $T_\alpha\notin \mathcal{U}$ because otherwise $T_\al\in\mc{F}_{\al'}$ for some $\al'>\al$ and the pair $T_\al,\{(T_\ga)_{\ga<\al}\}$ would witness that $\mrm{UC}(\mc{F}_{\al'},\mc{D}_{\al'})$ fails.

\smallskip
Finally we show that  $\mathcal{U}$ does not contain a tower of length $<\mathfrak{c}$ in
any forcing extension (actually, in any extension) where $\mathfrak{c}$ remains regular. To this end we will show that
there is family $\{X_\alpha:\alpha<\mathfrak{c}\}$ such that for each $F\in\mathcal{U}$
the set $\{\alpha<\mf{c}:|X_\alpha\setminus F|=\omega\}$ is of size $<\mathfrak{c}$. The result then follows immediately.  To construct the family, it is sufficient to let
\(X_\alpha\) be any pseudointersection of $\mathcal{F}_\alpha$ which exists as $\chi(\mathcal{F}_\alpha)\leq|\alpha|+\omega<\mathfrak{c}=\mathfrak{p}$.
\end{proof}

Next we show that $\mf{t}=\mf{c}$ is not necessary to construct a selective ultrafilter which does not contain towers.

\begin{thm}
Assume $\mrm{GCH}$ and let $\om<\lam<\ka$ be regular cardinals. Then in a cardinal preserving forcing extension $\mf{t}=\lam$, $\mf{c}=\ka$, and there is a selective ultrafilter which does not contain a tower.
\end{thm}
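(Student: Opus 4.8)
The plan is to first produce, by forcing over a model of $\mathrm{GCH}$, a model $W$ in which $\mf{t}=\lam$, $\mf{c}=\ka$, $\cov(\mc{M})=\ka$ and $\diamondsuit(S^\ka_{\geq\om_1})$ all hold, and then to run inside $W$ a transfinite recursion generalizing the one in the previous theorem. For the first part: start with $V\models\mathrm{GCH}$ and perform the usual finite support iteration of length $\ka$ of $\sigma$-centered partial orders of size $<\lam$, with a bookkeeping enumerating all the relevant names and including cofinally many Cohen posets; this is ccc, hence cardinal preserving, it forces $\cov(\mc{M})=\mf{c}=\ka$, and a reflection argument (using that $\ka$ is regular and the iteration is ccc) gives $\mf{p}\geq\lam$, while it is standard that in fact $\mf{p}=\mf{t}=\lam$ for this iteration (see \cite{Bl}). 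In the resulting model $V_1$ one has $2^\mu=\ka$ for $\om\leq\mu<\ka$ (nice names for subsets of $\mu$ number at most $\ka^\mu=\ka$ by $\mathrm{GCH}$ in $V$ and $\cf(\ka)=\ka>\mu$), so $2^{<\ka}=\ka$; hence $(2^{<\ka},\supseteq)$ is $<\ka$-closed and $\ka^+$-cc, adds no new $<\ka$-sequences, preserves cardinals, leaves $\mf{t}=\lam$, $\mf{c}=\ka$ and $\cov(\mc{M})=\ka$ unchanged, and forces $\diamondsuit(S)$ for every stationary $S\subseteq\ka$, in particular $\diamondsuit(S^\ka_{\geq\om_1})$. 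Call the resulting model $W$; note that $\ka^{<\ka}=\ka$ in $W$, so $W$ contains at most $\ka$ many towers of length $<\ka$.

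Inside $W$ I would repeat the construction from the proof of the previous theorem, with two modifications. As there, one recursively builds an increasing chain of filters $(\mc{F}_\al)_{\al<\ka}$ with $\chi(\mc{F}_\al)\leq|\al|+\om$ together with families $\mc{D}_\al$ of $\subseteq^*$-descending sequences of uncountable cofinality, maintaining $\mathrm{UC}(\mc{F}_\al,\mc{D}_\al)$, and at stage $\al$ one decides the next bookkept subset of $\om$ (so that $\mc{U}=\bigcup_{\al<\ka}\mc{F}_\al$ is an ultrafilter), handles the next bookkept partition (so that $\mc{U}$ is selective), and uncovers the next bookkept $\subseteq^*$-descending sequence. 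The first modification: the step making $\mc{U}$ selective -- meeting the dense sets $E^k_{B,\mc{D}}$ in the appropriate countable poset -- is now justified directly by $\cov(\mc{M})=\mf{c}=\ka$, since at stage $\al$ there are only $|\al|+\om<\ka$ such dense sets, rather than by $\mathrm{MA}$ for countable posets coming from $\mf{p}=\mf{c}$. The second modification concerns towers of length $<\ka$: since now $\mf{t}=\lam<\mf{c}$ such towers exist and must also be uncovered (the previous proof only had to deal with towers of length $\mf{c}$). Towers of length $\ka$ are uncovered by the $\diamondsuit(S^\ka_{\geq\om_1})$-sequence exactly as before; towers of length $\mu\in[\lam,\ka)$ are put into the bookkeeping (there are $\leq\ka$ of them in $W$), and such a tower $\mc{T}=(T_\ga)_{\ga<\mu}$ is uncovered by placing an initial segment $(T_\ga)_{\ga<\be}$, for a suitable $\be<\mu$ of uncountable cofinality, into some $\mc{D}_{\al+1}$; this forces $T_\be\notin\mc{U}$.

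The main obstacle is finding such a $\be<\mu$ while preserving $\mathrm{UC}$: the relevant version of the ``Claim'' from the previous proof produces $\be$ as a supremum of $|\al|+\om$ many ordinals below $\mu$ (one for each pair $(B,\mc{D})$ with $B$ in a fixed base of $\mc{F}_\al$ and $\mc{D}\in[\mc{D}_\al]^{<\om}$), so it lands below $\mu$ only when $\chi(\mc{F}_\al)<\cf(\mu)$; since $\cf(\mu)$ can be as small as $\mf{t}=\lam<\ka$, a naive bookkeeping that handles $\mc{T}$ at a stage $\al$ with $|\al|\geq\lam$ fails. The way around this is to uncover $\mc{T}$ gradually rather than at a single stage: maintain $\mathrm{UC}(\mc{F}_\al,\mc{D}_\al\cup\{(T_\ga)_{\ga<\be_\al}\})$ for an ordinal $\be_\al<\mu$ that increases with $\al$, and argue, using that $\mc{T}$ has no pseudointersection, that the set of stages $\al$ at which $(T_\ga)_{\ga<\be_\al}$ can safely be frozen into $\mc{D}_{\al+1}$ is unbounded below $\mu$ (in fact contains a club of $\mu$ intersected with its cofinality-$\geq\om_1$ points), so that $\mc{T}$ is uncovered before stage $\mu$. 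One also has to check that these uncovering procedures for different towers, and the interleaved ultrafilter- and selectivity-steps, do not interfere, which follows from the monotone, finite-character nature of $\mathrm{UC}$ (exactly as in the ``First step'' and ``Second step'' of the previous proof). Granting this, $\mc{U}=\bigcup_{\al<\ka}\mc{F}_\al$ is a selective ultrafilter containing no tower, and since $W$ is a cardinal preserving extension of $V$ with $\mf{t}=\lam$ and $\mf{c}=\ka$, the theorem follows.
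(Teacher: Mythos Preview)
Your approach diverges substantially from the paper's and, as you yourself anticipate, runs into a genuine obstruction that the ``gradual uncovering'' idea does not resolve.

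The paper's strategy is to first pass to a model $V^{\mathbb{P}\ast\dot{\mathbb{Q}}}$ in which $\mf{t}=\mf{c}=\ka$ and $\diamondsuit(S^\ka_{\geq\om_1})$ hold, apply the previous theorem there to obtain a selective ultrafilter $\mc{U}$ with no towers --- and, crucially, with no towers of length $<\ka$ in any further extension preserving the regularity of $\ka$ (this is the ``moreover'' clause, proved via the pseudointersections $X_\al$ of the $\mc{F}_\al$, which exist precisely because $\mf{p}=\ka$) --- and only \emph{then} lower $\mf{t}$ to $\lam$ by forcing with $\mathbb{B}=(\lam^{<\lam},\supseteq)^V$. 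This last forcing is $<\lam$-distributive in $V^{\mathbb{P}\ast\dot{\mathbb{Q}}}$, hence adds no reals; so $\mc{U}$ remains a selective ultrafilter, the new short towers created in the final model are excluded from $\mc{U}$ by the ``moreover'' clause, and towers of length $\ka$ are handled by a pigeonhole argument using $|\mathbb{B}|=\lam<\ka$. The point is that the ultrafilter is constructed \emph{before} any short towers exist.

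Your approach instead builds $\mc{U}$ directly inside a model $W$ where $\mf{t}=\lam<\ka=\mf{c}$, so you must uncover all short towers during the recursion. The obstacle you identify is real, and the fix does not work. For a tower $\mc{T}$ of length $\mu$ with $\cf(\mu)=\lam$, finding $\be<\mu$ such that $\mrm{UC}(\mc{F}_\al,\mc{D}_\al\cup\{(T_\ga)_{\ga<\be}\})$ holds requires taking a supremum of $\chi(\mc{F}_\al)\cdot|[\mc{D}_\al]^{<\om}|$ many ordinals below $\mu$, and this lands below $\mu$ only when both $\chi(\mc{F}_\al)$ and $|\mc{D}_\al|$ are $<\lam$ --- essentially only at stages $\al<\lam$. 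But there are $\ka$ many towers of cofinality $\lam$ (modify a single such tower at cofinally many coordinates), whereas by stage $\lam$ you can have frozen at most $|\mc{D}_\lam|$ sequences, and you genuinely need $|\mc{D}_\al|<\ka$ for the selectivity step (your ``Second step'' via $\cov(\mc{M})=\ka$) to go through. Tracking a tower over many stages does not help: if you only freeze it at some stage $\al$, you still need $\chi(\mc{F}_\al),|\mc{D}_\al|<\cf(\mu)$ at that stage, and the cardinality mismatch persists. Note also that the ``moreover'' argument from the previous theorem is unavailable in $W$: once $\al\geq\lam$ the filter $\mc{F}_\al$ has character $\geq\lam>\mf{p}$ and need not have a pseudointersection, so the family $\{X_\al\}$ cannot be constructed.
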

\begin{proof}
Start with a model $V$ of $\mrm{GCH}$, let $\mbb{B}=(\lam^{<\lambda},\supseteq)$ defined in $V$, and let $\PP\in V$ be a ccc forcing such that $\Vdash_\PP \mathfrak t=\mathfrak c=2^{<\mf{c}}=\ka$. Applying \cite[Lem. 15.19]{Jech}, we know that $\mbb{B}^V$ is $<\lam$-distributive in $V^{\PP}$, that is, in $V^\PP$ intersection of less than $\lam$ many dense open subsets of $\mbb{B}^V$ is dense, or equivalently, $V^\PP\models$``$\mbb{B}^V$ does not add new sequences of elements of $V^\PP$ of length $<\lam$''.

Next force $\diamondsuit(S^\ka_{\geq\om_1})+\mf{c}=\mf{t}(=\ka)$ with a $<\ka$-closed and $\ka^+$-cc forcing $\QQ\in V^\PP$ (e.g. $V^\PP\models\dot{\mbb{Q}}=(2^{<\mf{c}},\supseteq)$). $\mbb{B}^V$ still remains $<\lam$-distributive in $V^{\PP\ast\QQ}$:
$\lam < \ka$ and hence $\QQ$ adds neither new subsets of $\mbb{B}^V$ nor new sequences of subsets of $\mbb{B}^V$ of length $<\lam$. In $V^{\PP\ast\QQ}$ we can apply the previous theorem and fix a selective $\mathcal{U}$ which does not contain a tower and does not contain towers of length $<\mf{c}$ in any forcing extensions of $V^{\PP\ast\QQ}$ which preserve regularity of $\mf{c}$. In the further
extension by $\mbb{B}^V$, that is, in the model $V^{(\PP\ast\QQ)\times\mbb{B}}$ the following holds:

(a) $\mf{c}=\ka=\mrm{cf}(\ka)$ and $\mathcal{U}$ is a selective ultrafilter: $\mbb{B}^V$ is $<\lam$-distributive and $\lam^+$-cc in $V^{\PP\ast\QQ}$ hence preserves cofinalities and does not add new reals.

(b) $\mc{U}$ does not contain towers: It cannot contain towers of length $<\ka$ because of (a) and properties of $\mc{U}$. To show that $\mc{U}$ cannot contain towers of length $\ka$, (working in $V^{\PP\ast\QQ}$) assume that $(\dot{T}_\al)_{\al<\ka}$ is a sequence of $\mbb{B}^V$-names for subsets of $\om$ and $b\vd$``$(\dot{T}_\al)_{\al<\ka}$ is a $\subseteq^*$-descending sequence in $\mc{U}$'' for some $b\in \mbb{B}^V$. For every $\al<\ka$ we can fix a $b_\al\leq b$ and an $A_\al\subseteq\om$ such that $b_\al\vd\dot{T}_\al=A_\al$ ($\mbb{B}^V$ does not add new reals). Since $\ka=\mrm{cf}(\ka)>\lam=|\mbb{B}|$, we have that $b_\al=b'\leq b$ for cofinally many $\al$, and therefore ($b'$ forces that) a cofinal subsequence of $(\dot{T}_\al)_{\al<\ka}$ is in $V^{\PP\ast\QQ}$ and so it has a pseudointersection (because $\mc{U}$ does not contain towers in $V^{\PP\ast\QQ}$).

(c) $\mathfrak{t}=\lambda$: Notice that $V^{\PP\ast\QQ}\models$``$\mbb{B}^V$ can be embedded into $([\omega]^\omega,\subseteq^*)$''. Applying that $\mf{t}=\ka>\lam$ in $V^{\PP\ast\QQ}$, by recursion on the length of $b\in\mbb{B}^V=\lam^{<\lam}\cap V$ we can define $e:\mbb{B}^V\to [\om]^\om$ as follows: Let $e(\0)=\om$. If $e(b)$ is defined then fix an almost disjoint family $\{A_\al:\al<\lam\}$ on $e(b)$ and define $e(b^\frown (\al))= A_\al$. If $\dom(b)=\ga<\lam$ is limit and $e(b\upharpoonright \xi)$ is defined for every $\xi<\ga$ then let $e(b)$ be a pseudointersection of this sequence. The $\mbb{B}^V$-generic $g\in\lam^\lam$ defines the $\subseteq^*$-decreasing sequence $(e(g\upharpoonright\ga))_{\ga<\lam}$ in $[\om]^\om$ which has to be a tower because of genericity and the fact that $\mbb{B}^V$ does not add new reals. Applying the $<\lam$-distributivity of $\mbb{B}^V$ there are no towers of length $<\lam$ in the final model.
\end{proof}

\section{$\mc{I}$-Luzin sets, $\mc{I}$-inaccessibility, cardinal invariants, and Kat\v{e}tov-Blass reducibility}\label{luzinsec}
We need a natural generalization of Sierpi\'nski and Luzin sets to
any ideal.
\begin{definition}
Let $I$ be an ideal on a set $X$. An uncountable(!) set $S\subseteq X$ is {\em $I$-Luzin} if
$|S\cap A|\le\om$ for each $A\in I$.
\end{definition}

The classical Sierpi\'nski/Luzin sets are exactly the
$\mc{N}$-/$\mc{M}$-Luzin sets where $\mc{N}$ is the $\sigma$-ideal
of null subsets of $2^\om$ (with respect to the usual product measure) and $\mc{M}$ is the $\sigma$-ideal of meager subsets of $2^\om$. It is easy to see that if there is an $I$-Luzin subset $H$ of $X$, then $\mathrm{non}(I)=\om_1$ and $|H|\leq\mrm{cov}(I)$; and if $\mathrm{cov}(I)=\mathrm{cof}(I)=\om_1$ then there is an $I$-Luzin set.

The following definition is motivated by \cite{Her}.

\begin{definition}
Let $I$ be an ideal on a set $X$. We say that $X$ is {\em
$I$-accessible} if there is an $\subseteq$-increasing sequence $(A_\al)_{\al<\ka}$ in $I$ which covers $X$, otherwise we say that $X$ is {\em $I$-inaccessible}.
\end{definition}

For example, if $\mathrm{add}(I)=\mathrm{cov}(I)$ or $\mathrm{non}(I)=|X|$, then $X$ is $I$-accessible but if
$\mathrm{non}(I)<\mathrm{cov}(I)$ then $X$ is $I$-inaccessible. Clearly, a $\subseteq^*$-increasing sequence $(A_\al)_{\al<\ka}$ in $\mc{I}$ is a cotower in $\mc{I}$ iff
$[\om]^\om=\bigcup_{\al<\ka}\widehat{A}_\al$ (recall that $\widehat{A}=\{X\in[\om]^\om:|X\cap A|=\om\}$ and that $\widehat{\mc{I}}$ is the ideal generated by $\{\widehat{A}:A\in\mc{I}\}$). In other
words, if there exists a cotower in $\mc{I}$, then $[\om]^\om$ is $\widehat{\mc{I}}$-accessible.

\smallskip
We summarize our observations (see also Fact \ref{fact1}) in the following diagram where $I=\widehat{\mc{I}}$ for some tall ideal $\mc{I}$ on $\om$:
\begin{diagram}
\mrm{cov}^*(\mc{I})=\mrm{cof}^*(\mc{I})=\om_1 & \rTo^{\text{if}\;\ka=\om_1} & \exists\;\widehat{\mc{I}}\text{-Luzin set of size}\;\ka    &  & \\
&& \dTo                                        &                             & \\
&&    \mathrm{non}^*(\mc{I})\le\om_1\;\;\text{and}\;\;\ka\leq\mathrm{cov}^*(\mc{I})   &       &     \\
&& \dTo_{\text{if}\;\ka>\om_1}                                        &  & \\
&&    \mathrm{non}^*(\mc{I})<\mathrm{cov}^*(\mc{I})   &       &     \\
&& \dTo                                        &  & \\
&& [\om]^\om\;\text{is}\;\widehat{\mc{I}}\text{-inaccessible}                        & \rTo & \nexists\;\text{cotower in}\;\mc{I}\\
& & \dTo                                    && \dTo \\
& &  \mathrm{non}^*(\mc{I})<\mathfrak{c} & & \mrm{add}^*(\mc{I})<\mrm{cov}^*(\mc{I})
\end{diagram}

In the rest of the paper, we will discuss the possible reverse implications in this diagram. From now on, if $\mc{I}$ is an ideal on $\om$, we simplify our notations and talk about $\mc{I}$-Luzin sets (instead of $\widehat{\mc{I}}$-Luzin sets) and about $\mc{I}$-(in)accessibility (instead of $\widehat{\mc{I}}$-(in)accessibility).

The Kat\v{e}tov-Blass preorder plays a very important role when studying the properties in the diagram above: We already know that if $\mc{I}\leq_{\mrm{KB}}\mc{J}$ and $\mc{I}$ contains a cotower then $\mc{J}$ also contains one. Furthermore, we have the following ((a) has been known for a long time):

\begin{fact}\label{KBcons}
Assume that $\mc{I}\leq_{\mrm{KB}}\mc{J}$. Then the following hold:
\begin{itemize}
\item[(a)] $\mrm{non}^*(\mc{I})\leq\mrm{non}^*(\mc{J})$ and $\mrm{cov}^*(\mc{I})\geq\mrm{cov}^*(\mc{J})$.
\item[(b)] If there is an $\mc{J}$-Luzin set of size $\ka$, then there is an $\mc{I}$-Luzin set of size $\ka$ as well.
\item[(c)] If $[\om]^\om$ is $\mc{J}$-inaccessible then it is also $\mc{I}$-inaccessible.
\end{itemize}
\end{fact}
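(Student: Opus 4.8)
The statement to prove is Fact~\ref{KBcons}: if $\mc{I}\leq_{\mrm{KB}}\mc{J}$, witnessed by a finite-to-one $f:\om\to\om$ with $f^{-1}[A]\in\mc{J}$ for every $A\in\mc{I}$, then (a) $\mrm{non}^*(\mc{I})\leq\mrm{non}^*(\mc{J})$ and $\mrm{cov}^*(\mc{I})\geq\mrm{cov}^*(\mc{J})$; (b) $\mc{J}$-Luzin sets of size $\ka$ give $\mc{I}$-Luzin sets of size $\ka$; (c) $\mc{J}$-inaccessibility of $[\om]^\om$ implies $\mc{I}$-inaccessibility. The common engine is the map $X\mapsto f^{-1}[X]$ on $[\om]^\om$, which (since $f$ is finite-to-one and has infinite range, as otherwise some $A\in\mc{I}$ would pull back to a cofinite set) sends infinite sets to infinite sets, commutes with the hat operation in the sense that $\widehat{f^{-1}[A]}\supseteq$ a natural preimage of $\widehat A$, and interacts well with $\subseteq^*$.

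\textbf{Step 1 (the basic transfer lemma).} First I would record the elementary facts about $g:X\mapsto f^{-1}[X]$: it is injective on $[\om]^\om$ modulo nothing (genuinely injective since $f$ is onto its range and $\ran(f)$ is infinite), it maps $[\om]^\om$ into $[\om]^\om$, and for $A\subseteq\om$ and $X\in[\om]^\om$ one has $|f^{-1}[X]\cap f^{-1}[A]|=\om$ iff $|X\cap A\cap\ran(f)|=\om$; in particular if $A\in\mc{I}$ then $f^{-1}[A]\in\mc{J}$ and $|X\cap A|<\om$ implies $|f^{-1}[X]\cap f^{-1}[A]|<\om$. Conversely, given $B\in\mc{J}$ one wants to push forward: $f[B]\in\mc{I}$? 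That need not hold, so for parts of the argument I will instead use that for $B\in\mc{J}$ the set $f^{-1}[f[B]]\supseteq B$ need not be in $\mc{J}$ either — the right move is to work with images $f[\cdot]$ only on the $[\om]^\om$ side, using that $f$ finite-to-one means $f[Y]$ is infinite whenever $Y$ is.

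\textbf{Step 2 (part (a)).} For $\mrm{cov}^*$: let $\mc{D}\subseteq\mc{J}$ witness $\mrm{cov}^*(\mc{J})$, so every $Y\in[\om]^\om$ meets some $D\in\mc{D}$ infinitely. I claim $\{f[D]:D\in\mc{D}\}\cup(\text{a countable family})$... actually the clean direction: given $\mc{C}\subseteq\mc{I}$ witnessing $\mrm{cov}^*(\mc{I})$, the family $\{f^{-1}[C]:C\in\mc{C}\}$ lies in $\mc{J}$ and for any $Y\in[\om]^\om$, $f[Y]\in[\om]^\om$ meets some $C\in\mc{C}$ infinitely, whence (since $f$ is finite-to-one) $|Y\cap f^{-1}[C]|=\om$; so this family witnesses $\mrm{cov}^*(\mc{J})\leq|\mc{C}|$. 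For $\mrm{non}^*$: given $\mc{X}\subseteq[\om]^\om$ witnessing $\mrm{non}^*(\mc{J})$, i.e. for every $B\in\mc{J}$ some $X\in\mc{X}$ has $|B\cap X|<\om$, put $\mc{X}'=\{f[X]:X\in\mc{X}\}$; for $A\in\mc{I}$ we have $f^{-1}[A]\in\mc{J}$, pick $X$ with $|f^{-1}[A]\cap X|<\om$, then $|A\cap f[X]|<\om$ because $f^{-1}[A\cap f[X]]\subseteq f^{-1}[A]\cap f^{-1}[f[X]]$ and... here I need to be slightly careful — use instead that $A\cap f[X]\subseteq f[f^{-1}[A]\cap X]$, which is finite. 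So $\mc{X}'$ witnesses $\mrm{non}^*(\mc{I})\leq|\mc{X}|$.

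\textbf{Step 3 (parts (b) and (c)).} For (b): let $S\subseteq[\om]^\om$ be a $\mc{J}$-Luzin set of size $\ka$ (relative to $\widehat{\mc{J}}$, i.e. $|S\cap\widehat B|\leq\om$ for every $B\in\mc{J}$); set $S'=\{f[X]:X\in S\}$. Since $X\mapsto f[X]$ is injective on those $X$ with $X\subseteq\ran(f)$... it may fail to be injective in general, but one can first replace each $X\in S$ by $X\cap\ran(f)$ — wait, that can be finite; better: note $f^{-1}[f[X]]\supseteq X$ and $f^{-1}\circ f$ is finite-to-one-ish, so $X\mapsto f[X]$ has countable fibers, hence $|S'|=\ka$. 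For $A\in\mc{I}$, $f[X]\in\widehat A$ means $|f[X]\cap A|=\om$, which forces $|X\cap f^{-1}[A]|=\om$, i.e. $X\in\widehat{f^{-1}[A]}$ with $f^{-1}[A]\in\mc{J}$; so $\{X\in S:f[X]\in\widehat A\}\subseteq S\cap\widehat{f^{-1}[A]}$ is countable, and thus (accounting for the countable fibers) $|S'\cap\widehat A|\leq\om$: $S'$ is $\mc{I}$-Luzin of size $\ka$. For (c): this is the contrapositive — if $[\om]^\om$ is $\mc{I}$-accessible, say $[\om]^\om=\bigcup_{\al<\la}\widehat{A_\al}$ with $(A_\al)\subseteq\mc{I}$ increasing mod finite, then for $Y\in[\om]^\om$ we have $f[Y]\in\widehat{A_\al}$ for some $\al$, hence $Y\in\widehat{f^{-1}[A_\al]}$ with $(f^{-1}[A_\al])_{\al<\la}$ increasing mod finite in $\mc{J}$; so $[\om]^\om=\bigcup_\al\widehat{f^{-1}[A_\al]}$ is $\mc{J}$-accessible.

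\textbf{Main obstacle.} The only real subtlety — and the thing I would be most careful about in writing it up — is the asymmetry between pullback $f^{-1}[\cdot]$ (which respects the ideals by hypothesis) and pushforward $f[\cdot]$ (which respects only infinitude, not ideal membership), together with the non-injectivity of $f[\cdot]$ on $[\om]^\om$; the fix throughout is to use $f[\cdot]$ on the Polish-space side of $\widehat{\cdot}$ where only cardinality and infinitude matter, and to keep $f^{-1}[\cdot]$ on the ideal side, plus the observation that $f[\cdot]$ has at most countable fibers (since $f^{-1}\circ f$ of a point is finite), which keeps cardinalities $\ge\om_1$ unchanged. Everything else is a routine unwinding of the finite-to-one property of $f$.
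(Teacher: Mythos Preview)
Your arguments for part (a) and the Luzin-transfer portion of (b) are essentially the same as the paper's, and the idea for (c) is the right contrapositive. But there is a genuine gap in (b), precisely at the point you flag as ``the only real subtlety'': your claim that $X\mapsto f[X]$ has at most countable fibers on $[\om]^\om$ is false, and the justification you give (``$f^{-1}\circ f$ of a point is finite'') does not establish it. For a fixed $Y\in[\om]^\om$ the fiber $\{X\in[\om]^\om:f[X]=Y\}$ consists of all $X$ obtained by choosing, for each $y\in Y$, a nonempty subset of the finite set $f^{-1}[\{y\}]$; as soon as infinitely many of these preimages have size at least $2$, the fiber has size $\mf{c}$. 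So in general $|S'|$ could drop below $\ka$, and your argument as written does not produce an $\mc{I}$-Luzin set of size $\ka$.

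The fix --- which is exactly what the paper does --- uses the Luzin property of $S$ together with tallness of $\mc{I}$ (recall the paper only speaks of $\mc{I}$-Luzin families when $\mc{I}$ is tall). Suppose $\{X\in S:f[X]=Y\}$ were uncountable for some $Y$. Pick $A\in\mc{I}\cap[Y]^\om$ by tallness. For each $X$ in this fiber we have $A\subseteq Y=f[X]$, hence for every $a\in A$ there is $x_a\in X$ with $f(x_a)=a$; distinct $a$'s give distinct $x_a$'s, so $|X\cap f^{-1}[A]|=\om$, i.e.\ $X\in\widehat{f^{-1}[A]}$. Since $f^{-1}[A]\in\mc{J}$, this puts uncountably many members of $S$ into a single $\widehat{B}$ with $B\in\mc{J}$, contradicting that $S$ is $\mc{J}$-Luzin. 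So the fibers are countable \emph{on $S$}, and $|S'|=\ka$ follows.

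A smaller point on (c): you assume the accessing sequence has the special form $\widehat{A_\al}$ with $(A_\al)_{\al<\lam}$ $\subseteq^*$-increasing in $\mc{I}$, but the definition only gives you a $\subseteq$-increasing sequence $(\mc{B}_\al)_{\al<\lam}$ in $\widehat{\mc{I}}$ with $\mc{B}_\al\subseteq\widehat{A_\al}$ for some $A_\al\in\mc{I}$ (not necessarily $\subseteq^*$-increasing). Your pullback idea still works in this generality: set $\mc{C}_\al=\{Y\in[\om]^\om:f[Y]\in\mc{B}_\al\}$; this is $\subseteq$-increasing, covers $[\om]^\om$, and $\mc{C}_\al\subseteq\widehat{f^{-1}[A_\al]}\in\widehat{\mc{J}}$ by the same computation you already did.
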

\begin{proof}
Let $f:\om\to\om$ be finite-to-one such that $f^{-1}[A]\in\mc{J}$ for every $A\in\mc{I}$.

(a): It is easy to see that if $\mc{X}\subseteq [\om]^\om$ and there is no $B\in\mc{J}$ having infinite intersection with every $X\in\mc{X}$, then there is no $A\in\mc{I}$ such that $|A\cap f[X]|=\om$ for every $X\in\mc{X}$ (notice that $f[X]\in [\om]^\om$). Similarly, if $\mc{D}\subseteq\mc{I}$ is a star-covering family, that is, for every $X\in [\om]^\om$ there is a $D\in \mc{D}$ such that $|X\cap D|=\om$, then $\{f^{-1}[D]:D\in\mc{D}\}\subseteq\mc{J}$ is also a star-covering family.

\smallskip
(b): When we are talking about $\mc{I}$-Luzin families, we always assume that $\mc{I}$ is tall (otherwise, $\widehat{\mc{I}}$ is not defined). Let $\mc{X}\subseteq [\om]^\om$ be a $\mc{J}$-Luzin family of size $\ka\geq\om_1$. First of all, notice that $\{ f[X]=Y:X\in \mc{X}\}$ is countable for every $Y\in [\om]^\om$ because otherwise if $A\in\mc{I}\cap [Y]^\om$ then $f^{-1}[A]$ would witness that $\mc{X}$ is not a $\mc{J}$-Luzin family. Therefore the family $\mc{X}'=\{f[X]:X\in\mc{X}\}\subseteq [\om]^\om$ is of size $\ka$. It is trivial to show that $\mc{X}'$ is an $\mc{I}$-Luzin family.

\smallskip
(c): Assume that $\mc{B}_\al\subseteq \widehat{A}_\al$, $A_\al\in\mc{I}$, and that the sequence $(\mc{B}_\al)_{\al<\ka}$ is a $\subseteq$-increasing cover of $[\om]^\om$. Let $\mc{C}_\al=\{Y\in [\om]^\om:f[Y]\in\mc{B}_\al\}$. Then $(\mc{C}_\al)_{\al<\ka}$ is also an increasing cover of $[\om]^\om$, and $|f[Y]\cap A_\al|=\om$ hence $|Y\cap f^{-1}[A_\al]|=\om$ for every $\al$. In particular, $\mc{C}_\al\subseteq \widehat{f^{-1}[A_\al]}\in\widehat{\mc{J}}$, and so $[\om]^\om$ is $\mc{J}$-accessible.
\end{proof}

\begin{rem}\label{usefulrem}
Notice that the sketched proof of $\cov^*(\mc{I})\leq\non(\mc{M})$ for tall analytic P-ideals (at the beginning of Section \ref{analp}) shows a bit more, namely, if $\mc{I}$ is a tall analytic P-ideal, then $\vd_{\mbb{C}}[\om]^\om\cap V\in\widehat{\mc{I}}$. It is easy to see that the same holds for every analytic or coanalytic (we need definability) ideal $\mc{I}$ satisfying $\mc{ED}_\mrm{fin}\leq_\mrm{KB}\mc{I}$:
If $f$ witnesses this reduction, $P_n=\{(n,m):m\leq n\}$, and
$X=\{n:P_n\cap \ran(f)\ne\0\}$, then $(f^{-1}[P_n])_{n\in X}$ is a partition of $\om$ into finite sets. If $c\in\prod(f^{-1}[P_n]:n\in X)$ is a Cohen real over our model $V$ and $(n,k_n)=f(c(n))$ for every $n\in X$, then $\{(n,k_n):n\in X\}\in\mc{ED}_\mrm{fin}$ and hence  $\ran(c)\subseteq f^{-1}\{(n,k_n):n\in X\}\in\mc{I}$. Because of a trivial density argument, $|S\cap \ran(c)|=\om$ for every $S\in [\om]^\om\cap V$.

In particular, we have the following observations: Let $\ka$ be regular, $\mc{I}$ be as above, and assume that $\PP$ is the limit of a nontrivial $\ka$-stage finite support iteration of ccc forcing notions.
\begin{itemize}
\item[(1)] If $\ka>\om_1$, then $V^\PP\models$``$\mrm{cov}^*(\mc{I})\leq\ka\leq \mathrm{non}^*(\mc{I})$, $[\om]^\om$ is $\mc{I}$-accessible, and there are no $\mc{I}$-Luzin sets''.
\item[(1')] If $\ka>\om_1$ and $\mc{I}$ is destroyed (i.e., its tallness is destroyed) at cofinally many stages (e.g. by iterating $\mbb{M}(\mc{I}^*)$ or $\mbb{L}(\mc{I}^*)$), then additionally to (1) we have $V^\PP\models\mrm{cov}^*(\mc{I})=\ka= \mathrm{non}^*(\mc{I})$.
\item[(2)] If $\ka=\om_1$ then $V^\PP\models$``$\mrm{cov}^*(\mc{I})\leq\om_1\leq \mathrm{non}^*(\mc{I})$ and $[\om]^\om$ is $\mc{I}$-accessible''.
\item[(2')] If $\ka=\om_1$ and $\mc{I}$ is destroyed at cofinally many stages, then additionally to (2) we have $V^\PP\models$``$\mrm{cov}^*(\mc{I})=\om_1=\mathrm{non}^*(\mc{I})$  and there are $\mc{I}$-Luzin sets''.
\end{itemize}
\end{rem}

In the next diagram, we summarize all Kat\v{e}tov-Blass reductions (known at this moment) between our examples. We know that, beyond the three question marks, there are no other Kat\v{e}tov-Blass-reductions (in almost all cases, not even Kat\v{e}tov-reductions) between these ideals:

\begin{diagram}
\mrm{Ran} & & \rDashto~{\text{{\large ?}}} & & \mc{S} & \rTo & \mrm{Nwd} \\
          & \rdTo  & & & & \rdTo & \\
 & & \mc{ED} & \rTo & \mc{ED}_\mrm{fin} & \rTo & \mc{G}_\mrm{fc} \\
 \dTo    & & \dTo & & \dTo & \rdTo &  & \\
 &  & \mrm{Fin}\otimes\mrm{Fin} & & \mc{W} & \rDashto~{\text{{\large ?}}} & \mc{I}_{1/n}\\
 & \ruTo &  & & \vLine & \rdDashto~{\text{{\large ?}}} & \dTo \\
\mrm{Conv} & \rTo  & &  & \HonV &  & \mrm{tr}(\mc{N})\\
 \dTo & \rdTo(4,2) & & & \dTo &  & \dTo \\
\mrm{Nwd} & &  & & \mc{Z}_u & \rTo & \mc{Z}
\end{diagram}

M. Hru\v{s}\'ak, J. Brendle, and J. Fla\v{s}kov\'a already presented (see \cite{hrusakkatetov} and \cite{janajorg}) a slightly smaller diagram and proved that there are no other reductions between the ideals included (with the exception of a possible reduction of $\mrm{Ran}$ to $\mc{S}$). We added $\mc{W}$,  $\mc{G}_\mrm{fc}$, $\mc{Z}_u$, and $\mrm{tr}(\mc{N})$ to their diagram. We already know that $\mc{I}_{1/n}\leq_\mrm{KB}\mrm{tr}(\mc{N})\leq_\mrm{KB}\mc{Z}$. First of all, we show that $\mc{Z}\nleq_\mrm{K}\mrm{tr}(\mc{N})\nleq_\mrm{K}\mc{I}_{1/n}$ (and more):
\begin{fact}$ $
\begin{itemize}
\item[(a)] $\mrm{tr}(\mc{N})\nleq_{\mrm{K}}\mc{I}_{1/n}\upharpoonright Y$ for every $Y\in\mc{I}^+_{1/n}$. \item[(b)] $\mc{Z}\nleq_{\mrm{K}}\mrm{tr}(\mc{N})\upharpoonright Y$ for every $Y\in\mrm{tr}(\mc{N})^+$.
\end{itemize}
\end{fact}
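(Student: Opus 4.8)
The plan is, in both parts, to fix an arbitrary candidate reduction and produce a set in the source ideal whose $f$-preimage lands outside the target ideal. \emph{Part (a).} Let $f\colon Y\to 2^{<\om}$ be arbitrary with $Y\in\mc{I}_{1/n}^+$, and for $s\in 2^{<\om}$ let $w(s)=\sum\{1/(n+1):n\in f^{-1}(\{s\})\}$ be the $\mc{I}_{1/n}$-weight of the corresponding fibre, so $\sum_{s}w(s)=\sum_{n\in Y}1/(n+1)=\infty$. Every $s\in 2^{<\om}$ either equals $0^m$ or extends $0^m{}^\frown 1$ for a unique $m$, so writing $W_m$ for the $\mc{I}_{1/n}$-weight of $f^{-1}[\{s:s\supseteq 0^m{}^\frown 1\}]$ we get $\sum_m w(0^m)+\sum_m W_m=\infty$. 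If the first sum is infinite, take $A=\{0^m:m\in\om\}$: then $[A]\subseteq\{0^\om\}$, so $A\in\mrm{tr}(\mc N)$, while $f^{-1}[A]$ is the disjoint union of the fibres over the $0^m$ and has infinite $\mc{I}_{1/n}$-weight. Otherwise $\sum_m W_m=\infty$; for each $m$ pick a \emph{finite} $A_m\subseteq\{s:s\supseteq 0^m{}^\frown 1\}$ whose preimage has $\mc{I}_{1/n}$-weight $\geq\min(1,W_m/2)$ (legitimate, as partial sums tend to $W_m$) and set $A=\bigcup_m A_m$. Any $x\ne 0^\om$ leaves the branch $0^\om$ at some level $j$, and then $x{\upharpoonright}n\in A$ forces $x{\upharpoonright}n\in A_j$, a finite set; hence $[A]=\0$ and $A\in\mrm{tr}(\mc N)$. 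The cones $\{s:s\supseteq 0^m{}^\frown 1\}$ are pairwise incomparable, so $f^{-1}[A]$ is the disjoint union of the $f^{-1}[A_m]$ and has weight $\sum_m\min(1,W_m/2)=\infty$. Either way $f^{-1}[A]\notin\mc{I}_{1/n}$, hence $f^{-1}[A]\notin\mc{I}_{1/n}{\upharpoonright}Y$; as $f$ (and $Y$) were arbitrary, (a) follows. There is no real obstacle here.

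\emph{Part (b).} Let $f\colon Y\to\om$ with $Y\in\mrm{tr}(\mc N)^+$, i.e.\ $\lam([Y])=\de>0$. If some fibre $f^{-1}(\{k\})$ has non-null trace we are done with $A=\{k\}\in\mc Z$, so assume every fibre has null trace. For $x\in[Y]$ put $R_x=\{f(x{\upharpoonright}n):x{\upharpoonright}n\in Y\}$; a pigeonhole argument shows that finiteness of $R_x$ puts $x$ into $[f^{-1}(\{k\})]$ for some $k$, so $G:=\{x\in[Y]:R_x\text{ infinite}\}$ (a Borel set) has $\lam(G)=\de$. It suffices to find $A\in\mc Z$ with $\lam\{x\in G:|R_x\cap A|=\om\}>0$, since $\{x\in G:|R_x\cap A|=\om\}\subseteq[f^{-1}[A]]$. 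I plan to build $A$ randomly against an interval partition adapted to $f$. First choose $0=R_0<R_1<\cdots$ recursively so that, with $I_j=[R_j,R_{j+1})$, one has $\lam\{x\in G:R_x\cap I_j=\0\}\leq\de\,2^{-j}$; this is possible because $R_x$ is infinite, so $\lam\{x\in G:R_x\cap[R_j,M)\ne\0\}\nearrow\de$ as $M\to\infty$, leaving one free to make the $|I_j|$ grow as rapidly as desired. By Borel--Cantelli, almost every $x\in G$ has $R_x\cap I_j\ne\0$ for all but finitely many $j$. Now let $A$ be random, putting each $k\in I_j$ into $A$ independently with probability $p_j=1/(j+1)$, with the $|I_j|$ chosen to grow fast enough; a routine Chernoff-and-Borel--Cantelli estimate then shows $A\in\mc Z$ almost surely (it meets each $I_j$ in roughly a $p_j$-fraction, and $p_j\to 0$). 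On the other hand, for each of the almost-every $x\in G$ above, the events $\{A\cap R_x\cap I_j\ne\0\}$ are independent in $j$ and have probability $\geq p_j$ for large $j$ (as $|R_x\cap I_j|\geq 1$), so their probabilities sum to $\infty$ and the second Borel--Cantelli lemma gives $|R_x\cap A|=\om$ a.s.; Fubini then yields $\mbb{E}_A\,\lam([f^{-1}[A]]\cap G)=\lam(G)>0$. Hence some single $A$ lies in $\mc Z$ and has $\lam([f^{-1}[A]])>0$, so $f$ does not witness $\mc Z\leq_{\mrm K}\mrm{tr}(\mc N){\upharpoonright}Y$, and (b) follows.

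The delicate point in (b) is forcing $A$ to be simultaneously density-zero and ``$R_x$-hitting'' for positively many $x$. A naive deterministic construction, choosing in each window a small set of values capturing almost all the mass of $G$, fails: $f$ may smear that mass uniformly over an interval of consecutive values, forcing a dense --- hence not density-zero --- choice. Randomisation circumvents this, since in each $I_j$ one only needs an $\geq p_j$ chance of a hit rather than a sure hit, while the adaptively chosen partition guarantees a candidate hit in every far interval for almost every $x$, keeping the Borel--Cantelli sum divergent. One should still verify the measurability behind Fubini (the set $\{(x,A):x\in[f^{-1}[A]]\}$ is $\Ubf{\Pi}^0_2$ in the product space) and that the $R_j$'s can be chosen increasing without clashing with the measure condition; both are routine.
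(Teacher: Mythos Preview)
Your proofs of both parts are correct and take genuinely different routes from the paper.

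\textbf{Part (a).} The paper does not fix the branch $0^\om$ in advance but recursively follows heavy cones: starting from the root, one finds a level $n_0$ so that the nodes below level $n_0$ already carry $\mc I_{1/n}$-weight $\ge 1$, then picks $s_0\in 2^{n_0}$ whose cone still has infinite weight, and iterates; the resulting $A$ is the union of ``bands'' between successive $s_k$'s and satisfies $[A]=\{\bigcup_k s_k\}$. Your argument instead partitions $2^{<\om}$ once, according to the position of the first $1$, and handles two cases. Both produce an $A$ with $[A]$ at most a singleton; the paper's construction generalises more readily, while yours is shorter for this particular pair of ideals.

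\textbf{Part (b).} Here the paper gives no direct argument at all: it simply invokes the characterisation of forcing indestructibility from \cite{fq} together with the random-indestructibility of $\mc Z$ (\cite{elekes}, \cite[Thm.~3.12]{cardinvanalp}). Your probabilistic construction is essentially an explicit, self-contained unwinding of that random-indestructibility. One simplification: the step ``$A\in\mc Z$ almost surely'' does not actually need any growth condition on the $|I_j|$. Writing $q_k=p_{j(k)}$, the indicators $\mathbf 1_{\{k\in A\}}$ are independent with variances $\le 1$, so Kolmogorov's strong law gives $\bigl(|A\cap n|-\sum_{k<n}q_k\bigr)/n\to 0$ a.s.; since $q_k\to 0$, the Ces\`aro mean $n^{-1}\sum_{k<n}q_k\to 0$ as well, hence $|A\cap n|/n\to 0$ a.s. This sidesteps having to control the density at points interior to the $I_j$ and confirms that your ``routine'' tag is justified.
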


\begin{proof}
(a): Let $f:Y\to 2^{<\om}$ be arbitrary. We show that $f$ is not a Kat\v{e}tov-reduction. We can assume that $f^{-1}[\{s\}]\in\mc{I}_{1/n}$ for every $s\in 2^{<\om}$ otherwise we are done. We know that $\sum\{1/(n+1):n\in Y\}=\infty$ hence there is an $n_0$ such that $\sum\{1/(n+1):f(n)\in 2^{<n_0}\}\geq 1$. We can also find an $s_0\in 2^{n_0}$ such that $\sum\{1/(n+1):f(n)\supseteq s_0\}=\infty$. We can continue this procedure, that is, we can construct a strictly increasing sequence $s_{-1}:=\0\subseteq s_0\subseteq s_1\dots $ in $2^{<\om}$ such that $\sum\{1/(n+1):s_k\subseteq f(n)$ and $|f(n)|<|s_{k+1}|\}\geq 1$ for every $k\geq -1$. Let
\[ A=\big\{t\in 2^{<\om}:\exists\;k\geq -1\;\big(s_k\subseteq t\;\text{and}\;|t|<|s_{k+1}|\big)\big\}.\]
Then $[A]=\{x\}\in\mc{N}$ where $x=s_0\cup s_1\cup\dots$ but $f^{-1}[A]\notin\mc{I}_{1/n}$.

(b): This is a direct consequence of the characterization of {\em forcing indestructibility} of ideals (see \cite[Thm. 1.6]{fq}) and the fact that $\mc{Z}$ is random-indestructible (see \cite{elekes} or \cite[Thm. 3.12]{cardinvanalp}).
\end{proof}

Let us present the rest of the proofs concerning the four new ideals ($\mc{W},\mc{G}_\mrm{fc},\mc{Z}_u$, and $\mrm{tr}(\mc{N})$) we extended the diagram with. Of course, to check that these reductions and non-reducibilities presented below do not leave any more questions beyond the marked ones, the reader has to use the original diagram (see \cite{hrusakkatetov} and \cite{janajorg})  and transitivity of $\leq_\mrm{K(B)}$.

\medskip
Position of $\mc{G}_\mrm{fc}$: We know that $\mc{S}\leq_\mrm{KB} \mc{G}_\mrm{fc}$ and $\mc{ED}_\mrm{fin}\leq_\mrm{KB}\mc{G}_\mrm{fc}$ (see \cite{Meza}). To find the exact position of $\mc{G}_\mrm{fc}$ in the diagram, it is enough to show that  $\mrm{Conv}\nleq_\mrm{K}\mc{G}_\mrm{fc}$, $\mc{I}_{1/n}\nleq_\mrm{K}\mc{G}_\mrm{fc}$, and $\mc{W}\nleq_\mrm{KB}\mc{G}_\mrm{fc}$. We will need the following:

\begin{thm}\label{aboveconv} {\em (see \cite[Thm. 2.4.3]{Meza})}. Let $\mc{I}$ be an ideal on $\om$. Then $\mrm{Conv}\leq_\mrm{K}\mc{I}$ iff there is a countable $\mc{X}\subseteq [\om]^\om$ such that $\forall$ $Y\in\mc{I}^+$ $\exists$ $X\in\mc{X}$ $|Y\cap X|=|Y\setminus X|=\om$.
\end{thm}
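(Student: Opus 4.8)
The plan is to use a reformulation of the Kat\v{e}tov condition. Since $\mrm{Conv}$ lives on $\mbb{Q}\cap[0,1]$ and is generated by the convergent sequences, a function $f$ (from the underlying set of $\mc{I}$ to $\mbb{Q}\cap[0,1]$) witnesses $\mrm{Conv}\leq_\mrm{K}\mc{I}$ if and only if $f[Y]\notin\mrm{Conv}$, i.e. $f[Y]$ has infinitely many accumulation points in $[0,1]$, for every $Y\in\mc{I}^+$: if $Y\in\mc{I}^+$ and $f[Y]\in\mrm{Conv}$ then $f^{-1}[f[Y]]\supseteq Y$ is $\mc{I}$-positive and is the preimage of a member of $\mrm{Conv}$; conversely, if $f^{-1}[A]\in\mc{I}^+$ for some $A\in\mrm{Conv}$ then $f[f^{-1}[A]]\subseteq A$ lies in $\mrm{Conv}$. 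For the ``only if'' direction I would fix such an $f$ and let $\mc{X}$ be the set of all infinite sets of the form $f^{-1}[(a,b)\cap\mbb{Q}]$ with $(a,b)$ an open interval with rational endpoints; this family is countable, and given $Y\in\mc{I}^+$ one picks two distinct accumulation points $p\ne q$ of $f[Y]$ together with a rational interval $(a,b)$ with $p\in(a,b)$ and $q\notin[a,b]$, so that $f[Y]\cap(a,b)$ and $f[Y]\setminus(a,b)$ are both infinite and hence $f^{-1}[(a,b)\cap\mbb{Q}]$ splits $Y$ and lies in $\mc{X}$.

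For the ``if'' direction, suppose $\mc{X}=\{X_n:n\in\om\}$ (repetitions allowed) splits every member of $\mc{I}^+$. I would fix a surjection $\pi:\om\to\om$ all of whose fibres are infinite, and build a Cantor scheme of closed intervals $(I_s)_{s\in 2^{<\om}}$ with rational endpoints such that $I_\emptyset=[0,1]$, the intervals $I_{s^\frown 0}$ and $I_{s^\frown 1}$ are disjoint subintervals of the interior of $I_s$, the diameters tend to $0$ along every branch, and the left endpoints $q_s$ of the $I_s$ are pairwise distinct rationals. For $k$ in the underlying set of $\mc{I}$ let $x_k\in 2^\om$ be defined by $x_k(n)=0\iff k\in X_{\pi(n)}$, put $A_s=\{k:x_k\upharpoonright|s|=s\}$ (so $A_\emptyset$ is everything, $A_{s^\frown 0}=A_s\cap X_{\pi(|s|)}$ and $A_{s^\frown 1}=A_s\setminus X_{\pi(|s|)}$), and define $f(k)=q_{x_k\upharpoonright k}\in\mbb{Q}\cap[0,1]$.

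It then remains to verify $f[Y]\notin\mrm{Conv}$ for every $Y\in\mc{I}^+$. Let $T_Y=\{s\in 2^{<\om}:|Y\cap A_s|=\om\}$; it is a nonempty pruned subtree of $2^{<\om}$, and for each branch $x\in[T_Y]$ the single point $\bigcap_n I_{x\upharpoonright n}$ is an accumulation point of $f[Y]$ (for $k\in Y\cap A_{x\upharpoonright n}$ with $k\ge n$ we have $f(k)\in I_{x_k\upharpoonright k}\subseteq I_{x\upharpoonright n}$, and distinctness of the $q_s$ guarantees these are infinitely many distinct values), while distinct branches produce distinct such points because the chosen subintervals are disjoint. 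Hence it suffices to show that $[T_Y]$ is infinite, equivalently that $T_Y$ has branching nodes of arbitrarily high level. If not, then $[T_Y]=\{x^1,\dots,x^j\}$ is finite and there is an $N$ above which $T_Y$ has no branching; since ideals are closed under finite unions, some piece $Z=Y\cap A_{x^i\upharpoonright N}$ is $\mc{I}$-positive, and by induction on $\ell\ge N$ the set $Z\setminus A_{x^i\upharpoonright \ell}$ is finite (the off-branch child of $x^i\upharpoonright\ell$ lies outside $T_Y$, so it meets $Y$, hence $Z$, in a finite set). Consequently, for each $\ell\ge N$ one of $Z\cap X_{\pi(\ell)}$, $Z\setminus X_{\pi(\ell)}$ is finite, i.e. $X_{\pi(\ell)}$ does not split $Z$; as $\pi$ is onto with infinite fibres this means that no $X_n$ splits $Z\in\mc{I}^+$, contradicting the choice of $\mc{X}$.

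I expect the last step of the ``if'' direction to be the main obstacle: one has to organize the splitting sets along the tree so that a hypothetical failure of infinite branching along some (a priori unknown) branch forces one fixed positive set $Z$ to remain unsplit by \emph{every} $X_n$. Arranging that all nodes of a given level are split by the same set $X_{\pi(\ell)}$, with $\pi$ hitting every index infinitely often, is exactly what makes this bookkeeping work; checking that $f$ lands in $\mbb{Q}\cap[0,1]$ and that accumulation points of $f[Y]$ match branches of $T_Y$ is then routine.
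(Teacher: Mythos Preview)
The paper does not prove this theorem; it is quoted from \cite[Thm.~2.4.3]{Meza} and used as a black box (in Corollary~\ref{FsigmaConv}). So there is no in-paper argument to compare against.

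Your proof is correct in both directions. A couple of minor remarks: in the ``only if'' direction you only need two distinct accumulation points of $f[Y]$ (not infinitely many) to find the splitting interval, but of course $f[Y]\notin\mrm{Conv}$ gives you infinitely many anyway. In the ``if'' direction your map $f$ is in fact injective (since $k\ne k'$ implies $|x_k\upharpoonright k|\ne|x_{k'}\upharpoonright k'|$ and the $q_s$ are distinct), which is what makes the accumulation-point verification clean; and the pressing-down along the unique branch through the chosen $Z$ is exactly the right bookkeeping. The use of $\pi$ with infinite fibres is the key device that lets a single branch witness non-splitting by \emph{every} $X_n$, and you identified this correctly as the main point.
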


We say that an ideal $\mc{I}$ is a {\em P$^+$-ideal}, if every $\subseteq$-decreasing sequence $(Y_n)_{n\in\om}$ of $\mc{I}$-positive sets has an $\mc{I}$-positive pseudointersection. It is well-known that $F_\sigma$ ideals are P$^+$: Let $\mc{I}=\mrm{Fin}(\varphi)$ for some lsc submeasure $\varphi$.  Applying lower-semicontinuity of $\varphi$ (and that $\varphi(Y_n)=\infty$ for every $n$), we can pick finite sets $F_n\subseteq Y_n$ such that $\varphi(F_n)\geq n$ for every $n$. It follows that $\bigcup\{F_n:n\in\om\}$ is an $\mc{I}$-positive pseudointersection of $(Y_n)_{n\in\om}$.

\begin{cor}\label{FsigmaConv}
There is no P$^+$-ideal (in particular, $F_\sigma$ ideal) Kat\v{e}tov-above $\mrm{Conv}$.
\end{cor}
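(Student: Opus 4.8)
The plan is to argue by contradiction, feeding the combinatorial reformulation of being Katětov-above $\mrm{Conv}$ (Theorem \ref{aboveconv}) against the P$^+$ property. So suppose $\mc{I}$ is a P$^+$-ideal on $\om$ with $\mrm{Conv}\leq_\mrm{K}\mc{I}$. By Theorem \ref{aboveconv} fix a countable family $\mc{X}=\{X_n:n\in\om\}\subseteq[\om]^\om$ such that for every $Y\in\mc{I}^+$ there is an $n$ with $|Y\cap X_n|=|Y\setminus X_n|=\om$. The strategy is to manufacture a single $\mc{I}$-positive set $Y$ that $\subseteq^*$-decides every $X_n$ (i.e. $Y\subseteq^* X_n$ or $Y\subseteq^*\om\setminus X_n$), which directly contradicts the defining property of $\mc{X}$.

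First I would construct, by recursion on $n$, a $\subseteq$-decreasing sequence $Y_0\supseteq Y_1\supseteq\cdots$ of $\mc{I}$-positive sets. Let $Y_0=\om$, which is $\mc{I}$-positive since $\mc{I}$ is proper. Given $Y_n\in\mc{I}^+$, write $Y_n=(Y_n\cap X_n)\cup(Y_n\setminus X_n)$; since $\mc{I}$ is an ideal, at least one of these two pieces is $\mc{I}$-positive, and I let $Y_{n+1}$ be such a piece. By construction, for every $n$ we have $Y_{n+1}\subseteq X_n$ or $Y_{n+1}\subseteq\om\setminus X_n$.

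Next I would invoke the P$^+$ hypothesis: the $\subseteq$-decreasing sequence $(Y_n)_{n\in\om}$ of $\mc{I}$-positive sets admits an $\mc{I}$-positive pseudointersection $Y$, that is, $Y\in\mc{I}^+$ and $Y\subseteq^* Y_n$ for all $n$. Now fix any $n$: if $Y_{n+1}\subseteq X_n$ then $Y\subseteq^* X_n$, so $|Y\setminus X_n|<\om$; if instead $Y_{n+1}\subseteq\om\setminus X_n$ then $Y\subseteq^*\om\setminus X_n$, so $|Y\cap X_n|<\om$. In either case it is false that $|Y\cap X_n|=|Y\setminus X_n|=\om$. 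Since this holds for every $n$, the family $\mc{X}$ fails the condition of Theorem \ref{aboveconv}, contradicting $\mrm{Conv}\leq_\mrm{K}\mc{I}$. The parenthetical claim about $F_\sigma$ ideals is then immediate, since the excerpt has just observed that every $F_\sigma$ ideal is P$^+$.

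I do not expect a real obstacle here: the argument is essentially a diagonalization whose only delicate points are that $Y_{n+1}$ stays $\mc{I}$-positive at each step (guaranteed by properness of $\mc{I}$ for the base case and by the ideal axioms at successors) and that the pseudointersection is taken inside $\mc{I}^+$ rather than merely inside $[\om]^\om$ — which is exactly the content of the P$^+$ assumption and the whole point of the statement. The only bookkeeping subtlety is the harmless identification of the underlying set $\mbb{Q}\cap[0,1]$ of $\mrm{Conv}$ with $\om$, and this is already absorbed into the statement of Theorem \ref{aboveconv}.
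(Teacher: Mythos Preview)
Your proof is correct and follows essentially the same approach as the paper: build a $\subseteq$-decreasing sequence of $\mc{I}^+$-sets by choosing at stage $n$ which side of $X_n$ to keep, then apply the P$^+$ property to get a positive pseudointersection that no $X_n$ splits. The paper phrases this directly (showing the criterion of Theorem~\ref{aboveconv} fails for every countable family) rather than by contradiction, and records the choices as an $\eps\in 2^\om$, but the argument is the same.
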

\begin{proof}
We will apply Theorem \ref{aboveconv}. Let $\mc{I}$ be an P$^+$-ideal and $\{X_n:n\in\om\}\subseteq [\om]^\om$ a countable family. We can fix an $\eps\in 2^\om$ such that $Y_n=X_0^{\eps(0)}\cap X_1^{\eps(1)}\cap\dots\cap X_n^{\eps(n)}\in\mc{I}^+$ for every $n$ (where of course, $X^0=X$ and $X^1=\om\setminus X$). Then $Y_0\supseteq Y_1\supseteq Y_2\supseteq\dots$ is a decreasing sequence in $\mc{I}^+$, hence it has a pseudointersection $Y\in\mc{I}^+$, and it follows that $|Y\cap X^{1-\eps(n)}|<\om$ for every $n$.
\end{proof}

In particular, $\mrm{Conv}\nleq_\mrm{K} \mc{G}_\mrm{fc}$ and $\mrm{Conv}\nleq_\mrm{K}\mc{W}$.

\begin{fact} $\mc{I}_{1/n}\nleq_\mrm{K}\mc{G}_\mrm{fc}$.
\end{fact}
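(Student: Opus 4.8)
The plan is to argue by contradiction: suppose $f\colon[\om]^2\to\om$ witnesses $\mc{I}_{1/n}\leq_\mrm{K}\mc{G}_\mrm{fc}$, so that $f^{-1}[A]\in\mc{G}_\mrm{fc}$ for every $A\in\mc{I}_{1/n}$, and then build a single ``bad'' $A\in\mc{I}_{1/n}$ whose preimage contains arbitrarily large complete graphs (hence has infinite chromatic number). For $k\in\om$ set $E_k=f^{-1}[\{k\}]$; since $\{k\}\in\mrm{Fin}\subseteq\mc{I}_{1/n}$, being a reduction forces $E_k\in\mc{G}_\mrm{fc}$, i.e.\ $c_k:=\chi(\om,E_k)<\om$ (and $c_k\geq1$). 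The elementary fact I would use is that a finite union of finite-chromatic graphs is finite-chromatic (the product of the colourings works), so for each $L\in\om$ the graph $\bigcup_{k<L}E_k$ has chromatic number at most $\prod_{k<L}c_k<\om$; hence $\om$ is covered by finitely many sets independent for $\bigcup_{k<L}E_k$, and at least one of them, call it $X_L$, is infinite. Thus any two distinct points of $X_L$ span an edge of $f$-colour $\geq L$.

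Next I would use $X_L$ to hide a large clique behind only a few, ``expensive'' colours. Given $j$, pick $v_1<\dots<v_j$ in $X_{L_j}$ for a threshold $L_j$ chosen below, and let $B_j=\{f(\{v_a,v_b\}):1\le a<b\le j\}$. Then $B_j\subseteq[L_j,\infty)$, $|B_j|\le\binom{j}{2}$, and $f^{-1}[B_j]$ contains the complete graph on $\{v_1,\dots,v_j\}$. Taking $L_j:=j^2\cdot2^j$ gives $\sum_{k\in B_j}\frac1{k+1}\le\binom{j}{2}/L_j\le2^{-j}$, so $A:=\bigcup_{j\geq1}B_j$ satisfies $\sum_{k\in A}\frac1{k+1}\le\sum_{j\geq1}2^{-j}<\infty$, i.e.\ $A\in\mc{I}_{1/n}$. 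On the other hand $f^{-1}[A]\supseteq f^{-1}[B_j]$ contains a $j$-clique for every $j$, so $\chi(\om,f^{-1}[A])=\infty$ and $f^{-1}[A]\notin\mc{G}_\mrm{fc}$, the desired contradiction. Note that no disjointness of the vertex sets $\{v_1,\dots,v_j\}$ across different $j$ is needed, since containing $K_j$ for every $j$ already rules out finite chromatic number.

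I do not expect a genuine obstacle here: once one observes that, because $f$ is a reduction, every single colour class $E_k$ lies in $\mc{G}_\mrm{fc}$ and hence the ``low'' part $\bigcup_{k<L}E_k$ of the colouring is finite-chromatic, the rest is finitary counting plus a convergence bookkeeping. The only point to state carefully is the choice of thresholds so that $A\in\mc{I}_{1/n}$ really holds — but any sequence $L_j$ with $\binom{j}{2}/L_j$ summable, e.g.\ $L_j=j^2\cdot 2^j$, suffices.
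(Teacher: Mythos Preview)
Your proof is correct and follows essentially the same approach as the paper: use that each colour class $f^{-1}[\{k\}]$ (and hence any finite union of them) lies in $\mc{G}_\mrm{fc}$ to find an infinite set on which all $f$-values are large, then extract a $j$-clique whose $\binom{j}{2}$ colours contribute a summable amount to $A$. The only cosmetic difference is that the paper constructs the pieces $F_{n+1}=f[[G]^2]$ recursively (choosing the next threshold above $\max(F_n)$), whereas you fix all thresholds $L_j=j^2\cdot 2^j$ in advance; both lead to the same conclusion that $f^{-1}[A]$ contains arbitrarily large cliques and hence lies outside $\mc{G}_\mrm{fc}$.
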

\begin{proof}
Fix a function $f:[\om]^2\to\om$ and assume that $f^{-1}[\{n\}]\in\mc{G}_\mrm{fc}$ for every $n\in\om$. By recursion, we will define a sequence $(F_n)_{n\in\om}$ of finite subsets of $\om$ such that $A=\bigcup\{F_n:n\in\om\}\in\mc{I}_{1/n}$ but $f^{-1}[A]\in\mc{G}_\mrm{fc}^+$. Let $F_0=\0$, $F_1=\{1\}$, and assume that we are done with $F_n$. Let $k_n>\max(F_n)$ be large enough such that $\binom{n+1}{2}/(k_n+1)\leq 2^{-n-1}$. We know that $f^{-1}[k_n]=f^{-1}[\{k:k<k_n\}]\in\mc{G}_\mrm{fc}$, i.e. there is a finite partition $\om=\bigcup\{P^n_m:m< \ell_n\}$ ($\ell_n\in\om$) such that $[P^n_m]^2\cap f^{-1}[k_n]=\0$ for every $m<\ell_n$. Then there is an $m_n$ such that $|P^n_{m_n}|=\om$. Let $G\in [P^n_{m_n}]^{n+1}$ be arbitrary and define $F_{n+1}=f[[G]^2]$. Then $A=\bigcup\{F_n:n\in\om\}\in\mc{I}_{1/n}$ because
\[ \sum_{k\in A}\frac{1}{k+1}=\sum_{n\in\om}\sum_{k\in F_n}\frac{1}{k+1}\leq\frac{1}{2}+\sum_{n\geq 1}\frac{|F_{n+1}|}{k_n+1}\leq\frac{1}{2}+\sum_{n\geq 1}
\frac{\binom{n+1}{2}}{k_n+1}\leq\frac{1}{2}+\sum_{n\geq 1}2^{-n-1}=1,\]
and $f^{-1}[A]\in\mc{G}_\mrm{fc}^+$ because it contains (the set of edges of) arbitrary large finite complete graphs.
\end{proof}

The only case when we cannot show strong non-reducibility in our diagram is the following result (that is, we can prove only non-Kat\v{e}tov-Blass-reducibility unlike in all other cases when we have non-Kat\v{e}tov-reducibility).

\begin{thm} $\mc{W}\nleq_\mrm{KB}\mc{G}_\mrm{fc}$.
\end{thm}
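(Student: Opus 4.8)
The plan is to suppose, towards a contradiction, that $f:[\om]^2\xrightarrow{\text{fin-to-one}}\om$ witnesses $\mc{W}\leq_\mrm{KB}\mc{G}_\mrm{fc}$, i.e. $f^{-1}[A]\in\mc{G}_\mrm{fc}$ for every $A\in\mc{W}$. I would exploit the fact that $\mc{W}$-positivity is a very robust combinatorial property: by van der Waerden's theorem, if $\om=\bigcup_{m<\ell}C_m$ is a finite partition, then some $C_m$ contains arbitrarily long arithmetic progressions, hence $C_m\in\mc{W}^+$. So $\mc{W}^+$ is closed under taking a piece of any finite partition, and more: any set containing arbitrarily long arithmetic progressions is $\mc{W}$-positive. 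I would build, by recursion, an $\mc{W}$-positive set $A\subseteq\om$ (containing arbitrarily long APs) such that $f^{-1}[A]$ has finite chromatic number, contradicting $A\in\mc{W}$ being impossible — wait, the direction is: I want $A\in\mc{W}$ (so $A$ does \emph{not} contain arbitrarily long APs) but $f^{-1}[A]\notin\mc{G}_\mrm{fc}$, i.e. $f^{-1}[A]$ contains a graph of infinite chromatic number. That is the analogue of the $\mc{I}_{1/n}\nleq_\mrm{K}\mc{G}_\mrm{fc}$ argument above, but now with the mass bound on $A$ replaced by the requirement ``$A$ contains no long AP.''

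Concretely, I would recursively construct finite sets $F_0,F_1,\dots\subseteq\om$ and an increasing sequence of ``widths,'' aiming for $A=\bigcup_n F_n\in\mc{W}$ and $f^{-1}[A]\in\mc{G}_\mrm{fc}^+$. At stage $n$, having committed to $F_0,\dots,F_n$, pick $k_n$ large and consider $f^{-1}[k_n]\in\mc{G}_\mrm{fc}$; fix a finite partition $\om=\bigcup_{m<\ell_n}P^n_m$ with $[P^n_m]^2\cap f^{-1}[k_n]=\0$, and choose an infinite piece $P^n_{m_n}$. Inside it take $G_n$ a set of size $n+1$ and set $F_{n+1}=f[[G_n]^2]$; then $f^{-1}[A]\supseteq[G_n]^2$ for all $n$, so $f^{-1}[A]$ contains arbitrarily large complete graphs, hence $\chi(\om,f^{-1}[A])=\om$ and $f^{-1}[A]\in\mc{G}_\mrm{fc}^+$. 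The point where this diverges from the $\mc{I}_{1/n}$ case, and where the real work lies, is controlling $A=\bigcup_n F_n$ so that it contains no long arithmetic progression: one cannot simply make $F_{n+1}$ ``small'' the way one makes its $1/n$-mass small, because finite sets always sit inside APs of the ambient $\om$. The idea will be instead to make $A$ \emph{sparse in a scale-invariant way} — e.g. arrange that $\min(F_{n+1})$ is enormous compared to $\max(F_n)$, so that any AP meeting two different blocks $F_i, F_j$ would have common difference exceeding anything it could realize within the window, forcing any long AP inside $A$ to live inside a single $F_n$; then, since $|F_n|\le\binom{n+1}{2}$ is finite, $A$ contains APs of length at most $\sup_n\mathrm{ap}(F_n)$ which we must also keep bounded.

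The main obstacle is exactly this last point: a fin-to-one $f$ gives us no control over \emph{which} elements of $\om$ end up in $F_{n+1}=f[[G_n]^2]$ — we only get to choose $G_n$ freely inside an infinite set, and $f$ then dictates $F_{n+1}$, which could a priori itself be a long arithmetic progression. So the recursion must be cleverer: at stage $n$ I would first thin the infinite homogeneous piece $P^n_{m_n}$ further so that the \emph{image} $f[[G]^2]$ is forced to avoid bad patterns. Here I expect to use that $f$ is fin-to-one in an essential way — for each $j$, $f^{-1}(\{j\})$ is finite, so $[P^n_{m_n}]^2$ meets $f^{-1}(\{j\})$ for only finitely many pairs, and by a greedy/Ramsey selection inside $P^n_{m_n}$ I can choose $G_n$ so that $f[[G_n]^2]$ lies above a prescribed bound and, crucially, is so ``stretched out'' (successive elements growing fast enough relative to all of $F_0\cup\dots\cup F_n$) that no AP of length, say, $3$ fits across the gaps, keeping $\mathrm{ap}(A)\le 2$ and hence $A\in\mc{W}$. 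I would organize the bookkeeping exactly as in Example~\ref{cotW}: work with a set $B$ analogous to the one there, i.e. track the finitely many ways a short AP could be completed using new points, and use fin-to-oneness to dodge all of them while still extracting the complete graph $[G_n]^2$ from the homogeneous piece. Verifying that this greedy choice is always possible — that the constraints ``$f[[G_n]^2]$ avoids completing any length-$3$ AP with earlier blocks'' exclude only finitely (or measure-negligibly) many candidate pairs inside the infinite set $P^n_{m_n}$, so an infinite subset survives and a suitable $G_n$ of size $n+1$ can be found — is the crux, and is where I would spend the bulk of the write-up.
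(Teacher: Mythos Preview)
Your overall strategy --- build a set $E\notin\mc{G}_\mrm{fc}$ (a union of complete graphs) whose $f$-image lies in $\mc{W}$ by greedily adding vertices while forbidding $3$-term APs in the image --- is the same as the paper's. The gap is in the sentence you flag as the crux: the assertion that the forbidden constraints ``exclude only finitely (or measure-negligibly) many candidate pairs'' is not true in general, and the argument cannot be completed along those lines. When you add a new vertex $g_i$ to $g_0,\dots,g_{i-1}$, you create $i$ new image values $f(\{g_0,g_i\}),\dots,f(\{g_{i-1},g_i\})$ simultaneously. The dangerous $3$-APs are not only of the form ``two old values plus one new value'' (which would indeed exclude only finitely many $g_i$, by fin-to-oneness), but also ``three new values''. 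Nothing prevents, say, $f(\{g_0,x\}),f(\{g_1,x\}),f(\{g_2,x\})$ from being a $3$-AP for \emph{every} $x$ in an infinite set; a fin-to-one $f$ can easily be arranged to do this (e.g.\ $f(\{0,x\})=3x$, $f(\{1,x\})=3x+1$, $f(\{2,x\})=3x+2$). So the greedy step can stall, and your proof sketch does not say what to do then.

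The paper's key idea, which you are missing, is that the construction need not succeed: if it stalls at stage $k$, the failure itself produces the desired witness. The paper builds a single infinite $B=\{a_0<a_1<\dots\}$ one point at a time, maintaining that for every $n$ in the current reservoir $A_k$ the ``column'' $I_{n,k}=\{f(\{a_\ell,n\}):\ell<k\}$ contains no $3$-AP. If no choice of $a_k$ and thinned reservoir $A_{k+1}$ works, then for every $m\in A_k$ and cofinitely many $n\in A_k$, the value $f(\{m,n\})$ must complete a $3$-AP with two members of $I_{n,k}$. But any pair $a<b$ has at most three AP-completions, so there are at most $3\binom{k}{2}$ possible values for $f(\{m,n\})$, all lying in $I_n$. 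Passing to an infinite $A'\subseteq A_k$ on which this holds for all $m<n$, one gets $f[[A']^2]\subseteq\bigcup_{n\in A'}J_n$ with $|J_n|\le 3\binom{k}{2}$; since the $I_n$ were chosen with exponentially growing gaps at the outset, any AP in this union has length $\le 3\binom{k}{2}+1$, so $f[[A']^2]\in\mc{W}$ while $[A']^2\notin\mc{G}_\mrm{fc}$ --- contradiction. This ``win either way'' counting argument is the heart of the proof; your proposal should replace the unsupported finiteness claim with it. (Incidentally, the detour through a fresh chromatic decomposition at each stage is unnecessary: a single initial thinning using fin-to-oneness, to make the blocks $I_n$ satisfy $2\max(I_m)<\min(I_n)$, already separates the blocks.)
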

\begin{proof}
Let $f:[\om]^2\to\om$ be finite-to-one. First of all, notice that we can fix an infinite $A\subseteq\om\setminus \{0\}$ such that letting $I_n=\{f(\{n',n\}):n'<n\}\in [\om]^{<\om}$, we have $2\cdot\max(I_m)<\min(I_n)$ for every $m<n$ from $A$. Now we will construct sequences $(A_k)_{k\in\om}$ and $(a_k)_{k\in\om}$ such that
\begin{itemize}
\item[(a)] $A=A_0\supseteq A_1\supseteq A_2\supseteq\dots$ are infinite;
\item[(b)] $a_k\in A_k$ and $a_k<\min(A_{k+1})$ for every $k$;
\item[(c)] for every $k$ and $n\in A_k$, the set $I_{n,k}=\{f(\{a_\ell,n\}):\ell<k\}$ does not contain arithmetic progressions of length $>2$ ($a_\ell< n$ for every $\ell<k$ because of (b), in particular $I_{n,k}\subseteq I_n$).
\end{itemize}
Suppose we have such sequences. Then let $B=\{a_k:k\in\om\}\in [A]^\om$ and $E=[B]^2$. Clearly, $E\notin\mc{G}_\mrm{fc}$. On the other hand, we claim that $X=f[E]\in\mc{W}$.

First of all, notice that $X=\bigcup\{I_{a_k,k}:k\in \om\}$. If $P\subseteq X$ is an arithmetic progression, then because of the ``big gaps'' between elements of $(I_n)_{n\in A}$ and hence of $(I_{a_k,k})_{k\in\om}$, $P$ can meet at most two $I_{a_k,k}$'s. Moreover, if $\ell<k$ and $P$ meets both $I_{a_\ell,\ell}$ and $I_{a_k,k}$, then $|P\cap I_{a_\ell,\ell}|=1$ (because of the long gaps again). At the same time, $|P\cap I_{a_k,k}|\leq 2$ for every $k$ because $I_{a_k,k}$ does not contain arithmetic progressions of length $>2$ (of course here we used that $\max(I_{a_\ell,\ell})<\min(I_{a_k,k})$ whenever $\ell<k$ and hence $P\cap I_{a_k,k}$ is an interval in $P$). Therefore $|P|\leq 3$.

\smallskip
Construction of the sequences $(A_k)_{k\in\om}$ and $(a_k)_{k\in\om}$: $A_0=
A$ is given. Also $a_0,A_1,a_1$ and $A_2$ can be constructed for trivial reasons. Assume that we already constructed $a_{k-1}$ and $A_k$. If we cannot find appropriate $a_k$ and $A_{k+1}$, then
\[ \forall\;m\in A_k\;\forall^\infty\;n\in A_k\setminus(m+1)\;\,\mrm{ap}\big(\big\{f(\{a,n\}):a\in \{a_0,a_1,\dots,a_{k-1},m\}\big\}\big)\geq 3\]
where $\mrm{ap}(\cdot)$ stands for the same operation as in Example \ref{cotW}. Then we can find an infinite $A'\subseteq A_k$ such that $\mrm{ap}(\{f(\{a,n\}):a\in \{a_0,a_1,\dots,a_{k-1},m\}\})\geq 3$ for every $m<n$ from $A'$.

Clearly, if $a<b$ are natural numbers, then there are at most three $c\in\om\setminus\{a,b\}$ such that $\{a,b,c\}$ is an arithmetic progression (namely, $c=2a-b,2b-a,(b-a)/2$). Hence for every $n\in A'$ we can fix a $J_n\subseteq I_n$ such that (i) $|J_n|\leq 3\binom{k}{2}$ and (ii) $f(\{m,n\})\in J_n$ for every $m<n$ from $A'$. Let $E'=[A']^2\notin\mc{G}_\mrm{fc}$. Then $X'=f[E']\in\mc{W}$ (a contradiction, and we are done). Why? We show that $\mrm{ap}(X')\leq 3\binom{k}{2}+1$. Notice that $X'\subseteq\bigcup\{J_n:n\in A'\}$, and just like above (for $X$) an arithmetic progression $P\subseteq X'$
can meet at most two $J_n$'s and if it meets both $J_m$ and $J_n$ where $m<n$ then $|P\cap J_m|=1$ and of course $|P\cap J_n|\leq 3\binom{k}{2}$ for every $n$.
\end{proof}

Position of $\mc{W}$:
It is easy to see that $\mc{ED}_\mrm{fin}\leq_\mrm{KB}\mc{W}$ (i.e. that $\mc{W}$ is not a weak Q-ideal). We also know that (i) $\mc{W}\nleq_\mrm{KB}\mc{ED}_\mrm{fin}$ because $\mc{W}\nleq_\mrm{KB}\mc{G}_\mrm{fc}$, (ii) $\mc{W}\leq_\mrm{KB}\mc{Z}_u$  because $\mc{W}\subseteq\mc{Z}_u$, and (iii) $\mrm{Conv}\nleq_\mrm{K}\mc{W}$ because $\mc{W}$ is $F_\sigma$. To show that there are no other reductions from and to $\mc{W}$, the last non-reducibility we have to check is $\mc{I}_{1/n}\nleq_\mrm{K}\mc{W}$ and this is easy.

\medskip
Position of $\mrm{tr}(\mc{N})$: To show that there are no additional reductions from or to $\mrm{tr}(\mc{N})$, we  prove that $\mrm{Conv}\leq_\mrm{KB}\mrm{tr}(\mc{N})$, $\mrm{tr}(\mc{N})\nleq_\mrm{K}\mc{Z}_u$, and  $\mc{Z}_u\nleq_\mrm{K}\mrm{tr}(\mc{N})$. The last one follows from \cite[Thm. 2.10, Thm. 4.2]{covprop} and  \cite[Thm. 1.6]{fq}.

\begin{fact}\label{convtrn}  $\mrm{Conv}\leq_\mrm{KB}\mrm{tr}(\mc{N})$.
\end{fact}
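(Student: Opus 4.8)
The plan is to exhibit an explicit finite-to-one map $f\colon 2^{<\om}\to\mbb{Q}\cap[0,1]$ witnessing the reduction. Identify $2^\om$ with $[0,1]$ via the binary expansion $\pi(x)=\sum_{n}x(n)2^{-n-1}$, and for $s\in 2^{<\om}$ let $I_s=\{\pi(x):x\in 2^\om,\ s\subseteq x\}$ be the corresponding closed dyadic interval, of length $2^{-|s|}$. I would define $f(s)$ to be the midpoint of $I_s$; this is a dyadic rational whose denominator in lowest terms is exactly $2^{|s|+1}$, so distinct $s$ (of the same length or of different lengths) give distinct values, i.e.\ $f$ is injective and in particular finite-to-one. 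Since $\mrm{tr}(\mc{N})$ is an ideal and every member of $\mrm{Conv}$ is covered by finitely many convergent sequences (recall $\mrm{Conv}$ is the ideal they generate, and $f^{-1}$ of a finite set is finite by injectivity), it suffices to check that $f^{-1}[S]\in\mrm{tr}(\mc{N})$ whenever $S\subseteq\mbb{Q}\cap[0,1]$ is infinite and converges to some point $p\in[0,1]$.

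The crux is the claim that $\big[f^{-1}[S]\big]\subseteq\pi^{-1}(\{p\})$. To see this, take $x\in\big[f^{-1}[S]\big]$, so $f(x\upharpoonright n)\in S$ for infinitely many $n$. Because $\pi(x)\in I_{x\upharpoonright n}$ and $|I_{x\upharpoonright n}|=2^{-n}$, we have $|f(x\upharpoonright n)-\pi(x)|\le 2^{-n-1}$; thus the points $f(x\upharpoonright n)$ are pairwise distinct (their lowest-terms denominators are $2^{n+1}$) and converge to $\pi(x)$. Hence $S$ contains a sequence of distinct points converging to $\pi(x)$, so $\pi(x)$ is an accumulation point of $S$, forcing $\pi(x)=p$. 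Since $\pi$ is at most two-to-one, $\pi^{-1}(\{p\})$ has at most two elements, so $\big[f^{-1}[S]\big]\in\mc{N}$, i.e.\ $f^{-1}[S]\in\mrm{tr}(\mc{N})$, as needed.

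I do not expect a serious obstacle here; the only subtlety is choosing $f(s)$ so that along every branch $x$ the values $f(x\upharpoonright n)$ are genuinely distinct, which is precisely why one takes midpoints of the dyadic intervals rather than, say, their left endpoints (for which $f(s)=f(s^\frown 0)$). Everything else is routine: the passage from convergent sequences to arbitrary elements of $\mrm{Conv}$ uses only that $\mrm{tr}(\mc{N})$ is an ideal, and the computation $|f(x\upharpoonright n)-\pi(x)|\le 2^{-n-1}$ is immediate from the definition of $I_s$.
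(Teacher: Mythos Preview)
Your proof is correct and, in fact, uses the very same reduction as the paper: a short computation shows that the midpoint of $I_s$ equals the paper's $r_s=\tfrac12+\sum_{k<|s|}(-1)^{s(k)+1}2^{-k-2}$, so the two maps coincide. Your presentation is slightly more direct and geometric---you argue immediately that $[f^{-1}[S]]\subseteq\pi^{-1}(\{p\})$ via the estimate $|f(x\upharpoonright n)-\pi(x)|\le 2^{-n-1}$, whereas the paper first transports $\mrm{Conv}$ to an auxiliary ideal $\mrm{Conv}_\mrm{tr}$ on $2^{<\om}$, characterizes its generators combinatorially, and then observes $\mrm{Conv}_\mrm{tr}\subseteq\mrm{tr}(\mc{N})$---but the underlying idea is identical.
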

\begin{proof}
Let $r_\0=2^{-1}$ and for every $t\in 2^{<\om}\setminus\{\0\}$ define $r_t=2^{-1}+\sum\{(-1)^{t(k)+1}2^{-k-2}:k<|t|\}$.
For example, if $t$ ranges over $2^{\leq 2}$ then $r_t$ ranges over $\{1/8,2/8,\dots,7/8\}$.
The map $t\mapsto r_t$ is one-to-one, $X=\{r_t:t\in 2^{<\om}\}\subseteq \mbb{Q}\cap [0,1]$, and $r_t< r_s$ iff $t< s$ in the ``lexicographic'' order on $2^{<\om}$, that is,
\[ \big(t\subseteq s\;\text{and}\;s(|t|)=1\big)\;\text{or}\;\big(s\subseteq t\;\text{and}\;t(|s|)=0\big)\;\text{or}\] \[\big(\big(\exists\;n\;t(n)\ne s(n)\big)\;\text{and}\; s(\min\{n:t(n)\ne s(n)\})=1\big).\]
Clearly, $X\in\mrm{Conv}^+$, therefore $\mrm{Conv}\leq_\mrm{KB}\mrm{Conv}\upharpoonright X$ (actually, these ideals are isomorphic up to a bijection between the underlying sets). We show that $\mrm{Conv}\upharpoonright X\leq_\mrm{KB}\mrm{tr}(\mc{N})$.

Notice that the bijection $t\mapsto r_t$ allows us to define $\mrm{Conv}\upharpoonright X$ on $2^{<\om}$: An infinite set $A\subseteq 2^{<\om}$ is a generator of $\mrm{Conv}_\mrm{tr}$ iff $\exists$ $x\in 2^\om$ $\forall$ $n\in\om$ $\forall^\infty$ $t\in A$ $(x\upharpoonright n\subseteq t)$. It is easy to see that $A$ is a generator of $\mrm{Conv}_\mrm{tr}$ iff the sequence $(r_t)_{t\in A}$ is convergent in $[0,1]$, therefore $\mrm{Conv}_\mrm{tr}=\{B\subseteq 2^{<\om}:\{r_t:t\in B\}\in\mrm{Conv}\}$. To finish the proof, notice that $\mrm{Conv}_\mrm{tr}\subseteq\mrm{tr}(\mc{N})$ because if $B\in\mrm{Conv}_\mrm{tr}$ then $[B]$ is finite. (Remark: $\mrm{tr}([2^\om]^{<\om})=\{B\subseteq 2^{<\om}:[B]$ is finite$\}\ne\mrm{Conv}_\mrm{tr}$.)
\end{proof}

\begin{fact}
$\mc{I}_{1/n}\nleq_\mrm{K}\mc{Z}_u$ (in particular, $\mrm{tr}(\mc{N})\nleq_\mrm{K}\mc{Z}_u$).
\end{fact}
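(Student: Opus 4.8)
The plan is to show directly that no function $f\colon\om\to\om$ can witness $\mc{I}_{1/n}\leq_\mrm{K}\mc{Z}_u$. First I would dispose of a trivial case: if $f^{-1}[[0,M)]\notin\mc{Z}_u$ for some $M\in\om$, then the finite (hence $\mc{I}_{1/n}$-small) set $[0,M)$ already shows that $f$ is not a Kat\v{e}tov reduction. So we may assume $f^{-1}[[0,M)]\in\mc{Z}_u$ for every $M$. Since $\mc{Z}_u\subseteq\mc{Z}$, each set $f^{-1}[[0,M)]$ has natural density zero and therefore has arbitrarily long gaps: for every $n$ there is an interval of length $n$ disjoint from it. Reformulated, for every $M$ and every $n$ there is an interval $I\subseteq\om$ with $|I|=n$ and $f[I]\subseteq[M,\infty)$.

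Using this I would build $A\in\mc{I}_{1/n}$ with $f^{-1}[A]\notin\mc{Z}_u$ by an easy recursion. At stage $k$ pick $n_k\geq k$ arbitrarily (only so that $n_k\to\infty$), set $M_k=2^k n_k$, choose an interval $I_k$ of length $n_k$ with $f[I_k]\subseteq[M_k,\infty)$, and let $F_k=f[I_k]$. Put $A=\bigcup_{k\in\om}F_k$. Then $|F_k|\leq n_k$ and every element of $F_k$ is $\geq M_k$, so $\sum_{m\in A}\frac{1}{m+1}\leq\sum_{k}|F_k|\cdot\frac{1}{M_k+1}\leq\sum_k\frac{n_k}{2^kn_k}=1$, i.e.\ $A\in\mc{I}_{1/n}$. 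On the other hand $f^{-1}[A]\supseteq f^{-1}[F_k]\supseteq I_k$, so $f^{-1}[A]$ contains an interval of length $n_k$ for each $k$; hence $S_{n_k}(f^{-1}[A])\geq n_k$ and $\limsup_{n\to\infty}S_n(f^{-1}[A])/n\geq 1$, so $f^{-1}[A]\notin\mc{Z}_u$. This contradicts $f$ being a reduction, so $\mc{I}_{1/n}\nleq_\mrm{K}\mc{Z}_u$; the ``in particular'' clause then follows from transitivity of $\leq_\mrm{K}$ together with $\mc{I}_{1/n}\leq_\mrm{KB}\mrm{tr}(\mc{N})$.

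I do not expect a real obstacle in this argument; the only subtlety is that there is no circularity between the choices of $n_k$ and $M_k$ --- once we know that every $f^{-1}[[0,M)]$ has arbitrarily long gaps, we may fix $n_k$ first and then set $M_k=2^kn_k$, and an interval of length $n_k$ avoiding $f^{-1}[[0,M_k)]$ is automatically available. The place where the hypothesis is genuinely used is the inclusion $\mc{Z}_u\subseteq\mc{Z}$, which produces the long gaps; note that the naive variant that enlarges $A$ by a single point per stage cannot work, since a singleton preimage $f^{-1}[\{m\}]$, being in $\mc{Z}_u$, is by itself far too thin to keep $f^{-1}[A]$ out of $\mc{Z}_u$.
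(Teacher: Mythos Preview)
Your argument is correct and follows essentially the same strategy as the paper's proof: after reducing to the case where every $f^{-1}[[0,M)]$ lies in $\mc{Z}_u$, both proofs choose intervals $I_k$ of increasing length on which $f$ takes only large values, set $A=\bigcup_k f[I_k]$, and verify $A\in\mc{I}_{1/n}$ while $f^{-1}[A]\supseteq\bigcup_k I_k\notin\mc{Z}_u$. The only noteworthy difference is how the intervals are located: the paper works directly with the function $S_M$ from the definition of $\mc{Z}_u$ (and also arranges the intervals to be successive and the images $f[I_k]$ to be pairwise disjoint, neither of which is actually needed), whereas you pass through the inclusion $\mc{Z}_u\subseteq\mc{Z}$ and use the elementary observation that a set of density zero has arbitrarily long gaps. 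Your route is a touch cleaner; the paper's is a touch more self-contained.
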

\begin{proof}
Let $f:\om\to \om$ and assume that $f^{-1}[\{n\}]\in\mc{Z}_u$ for every $n$. We will define a sequence $(E_n)_{n\in\om}$ of successive intervals on $\om$ such that $|E_n|=n$ hence $E=\bigcup\{E_n:n\in\om\}\notin\mc{Z}_u$ but $f[E]\in\mc{I}_{1/n}$. Let $E_0=\0$, $E_1=\{0\}$, and assume that we already defined $E_n$ such that $f(k)\geq 2^n$ for every $k\in E_n$ (if $n>1$).

Let $m=\max\{\max\{f(k):k\in E_n\},2^{n+1}\}+1$. Then $A=f^{-1}[m]\in\mc{Z}_u$. In particular, if $M>2(n+1)$ is large enough such that $S_{M}(A)<M/(2(n+1))$, and $E'_{n+1}$ is an interval of length $M$  after $E_n$, then it must contain a subinterval $E_{n+1}$ of size $n+1$ which is disjoint from $A$. Then $E=\bigcup\{E_n:n\in\om\}\notin\mc{Z}_u$, and of course $f[E]\in\mc{I}_{1/n}$ because $\sum\{1/(\ell+1):\ell\in f[E_n]\}\leq n/2^n$.
\end{proof}

Position of $\mc{Z}_u$: To find the position of $\mc{Z}_u$, we have to show that $\mrm{Conv}\leq_\mrm{KB}\mc{Z}_u$.

\begin{fact}
$\mrm{Conv}\leq_\mrm{KB}\mc{Z}_u$.
\end{fact}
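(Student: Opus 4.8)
The plan is to exhibit an explicit finite‑to‑one (indeed bijective) map $f\colon\om\to D$, where $D\subseteq\mbb{Q}\cap[0,1]$ is the set of dyadic rationals in $(0,1)$, and to check that $f^{-1}[A]\in\mc{Z}_u$ for every convergent sequence $A\subseteq D$. Since $\mrm{Conv}$ is generated by convergent sequences, $\mc{Z}_u$ is an ideal, and $f^{-1}[A]=f^{-1}[A\cap D]$ with $A\cap D$ again a finite union of convergent sequences (plus a finite set) for any $A\in\mrm{Conv}$, this suffices to witness $\mrm{Conv}\leq_\mrm{KB}\mc{Z}_u$. To construct $f$, partition $\om$ into the consecutive blocks $I_k=[2^{k-1}-1,2^k-1)$ for $k\geq1$, so $|I_k|=2^{k-1}$, and let $N_k=\{(2i+1)2^{-k}:0\leq i<2^{k-1}\}$ be the dyadic rationals of level exactly $k$; then $|N_k|=|I_k|$ and $D=\bigcup_k N_k$ is a disjoint union. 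Identifying each $I_k$ and each $N_k$ with $\{0,\dots,2^{k-1}-1\}$ via their natural orders, I would let $f\upharpoonright I_k$ be the \emph{bit‑reversal} permutation: the $j$‑th element of $I_k$ is sent to the $\mrm{bitrev}_{k-1}(j)$‑th element of $N_k$, where $\mrm{bitrev}_d(j)$ is the integer whose $d$‑digit binary expansion is that of $j$ read backwards.

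The first thing to isolate is the elementary estimate that makes bit‑reversal work: for every interval $[a,a+\ell)\subseteq\{0,\dots,2^d-1\}$ and every window $[b,b+m)$,
\[ \big|\mrm{bitrev}_d\big([a,a+\ell)\big)\cap[b,b+m)\big|\;\leq\;\frac{4m\ell}{2^{d}}+2 , \]
which one proves by noting that $\mrm{bitrev}_d$ carries a $2^e$‑aligned block of size $2^e$ onto an arithmetic progression of common difference $2^{d-e}$, that an arbitrary length‑$\ell$ interval lies in two consecutive such blocks with $2^e<2\ell$, and that a window of length $m$ meets an arithmetic progression of gap $g$ in at most $m/g+1$ points. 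Given this, fix a convergent $A\subseteq D$ with $A\to x$ and an $\eta>0$; set $M_\eta=|A\setminus(x-\eta,x+\eta)|<\om$. For each $k$ the set $A\cap N_k$ is contained in $\big(A\setminus(x-\eta,x+\eta)\big)\cup N_k^\eta$, where $N_k^\eta:=N_k\cap(x-\eta,x+\eta)$ is an \emph{interval} of $N_k$ of size at most $\eta2^k+1$. Hence for a window $W=[b,b+m)$ with $m\leq|I_k|$ one gets $|f^{-1}[A]\cap I_k\cap W|\leq |\mrm{bitrev}_{k-1}(N_k^\eta)\cap W|+M_\eta\leq 8\eta m+M_\eta+6$. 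For a general window $W$ of length $m$: it meets at most $\log_2 m+O(1)$ of the blocks $I_k$; the blocks lying entirely inside $W$ have total length $\leq m$, so their $N_k^\eta$‑parts contribute at most $2\eta m$ in total (plus $O\big((1+M_\eta)\log m\big)$ for the additive and rounding terms); and the at most two boundary blocks contribute at most $8\eta m+M_\eta+6$ each. Altogether $|f^{-1}[A]\cap W|\leq C\eta\,m+O(M_\eta\log m)$ for an absolute constant $C$, so $\limsup_m S_m(f^{-1}[A])/m\leq C\eta$, and letting $\eta\to0$ gives $f^{-1}[A]\in\mc{Z}_u$.

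The step I expect to be the real obstacle is precisely the choice of the permutations $f\upharpoonright I_k$, and it is also where the naïve attempt fails: with the order isomorphism in place of bit‑reversal, a convergent sequence can contain, for infinitely many $k$, essentially all level‑$k$ dyadics inside a shrinking interval around its limit, and these pull back to a solid block of roughly $2^{k}/k$ consecutive integers, forcing $S_m/m\not\to0$. Bit‑reversal is exactly the device that converts ``an interval of relative width $\to0$'' into ``an arithmetic‑progression‑like set with gaps $\to\infty$'', which is what the $\mc{Z}_u$‑norm tolerates; once the arithmetic‑progression estimate above is nailed down, the rest is the routine bookkeeping of the interior and the two boundary blocks of a window.
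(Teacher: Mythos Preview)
Your argument is correct: the bit-reversal estimate is sound, the per-block bound and the interior/boundary bookkeeping across a window go through, and dividing by $m$ and letting $\eta\to0$ gives $f^{-1}[A]\in\mc{Z}_u$.

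The paper takes a closely related but much more streamlined route. Instead of mapping into the dyadics, it factors through the tree version $\mrm{Conv}_\mrm{tr}$ on $2^{<\om}$ (already introduced in the proof that $\mrm{Conv}\leq_\mrm{KB}\mrm{tr}(\mc{N})$), where a set $S$ converges to $x\in 2^\om$ iff for each $k$ almost all members of $S$ extend $x\upharpoonright k$. The reduction is then simply the LSB-first binary expansion $f:\om\to 2^{<\om}$, and the whole analytic content collapses to one observation: if $f(n)$ and $f(m)$ both extend $x\upharpoonright k$ then $n\equiv m\pmod{2^k}$, so $|n-m|\geq 2^k$; uniform density zero of $f^{-1}[S]$ is immediate, with no interval estimate or windowing needed. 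The mechanism is the same as yours --- your level-wise $\mrm{bitrev}_{k-1}$ and the paper's global LSB-first map package the same permutation --- but working on the tree replaces ``interval of relative width $\eta$'' by ``residue class mod $2^k$'', which is exactly the simplification that eliminates your $4m\ell/2^d+2$ bound and the boundary-block analysis. What your route buys is self-containment on the real-number side; what the paper's buys is that the proof becomes three sentences.
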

\begin{proof}
We will show that $\mrm{Conv}_\mrm{tr}\leq_\mrm{KB} \mc{Z}_u$. Let $f:\om\to 2^{<\om}$ be the binary expansion function starting with the least significant bit, that is, $f(0)=\0$ and if $n>0$ then $f(n)$ is the unique elements of $2^{<\om}$ satisfying $f(n)(|f(n)|-1)=1$ and $n=\sum\{ 2^k \cdot f(n)(k):k<|f(n)|\}$. $f$ is clearly one-to-one.

Now, if $S\subseteq 2^{<\om}$ converges to $x\in 2^\om$, then for each $k\in\om$ all but finitely many members of $S$ extend $x\upharpoonright k$. However, if $n\ne m$ and both $f(n)$ and $f(m)$ extend $x\upharpoonright k$ then $|n-m|\geq 2^k$.  It easily follows that $f^{-1}[S]$ has uniform density $0$.
\end{proof}

\section{Some ideals with $\mrm{non}^*=\om<\mrm{cov}^*$}\label{countnon}

We know that (see e.g. \cite{Meza}, and \cite{covnwd} for the results on $\mrm{Nwd}$)
\begin{itemize}
\item $\mrm{non}^*(\mc{ED})=\om$, $\mrm{cov}^*(\mc{ED})=\mrm{non}(\mc{M})$, and $\mrm{cof}^*(\mc{ED})=\mf{c}$;
\item $\mrm{non}^*(\mrm{Ran})=\om$, $\mrm{cov}^*(\mrm{Ran})=\mrm{cof}^*(\mrm{Ran})=\mf{c}$;
\item $\mrm{non}^*(\mc{S})=\om$, $\mrm{cov}^*(\mc{S})=\mrm{non}(\mc{N})$, and $\mrm{cof}^*(\mc{S})=\mf{c}$;
\item $\mrm{non}^*(\mrm{Nwd})=\om$, $\mrm{cov}^*(\mrm{Nwd})=\mrm{cov}(\mc{M})$, and $\mrm{cof}^*(\mrm{Nwd})=\mrm{cof}(\mc{M})$;
\item $\mrm{non}^*(\mrm{Conv})=\om$, $\mrm{cov}^*(\mrm{Conv})=\mrm{cof}^*(\mrm{Conv})=\mf{c}$;
\item $\mrm{non}^*(\mrm{Fin}\otimes\mrm{Fin})=\om$, $\mrm{cov}^*(\mrm{Fin}\otimes\mrm{Fin})=\mf{b}$, and $\mrm{cof}^*(\mrm{Fin}\otimes\mrm{Fin})=\mf{d}$.
\end{itemize}

In the case of these ideals (the upper left triangle in the diagram of KB-reducibility), the diagram summarizing the connections between cardinal invariants, existence of Luzin type families, and existence of cotowers in the ideal, becomes much simpler because $\mrm{non}^*(\mc{I})=\om<\om_1\leq\mrm{cov}^*(\mc{I})$ if $\mc{I}=\mc{ED},\mrm{Ran},\mc{S},\mrm{Nwd},\mrm{Conv},\mrm{Fin}\otimes\mrm{Fin}$.
The remaining related questions are the following: Let $\mc{I}$ be one these ideals.
\begin{itemize}
\item[(Q1)] Does there exist an $\mc{I}$-Luzin set in $\mrm{ZFC}$? If no, is it possible that $\mrm{cov}^*(\mc{I})=\mf{c}$ and there are no $\mc{I}$-Luzin sets?
\item[(Q2)] Can we force large $\mc{I}$-Luzin sets?
\item[(Q3)] Is it consistent with $\mrm{ZFC}$ that there is an $\mc{I}$-Luzin set and $\mrm{cov}^*(\mc{I})<\mrm{cof}^*(\mc{I})=\om_2$?
\item[(Q4)] Is it consistent with $\mrm{ZFC}$ that there is an $\mc{I}$-Luzin set but there are no $\mc{I}$-Luzin sets of size $\mrm{cov}^*(\mc{I})=\om_2=\mf{c}$?
\end{itemize}

The next two examples answer all these questions in the case of $\mc{I}=\mrm{Ran}$ and $\mc{I}=\mrm{Conv}$.

\begin{exa}
There is a $\mrm{Conv}$-Luzin set (and hence by Fact \ref{KBcons} (b), also a $\mrm{Ran}$-Luzin set) of size $\mf{c}$. For every $x\in [0,1]\setminus \mbb{Q}$ fix a sequence $(r^x_n)_{n\in\om}$ of rational numbers tending to $x$. Then the family $\{\{r^x_n:n\in\om\}:x\in [0,1]\setminus\mbb{Q}\}$ is a $\mrm{Conv}$-Luzin set.
\end{exa}

For the rest, that is, for $\mc{I}=\mc{ED},\mc{S},\mrm{Nwd},\mrm{Fin}\otimes\mrm{Fin}$, it is consistent that there are no $\mc{I}$-Luzin sets (even together with $\mrm{cov}^*(\mc{I})=\mf{c}$) because of the following  easy results.

\begin{Prop}\label{nosluzin}
Assume $\mrm{MA}_{\om_1}$ (or just $\mrm{cov}(\mc{N})>\om_1$). Then there are no $\mc{S}$-Luzin sets.
\end{Prop}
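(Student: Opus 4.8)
The plan is to turn an $\mc{S}$-Luzin family into a covering of $2^\om$ by $\om_1$ null sets, which is impossible when $\cov(\mc{N})>\om_1$ (and $\mrm{MA}_{\om_1}$ gives $\cov(\mc{N})=\mf{c}>\om_1$, so the parenthetical hypothesis is indeed the one that does the work). So suppose $\mc{X}\subseteq[\Omega]^\om$ is $\mc{S}$-Luzin. Since the property is inherited by uncountable subfamilies, we may assume $\mc{X}=\{X_\al:\al<\om_1\}$. As $\mc{S}$ is the ideal generated by $\{I_x:x\in2^\om\}$ together with the finite sets, every $A\in\mc{S}$ lies below a finite union $I_{x_1}\cup\dots\cup I_{x_k}$ modulo a finite set, and $|X_\al\cap(I_{x_1}\cup\dots\cup I_{x_k})|=\om$ iff $|X_\al\cap I_{x_i}|=\om$ for some $i$; hence $\mc{X}$ is $\mc{S}$-Luzin iff for every $x\in2^\om$ the set $\{\al<\om_1:|X_\al\cap I_x|=\om\}$ is countable. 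Writing $X_\al=\{A^\al_n:n\in\om\}$ (pairwise distinct clopen sets of measure $1/2$) and putting $N_\al=\{x:x\in A^\al_n\text{ for infinitely many }n\}=\bigcap_{N}\bigcup_{n\ge N}A^\al_n$, a $G_\de$ set, we have $|X_\al\cap I_x|=\om$ iff $x\in N_\al$, so the hypothesis reads: \emph{every $x\in2^\om$ lies in only countably many of the $N_\al$.}

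Next I would pin down the measure of $N_\al$. Continuity of measure along the decreasing sequence $\bigcup_{n\ge N}A^\al_n$ gives $\lam(N_\al)=\lim_N\lam(\bigcup_{n\ge N}A^\al_n)\ge 1/2$, and the inequality is in fact strict: $\lam(N_\al)=1/2$ would force $\lam(\bigcup_{n\ge N}A^\al_n)=1/2$ for every $N$, hence (a clopen set of measure $1/2$ contained in another of measure $1/2$ must equal it) $A^\al_n=A^\al_m$ for all large $n,m$, contradicting distinctness. By pigeonhole there are $\de_0>0$ and an uncountable $I\subseteq\om_1$ with $\lam(N_\al)\ge 1/2+\de_0$ for all $\al\in I$; a further Lebesgue-density localization (shrink $I$, pass to a dyadic subinterval on which all these $N_\al$ are almost full, rescale to $2^\om$) lets us assume $\lam(N_\al)>1-\eps$ on $I$ for a small fixed $\eps>0$, while still every point lies in only countably many $N_\al$.

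Finally, for each countable ordinal $\ga$ let $P_\ga=\{x:\{\al\in I:x\in N_\al\}\subseteq\ga\}=\bigcap\{2^\om\setminus N_\al:\al\in I,\ \al\ge\ga\}$; since a countable subset of $\om_1$ is bounded, $2^\om=\bigcup_{\ga<\om_1}P_\ga$. It remains to see that each $P_\ga$ is null, and this is the crux where the (localized, uniform) lower bound on $\lam(N_\al)$ must be used, via a $\sigma$-additivity argument producing a countable $J\subseteq I\setminus\ga$ with $\lam(\bigcup_{\al\in J}N_\al)=1$, whence $\lam^*(P_\ga)\le\lam(\bigcap_{\al\in J}(2^\om\setminus N_\al))=0$; alternatively, starting from $\mrm{MA}_{\om_1}$ one can run a ccc poset of finite conditions approximating a point that lies in uncountably many $N_\al$, directly contradicting the displayed consequence of $\mc{S}$-Luzinness. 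Granting the nullity of the $P_\ga$, the family $\{P_\ga:\ga<\om_1\}$ covers $2^\om$ by $\om_1$ null sets, contradicting $\cov(\mc{N})>\om_1$. I expect the main obstacle to be precisely this last step: squeezing enough out of the bound $\lam(N_\al)>1/2$ (and the localization) to force a full-measure countable subunion, rather than merely a positive one.
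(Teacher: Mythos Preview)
Your translation of $\mc{S}$-Luzinness into ``each $x\in 2^\om$ lies in only countably many $N_\al=\limsup X_\al$'' (with $\lam(N_\al)\ge 1/2$) is correct and is exactly the paper's starting point. The gap is where you locate it, and it is genuine: ``each $P_\ga$ is null'' need not hold, and your countable full-measure-union argument cannot establish it, even after localization. Concretely, assuming $\cov(\mc{N})=\om_1$, write $[\eps/2,1]=\bigcup_{\al<\om_1}M_\al$ as an increasing union of null sets and put $N_\al=[\eps/2,1]\setminus M_\al$. Then $\lam(N_\al)=1-\eps/2>1-\eps$ (so your localized hypothesis is already met), every point lies in only countably many $N_\al$, yet all unions of $N_\al$'s lie inside $[\eps/2,1]$ and $P_\ga\supseteq[0,\eps/2]$ has positive measure. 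Since this example survives any prescribed lower bound $1-\eps$ on the measures, a single Lebesgue-density step (or finitely many) cannot close the gap; and your alternative ``ccc poset of finite conditions'' is not specified enough to assess.

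The paper sidesteps the difficulty. It picks compact $C_\al\subseteq N_\al$ of positive measure, views them as conditions in random forcing, and uses that under $\mrm{MA}_{\om_1}$ every ccc poset has precaliber $\om_1$: some uncountable $\{C_\al:\al\in Z\}$ is centered, so by compactness $\bigcap_{\al\in Z}C_\al\neq\0$, and any $x$ there lies in uncountably many $N_\al$. Your covering strategy \emph{can} be repaired (and then yields the parenthetical $\cov(\mc{N})>\om_1$ hypothesis directly), but via a measurable-hull argument rather than by enlarging unions: set $m=\sup_\ga\lam^*(P_\ga)\le 1/2$, choose $\ga^*$ with $\lam^*(P_{\ga^*})=m$, and let $H\supseteq P_{\ga^*}$ be a Borel hull with $\lam(H)=m$. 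From $P_{\ga^*}\subseteq H\setminus N_\al$ for $\al\ge\ga^*$ one gets $\lam(H\cap N_\al)=0$, whence every $P_\ga$ has hull $H$ mod null and so $P_\ga\setminus H$ is null. These $\om_1$ null sets cover the positive-measure set $2^\om\setminus H$ (if $m>0$) or all of $2^\om$ (if $m=0$), contradicting $\cov(\mc{N})>\om_1$ either way.
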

\begin{proof}
Recall that $\Omega=\{A\subseteq 2^\om:A$ is clopen and $\lam(A)=1/2\}$ and the ideal $\mc{S}$ on $\Omega$ is generated by $\{I_x:x\in 2^\om\}$ where $I_x=\{A\in\Omega:x\in A\}$.

If $X=\{C_n:n\in \om\}\in [\Omega]^\om$ then we can consider $\widetilde{X}=\limsup X=\bigcap_{n\in\om}\bigcup_{k\geq n}C_k$, clearly $\lam(\widetilde{X})\geq 1/2$. Now assume that $\mc{X}=\{X_\al:\al<\om_1\}\subseteq [\Omega]^\om$, and for every $\al$ fix a compact set $C_\al\subseteq \widetilde{X}_\al$ of positive measure. Then $\{C_\al:\al<\om_1\}\subseteq\PP=\{B\in\mrm{Borel}(2^\om):\lam(B)>0\}$. $\PP$ equipped with $\subseteq$ is a ccc forcing notion (it is the random forcing). Applying $\mrm{MA}_{\om_1}$, we know that there is a $Z\in [\om_1]^{\om_1}$ such that $\mc{C}=\{C_\al:\al\in Z\}$ is centered in $\PP$, in particular, it is centered in the usual sense as well, and because of compactness, there is an $x\in\bigcap\mc{C}$. Clearly, $|I_x\cap X_\al|=\om$ for every $\al\in Z$.
\end{proof}

Applying Fact \ref{KBcons} (b) and that $\mc{S}\leq_\mrm{KB}\mrm{Nwd}$, it is also consistent that there are no $\mrm{Nwd}$-Luzin families.

\begin{Prop}\label{noedluzin}
Assume $\mrm{MA}_{\om_1}$ (or just $\mrm{cov}(\mc{M})>\om_1$). Then there are no $\mc{ED}$-Luzin sets.
\end{Prop}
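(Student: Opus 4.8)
The plan is to reduce to the standard fact that $\mrm{cov}(\mc{M})$ is the least cardinality of a covering of $\om^\om$ by meager sets, so that it suffices to show: if $\om^\om$ is not the union of $\om_1$ meager sets, then there is no $\mc{ED}$-Luzin set. Since $\mrm{MA}_{\om_1}$ implies $\mrm{cov}(\mc{M})>\om_1$, the parenthetical hypothesis is all that is needed. Recall that, unwinding the definitions, an $\mc{ED}$-Luzin set is an uncountable $\mc{X}\subseteq [\om\times\om]^\om$ such that $\{X\in\mc{X}:|X\cap A|=\om\}$ is countable for every $A\in\mc{ED}$.

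Suppose $\mc{X}$ is such a set; passing to a subfamily of size $\om_1$ (still $\mc{ED}$-Luzin, directly from the definition) we write $\mc{X}=\{X_\al:\al<\om_1\}$. First I would clean up $\mc{X}$: every column $\{n\}\times\om$ lies in $\mc{ED}$, so $\{\al<\om_1:|X_\al\cap(\{n\}\times\om)|=\om\}$ is countable for each $n$, hence $\{\al:X_\al$ has an infinite column$\}$ is countable; discard these $\al$. Then every remaining $X_\al$ has all columns $(X_\al)_n$ finite and, being infinite, meets infinitely many columns.

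The main step is a Baire-category argument using the graphs $A_g=\{(n,g(n)):n\in\om\}$, $g\in\om^\om$; each $A_g$ has all columns of size $1$, so $A_g\in\mc{ED}$. For $\al<\om_1$ set $M_\al=\{g\in\om^\om:g(n)\notin (X_\al)_n$ for all but finitely many $n\}$. Then $M_\al$ is meager: it equals $\bigcup_{m\in\om}\{g:\forall n\geq m\ g(n)\notin(X_\al)_n\}$, each term is closed, and it is nowhere dense since, given $s\in\om^{<\om}$, one picks $n\geq\max(m,|s|)$ with $(X_\al)_n\ne\0$ (using that $X_\al$ meets infinitely many columns) and $k\in(X_\al)_n$, and any $g\supseteq s$ with $g(n)=k$ escapes that term. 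As $\mrm{cov}(\mc{M})>\om_1$, there is $g^*\in\om^\om\setminus\bigcup_{\al<\om_1}M_\al$; then for every $\al$ we have $g^*(n)\in(X_\al)_n$ infinitely often, i.e. $|X_\al\cap A_{g^*}|=\om$. Hence $\{\al<\om_1:|X_\al\cap A_{g^*}|=\om\}=\om_1$ is uncountable while $A_{g^*}\in\mc{ED}$, contradicting $\mc{ED}$-Luzinness.

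I do not anticipate a genuine difficulty; the argument is short. The one point worth stating carefully is the cleanup step --- verifying that the set of discarded indices is really countable (a countable union of countable sets) and that each surviving $X_\al$ meets infinitely many columns, which is exactly the fact the nowhere-density computation relies on. One could also phrase everything with arbitrary elements of $\mc{ED}$ instead of function graphs, but restricting to the $A_g$ is precisely what reduces the problem to the classical meager-set characterisation of $\mrm{cov}(\mc{M})$, so I would keep it in that form.
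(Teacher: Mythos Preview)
Your proof is correct and is essentially the same argument as the paper's: the paper phrases it as meeting $\om_1$ dense sets in Cohen forcing $(\om^{<\om},\supseteq)$ (using that $\mrm{cov}(\mc{M})>\om_1$ is equivalent to $\mrm{MA}_{\om_1}$ for countable posets), while you phrase it directly as avoiding $\om_1$ meager subsets of $\om^\om$, and your cleanup to finite columns corresponds to the paper's shrinking of each $X_\al$ to an infinite partial function. These are the same idea in dual language.
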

\begin{proof}
Let $\mc{X}=\{X_\al:\al<\om_1\}\subseteq [\om\times\om]^\om$. By shrinking the family and its elements as well, we can assume that every $X_\al$ is an infinite partial function $\om\to\om$ (if it is impossible, then $\mc{X}$ cannot be an $\mc{ED}$-Luzin set anyway). Let $\PP=(\om^{<\om},\supseteq)$ be the Cohen-forcing. It is clear that with $\om_1$ many dense sets we can ensure that the generic function has infinite intersection with all $X_\al$.
\end{proof}

Applying Fact \ref{KBcons} (b) and that $\mc{ED}\leq_\mrm{KB}\mrm{Fin}\otimes\mrm{Fin}$, this last result also shows that consistently there are no $\mrm{Fin}\otimes\mrm{Fin}$-Luzin families. The next result says a bit more about $\mrm{Fin}\otimes\mrm{Fin}$-Luzin families.

\begin{Prop}
There is a $\mathrm{Fin}\otimes\mathrm{Fin}$-Luzin set if and only if $\mf{d}=\om_1$.
\end{Prop}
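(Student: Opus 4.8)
The plan is to prove both implications directly, without appealing to the general Luzin facts (though the backward direction can also be read off from the fact quoted in Section~\ref{luzinsec} together with $\mathrm{cov}^*(\mathrm{Fin}\otimes\mathrm{Fin})=\mf{b}$ and $\mathrm{cof}^*(\mathrm{Fin}\otimes\mathrm{Fin})=\mf{d}$).

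For the direction $\mf{d}=\om_1\Rightarrow$ ``there is a Luzin set'', note that $\mf{d}=\om_1$ forces $\mf{b}=\om_1$, so there is a $\le^*$-increasing dominating family $\{f_\al:\al<\om_1\}$ of strictly increasing functions with $f_\al\ne f_\be$ for $\al\ne\be$. I would put $X_\al=\{(n,f_\al(n)):n\in\om\}$ and $\mc{X}=\{X_\al:\al<\om_1\}$, an uncountable subfamily of $[\om\times\om]^\om$. To check it is $\mathrm{Fin}\otimes\mathrm{Fin}$-Luzin, fix $A\in\mathrm{Fin}\otimes\mathrm{Fin}$, choose $N$ with $(A)_n$ finite for $n\ge N$, and set $h(n)=1+\max(A)_n$ for such $n$ (and $h(n)=0$ otherwise): if $|X_\al\cap A|=\om$ then $f_\al(n)\in(A)_n\subseteq h(n)$ for infinitely many $n$, i.e.\ $f_\al\not\ge^* h$, which by domination and $\le^*$-increasingness happens only for $\al$ below some fixed $\al_0$; so $\{X\in\mc{X}:|X\cap A|=\om\}$ is countable.

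For the converse, suppose $\mc{X}$ is $\mathrm{Fin}\otimes\mathrm{Fin}$-Luzin. I would first clean up $\mc{X}$: since $\{n\}\times\om\in\mathrm{Fin}\otimes\mathrm{Fin}$ and $\om\times\{k\}\in\mathrm{Fin}\otimes\mathrm{Fin}$, Luzin-ness kills all but countably many $X\in\mc{X}$ with an infinite column, resp.\ an infinite row, so after discarding countably many members I may assume every $X\in\mc{X}$ has all rows and all columns finite. Finiteness of the rows gives $\min(X)_m\to\infty$ as $m$ ranges over the (necessarily infinite) set $\mathrm{supp}(X)$, and this lets me extract from each $X$ a subset $Y_X=\{(a^X_j,b^X_j):j\in\om\}\subseteq X$ with both coordinate sequences strictly increasing; in particular $a^X_j\ge j$. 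Set $g_X(j)=\max(a^X_j,b^X_j)$, fix an $\om_1$-sized subfamily $\{X_\xi:\xi<\om_1\}$, and I claim $\{g_{X_\xi}:\xi<\om_1\}$ is dominating, which yields $\mf{d}\le\om_1$, hence $\mf{d}=\om_1$.

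To prove the claim it suffices to dominate each nondecreasing $h$; here I would use $A_h=\{(m,k):k<h(m)\}\in\mathrm{Fin}\otimes\mathrm{Fin}$ (all columns finite). If $g_X\not\ge^* h$ then $\max(a^X_j,b^X_j)<h(j)$ for infinitely many $j$, and for such $j$, using $a^X_j\ge j$ and monotonicity of $h$, $b^X_j<h(j)\le h(a^X_j)$, so $(a^X_j,b^X_j)\in A_h$; as these points are distinct, $|Y_X\cap A_h|=\om$ and hence $|X\cap A_h|=\om$. By Luzin-ness only countably many $X\in\mc{X}$ have $|X\cap A_h|=\om$, so $g_{X_\xi}\ge^* h$ for all but countably many $\xi<\om_1$, in particular for some $\xi$. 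The main point to get right is this cleanup-plus-monotone-staircase extraction together with the bookkeeping inequality $a^X_j\ge j$: that is exactly what allows a single set $A_h$ depending only on $h$ to catch every $Y_X$ whose associated function fails to dominate $h$, and everything else is routine.
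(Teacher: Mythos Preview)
Your proof is correct and follows essentially the same strategy as the paper's: the backward direction uses a scale, and the forward direction extracts a dominating family from the Luzin set by associating functions to its members and testing against the sets $A_h=\{(m,k):k<h(m)\}\in\mathrm{Fin}\otimes\mathrm{Fin}$. Your bookkeeping in the forward direction (cleaning up both rows and columns, staircase extraction with $a^X_j\ge j$, and $g_X(j)=\max(a^X_j,b^X_j)$) differs only cosmetically from the paper's (shrinking to strictly increasing partial functions and extending to total ones via $\tilde g(k)=g(\min(\mathrm{dom}(g)\setminus k))$).
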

\begin{proof}
Clearly the columns of $\om\times\om$ and sets of the form
$A_f=\{(n,m):m\le f(n)\}$ (where $f\in\om^\om$) generate $\mathrm{Fin}\otimes\mathrm{Fin}$.

If $\mf{d}=\om_1$ then there is a scale $(f_\al)_{\al<\om_1}$, that is, a $\le^*$-well ordered dominating family in $\om^\om$, and $\{f_\al:\al<\om_1\}$ forms a $\mathrm{Fin}\otimes\mathrm{Fin}$-Luzin set because $|A_f\cap f_\al|<\om$ for all but countably many $\al$.

Conversely, assume that $\mc{X}\subseteq [\om\times\om]^\om$ is a $\mathrm{Fin}\otimes\mathrm{Fin}$-Luzin set.
By shrinking the family and its elements as well, we can assume that all elements of $\mc{X}$ are infinite partial functions. Next, we can assume that all elements of $\mc{X}$ are strictly increasing because if $\om_1$ many do not contain strictly increasing partial functions, then they are bounded and $\om_1$ many have the same bound therefore $\mc{X}$ cannot be $\mrm{Fin}\otimes\mrm{Fin}$-Luzin. We can extend these functions to total functions by $\tilde{g}(k)=g(\min\{\mathrm{dom}(g)\setminus k\})$ for $g\in\mc{X}$, and let $\mc{D}=\{\tilde{g}:g\in \mc{X}\}$. We claim that $\mc{D}$ is dominating and $|\mc{D}|=\om_1$. If $f\in\om^\om$, then the set $\mc{D}_f=\{\tilde{g}\in\mc{D}:|\tilde{g}\cap A_f|=\om\}$ is countable, and for all $\tilde{h}\in\mc{D}\setminus\mc{D}_f$ we have $f\le^* \tilde{h}$. It follows that all uncountable subfamilies of $\mc{D}$ are dominating, and therefore $\mf{d}=\om_1$. In particular, there is a scale $(f_\al)_{\al<\om_1}$ and $\mc{D}$ is the union of the countable sets $\{\tilde{g}\in\mc{D}:\tilde{g}\le^* f_\al\}\subseteq \mc{D}_{f_\al}$ hence it has cardinality $\om_1$.
\end{proof}

What can we say about (Q2)? We have to deal with $\mc{ED}$, $\mc{S}$, and $\mrm{Nwd}$.

If $I$ is an ideal on $2^\om$, then one can define $\mrm{tr}(I)$ as $\mrm{tr}(\mc{N})$ was defined. It is trivial to see that if there is an $I$-Luzin set $\{x_\al:\al<\ka\}\subseteq 2^\om$ of size $\ka$, then there is a $\mrm{tr}(I)$-Luzin set of size $\ka$: Simply $\{\{x_\al\upharpoonright n:n\in\om\}:\al<\ka\}$ is a $\mrm{tr}(I)$-Luzin set. In particular, if there is a Sierpi\'nski/Luzin set of size $\ka$, then there is a $\mrm{tr}(\mc{N})/\mrm{Nwd}$-Luzin set of size $\ka$. \label{largetrluzinlabel} Here, of course, we used the easy fact that $\mrm{tr}(\mc{M})$ is the tree version $\mrm{Nwd}_\mrm{tr}$ of $\mrm{Nwd}$ defined in the same manner as $\mrm{Conv}_\mrm{tr}$ in Fact \ref{convtrn}.  (We do not need it but it is easy to see that $\mrm{Nwd}$ is isomorphic to $\mrm{Nwd}_\mrm{tr}$.)

Consequently, it is consistent that there are $\mrm{Nwd}$-Luzin (hence $\mc{S}$-Luzin) sets of size $\mf{c}>\om_1$ (e.g. in the Cohen model), and that there are $\mrm{tr}(\mc{N})$-Luzin (hence $\mc{ED}$-Luzin) sets of size $\mf{c}>\om_1$ (e.g. in the random model).

\medskip
In the rest of this section we focus on (Q3) and (Q4). We have to deal with $\mc{ED}$, $\mc{S}$, and $\mrm{Nwd}$.

\begin{model}\label{q3many} ((Q3) for $\mc{ED}$, $\mc{S}$, and $\mrm{Nwd}$.)
It is consistent that there are  $\mc{I}$-Luzin sets and $\mrm{cov}^*(\mc{I})=\om_1<\mrm{cof}^*(\mc{I})=\mf{c}=\om_2$ for $\mc{I}=\mc{ED}$, $\mc{S}$, and $\mrm{Nwd}$ (in the same model).

Start with a model $V$ of $\mf{b}=\mf{c}=\om_2$ and consider the $\om_1$ stage finite support iteration $(\PP_\al)_{\al\leq\om_1}$ based on $(\QQ_\al)_{\al<\om_1}$ such that $\QQ_\al$ takes the values $\mbb{B}$ (the random forcing), $\mbb{M}(\mc{ED}^*)$, and $\mbb{M}(\mc{S}^*)$ alternately.

$\PP_{\om_1}$ adds $\mc{I}$-Luzin sets for all three ideals: For $\mc{I}=\mc{ED}$ and $\mc{I}=\mc{S}$, the sequence of $\mbb{M}(\mc{I}^*)$-generic sets forms an $\mc{I}$-Luzin set. The Cohen reals appearing at limit stages form a (classical) Luzin set and hence we also obtain $\mrm{Nwd}$-Luzin sets.

$V^{\PP_{\om_1}}\models\mrm{cov}^*(\mc{ED})=\mrm{non}(\mc{M})=\om_1$ because of the Cohen reals added at limit stages. $V^{\PP_{\om_1}}\models\mrm{cov}^*(\mc{S})=\mrm{non}(\mc{N})=\om_1$ because of the random reals added at cofinally many stages, in particular $V^{\PP_{\om_1}}\models\mrm{cov}^*(\mrm{Nwd})=\mrm{cov}(\mc{M})=\om_1$ because $\mrm{cov}(\mc{M})\leq\mrm{non}(\mc{N})$.

$\mrm{cof}^*(\mc{ED})=\mrm{cof}^*(\mc{S})=\mf{c}=\om_2$ in $\mrm{ZFC}$. To show that $V^{\PP_{\om_1}}\models\mrm{cof}^*(\mrm{Nwd})=\mrm{cof}(\mc{M})=\om_2$, we know that $V\models \mf{b}=\mf{d}=\om_2$ hence there is a scale $(f_\al)_{\al<\om_2}$ in $V$ (that is, a $\leq^*$-well-ordered dominating sequence in $\om^\om$), and it is enough to show that this sequence remains unbounded in $V^{\PP_{\om_1}}$ because then $V^{\PP_{\om_1}}\models\mf{d}=\om_2\leq\mrm{cof}(\mc{M})$.
$\mbb{B}$ is $\om^\om$-bounding hence it preserves unbounded families, and we know (see \cite{jorgmob}, or \cite{HuMi} for a more general characterisation) that $\mbb{M}(\mc{I}^*)$ preserves well-ordered unbounded families if $\mc{I}$ is $F_\sigma$. At limit stages we can apply \cite[Lem. 6.5.7]{BaJu}.
\end{model}

\begin{model}\label{modelq4ed} ((Q4) for $\mc{ED}$.)
It consistent that there are  $\mc{ED}$-Luzin sets but there are no $\mc{ED}$-Luzin sets of size $\mrm{cov}^*(\mc{ED})=\om_2=\mf{c}$.

Let $V\models\mrm{cov}(\mc{N})=\mrm{cov}(\mc{M})=\om_2=\mf{c}$ and add $\om_1$ many random reals simultaneously, this forcing notion will be denoted by $\mbb{B}(\om_1)$. It adds a Sierpi\'nski set of size $\om_1$ hence a $\mrm{tr}(\mc{N})$-Luzin set too, and applying Fact \ref{KBcons} (b), an $\mc{ED}$-Luzin set as well.

We know that $\mbb{B}$ preserves $\mrm{cov}(\mc{M})=\om_2$ (see \cite[Thm. 3.2.43]{BaJu}) hence by Proposition \ref{noedluzin}, there are no large $\mc{ED}$-Luzin sets in the final extension.

To show that $V^{\mbb{B}(\om_1)}\models\mrm{cov}^*(\mc{ED})=\mrm{non}(\mc{M})=\om_2$, we prove that $\mbb{B}(\om_1)$ preserves $\mrm{cov}(\mc{N})=\om_2$. Then we are done because $\mrm{cov}(\mc{N})\leq\mrm{non}(\mc{M})$.

This is a standard Fubini-type argument: Let $(\dot{A}_\al)_{\al<\om_1}$ be a sequence of $\mbb{B}(\om_1)$-names for Borel null subsets of $2^\om$. We show that there is a $y\in 2^\om$ such that $\vd_{\mbb{B}(\om_1)} y\notin\bigcup\{\dot{A}_\al:\al<\om_1\}$. $\dot{A}_\al$ is coded by a countable set hence we can assume that it is a (nice) $\mbb{B}(\be_\al)\simeq\mbb{B}$-name for some $\be_\al<\om_1$. Applying \cite[Lem. 3.1.6]{BaJu}, there are Borel sets $B_\al\subseteq 2^\om\times 2^\om$ such that $\vd_{\mbb{B}(\be_\al)}\dot{A}_\al=(B_\al)_{\dot{r}_{\be_\al}}:=\{y:(\dot{r}_{\be_\al},y)\in B_\al\}$ where $\dot{r}_{\be_\al}$ stands for the canonical $\mbb{B}(\be_\al)$-name of the generic (random) real (here of course, we identify $\mbb{B}(\be_\al)$ and $\mbb{B}$ along a bijection between $\be_\al$ and $\om$ fixed in $V$). This implies that co-null many vertical sections of $B_\al$ are null, and hence co-null many horizontal sections of it are also null, let $C_\al=\{y\in 2^\om:(B_\al)^y:=\{x:(x,y)\in B_\al\}$ is not a null set$\}\in\mc{N}$. Applying that $\mrm{cov}(\mc{N})=\om_2$ in $V$, we know that there is a $y\in 2^\om\setminus \bigcup\{C_\al:\al<\om_1\}$. Now if $\al<\om_1$ then $(B_\al)^y$ is a null set hence $\vd_{\mbb{B}(\be_\al)}\dot{r}_{\be_\al}\notin (B_\al)^y$ i.e. $\vd_{\mbb{B}(\be_\al)}y\notin (B_\al)_{\dot{r}_{\be_\al}}=\dot{A}_\al$, and hence $\vd_{\mbb{B}(\om_1)} y\notin \bigcup\{\dot{A}_\al:\al<\om_1\}$ because $\mbb{B}(\be_\al)$ is a complete subforcing of $\mbb{B}(\om_1)$.

Remark: One may notice that starting with $V\models\mrm{add}(\mc{M})=\om_2=\mf{c}$ would simplify the proof because then $\mf{b}=\mrm{cov}(\mc{M})=\om_2$ in $V$ and the random forcing preserves $\mf{b}=\om_2\leq\mrm{non}(\mc{M})$. However, this particular model described above will be useful later as well.
\end{model}

(Q4) for $\mc{S}$ is open (in the last section we will recall all remaining open questions including this one as well).

In the last model of this section, we will need the following interesting fact (notice that if $\mrm{non}(\mc{M})>\om_1$ then there are no Luzin sets).

\begin{thm}\label{bnwdluzin}$ $
\begin{itemize}
\item[(a)] If $\mf{b}\geq\om_2$ then there are no $\mrm{Nwd}$-Luzin sets.
\item[(b)] If there is an $\mrm{Nwd}$-Luzin set of size $\kappa > \omega_1$, then there is a Luzin set of size $\kappa$.
\end{itemize}
\end{thm}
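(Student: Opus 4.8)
The plan is to work with the standard tree representation of $\mrm{Nwd}$. Fix a bijection between $\mbb{Q}$ and $2^{<\om}$ so that $\mrm{Nwd}$ becomes (isomorphic to) $\mrm{Nwd}_\mrm{tr}=\{A\subseteq 2^{<\om}:[A]$ is nowhere dense in $2^\om\}=\mrm{tr}(\mc{M})$, as was noted around Fact~\ref{convtrn}; and recall that $\mrm{Nwd}$ is isomorphic to $\mrm{Nwd}_\mrm{tr}$. Under this identification, a set $\mc{X}=\{A_\al:\al<\ka\}\subseteq [2^{<\om}]^\om$ is an $\mrm{Nwd}$-Luzin family iff it is uncountable and for every nowhere dense closed $K\subseteq 2^\om$ the set $\{\al:|\{s\in A_\al:[s]\cap K\ne\0$ ``infinitely often along $A_\al$''$\}|=\om\}$ is countable; more usefully, $A\in\mrm{Nwd}_\mrm{tr}$ exactly when $[A]=\{x:\exists^\infty n\ x\!\restriction\! n\in A\}$ is nowhere dense. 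The first reduction I would carry out is to replace each $A_\al$ by a ``branch-like'' set: by shrinking (and using that the family is $\mrm{Nwd}$-Luzin, so only countably many $A_\al$ can be discarded at each step) I want to arrange that each $[A_\al]$ is a single point $x_\al\in 2^\om$, i.e. that $A_\al\subseteq^* \{x_\al\!\restriction\! n:n\in\om\}$ for some $x_\al$. The point is that if $[A_\al]$ always contained at least two points, or a perfect set, one could cover $\om_1$-many $A_\al$ by a single nowhere dense tree obtained from a suitable countable ``spread'' — here I would run an argument in the spirit of Corollary~\ref{FsigmaConv}/Theorem~\ref{aboveconv}, or simply a direct fusion: the $[A_\al]$'s cannot accumulate too wildly, or else a countable family of clopen sets witnesses $\mrm{Conv}\le_\mrm{K}$-type behaviour and kills the Luzin property. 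Once this is done, set $L=\{x_\al:\al<\ka\}$.

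Next I would show $L$ is a Luzin set, i.e. $|L\cap M|\le\om$ for every meager $M\subseteq 2^\om$. Since every meager set is contained in a countable union of nowhere dense closed sets, and $\ka>\om_1$ (so $\mrm{cf}(\ka)>\om$ once we know $\mrm{cf}(\ka)\ge\om_1$; actually I only need ``uncountably many $x_\al$ avoid each single nowhere dense closed set''), it suffices to prove $|L\cap K|\le\om$ for each nowhere dense closed $K$. Given such $K$, let $A_K=\{s\in 2^{<\om}:[s]\cap K\ne\0\}$ — this is a subtree of $2^{<\om}$ with $[A_K]=K$, hence $A_K\in\mrm{Nwd}_\mrm{tr}$. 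If $x_\al\in K$ then $x_\al\!\restriction\! n\in A_K$ for all $n$, so $A_\al\subseteq^*\{x_\al\!\restriction\! n:n\}\subseteq A_K$ except for finitely many terms, and in particular $|A_\al\cap A_K|=\om$; since $\{A_\al:\al<\ka\}$ is $\mrm{Nwd}$-Luzin and $A_K\in\mrm{Nwd}$, only countably many $\al$ satisfy this. Hence $\{x_\al:x_\al\in K\}$ is countable. As the map $\al\mapsto x_\al$ is countable-to-one (two $A_\al$ with the same limit point would again be swallowed by one tree; this is part of the first-step bookkeeping), we get $|L\cap K|\le\om$. Then part (b) follows, and part (a) is the special case: if $\mf{b}\ge\om_2$ there is no Luzin set of size $\ge\om_2$ (indeed no uncountable Luzin set, since $\mrm{non}(\mc{M})\ge\mf{b}\ge\om_2$ — a Luzin set is non-meager, of size $\le$ everything, contradiction), so by (b) there is no $\mrm{Nwd}$-Luzin set; more precisely, an $\mrm{Nwd}$-Luzin set has size $\ge\om_1$ by definition and hence, unless it has size exactly... — here I would just invoke that $\mrm{non}^*(\mrm{Nwd})=\om$ forces nothing, but $\mf{b}\le\non(\mc{M})$ and a $\mrm{Nwd}$-Luzin set yields, via (b) applied after possibly first noting any uncountable $\mrm{Nwd}$-Luzin set of size $\om_1$ still produces a Luzin set of size $\om_1$ contradicting $\mrm{non}(\mc{M})>\om_1$, the desired contradiction.)

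The main obstacle I anticipate is the first step: rigorously showing one may assume each $A_\al$ converges to a single point $x_\al$ and that distinct indices give (mostly) distinct points, while only throwing away countably many $\al$. The clean way is: for each $\al$ pick inductively a point $x_\al\in[A_\al]$ and argue that the map must be countable-to-one and that $A_\al\setminus\{x_\al\!\restriction\! n:n\in\om\}$ must be finite ``often enough'' — if not, one extracts from the $A_\al$'s a countable subfamily whose ``spread'' is a single nowhere dense tree $T$ with $|A_\al\cap T|=\om$ for uncountably many $\al$, contradicting the Luzin property. I would isolate this as a lemma: \emph{if $\{A_\al:\al<\om_1\}\subseteq[2^{<\om}]^\om$ and for each $\al$ the closure $[A_\al]$ is not eventually a single branch of $A_\al$, then there is $T\in\mrm{Nwd}_\mrm{tr}$ with $|A_\al\cap T|=\om$ for uncountably many $\al$}; the proof is a $\Delta$-system / pigeonhole over the finite ``branching patterns'' of initial segments, very much in the style of Example~\ref{cotW} and the $\mc{W}$ computations, combined with the observation that finitely many branches form a nowhere dense closed set. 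Everything after that lemma is the short clopen-tree computation above.
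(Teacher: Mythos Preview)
Your reduction step contains a genuine error. You write that you want ``each $[A_\al]$ is a single point $x_\al$, i.e.\ that $A_\al\subseteq^*\{x_\al\!\restriction\! n:n\in\om\}$''. These two statements are \emph{not} equivalent. If $A_\al$ is an antichain in $2^{<\om}$ converging to $x_\al$ (meaning $\forall n\ \forall^\infty s\in A_\al\ (x_\al\!\restriction\! n\subseteq s)$), then $[A_\al]$ may well be a singleton (or even empty), yet $A_\al$ is disjoint from the branch $\{x_\al\!\restriction\! n:n\in\om\}$. No amount of shrinking will turn an antichain into a subset of a branch. The correct reduction, via K\"onig's Lemma, is that each $A_\al$ can be assumed to be either a subset of a branch \emph{or} an antichain converging to a point. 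Your argument ``if $x_\al\in K$ then $A_\al\subseteq^*\{x_\al\!\restriction\! n\}\subseteq A_K$'' handles only the branch case; in the antichain case the nodes of $A_\al$ extend $x_\al\!\restriction\! n$ but then branch away, and need not lie in the tree $A_K$ at all.

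The antichain case is precisely where the hypotheses are used. For (a), the paper associates to each antichain $X$ a function $f_X\in\om^\om$ recording how long the nodes of $X$ are relative to their agreement with $y_X$; one uses $\mf{b}\geq\om_2$ to dominate all $f_X$ simultaneously by some $g$, and then fattens the nowhere dense tree $T$ to a tree $S$ (still with $[S]=[T]$, hence $S\in\mrm{Nwd}_\mrm{tr}$) that swallows all nodes of bounded excess length. For (b), one uses $\mrm{cov}(\mc{M})\geq\ka>\om_1$ to find a single function $f$ meeting all the relevant ``selection'' functions $g_X$ infinitely often, producing an element of $\mrm{Nwd}_\mrm{tr}$ that hits uncountably many antichains. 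Your proposed lemma at the end does not substitute for this: you give no mechanism to capture uncountably many antichains by one nowhere dense tree, and a pigeonhole on ``finite branching patterns'' does not supply one.

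Finally, you cannot derive (a) from (b). Part (b) requires $\ka>\om_1$, so it says nothing about $\mrm{Nwd}$-Luzin sets of size exactly $\om_1$; whether such a set must yield a Luzin set is precisely the open Question following the theorem. The paper therefore proves (a) directly, and the assumption $\mf{b}\geq\om_2$ is used in two different ways: first to put all the limit points $y_X$ into a single nowhere dense closed set (via $\mrm{non}(\mc{M})\geq\mf{b}$), and second to dominate the functions $f_X$ coming from the antichains.
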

\begin{proof}
(a): Instead of $\mrm{Nwd}$,  we will work with $\mrm{Nwd}_\mrm{tr}=\{A\subseteq 2^{<\om}:\forall$ $s\in 2^{<\om}$ $\exists$ $t\supseteq s$ $A\cap t^{\uparrow}=\0\}$ where $t^{\uparrow}=\{t'\in 2^{<\om}:t'\supseteq t\}$.  Let $\mc{X} \subseteq [2^{<\om}]^\omega$, $|\mc{X}|=\om_1$. Since Luzinity is preserved
if we replace members of $\mc{X}$ by their (infinite) subsets, applying K\"onig's Lemma and compactness of $2^\om$, we may assume that every $X \in \mc{X}$ is
\begin{itemize}
\item[(1)] either a subset of a branch $b_X \subseteq 2^{<\om}$ with ``limit'' $y_X= \bigcup b_X \in 2^\om$,
\item[(2)] or an antichain ``converging'' to a $y_X \in 2^\om$, i.e. $\forall$ $n$ $\forall^\infty$ $s \in X$ $(y_X \upharpoonright n \subseteq s)$.
\end{itemize}
Since $\mrm{non}(\mc{M}) \geq\mf{b}\geq \omega_2$, we can assume that there is a nowhere dense closed set $C\subseteq 2^\om$ such that
$\{y_X:X\in\mc{X}\}\subseteq C$. We know that $C=[T]$ for a tree $T\subseteq 2^{<\om}$, in particular $T \in \mrm{Nwd}_\mrm{tr}$. If we are in case (1), then $X \subseteq b_X\subseteq T$ and hence
$|X \cap T| = \omega$ is immediate. In case (2), when
$X$ is an antichain converging to $y_X$, we define $f_X \in \om^\om$ as follows:
\[ f_X (n) = \min\big\{ |s| : s \in X\;\text{and}\; y_X \upharpoonright n \subseteq s \big\}. \]
By $\mf{b}\geq \omega_2$, there is a strictly increasing $g \in\om^\om$ such that $f_X \leq^* g$ for all such $X \in \mc{X}$.
Now define $S \subseteq 2^{<\om}$ as follows:
\[ S = \big\{ s\in 2^{<\om}: |s|\leq \max\{g(|t|):t\in T,t\subseteq s\}\big\}. \]
Clearly, $T\subseteq S$ and $S$ is also a tree. But also $[S] = [T]$ because if $x\in 2^\om\setminus [T]$, $x\upharpoonright n\in T$, and $x\upharpoonright (n+1)\notin T$, then $x\upharpoonright (g(n)+1)\notin S$. Therefore $S \in\mrm{Nwd}_\mrm{tr}$ as well. Now if $X \in \mc{X}$ is an antichain, and $f_X(n)\leq g(n)$ for every $n \geq N$, then for every such $n$ the witness $s_n\in X$ in the definition $f_X(n)$ belongs to $S$ as well. In particular, $|X \cap S| = \omega$ for every antichain $X\in\mc{X}$ and hence
$|X \cap S| =\omega$ for every $X \in \mc{X}$, therefore $\mc{X}$ is not $\mrm{Nwd}_\mrm{tr}$-Luzin.

\smallskip
(b): The proof is similar to the proof of (a). Note that the assumption implies $\mrm{cov}(\mc{M})=\cov^* (\mrm{Nwd}) \geq\ka > \omega_1$.  Let $\mc{X}\subseteq [2^{<\om}]^\om$ be an $\mrm{Nwd}_\mrm{tr}$-Luzin family of size $\ka$. As above, we can assume that every $X\in\mc{X}$ satisfies (1) or (2) (without decreasing $|\mc{X}|$). Define $\mc{X}_1=\{X\in\mc{X}:X$ satisfies $(1)\}$, $\mc{X}_2=\mc{X}\setminus\mc{X}_1$, and similarly let $Y_1 = \{ y_X : X\in \mc{X}_1 \}$ and $Y_2=\{y_X:X\in\mc{X}_2\}$ (notice that $Y_1$ and $Y_2$ are not necessarily disjoint). We distinguish two cases:

Case I: $|\mc{X}_1|=\ka$. If $y\in Y_1$ then the set $\{X\in\mc{X}_1:y_X=y\}$ is countable because otherwise $\{y\upharpoonright n:n\in\om\}\in\mrm{Nwd}_\mrm{tr}$ witnesses that $\mc{X}_1$ is not $\mrm{Nwd}_\mrm{tr}$-Luzin. In particular, $|Y_1|=\ka$. We show that $Y_1$ is Luzin. Suppose not and let $A\subseteq 2^{\om}$ be nowhere dense closed set such that $|Y_1 \cap A | \geq \omega_1$. Then $A=[T]$ for some nowhere dense tree $T\subseteq 2^{<\om}$. Then $T\in\mrm{Nwd}_\mrm{tr}$ and $|T\cap X|=\om$ for every $X\in \mc{X}_1$ with $y_X\in A$ (that is, for uncountable many $X\in\mc{X}_1$), contradiction.

Case II: If $|\mc{X}_1|<\ka$ then $|\mc{X}_2|=\ka$. First we show that $|Y_2|=\ka$. Suppose not, then there is a $y\in Y_2$ and an $\mc{X}'\in [\mc{X}_2]^{\om_1}$ such that $y=y_X$ for every $X\in\mc{X}'$. For $X\in \mc{X}'$ fix a function $g_X:b=\{y\upharpoonright n:n\in\om\}\to X$ such that $s\subseteq g_X(s)$ for every $s\in b$. Then the set
\[ \big\{f\in \big(2^{<\om}\big)^b:\forall^\infty\;s\in b\;\big(f(s)\ne g_X(s)\big)\big\}\]
is meager in $(2^{<\om})^b$ and hence (applying that $\mrm{cov}(\mc{M})>\om_1$) there is an $f:b\to 2^{<\om}$ such that  $\forall$ $X\in\mc{X}'$ $\exists^\infty$ $s$ $(f(s)=g_X(s))$. We can assume that $f(s)\supseteq s$ for every $s$. Then $\mrm{ran}(f)\in\mrm{Nwd}_\mrm{tr}$ and $|X\cap \mrm{ran}(f)|=\om$ for every $X\in\mc{X}'$, contradiction.

Finally, we show that $Y_2$ is Luzin.  Suppose not and let $T\in\mrm{Nwd}_\mrm{tr}$ be a nowhere dense tree such that $|[T] \cap Y_2 | \geq \omega_1$. Fix an $\mc{X}''\subseteq \mc{X}_2$ of size $\om_1$ such that $y_X\in [T]$ for every $X\in\mc{X}''$. For $X\in \mc{X}''$ define $g_X$ as above. Then for every $X\in\mc{X}''$ the set
\[ \big\{f\in \big(2^{<\om}\big)^T :\forall^\infty\;s\in\dom(g_X)\;\big(f(s)\ne g_X(s)\big)\big\}\]
is meager in $(2^{<\om})^T$, and hence there is an $f:T\to 2^{<\om}$ such that $\forall$ $X\in\mc{X}''$ $\exists^\infty$ $s\in\dom(g_X)$ $(f(s)=g_X(s))$. We can assume that $s\subseteq f(s)$ for every $s\in T$, in particular if $S=T\cup\mrm{ran}(f)$ then $[S]=[T]$ so $S\in\mrm{Nwd}_\mrm{tr}$, and of course, $|S\cap X|=\om$ for every $X\in\mc{X}''$, contradiction.
\end{proof}

This argument in part (b) above does not work for $\ka=\om_1$, this case is still open:
\begin{que}\label{nwdluzinque}
Does the existence of an $\mrm{Nwd}$-Luzin set imply the existence of a Luzin set?
\end{que}

\begin{model} ((Q4) for $\mrm{Nwd}$.)
It is consistent that there are  ($\mrm{Nwd}$-)Luzin sets but there are no $\mrm{Nwd}$-Luzin sets of size $\mrm{cov}^*(\mrm{Nwd})=\om_2=\mf{c}$.

Assume $\mrm{add}(\mc{M}) = \omega_2 = \mf{c}$ in the ground model $V$ and add $\omega_1$ many Cohen reals, this forcing notion will be denoted by $\mbb{C}(\om_1)$. Then in $V^{\mbb{C}(\om_1)}$ there is a Luzin set so also an $\mrm{Nwd}$-Luzin set of size $\omega_1$. By the same Fubini type argument we used in Model \ref{modelq4ed} it is easy to show that  $\mrm{cov}^*(\mrm{Nwd})=\mrm{cov}(\mc{M}) =\omega_2$ is preserved.

We still have to prove that there are no large $\mrm{Nwd}$-Luzin sets. Another Fubini type argument shows that $\mrm{add}(\mc{M}) = \omega_2$ holds in all intermediate extensions (see \cite[Thm. 3.3.16]{BaJu}). This argument only works for the intermediate
extensions, of course, $\mrm{add}(\mc{M}) = \mrm{non}(\mc{M}) = \omega_1$ in $V^{\mbb{C}(\om_1)}$. A fortiori, $\mrm{non}(\mc{M}) = \mf{b} = \omega_2$ in all intermediate extensions, hence there are no $\mrm{Nwd}$-Luzin sets in these models (by Theorem \ref{bnwdluzin} (a)), therefore there are no large $\mrm{Nwd}$-Luzin sets in $V^{\mbb{C}(\om_1)}$.
\end{model}

\section{Around $\mc{ED}_\mrm{fin}$}\label{edfin}

In this section we briefly study one ideal, $\mc{ED}_\mrm{fin}$ from the ``next layer'' of ideals in the diagram of KB-reducibility. We know (see e.g. \cite{Meza}) that  $\mrm{add}^*(\mc{ED}_\mrm{fin})=\om$, $\mrm{cof}^*(\mc{ED}_\mrm{fin})=\mf{c}$, $\mrm{cov}(\mc{M})=\min\{\mf{d},\mrm{non}^*(\mc{ED}_\mrm{fin})\}$, $\mrm{non}(\mc{M})=\max\{\mrm{cov}^*(\mc{ED}_\mrm{fin}),\mf{b}\}$, $\mf{s}\leq \mrm{cov}^*(\mc{ED}_\mrm{fin})$ and $\mrm{non}^*(\mc{ED}_\mrm{fin})\leq\mf{r}$ where
    \begin{align*}
    \mf{s} & =\min\big\{|\mc{H}|:\mc{H}\subseteq [\om]^\om\;\text{and}\;\forall\;X\in [\om]^\om\;\exists\;H\in\mc{H}\;|X\cap H|=|X\setminus H|=\om\big\}\\
    \mf{r} & = \min\big\{|\mc{O}|:\mc{O}\subseteq [\om]^\om\;\text{and}\;\forall\;H\in [\om]^\om\;\exists\;O\in\mc{O}\;\big(|O\cap H|<\om\;\text{or}\;|O\setminus H|<\om\big)\big\}.
    \end{align*}

We also know that $\mc{ED}_\mrm{fin}$ does not contain cotowers. The remaining reasonable questions concerning $\mc{ED}_\mrm{fin}$ are the following: (Q2)-(Q4) from the previous section and

\begin{itemize}
\item[(Q1')] Does there exist an $\mc{ED}_\mrm{fin}$-Luzin set in $\mrm{ZFC}$? If no, is it possible that $\mrm{non}^*(\mc{ED}_\mrm{fin})=\om_1$, $\mrm{cov}^*(\mc{ED}_\mrm{fin})=\mf{c}$, and there are no $\mc{ED}_\mrm{fin}$-Luzin sets?
\item[(Q5)] Is it consistent that $\om_1<\mrm{non}^*(\mc{ED}_\mrm{fin})<\mrm{cov}^*(\mc{ED}_\mrm{fin})$?
\item[(Q6)] Is it consistent that $[\Delta]^\om$ is $\mc{ED}_\mrm{fin}$-inaccessible but $\mrm{cov}^*(\mc{ED}_\mrm{fin})=\om_1$?
\item[(Q7)] Is it consistent that $\mrm{non}^*(\mc{ED}_\mrm{fin})=\om_1<\mf{c}$ but $[\Delta]^\om$ is $\mc{ED}_\mrm{fin}$-accessible?
\end{itemize}

The second part of (Q1') is open. Notice that Proposition \ref{noedluzin} and Fact \ref{KBcons} (b) imply that consistently there are no $\mc{ED}_\mrm{fin}$-Luzin families, and under $\mrm{MA}_{\om_1}+\mf{c}=\om_2$, we have $\om_2=\mf{s}\leq\mrm{cov}^*(\mc{ED}_\mrm{fin})=\om_2$ hence (Q1) is answered easily. However, under $\mrm{MA}_{\om_1}$ we have $\mrm{non}^*(\mc{ED}_\mrm{fin})\geq \mrm{cov}(\mc{M})>\om_1$.

Answering (Q2), we know that consistently there are large $\mc{ED}_\mrm{fin}$-Luzin sets because of large Sierpi\'nski sets.

Answering (Q3) and (Q7), take Model \ref{q3many}, more precisely, at cofinally many stages force with $\mbb{M}(\mc{ED}_\mrm{fin}^*)$ as well, and apply Remark \ref{usefulrem} (2) and (2').

Answering (Q4), consider Model \ref{modelq4ed}: Applying Fact \ref{KBcons} (a) and (b) in the extension we know that $\mrm{cov}^*(\mc{ED}_\mrm{fin})\geq\mrm{cov}^*(\mrm{tr}(\mc{N}))\geq\mrm{cov}(\mc{N})=\om_2(=\mf{c})$; there is an $\mc{ED}_\mrm{fin}$-Luzin set because there is a $\mrm{tr}(\mc{N})$-Luzin set; and finally, there are no large $\mc{ED}_\mrm{fin}$-Luzin sets in this model because there are no large $\mc{ED}$-Luzin sets either.

(Q5) holds in any model of $\om_1<\mrm{cov}(\mc{M})=\mrm{non}(\mc{N})<\mrm{cov}(\mc{N})=\mrm{non}(\mc{M})$ (for such a model see \cite{shattered}) because $\cov(\mc{M})\leq\non^*(\mc{ED}_\mrm{fin})\leq \non^*(\mrm{tr}(\mc{N}))\leq\non(\mc{N})$ and $\cov(\mc{N})\leq\cov^*(\mrm{tr}(\mc{N}))\leq\cov^*(\mc{ED}_\mrm{fin})\leq\non(\mc{M})$.

\section{Analytic P-ideals}\label{analpindiag}

The ``last layer'' of ideals in the KB-reducibility diagram consists of three tall analytic P-ideals: $\mc{I}_{1/n}$, $\mrm{tr}(\mc{N})$, and $\mc{Z}$. We know that the following (in)equalities hold (where every second line, the ``prime'' version, is just the dual (in)equality):
\begin{itemize}
\item[(a)] $\mrm{add}^*(\mc{I}_{1/n})=\mrm{add}^*(\mrm{tr}(\mc{N}))=\mrm{add}^*(\mc{Z})=\mrm{add}(\mc{N})$;
\item[(a')] $\mrm{cof}^*(\mc{I}_{1/n})=\mrm{cof}^*(\mrm{tr}(\mc{N}))=\mrm{cof}^*(\mc{Z})=\mrm{cof}(\mc{N})$;
\item[(b)] $\mrm{cov}(\mc{M})\leq\mrm{non}^*(\mc{I}_{1/n})\leq\mrm{non}^*(\mrm{tr}(\mc{N}))\leq\mrm{non}^*(\mc{Z})$;
\item[(b')] $\mrm{non}(\mc{M})\geq\mrm{cov}^*(\mc{I}_{1/n})\geq\mrm{cov}^*(\mrm{tr}(\mc{N}))\geq\mrm{cov}^*(\mc{Z})$;
\item[(c)] $\mrm{non}^*(\mrm{tr}(\mc{N}))\leq\mrm{non}(\mc{N})$;
\item[(c')] $\mrm{cov}^*(\mrm{tr}(\mc{N}))\geq\mrm{cov}(\mc{N})$;
\item[(d)] $\min\{\mrm{cov}(\mc{N}),\mf{b}\}\leq\mrm{cov}^*(\mc{Z})\leq\max\{\mf{b},\mrm{non}(\mc{N})\}$;
\item[(d')] $\min\{\mrm{cov}(\mc{N}),\mf{d}\}\leq\mrm{non}^*(\mc{Z})\leq\max\{\mf{d},\mrm{non}(\mc{N})\}$;
\item[(e)] $\mrm{cov}^*(\mc{Z})\leq \mf{d}$;
\item[(e')]  $\mf{b}\leq\mrm{non}^*(\mc{Z})$.
\end{itemize}

(a), (a'): $\mrm{add}^*(\mrm{tr}(\mc{N}))=\mrm{add}(\mc{N})$ and $\mrm{cof}^*(\mrm{tr}(\mc{N}))=\mrm{cof}(\mc{N})$ are trivial.
See \cite[524H]{Fr51} and \cite[Thm. 4.2.4, Cor. 4.2.5]{mythesis} for $\mrm{add}^*(\mc{I}_{1/n})=\mrm{add}(\mc{N})$ and $\mrm{cof}^*(\mc{I}_{1/n})=\mrm{cof}(\mc{N})$. And see \cite[526G]{Fr51} for $\mrm{add}^*(\mc{Z})=\mrm{add}(\mc{N})$ and $\mrm{cof}^*(\mc{Z})=\mrm{cof}(\mc{N})$.

\smallskip
(b), (b'): See \cite[Thm. 3.7]{cardinvanalp} for the first inequality. The second and third inequalities are consequences of Fact \ref{KBcons} (a).

\smallskip
(c), (c'): Trivial.

\smallskip
(d), (d'): See \cite[Thm. 3.10, 3.12]{cardinvanalp}.

\smallskip
(e), (e'): See \cite{dilipshelah}.

\smallskip
Let us summarize (almost) all the above inequalities in a diagram. Two inequalities, $\mrm{cov}^*(\mc{Z})\leq\max\{\mf{b},\mrm{non}(\mc{N})\}$ and its dual $\min\{\mrm{cov}(\mc{N}),\mf{d}\}\leq\mrm{non}^*(\mc{Z})$ are missing, $a,a',b,b'$ next to dotted arrows indicate min-max results , and of course $\mc{I}=\mc{I}_{1/n},\mrm{tr}(\mc{N})$, or $\mc{Z}$.

{\footnotesize
\begin{diagram}
&&&&&&&&&&&&&&&&&& \mrm{cof}^*(\mc{I})\\
&&&&&&&&&&&&&&&&&& \uEqv\\
\mrm{cov}(\mc{N}) & \rTo & \mrm{cov}^*(\mrm{tr}(\mc{N})) & \rTo & \mrm{cov}^*(\mc{I}_{1/n}) & \rTo &\cov^*(\mc{ED}_\mrm{fin}) & \rDotsto_a & \mrm{non}(\mc{M}) & \rDotsto_{b'} & \mrm{cof}(\mc{M}) & & & & \rTo & & & & \mrm{cof}(\mc{N})\\
 & & \uTo & & & & & & \uDotsto^a & & \uDotsto^{b'} & & & & & & &  &  \uTo   \\
 & & \mrm{cov}^*(\mc{Z}) & & &\hLine & & & \VonH &  \rTo& \mf{d} & & & & \rTo & & & & \max\{\mf{d},\mrm{non}(\mc{N})\} \\
\uTo & \ruTo& & & & && &  & \ruTo & & & & && & & \ruTo& \\
\min\{\mrm{cov}(\mc{N}),\mf{b}\} &  &  & &\rTo & &  & & \mf{b} & \hLine & \VonH & && \rTo && & \mrm{non}^*(\mc{Z}) &  & \uTo \\
\uTo & & & & & && & \uDotsto_b & &  \uDotsto_{a'} & & && & & \uTo & & \\
\mrm{add}(\mc{N}) & & & &\rTo && & & \mrm{add}(\mc{M}) & \rDotsto^b & \mrm{cov}(\mc{M}) & \rDotsto^{a'} & \non^*(\mc{ED}_\mrm{fin}) &\rTo & \mrm{non}^*(\mc{I}_{1/n}) & \rTo & \mrm{non}^*(\mrm{tr}(\mc{N})) & \rTo & \mrm{non}(\mc{N})\\
\uEqv &&&&&&&&&&&&&&&&\\
\mrm{add}^*(\mc{I}) &&&&&&&&&&&&&&&&
\end{diagram}}

\smallskip
Concerning these ideals, beyond (Q1')-(Q2)-(Q6), the following questions are interesting:

\begin{itemize}
\item[(Q7')] Is it consistent that $[\om]^\om$ is $\mc{I}$-accessible, yet $\mrm{non}^*(\mc{I})<\mf{c}$ and there are no cotowers in $\mc{I}$ (in particular, $\mrm{add}^*(\mc{I})<\mrm{cov}^*(\mc{I})$)?
\item[(Q8)] Is it consistent that there are no cotowers in $\mc{I}$ but $\mrm{non}^*(\mc{I})=\mf{c}$?
\item[(Q9)] Is it consistent that $\mrm{non}^*(\mc{I})<\mf{c}$ but $\mrm{add}^*(\mc{I})=\mrm{cov}^*(\mc{I})$?
\item[(Q10)] Is it consistent that there is a cotower in $\mc{I}$, $\mrm{non}^*(\mc{I})=\mf{c}$, and $\mrm{add}^*(\mc{I})<\mrm{cov}^*(\mc{I})$?
\item[(Q11)] Is it consistent that there is a cotower in $\mc{I}$, $\mrm{non}^*(\mc{I})<\mf{c}$, and $\mrm{add}^*(\mc{I})<\mrm{cov}^*(\mc{I})$?
\end{itemize}

Proposition \ref{noedluzin} and Fact \ref{KBcons} (b) imply the consistency that there are no $\mc{I}$-Luzin sets (even with $\mrm{cov}^*(\mc{I})=\mf{c}$ answering (Q1)) but in this model $\mrm{non}^*(\mc{I})=\mf{c}$ for every tall analytic P-ideal $\mc{I}$. Alternatively, applying Remark \ref{usefulrem} (1), we can take any nontrivial $\om_2$ stage finite support iteration over a model of $\mrm{CH}$, then in the extension $\mrm{non}^*(\mc{I})=\om_2$ and there are no $\mc{I}$-Luzin sets.

(Q1'), (Q2), (Q4), and (Q5) for $\mc{I}=\mc{Z}$ seem to be very difficult because in such models  $\mathrm{non}^*(\mc{Z})<\mathrm{cov}^*(\mc{Z})$ would hold, and the consistency of this strict inequality is still an open problem. (Q1') is open for $\mc{I}_{1/n}$ and $\mrm{tr}(\mc{N})$ too.

(Q2) for $\mrm{tr}(\mc{N})$ (and hence for $\mc{I}_{1/n}$) has already been answered: If there is a large Sierpi\'nski set, then there is a large $\mrm{tr}(\mc{N})$-Luzin set as well.

(Q9) is easy, consider the Sacks model. A bit more interesting model answers (Q3) and (Q9) at the same time for an arbitrary tall analytic P-ideal $\mc{I}$: Consider the appropriate version of the (dual) Model \ref{q3many}, that is, let $V\models\mrm{add}(\mc{N})=\om_2$, force with the $\om_1$ stage finite support iteration of $\mbb{M}(\mc{I}^*)$. Applying Remark \ref{usefulrem} (2'), in the extension there is an $\mc{I}$-Luzin set and $\mrm{non}^*(\mc{I})=\mrm{cov}^*(\mc{I})=\om_1<\mf{c}$. The next lemma implies that $\mrm{cof}^*(\mc{I})=\mf{c}$ also holds in this extension.

\begin{lem}
If $\mrm{add}^*(\mc{I})=\ka$ and $\PP$ is $\sigma$-centered, then $V^\PP\models\mrm{cof}^*(\mc{I})\geq\ka$.
\end{lem}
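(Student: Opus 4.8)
The plan is to reduce the statement to the preservation of a single ground-model $\subseteq^*$-unbounded family and then to invoke Lemma~\ref{sigmacentered}. We may assume $\mc{I}$ is a P-ideal, as otherwise $\mrm{add}^*(\mc{I})=\om$ and there is nothing to prove. Since $\mrm{add}^*(\mc{I})=\mrm{add}(\widehat{\mc{I}})=\ka$ is regular and uncountable and $\widehat{A}\subseteq\widehat{B}$ is equivalent to $A\subseteq^* B$, I would first fix in $V$ a $\subseteq^*$-increasing sequence $(A_\al)_{\al<\ka}$ in $\mc{I}$ that is $\subseteq^*$-unbounded in $\mc{I}$ (such a sequence exists by the usual recursion, using that $\mc{I}$ is a P-ideal and $\ka$ is regular). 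The key claim will be that this sequence stays $\subseteq^*$-unbounded in $\mc{I}$ in $V^\PP$. Granting this, if $V^\PP\models\mrm{cof}^*(\mc{I})=\mu<\ka$, I would pick a $\subseteq^*$-cofinal family $\{C_\xi:\xi<\mu\}\subseteq\mc{I}$ there and, for each $\al$, some $\xi(\al)<\mu$ with $A_\al\subseteq^* C_{\xi(\al)}$; since $\PP$ is ccc, $\ka$ remains regular, so some $\xi^*$ has $\xi(\al)=\xi^*$ for cofinally many $\al$, whence $A_\al\subseteq^* C_{\xi^*}$ for all $\al<\ka$ by $\subseteq^*$-increasingness --- contradicting the claim. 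Hence $\mu\ge\ka$.

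For the claim I would pass to complements: $(\om\setminus A_\al)_{\al<\ka}$ is $\subseteq^*$-decreasing in $\mc{I}^*$, and its pseudointersections $X$ correspond via $X\leftrightarrow\om\setminus X$ precisely to the $\subseteq^*$-upper bounds of $(A_\al)_{\al<\ka}$ in $\mc{I}$; thus the claim is exactly that this sequence has no pseudointersection in $\mc{I}^*$ after forcing. When $\mc{I}$ is $F_\sigma$ this is immediate from Lemma~\ref{sigmacentered}(a), so the interesting case is $\mc{I}=\mrm{Exh}(\varphi)$ an analytic P-ideal with $\varphi$ finite lsc (Theorem~\ref{char}). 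The obstacle is that Lemma~\ref{sigmacentered}(b) preserves only the stronger property ``no pseudointersection $X$ with $\|\om\setminus X\|_\varphi<\|\om\|_\varphi$'', whereas in $V$ we know only the weaker one; bridging this gap is the main difficulty, and I would close it by replacing $\varphi$ with a restriction.

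Concretely, I would set $r_0=\inf\{\|A\|_\varphi:A\text{ is a }\subseteq^*\text{-upper bound of }(A_\al)_{\al<\ka}\}$. Using that the upper bounds are closed under finite intersections, I would take a $\subseteq$-decreasing sequence of upper bounds whose $\varphi$-norms tend to $r_0$ and paste together sufficiently late tails of them into a single set $A^*$; the routine checks are that $\|A^*\|_\varphi=r_0$ (each pasted tail sits inside the corresponding approximating bound, so $A^*\setminus m_k\subseteq(k\text{-th bound})\setminus m_k$) and that $A^*$ is again an upper bound (for fixed $k$ the cut-offs ``$A_\al\setminus M\subseteq(k\text{-th bound})$'' are weakly increasing in $\al$, hence eventually constant, and the tails are chosen past these eventual values). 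This also forces $r_0>0$, since an upper bound of $\varphi$-norm $0$ would lie in $\mc{I}$. Then, working with the lsc submeasure $\psi=\varphi\restriction\mc{P}(A^*)$ on the countable set $A^*$, every $\subseteq^*$-upper bound $A'\subseteq A^*$ of $(A_\al)_{\al<\ka}$ satisfies $r_0\le\|A'\|_\psi\le\|A^*\|_\psi=r_0$, so the $\subseteq^*$-decreasing sequence $(A^*\setminus A_\al)_{\al<\ka}$ in $\mrm{Exh}(\psi)^*$ has no pseudointersection $X\subseteq A^*$ with $\|A^*\setminus X\|_\psi<\|A^*\|_\psi$ --- exactly the hypothesis of Lemma~\ref{sigmacentered}(b) for $\mrm{Exh}(\psi)$ on $A^*$. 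That lemma preserves this, and finally any $\subseteq^*$-upper bound $C\in\mc{I}$ of $(A_\al)_{\al<\ka}$ appearing in $V^\PP$ would make $A^*\setminus C$ a pseudointersection of $(A^*\setminus A_\al)_{\al<\ka}$ with $\|A^*\cap C\|_\psi\le\|C\|_\varphi=0<r_0$, a contradiction. This yields the claim, and hence the lemma.
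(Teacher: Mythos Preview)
Your overall strategy --- fix a $\subseteq^*$-increasing $\subseteq^*$-unbounded sequence $(A_\al)_{\al<\ka}$ in $\mc{I}$ and show it stays unbounded after $\sigma$-centered forcing --- is different from the paper's. The paper argues directly with a hypothetical small family $\{\dot A_\al:\al<\lam\}$ in the extension that $\subseteq$-dominates $\mc{I}\cap V$, and uses the $\sigma$-centered decomposition to build, for each triple $(n,\al,k)$, a set $B_{n,\al,k}\in\mc{I}$ that no $X$ in the corresponding piece can almost contain; a single $X\in\mc{I}$ almost containing all $B_{n,\al,k}$ (which exists since there are $\lam<\ka=\add^*(\mc{I})$ many) then yields the contradiction. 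It never invokes Lemma~\ref{sigmacentered}. Your reduction in the $F_\sigma$ case via Lemma~\ref{sigmacentered}(a) is correct, and the final deduction of $\cof^*(\mc{I})\ge\ka$ from unboundedness of the sequence is fine.

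The gap is in the analytic P-ideal case, in the construction of the upper bound $A^*$ with $\|A^*\|_\varphi=r_0$. You assert that for fixed $k$ the cut-offs $M(\al,k)=\min\{M:A_\al\setminus M\subseteq B_k\}$ are weakly increasing in $\al$ and hence eventually constant. This is false: from $A_\al\subseteq^* A_\be$ one only gets $M(\al,k)\le\max\{N_{\al,\be},M(\be,k)\}$ where $A_\al\setminus N_{\al,\be}\subseteq A_\be$; for instance $A_\al=2\om$, $A_\be=\om\setminus\{0\}$, $B_k=\om\setminus\{0\}$ gives $M(\al,k)=1>0=M(\be,k)$. Without monotonicity there is no reason the cut-offs stabilize, and your pasted $A^*$ need not be an upper bound of the whole sequence. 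The remainder of your argument genuinely requires $\|A^*\|_\varphi=r_0$ exactly: with $\|A^*\|_\varphi>r_0$ there can be pseudointersections $X$ of $(A^*\setminus A_\al)$ with $\|A^*\setminus X\|_\psi$ strictly between $r_0$ and $\|A^*\|_\psi$, so the hypothesis of Lemma~\ref{sigmacentered}(b) fails. Whether the infimum $r_0$ is always attained is not obvious; the question reduces to $\le^*$-bounding the family $\{k\mapsto M(\al,k):\al<\ka\}$ in $\om^\om$, which is not automatic (and is delicate precisely when $\ka=\mf{b}$). The paper's direct argument sidesteps this issue completely.
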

\begin{proof}
We will prove a bit more, namely that
\[ V^\PP\models\min\big\{|\mc{A}|:\mc{A}\subseteq\mc{I}:\forall\;B\in\mc{I}\cap V\;\exists\;A\in\mc{A}\;B\subseteq A\big\}\geq\ka.\]
Let $\mc{I}=\mrm{Exh}(\varphi)$ for some lsc submeasure $\varphi$, we can assume that $\|\om\|_\varphi=1$. Let $\PP=\bigcup_{n\in\om}C_n$ where $C_n$ is centered for every $n$, and assume on the contrary that there is a family $\{\dot{A}_\al:\al<\lam\}$ of $\PP$-names for elements of $\mc{I}$ for some $\lam<\ka$ such that $V^\PP\models\forall$ $B\in\mc{I}\cap V$ $\exists$ $\al<\lam$ $B\subseteq \dot{A}_\al$.

For every $X\in\mc{I}$ we can find a $p_X\in \PP$, an $\al_X<\lam$, and a $k_X\in\om$ such that $p_X\vd$``$X\subseteq \dot{A}_{\al_X}$ and $\varphi(\dot{A}_{\al_X}\setminus k_X)\leq 1/2$''. Furthermore, for every $n,k\in\om$ and $\al<\lam$ define $\mc{X}_{n,\al,k}=\{X\in\mc{I}:p_X\in C_n,\al_X=\al,$ and $k_X=k\}$.

We claim that $\varphi(\bigcup\mc{X}_{n,\al,k}\setminus k)\leq 1/2$. Otherwise there is a $K>k$ such that $\varphi(\bigcup\mc{X}_{n,\al,k}\cap [k,K))>1/2$ and there are finitely many $X_0,X_1,\dots,X_{m-1}\in\mc{X}_{n,\al,k}$ such that $\bigcup\mc{X}_{n,\al,k}\cap [k,K)=\bigcup_{i<m}X_i\cap [k,K)$. Now if $p$ is a common extension of $p_{X_0},p_{X_1},\dots,p_{X_{m-1}}$, then
$p\vd 1/2<\varphi(\bigcup_{i<m}X_i\cap [k,K))\leq \varphi(\dot{A}_\al\setminus k)$, a contradiction.

In particular, $\om\setminus\bigcup\mc{X}_{n,\al,k}$ is infinite for every $n,\al$, and $k$, hence it has an infinite subset $B_{n,\al,k}\in\mc{I}$. If $X\in\mc{I}$ such that $B_{n,\al,k}\subseteq^* X$ for every $\al,n,k$, then $X$ cannot belong to any $\mc{X}_{n,\al,k}$, a contradiction.
\end{proof}

Answering (Q4) for $\mc{I}_{1/n}$ and $\mrm{tr}(\mc{N})$, consider the same model and explanation as in Model \ref{modelq4ed}.

Answering (Q5) for $\mc{I}_{1/n}$ and $\mrm{tr}(\mc{N})$, take any model of $\om_1<\mrm{cov}(\mc{M})=\mrm{non}(\mc{N})<\mrm{cov}(\mc{N})=\mrm{non}(\mc{M})$. For such a model see \cite{shattered}.

The answer to (Q6) is NO because, as we pointed out earlier, if $\mc{I}$ is a P-ideal, then $\mrm{cov}^*(\mc{I})=\om_1$ iff there exists of a cotower in $\mc{I}$ is height $\om_1$, in particular, $[\om]^\om$ is $\mc{I}$-accessible. This easy observation motivates the following weak version of (Q6):

\begin{itemize}
\item[(Q6')] Is it consistent that $[\om]^\om$ is $\mc{I}$-inaccessible but $\mrm{non}^*(\mc{I})\geq\mrm{cov}^*(\mc{I})$?
\end{itemize}

This is open and seems to be quite difficult.

Before dealing with (Q7') let us show that answering (Q7) is not difficult, simply consider the appropriate version of Model \ref{q3many}. (Q7') is more problematic, at this moment we can answer it for $F_\sigma$ P-ideals only:

\begin{model} ((Q7') for tall $F_\sigma$ P-ideals.)
Let $\I$ be a tall $F_\sigma$ P-ideal.
It is consistent that $\omoms$ is $\I$-accessible, there are no towers in $\I^*$, and $\add^* (\I) = \omega_1 < \non^* (\I) = \cov^* (\I) = \omega_2 < \cc = \omega_3$.

Start with a model $V$ of $\cc = \omega_3$ and consider the $\om_2$ stage finite support iteration of $\MM (\I^*)$ over $V$, call this iteration $(\PP_\alpha)_{\alpha \leq \omega_2}$.
Applying Remark \ref{usefulrem} (1') we know that $[\om]^\om$ is $\mc{I}$-accessible and $\mrm{cov}^*(\mc{I})=\mrm{non}^*(\mc{I})=\om_2$ in $V^{\PP_{\om_2}}$.  Hence it suffices to show that there are no towers in $\I^*$ (then $\add^* (\I) = \omega_1$ follows automatically).

We know that in $V^{\PP_{\om_2}}$ the only possible height of a tower is $\omega_2$ (because the cofinality of its length is between $\mrm{cov}^*(\mc{I})$ and $\mrm{non}^*(\mc{I})$). Assume $(\dot{A}_\alpha)_{\alpha < \omega_2}$ is a $\PP_{\omega_2}$-name for a $\subseteq^*$-decreasing sequence in $\mc{I}^*$ (where $\dot{A}_\al$ is a nice $\PP_{\om_2}$-name for every $\al$). Let $C=\{\al<\om_2:\dot{A}_\ga$ is a $\PP_\al$-name for every $\ga<\al\}$.
Since for every $\al$
\[  \vd_{\omega_2}``(\dot A_\ga)_{\ga < \al}\;\text{has a pseudointersection in}\; \I^*", \]
by Lemma \ref{sigmacentered} and the $\sigma$-centeredness of the remainder forcing $\PP_{\om_2}/\dot{G}_\al\in V^{\PP_\al}$,
we see that for $\al\in C$ there is a nice $\PP_\al$-name $\dot{B}_\al$ such that
\[  \vd_\al ``(\dot A_\ga)_{\ga<\al}\;\text{has a pseudointersection}\; \dot B_\al \in \I^*". \]
Let $S=\{\al\in C:\mrm{cf}(\al)=\om_1\}$ (a stationary subset of $\om_2$) and for every $\al\in S$ pick a $\ga_\al<\al$ such that $\dot B_\al$ is a $\PP_{\gamma_\al}$-name. By pressing-down (Fodor's Lemma),
there are $\gamma < \omega_2$ and a stationary set $S' \subseteq S$ such that $\gamma_\al = \gamma$ for all $\al \in S'$.
Since $\vd_\gamma \dot B_\al \in \I^*$
for $\al \in S'$, there is a $\PP_{\gamma + 1}$-name $\dot B$ (the name for the generic) such that $\vd_{\gamma + 1}\dot B \subseteq^* \dot B_\al$ for all $\al \in S'$. Then $\PP_{\omega_2}$ forces $\dot B$ to be a pseudointersection of $(\dot A_\alpha)_{\alpha < \omega_2}$.
\end{model}

Answering (Q8), simply consider the $\om_2$ stage finite support iteration of $\mbb{M}(\mc{I}^*)$ over a model of $\mrm{CH}$ (where $\mc{I}$ is any tall analytic P-ideal), then in the extension there are no cotowers in $\mc{I}$ (see Theorem \ref{first-thm}) and $\mrm{non}^*(\mc{I})=\om_2=\mf{c}$ (see Remark \ref{usefulrem} (1)).

We already dealt with (Q9).

\begin{model} ((Q10) for tall analytic P-ideals.) It is consistent that there is a cotower in $\mc{I}$, $\mrm{non}^*(\mc{I})=\mf{c}$, and $\mrm{add}^*(\mc{I})<\mrm{cov}^*(\mc{I})$.

Start with a model $V\models$``there is a tower in $\mc{I}^*$ of height $\mf{c}=2^{\om_1}=\om_2$'' (e.g. apply Theorem \ref{locmodel} or Theorem \ref{bigspec}, or simply start with a model of $\mrm{MA}_{\om_1}+\mf{c}=\om_2$), and construct an $\om_2$ stage finite support iteration of forcing notions of the form $\mbb{M}(\mc{F})$ where $\mc{F}\subseteq\mc{I}^*$ is a filter such that (a) $\chi(\mc{F})=\om_1$ for every $\mc{F}$ along the iteration, and (b) if $\PP_{\om_2}$ is the limit of this iteration (denoted by $(\PP_\al)_{\al\leq\om_2}$) and $\mc{F}$ is a subfilter of $\mc{I}^*$ generated by $\om_1$ elements in $V^{\PP_{\om_2}}$, then we forced with $\mbb{M}(\mc{F})$.

The tower from $V$ survives this iteration because if it was destroyed, then a $\PP_\al$ destroyed it for some $\al<\om_2$ but $\PP_\al$ has a dense subforcing of size $\om_1$, and hence it can not destroy a tower of height $\om_2$. Why? Let $(T_\al)_{\al<\om_2}$ be a $\subseteq^*$-decreasing sequence, $|\mbb{Q}|=\om_1$, and $\dot{X}$ be a $\mbb{Q}$-name for an infinite subset of $\om$ such that $\vd_\mbb{Q}\dot{X}\subseteq^* T_\al$ for every $\al<\om_2$. Then for every $\al$ we can pick a $q_\al\in\mbb{Q}$ and a $k_\al\in\om$ such that $q_\al\vd\dot{X}\setminus k_\al\subseteq T_\al$. Then there are a pair $(q,k)$ and an $S\in [\om_2]^{\om_2}$ such that $q_\al=q$ and $k_\al=k$ for every $\al\in S$, and hence $q\vd\dot{X}\setminus k\subseteq\bigcap\{T_\al:\al\in S\}$, in particular, $q\vd$``$\bigcap\{T_\al:\al\in S\}$ is infinite'', but then $\bigcap\{T_\al:\al\in S\}$ is infinite in $V$ as well, therefore this sequence is not a tower in $V$.

$V^{\PP_{\om_2}}\models\cov^*(\mc{I})=\om_2=\mf{c}$ is trivial, and because of the tower in $\mc{I}^*$, we know that $\cov^*(\mc{I})\leq\non^*(\mc{I})$.

To show that $\add^*(\mc{I})=\om_1$ in $V^{\PP_{\om_2}}$, notice that $V^{\PP_{\om_1}}\models\non(\mc{M})=\om_1$ (because of the Cohen reals added at cofinally many stages) and hence $V^{\PP_{\om_1}}\models\add^*(\mc{I})=\cov^*(\mc{I})=\om_1$ (because $\cov^*(\mc{I})\leq\non(\mc{M})$, see at the beginning of Section \ref{analp}). The quotient forcing $\PP_{\om_2}/\dot{G}_{\om_1}\in V^{\PP_{\om_1}}$ is $\sigma$-centered (because $\mf{c}=\om_2$) and hence we can apply Corollary \ref{preservingadd} (b).
\end{model}

The last model ((Q11) for tall analytic P-ideals) is probably the most interesting one. We will need to do some preparations. Fix an lsc submeasure $\varphi$ such that $\mc{I}=\mrm{Exh}(\varphi)$. If $\mc{T}=(T_\xi)_{\xi<\al}$ is a $\subseteq^*$-decreasing sequence in $\mc{I}^*$, then define the forcing notion $\mbb{Q}(\mc{T})$ as follows: $q\in\mbb{Q}(\mc{T})$ iff $q=(s^q,\eps^q,A^q)$ where $s^q\in 2^{<\om}$, $\eps^q$ is a positive rational, $A^q\in [\al]^{<\om}$, and \[ \varphi\Big(\om\setminus\Big( |s^q|\cup\bigcap\big\{T_\xi:\xi\in A^q\big\}\Big)\Big)<\eps^q;\] $q_1=(s^{q_1},\eps^{q_1},A^{q_1})\leq q_0=(s^{q_0},\eps^{q_0},A^{q_0})$ iff $s^{q_1}\supseteq s^{q_0}$, $\eps^{q_1}\leq\eps^{q_0}$, $A^{q_1}\supseteq A^{q_0}$, and the following two conditions hold: \begin{align*} \big\{k\in [|s^{q_0}|,|s^{q_1}|):s^{q_1}(k)=1\big\} & \subseteq\bigcap\big\{T_\xi:\xi\in A^{q_0}\big\},\\
\varphi\big(\big\{k\in[|s^{q_0}|,|s^{q_1}|):s^{q_1}(k)=0\big\}\big) & \leq\eps^{q_0}-\eps^{q_1}.
\end{align*}

It is straightforward to show that $\mbb{Q}(\mc{T})$ adds a $\dot{T}_\al\in V^{\mbb{Q}(\mc{T})}$ such that
\begin{itemize}
\item[(i)] $V^{\mbb{Q}(\mc{T})}\models\dot{T}_\al\in\mc{I}^*$,
\item[(ii)] $V^{\mbb{Q}(\mc{T})}\models$``$\dot{T}_\al$ is a pseudointersection of $(T_\xi)_{\xi<\al}$'',
\item[(iii)] $V^{\mbb{Q}(\mc{T})}\models\forall$ $X\in [\om]^\om\cap V$ $(|X\setminus \dot{T}_\al|=\om)$.
\end{itemize}
In particular, applying Corollary \ref{firstcor}, if $\mc{T}$ is a tower then $\mbb{Q}(\mc{T})$ is not $\sigma$-centered. We will also need the fact that $\mbb{Q}(\mc{T})$ is absolute in the following sense: If $V$ is a transitive model containing $\varphi\upharpoonright\mrm{Fin}$ and $\mc{T}$, then $\mbb{Q}(\mc{T})^V=\mbb{Q}(\mc{T})$.

Now build an iteration $(\PP_\al)_{\al\leq\om_2}$ based on $(\dot{\mbb{Q}}_\al)_{\al<\om_2}$ by recursion on $\al$ such that $\PP_\al$ adds $\dot{\mc{T}}_\al=(\dot{T}_\xi)_{\xi<\al}$ (let $T_0=\om$) and $\vd_{\PP_\al}\QQ_\al=\mbb{Q}(\dot{\mc{T}}_\al)$. $\PP_{\om_2}$ adds a tower in $\mc{I}^*$ of height $\om_2$. Notice that essentially $\PP_{\om_2}$ is just a new presentation of the forcing notion we defined in Theorem \ref{bigspec} if $R=\{\om_2\}$. Why do we not use that forcing notion then? We could do so but then we would have to deal with some technical difficulties in Model \ref{lastone}, namely, we would have to prove that the quotient forcings at  successor stages satisfy property (iii) regardless of the model. More precisely, working with the iterated construction defined above, if $G_\al$ is a $(V,\PP_\al)$-generic filter, $\mbb{Z}\in V[G_\al]$ is a forcing notion, $G^\al_{\al+1}\times H$ is a $(V[G_\al],\mbb{Q}(\mc{T}_\al)\times\mbb{Z})$-generic filter, and $T_\al$ is the generic pseudointersection defined from $G^\al_{\al+1}$, then \[ V[G_\al][G^\al_{\al+1}\times H]\models \forall\;X\in [\om]^\om\cap V[G_\al][H]\;(|X\setminus T_\al|=\om)\]  because $G^\al_{\al+1}$ is $(V[G_\al][H],\mbb{Q}(\mc{T}_\al)^{V[G_\al]})$-generic, $\mbb{Q}(\mc{T}_\al)^{V[G_\al]}=\mbb{Q}(\mc{T}_\al)^{V[G_\al][H]}$,
and hence we can apply (iii).

What goes wrong if we try to work with the forcing notion defined in Theorem \ref{bigspec}? We can decompose it into a chain $\mbb{I}_\al\leq_c\mbb{I}_\be$ ($\al<\be\leq\om_2$) where $\mbb{I}_\al$ adds $\mc{T}_\al=(T_\xi)_{\xi<\al}$. Now if $\Gamma_\al$ is a $(V,\mbb{I}_\al)$-generic filter,  $\mbb{I}^\al_{\al+1}=\mbb{I}_{\al+1}/\Gamma_\al$ is the quotient forcing,  $\mbb{Z}\in V[\Gamma_\al]$ is a forcing notion, $\Gamma^\al_{\al+1}\times H$ is a $(V[G_\al],\mbb{I}^\al_{\al+1}\times\mbb{Z})$-generic filter, and $E_\al$ is the generic pseudointersection defined from $\Gamma^\al_{\al+1}$, then we would need that $V[\Gamma_\al][\Gamma^\al_{\al+1}\times H]\models \forall$ $X\in [\om]^\om\cap V[\Gamma_\al][H]$ $(|X\setminus E_\al|=\om)$ which is unclear. Of course, it very probably holds but we prefer to use the clearer iterated construction than dealing with this quotient forcing.

The following lemma is somewhat surprising since $\mbb{Q}(\mc{T})$ is not necessarily $\sigma$-centered.

\begin{lem}\label{damnsigmacent} If $\om_2\leq\mf{c}$ then
$\PP_{\om_2}$ is $\sigma$-centered.
\end{lem}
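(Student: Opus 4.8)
The plan is to exhibit, for each $\ell\in\om$, a natural parameter attached to a condition $q\in\PP_{\om_2}$ such that two conditions sharing this parameter are always compatible, with a common extension lying in the same piece of the decomposition. Since $\PP_{\om_2}$ is a finite support iteration, a condition $q$ has finite support $\supp(q)\subseteq\om_2$, and at each $\al\in\supp(q)$ the coordinate $q(\al)$ is forced to be a triple $(s^{q}_\al,\eps^{q}_\al,A^{q}_\al)$ with $s^q_\al\in 2^{<\om}$ a finite binary string, $\eps^q_\al$ a positive rational, and $A^q_\al\in[\al]^{<\om}$. The key observation (exactly as in the proof of Theorem \ref{bigspec}, in particular Claim \ref{claim2}) is that whether two conditions from $\mbb{Q}(\mc{T})$ are compatible depends only on the finitary data $s^q$ and $\eps^q$ together with the requirement that the ``tails'' of the stems decrease appropriately; the set-of-ordinals part $A^q$ only records which $T_\xi$'s are being threaded and can always be amalgamated by union. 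So I would like to centre conditions according to the full sequence of stems and rationals $(s^q_\al,\eps^q_\al)_{\al\in\supp(q)}$, together with the ``shape'' of the support.

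First I would fix a bijection between $\om_2$ (or rather, below $\mf{c}$, a set of size $\mf{c}$ of potential support points, using $\om_2\le\mf{c}$ to have enough room) and enumerate the countably many possible finite patterns of stems-and-rationals. Concretely: call $q$ and $q'$ \emph{equivalent} if $\supp(q)$ and $\supp(q')$ have the same size $n$, and if writing $\supp(q)=\{\al_0<\dots<\al_{n-1}\}$ and $\supp(q')=\{\al'_0<\dots<\al'_{n-1}\}$ in increasing order we have $s^q_{\al_i}=s^{q'}_{\al'_i}$ and $\eps^q_{\al_i}=\eps^{q'}_{\al'_i}$ for every $i<n$, \emph{and} the order-relations between the ordinals appearing in the various $A^q_{\al_i}$ and the support points $\al_j$ match up under the order-isomorphism $\al_i\mapsto\al'_i$. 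Since there are only countably many such finite combinatorial types (a finite binary string, a positive rational, and a finite partial order on a finite set, for each of finitely many coordinates), this partitions $\PP_{\om_2}$ into countably many classes $C_\ell$. Here is where $\om_2\le\mf{c}$ enters in the way the paper uses it elsewhere: we only need the \emph{reals} coding the stems to range over a countable set of possibilities; the number of conditions is irrelevant to $\sigma$-centeredness, only the number of classes.

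Then I would verify that each $C_\ell$ is centered. Given finitely many conditions $q^0,\dots,q^{m-1}\in C_\ell$, their supports need not coincide, but I can take $r$ with support $\bigcup_j\supp(q^j)$; at a point $\al$ lying in the support of exactly one $q^j$ I copy $q^j(\al)$, and at a point $\al$ lying in the supports of several $q^j$'s I must produce a common extension of the finitely many coordinate-conditions, all of which have \emph{the same stem} $s$ and \emph{the same} $\eps$ (by the equivalence relation). That common extension is built exactly as in Claim \ref{claim2}: keep stem $s$, shrink $\eps$, take the union of the $A^q_\al$'s; the compatibility conditions (2a)--(2c) of that Claim are satisfied trivially here because the stems are literally equal so there is no ``tail past $n^p$'' to control. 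One has to recurse downward through the iteration to make the amalgamated $A^r_\al$'s consistent with $\PP_\al$-names, but this is the standard finite-support amalgamation argument and the order-type matching in the definition of $C_\ell$ is precisely what lets the ordinal data glue coherently.

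The main obstacle I anticipate is bookkeeping in the amalgamation: the coordinate $q(\al)$ is only a $\PP_\al$-\emph{name} for a triple, not a literal triple, and $A^q_\al$ is a name for a finite set of ordinals below $\al$, so ``$q$ and $q'$ have the same stem at coordinate $i$'' must be phrased as ``some condition in $C_\ell$ decides it to be this value''. The clean way around this, and the way consistent with how the paper treats $\mbb{Q}(\mc{T})$, is to work with a dense suborder of $\PP_{\om_2}$ consisting of conditions that \emph{explicitly} specify $s^q_\al$, $\eps^q_\al$ as objects in the ground model and $A^q_\al$ as a genuine finite set of ordinals (this is dense because stems and rationals are hereditarily countable and $A^q_\al$ can be decided by strengthening finitely often); on this dense suborder the equivalence relation above is literal, genuinely countably-valued, and centeredness of each class follows from the Claim \ref{claim2} amalgamation carried out by recursion on the support. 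Since a forcing is $\sigma$-centered iff it has a $\sigma$-centered dense suborder, this suffices.
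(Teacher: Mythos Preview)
Your decomposition into countably many classes by combinatorial type does not produce centered classes. Consider two conditions $q$, $q'$ in your dense suborder with $\supp(q)=\{0,1\}$, $\supp(q')=\{1,2\}$, and data $q(0)=q'(1)=((0),1,\emptyset)$, $q(1)=q'(2)=((1),1,\emptyset)$. These have the same combinatorial type: supports of size $2$, stem $(0)$ and $\eps=1$ at position $0$, stem $(1)$ and $\eps=1$ at position $1$, $A=\emptyset$ throughout. But at the common ordinal $1$ the stems $(1)$ and $(0)$ are incomparable in $2^{<\om}$, so $q(1)$ and $q'(1)$ have no common extension in $\mbb{Q}(\mc{T}_1)$, and $q$, $q'$ are incompatible in the iteration. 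Your assertion that ``at a point $\al$ lying in the supports of several $q^j$'s \dots\ all \dots\ have the same stem'' is exactly what fails: your equivalence relation matches stems at corresponding \emph{positions} in the increasing enumeration of the support, not at corresponding \emph{ordinals}, and overlapping supports can put the same ordinal at different positions.

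This is precisely where the hypothesis $\om_2\leq\mf{c}$ is actually used, and you have misidentified its role. The paper fixes an injection $\om_2\hookrightarrow 2^\om$ and takes the induced countable family $\mc{H}$ of finite partitions of $\om_2$ (pullbacks of the level-$n$ clopen partitions of $2^\om$) with the property that every finite $F\subseteq\om_2$ is separated by some member of $\mc{H}$. Centered pieces $C_f$ are then indexed by functions $f:\om_2\to 2^n$ constant on the blocks of some $H\in\mc{H}$; a condition $p$ lies in $C_f$ when $s^p_\al=f(\al)$ for all $\al\in\dom(p)$. Now stems at a common ordinal genuinely agree, and the amalgamation (union of supports, union of $A_\al$'s, carefully recomputed $\de_\al$'s) goes through by recursion on the support. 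But for $p$ to land in some $C_f$ one needs all stems occurring in $p$ to have the \emph{same length} $n^p$; showing that such conditions are dense (the suborder $\PP''$) is the bulk of the paper's argument and is where the submeasure computations and the nontrivial Claim about $\eps_\al-\varphi(\{k\in[|s_\al|,n):t_\al(k)=0\})>0$ enter. Your proposal skips this step entirely.
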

\begin{proof} The end of the argument is a modification of the proof that $\mbb{C}(\om_2)$ is $\sigma$-centered if $\om_2\leq\mf{c}$ but first we will define a dense subforcing $\PP''$ of $\PP_{\om_2}$, and then apply this classical method to $\PP''$. Conditions of $\PP_{\om_2}$ are considered to be partial functions, in particular, the support of a condition is its domain.

First define $\PP'\subseteq\PP_{\om_2}$ as follows: $p\in\PP'$ iff there are sequences $s_\al\in 2^{<\om},\eps_\al\in\{$positive rationals$\}$, and $A_\al\in [\al]^{<\om}$ ($\al\in\dom(p)$) such that
\begin{itemize}
\item[(a)] $p\upharpoonright\al\vd_\al p(\al)=(s_\al,\eps_\al,A_\al)$ for every $\al\in\dom(p)$,
\item[(b)] $|s_\al|\geq |s_\be|$ for every $\al<\be$ from $\dom(p)$,
\item[(c)] $\bigcup\{A_\al:\al\in\dom(p)\}\subseteq \dom(p)$.
\end{itemize}
An easy recursive argument shows that $\PP'$ is dense in $\PP_{\om_2}$: Fix a condition $p\in\PP_{\om_2}$. Then we can define a decreasing sequence $(p_n)$ of extensions of $p$ (where $p_0=p$) such that after the $n$th step we will not modify the last $n$ values any more (i.e. the restriction of $p_n$ to the last $n$ elements of its domain is the same as the restriction of $p_{n+1}$ to the last $n$ elements of its domain). In particular, this procedure must terminate after finitely many steps (but of course, it may take much more steps than $|\dom(p)|$ because of the new elements added to the domain when taking extensions of initial segments and also because we wish to guarantee (c)). Notice that if $p\in\PP'$ and $\al_0=\min(\dom(p))$ then $p(\al_0)$ is of the form $(\cdot,\cdot,\0)$.

We define $\PP''\subseteq\PP'$ as follows:  $p=((s_\al,\eps_\al,A_\al):\al\in\dom(p))\in\PP''$ if there is an $n^p\in\om$ such that $|s_\al|=n^p$ for every $\al\in\dom(p)$. We show that $\PP''$ is dense in $\PP'$. Let $p=((s_\al,\eps_\al,A_\al):\al\in\dom(p))\in\PP'$ be arbitrary and $n=|s_{\al_0}|$ where $\al_0=\min(\dom(p))$ (in particular, $n\geq |s_\al|$ for every $\al\in\dom(p)$ because of (b)). We will define $r=((t_\al,\de_\al,A_\al):\al\in\dom(p))\in\PP''$ such that $n^r=n$ and $r\leq p$. First of all, by a recursion on $\dom(p)$, we define the sequence $(t_\al)_{\al\in\dom(p)}$: Let $t_{\al_0}=s_{\al_0}$ and if $\al>\al_0$ then let $t_\al\upharpoonright |s_\al|=s_\al$ and on $[|s_\al|,n)$ define $t_\al(k)=1$ iff $t_\xi(k)=1$ for every $\xi\in A_\al$ (here we used (c)).

\begin{claim} $\eps_\al-\varphi(\{k\in [|s_\al|,n):t_\al(k)=0\})>0$ for every $\al\in\dom(p)$.
\end{claim}
\begin{proof}[Proof of the Claim]
Assume on the contrary that $\eps_\al-\varphi(\{k\in [|s_\al|,n):t_\al(k)=0\})\leq 0$ for some $\al\in\dom(p)$, we can assume that $\al$ is minimal, that is,  $\eps_\xi-\varphi(\{k\in [|s_\xi|,n):t_\xi(k)=0\})>0$ for every $\xi\in\dom(p)\cap\al$ (notice that $\al>\al_0$). Let \[ 0<\eps<\min\big\{\eps_\xi-\varphi\big(\big\{k\in [|s_\xi|,n):t_\xi(k)=0\big\}\big):\xi\in\dom(p)\cap\al\big\}\] and fix a sequence $(\de_\xi)_{\xi\in\dom(p)\cap \al}$ of positive rationals such that $\sum\{\de_\zeta:\zeta\in A_\xi\}<\de_\xi\leq\eps$ for every $\xi\in\dom(p)\cap\al$.

It is enough to show that $r'=((t_\xi,\de_\xi,A_\xi):\xi\in\dom(p)\cap\al)\in\PP_\al$ and $r'\leq p\upharpoonright\al$. Why? Because then $r'\vd_\al (s_\al,\eps_\al,A_\al)\in\mbb{Q}(\dot{\mc{T}}_\al)$, in particular, $r'\vd_\al\varphi(\om\setminus(|s_\al|\cup\bigcap\{\dot{T}_\xi:\xi\in A_\al\}))<\eps_\al$. By the definition of the sequence $t_\xi$, $r'$ decides $\dot{T}_\xi\cap [|s_\al|,n)$ ($\xi\in A_\al$) and \[ r'\vd_\al\big\{k\in [|s_\al|,n):t_\al(k)=0\big\}= [|s_\al|,n)\setminus\bigcap\big\{\dot{T}_\xi:\xi\in A_\al\big\},\]
in particular,
\[ r'\vd_\al\big\{k\in [|s_\al|,n):t_\al(k)=0\big\}\subseteq \om\setminus\big(|s_\al|\cup\bigcap\big\{\dot{T}_\xi:\xi\in A_\al\big\}\big),\]
and hence $r'\vd_\al\varphi(\{k\in [|s_\al|,n):t_\al(k)=0\})<\eps_\al$, contradiction.

$r'\in\PP_\al$: We know that $\vd_\xi\varphi(\om\setminus(n\cup\bigcap\{\dot{T}_\zeta:\zeta\in A_\xi\}))\leq \sum_{\zeta\in A_\xi}\varphi(\om\setminus (n\cup \dot{T}_\zeta))$ for every $\xi\in\dom(p)\cap\al$. By induction on $\xi\in\dom(p)\cap(\al+1)$, we show that $r'\upharpoonright\xi=((t_\zeta,\de_\zeta,A_\zeta):\zeta\in\dom(p)\cap \xi)\in\PP_\xi$.  Clearly, $r'\upharpoonright\al_0=\0\in\PP_{\al_0}$. Now assume that we already know that $r'\upharpoonright \xi\in\PP_\xi$ for some $\xi\in\dom(p)\cap \al$. We have to show that $r'\upharpoonright\xi\vd_\xi \varphi(\om\setminus (n\cup \bigcap\{\dot{T}_\zeta:\zeta\in A_\xi))<\de_\xi$. By the definition of $\mbb{Q}(\dot{\mc{T}}_\zeta)$, we know that  $r'\upharpoonright\zeta\vd_\zeta\varphi(\om\setminus(n\cup\dot{T}_\zeta))\leq\de_\zeta$ for every $\zeta\in A_\xi$, and by the choice of the sequence $(\de_\xi)$, we obtain that
\[r'\upharpoonright\xi\vd_\xi \varphi\big(\om\setminus\big(n\cup\bigcap\big\{\dot{T}_\zeta:\zeta\in A_\xi\big\}\big)\big)\leq \sum_{\zeta\in A_\xi}\de_\zeta<\de_\xi.\]

$r'\leq p\upharpoonright\al$:  By induction on $\xi\in\dom(p)\cap(\al+1)$, we show that $r'\upharpoonright\xi\leq p\upharpoonright \xi$. The first step is trivial because $r'\upharpoonright\al_0=p\upharpoonright\al_0=\0$. Now if we already know that $r'\upharpoonright\xi\leq p\upharpoonright\xi$, then we have to check that
$r'\upharpoonright \xi\vd_\xi\{k\in [|s_\xi|,n):t_\xi(k)=1\}\subseteq\bigcap\{\dot{T}_\zeta:\zeta\in A_\xi\}$ and that $r'\upharpoonright\xi \vd_\xi\varphi(\{k\in [|s_\xi|,n):t_\xi(k)=0\})\leq \eps_\xi-\de_\xi$. The first one is given by the choice of sequence $t_\xi$, and the second follows from the choice of $\de_\xi$:
$\de_\xi\leq\eps<\eps_\xi-\varphi(\{k\in[|s_\xi|,n):t_\xi(k)=0\})$ and hence $\varphi(\{k\in[|s_\xi|,n):t_\xi(k)=0\})<\eps_\xi-\de_\xi$.
\end{proof}

Applying the claim, we can fix an
\[ 0<\eps<\min\big\{\eps_\al-\varphi\big([|s_\al|,n):t_\al(k)=0\big\}\big):\al\in\dom(p)\big\}\]
and a sequence $(\de_\al)_{\al\in\dom(p)}$ such that $\sum_{\xi\in A_\al}\de_\xi<\de_\al\leq\eps$ for every $\al\in\dom(p)$. A trivial modification of the proof of the Claim shows that $r=((t_\al,\de_\al,A_\al):\al\in\dom(p))$ is a condition and $r\leq p$ (of course, $r\in\PP''$).

\smallskip
Finally, we show that $\PP''$ is $\sigma$-centered. The point is that there is a countable family $\mc{H}$ of partitions of $\om_2$ into finitely many sets such that whenever $F\in [\om_2]^{<\om}$, there is a partition $\{H_i:i<m\}\in\mc{H}$ such that $|F\cap H_i|\leq 1$ for every $i<m$. We can construct such an $\mc{H}$ by the following way: It is enough to work on $2^\om$ because $\om_2\leq\mf{c}$, and on $2^\om$ it is very easy to define such a family: Take for example $\{\{[t]:t\in 2^n\}:n\in\om\}$ where $[t]=\{x\in 2^\om:t\subseteq x\}$.

Now let $F$ be the set of those functions $f:\om_2\to 2^{<\om}$ such that there is an $\{H_i:i<m\}\in\mc{H}$ and an $n(f)\in\om$ such that $f\upharpoonright H_i$ is constant for every $i$ and $\ran(f)\subseteq 2^{n(f)}$. For every $f\in F$ define
\[ C_f=\big\{p\in\PP'':\forall\;\al\in\dom(p)\;\,s^p_\al=f(\al)\big\}\]
where of course $p=((s^p_\al,\eps^p_\al,A^p_\al):\al\in\dom(p))$. Then $\PP''=\bigcup\{C_f:f\in F\}$. We claim that $C_f$ is centered.  Let  $\{p_k=((s^k_\al,\eps^k_\al,A^k_\al):\al\in\dom(p_k)):k<\ell\}\subseteq C_f$. We will define a common extension $p$ of these conditions. The trick is that we will manipulate the second coordinates (that is, the $\eps$'s) only. Let $n=n(f)$, $\dom(p)=\bigcup\{\dom(p_k):k<\ell\}$, and for $\al\in\dom(p)$ let $A_\al=\bigcup\{A^k_\al:\al\in\dom(p_k)\}$ and $s_\al=s^k_\al$ for some $p_k$ such that $\al\in\dom(p_k)$ ($s_\al$ does not depend on $k$ because of the definition of $C_f$).  Define $\eps=\min\{\eps^k_\al:k<\ell,\al\in\dom(p_k)\}$ and fix a sequence $(\de_\al)_{\al\in\dom(p)}$ of positive rationals such that $\sum_{\xi\in A_\al}\de_\xi<\de_\al\leq\eps$. The last thing we need to check is that $p=((s_\al,\de_\al,A^p_\al):\al\in\dom(p))$ is a condition because then $p\leq p_k$ holds trivially. In other words, we have to show by recursion on $\al$ that $p\upharpoonright\al\in\PP_\al$ and
\[ p\upharpoonright\al\vd_\al\varphi\big(\om\setminus\big(n\cup\bigcap\big\{\dot{T}_\xi:\xi\in A_\al\big\}\big)\big)<\de_\al.\]
For every $\xi\in\dom(p)\cap\al$ we know that $p\upharpoonright\xi\vd_\xi\varphi(\om\setminus (n\cup\dot{T}_\xi))<\de_\xi$ and of course
\[ \vd_\al\om\setminus\big(n\cup\bigcap\big\{\dot{T}_\xi:\xi\in A_\al\big\}\big)=\bigcup\big\{\om\setminus\big(n\cup \dot{T}_\xi\big):\xi\in A_\al\big\},\] therefore
\[ p\upharpoonright\al\vd_\al\varphi\big(\om\setminus\big(n\cup\bigcap\big\{\dot{T}_\xi:\xi\in A_\al\big)\big)\leq \sum_{\xi\in A_\al}\de_\xi< \de_\al.\qedhere\]
\end{proof}

\begin{model}\label{lastone} ((Q11) for tall analytic P-ideals.) It is consistent that there is a cotower in $\mc{I}$, $\mrm{non}^*(\mc{I})<\mf{c}$, and $\mrm{add}^*(\mc{I})<\mrm{cov}^*(\mc{I})$. Notice that in such a model $\mf{c}\geq\om_3$ because there is a tower in $\mc{I}^*$ and hence $\cov^*(\mc{I})\leq\non^*(\mc{I})$ holds.

Start with a model $V$ of $\mf{c}=\om_3$ and $\cov^*(\mc{I})=\om_1$ (e.g. add $\om_3$ Cohen reals to a model of $\mrm{CH}$, then $\cov^*(\mc{I})=\non(\mc{M})=\om_1$ in the extension). First we force with $\PP_{\om_2}$ (see above), and then over $V^{\PP_{\om_2}}$ we construct another $\om_2$ stage finite support iteration $(\mbb{R}_\al)_{\al\leq\om_2}$ based on $(\dot{\mbb{Z}}_\al)_{\al<\om_2}$. Let $G_{\om_2}$ be a $(V,\PP_{\om_2})$-generic filter and $G_\al=G_{\om_2}\cap\mbb{P}_\al$. Working in $V[G_{\om_2}]$, by recursion on $\al$ we will define $\mbb{R}_\al$ and the $\mbb{R}_\al$-name $\dot{\mbb{Z}}_{\al}$ such that $\mbb{R}_\al$ has an $\mbb{P}_\al$-name $\dot{\mbb{R}}_\al$. Assume that we already defined $\mbb{R}_\al$ for some $\al<\om_2$ and let $H_\al$ be a $(V[G_{\om_2}],\mbb{R}_\al)$-generic filter. Applying that $\mbb{P}_\al\ast\dot{\mbb{R}}_\al\leq_c \mbb{P}_{\om_2}\ast\dot{\mbb{R}}_\al$ and hence $V[G_\al][H_\al]\subseteq V[G_{\om_2}][H_\al]$, let $\dot{\mbb{Z}}_\al$ be an $\mbb{R}_\al$-name of $\mbb{M}(\mc{I}^*)^{V[G_\al][H_\al]}$.

Since $\vd_{\mbb{R}_{\al}}$``$\dot{\mbb{Z}}_\al$ is $\sigma$-centered'' holds trivially, $\mbb{R}_{\om_2}$ is an $\om_2(<\mf{c}^{V[G_{\om_2}]})$ stage finite support iteration of $\sigma$-centered forcing notions, and hence it is also $\sigma$-centered. Therefore $\PP_{\om_2}\ast\dot{\mbb{R}}_{\om_2}$ is $\sigma$-centered. In particular, applying Corollary  \ref{preservingadd} (b), $V[G_{\om_2}][H_{\om_2}]\models\add^*(\mc{I})=\om_1$.

We show that $V[G_{\om_2}][H_{\om_2}]\models$``$\non^*(\mc{I})=\cov^*(\mc{I})=\om_2$ and there is a tower in $\mc{I}^*$''. One may follow the argument on the following diagram (where $\to$ stands for $\subseteq$):
\begin{diagram}
&&&&&& V[G_{\om_2}][H_{\om_2}]\\
&&&&&\ruTo & \uTo\\
&&&& V[G_\be][H_\be] &\rTo& V[G_{\om_2}][H_\be]\\
&&&\ruTo & \uTo &&  \uTo\\
&& V[G_\al][H_\al] &\rTo & V[G_\be][H_\al] & \rTo & V[G_{\om_2}][H_\al]\\
& \ruTo & \uTo  && \uTo &&\uTo\\
V & \rTo & V[G_\al] & \rTo & V[G_\be] & \rTo & V[G_{\om_2}]
\end{diagram}

First of all, we show that the tower $(T_\al)_{\al<\om_2}$ added by $\mbb{P}_{\om_2}$ survives the second extension, in particular $V[G_{\om_2}][H_{\om_2}]\models$``$\cov^*(\mc{I})\leq\om_2\leq\non^*(\mc{I})$'' (see Fact \ref{fact1} (b)). Let $X\in [\om]^\om \cap V[G_{\om_2}][H_{\om_2}]$. Then $X\in V[G_{\om_2}][H_\al]$ for some $\al<\om_2$, i.e. $X$ has a nice $\mbb{P}_\al$-name $\dot{X}\in V[G_{\om_2}]$. There is also a $\be\in(\al,\om_2)$ such that $\dot{X}\in V[G_\be]$  because $\mbb{R}_\al\in V[G_\al]$ and $\dot{X}$ is coded by a countable sequence in $\mbb{R}_\al$ (and $\mbb{P}_{\om_2}$ satisfies the ccc), therefore $X\in V[G_\be][H_\al]$. Notice that $\mbb{P}_{\be+1}\ast\dot{\mbb{R}}_\al$ and $(\mbb{P}_\be\ast\dot{\mbb{R}}_\al)\ast\QQ_\be$ are forcing equivalent (clearly, $\QQ_\be$ can be seen as a $\mbb{P}_\be\ast\dot{\mbb{R}}_\al$-name). Why? Simply because $\dot{\mbb{R}}_\al$ and $\QQ_\be=\mbb{Q}(\dot{\mc{T}}_\be)$ are $\mbb{P}_\be$-names, $\mbb{R}_\al,\mbb{Q}(\mc{T}_\be)\in V[G_\be]$ and hence their two stage iteration does not depend on the order, it is (forced to be) forcing equivalent to their product. More precisely, if $G^\be_{\be+1}$ is the $(V[G_\be],\mbb{Q}(\mc{T}_\be))$-generic filter such that $G_{\be+1}=G_\be\ast G^\be_{\be+1}$, then $H_\al$ is $\mbb{R}_\al$-generic over $V[G_\be][G^\be_{\be+1}]$ and hence $G^\be_{\be+1}$ is $(V[G_\be][H_\al],\mbb{Q}(\mc{T}_\be))$-generic, in particular  $|Y\setminus T_\be|=\om$ for every $Y\in [\om]^\om\cap V[G_\be][H_\al]$, and so it holds for $Y=X$ too (here we used that $\mbb{Q}(\mc{T}_\be)$ is absolute).

A very similar argument shows that  $V[G_{\om_2}][H_{\om_2}]\models$``$\non^*(\mc{I})\leq\om_2\leq\cov^*(\mc{I})$''  is witnessed by the $\mbb{R}_{\om_2}$-generic sequence.
\end{model}

There is one more natural question concerning the construction of large $\mrm{tr}(\mc{N})$-Luzin families, namely, if we really need a large Sierpi\'nski set.

\begin{exa}\label{trluzinnosier}
If $V\models\mrm{non}(\mc{N})=\mf{c}=\om_2$ and we force with the $\om_1$ stage finite support iteration of $\mbb{M}(\mrm{tr}(\mc{N})^*)$, then  in the extension there is a $\mrm{tr}(\mc{N})$-Luzin set of size $\om_1$ because of Remark \ref{usefulrem} (2'), and there are no Sierpi\'nski sets because $\mrm{non}(\mc{N})=\om_2$ is preserved by $\sigma$-centered forcing notions.
\end{exa}

\section{More related questions}\label{questions}

In the previous sections we already stated some important problems: \ref{mainq} (its second part is still open, see also \ref{fsq}), \ref{fragmq}, \ref{sptower},  \ref{tmeager}, \ref{meagertowerinmat}, and \ref{nwdluzinque}. Here we collect some additional questions we found interesting.

\smallskip
We already mentioned that if we fix tall analytic P-ideals $\mc{I}$ and $\mc{J}$ such that $\mc{I}$ is ``fairly bigger'' than $\mc{J}$, then it is natural to ask whether we can force that $\mc{I}$ contains a cotower but $\mc{J}$ does not. Let us show an easy example:

\begin{exa}\label{cotinZnotintr}
In the random model (that is, in $V^{\mbb{B}(\om_2)}$ where $V$ is a model of $\mrm{GCH}$) $\mc{Z}$ contains a cotower but $\mrm{tr}(\mc{N})$ does not. Why? On the one hand, we know that in $V^{\mbb{B}(\om_2)}$ there is a $\mrm{tr}(\mc{N})$-Luzin set of size $\om_2$ (see page \pageref{largetrluzinlabel}) hence there are no cotowers in $\mrm{tr}(\mc{N})$. On the other hand, $\mc{Z}$ is random indestructible (see \cite{elekes} or \cite[Thm. 3.12]{cardinvanalp}) hence $\mrm{add}^*(\mc{Z})=\mrm{cov}^*(\mc{Z})=\om_1$ in $V^{\mbb{B}(\om_2)}$, in particular there is a cotower in $\mc{Z}$.
\end{exa}

Let us turn back to the general setting, that is, $\mc{I}$ and $\mc{J}$ are tall analytic P-ideals such that $\mc{I}$ is ``fairly bigger'' than $\mc{J}$. To find a model in which $\mc{I}$ contains a cotower but $\mc{J}$ does not, the very natural idea would be to show that $\mbb{L}(\mc{J}^*)$ (or $\mbb{M}(\mc{J}^*)$) preserves cotowers in $\mc{I}$, and then consider the $\om_2$ stage finite support iteration of $\mbb{L}(\mc{J}^*)$ over a model of $\mrm{CH}$ (and apply Theorem \ref{first-thm}). In other words, the question is whether $\cov^*(\mc{I})=\om_1$ is preserved when iterating $\mbb{M}(\mc{J}^*)$. We prove something slightly weaker and then explain why it is not enough to solve the general case. Let $\om^{\uparrow\om}$ denote the set of strictly increasing $\om\to\om$ functions.

\begin{thm}\label{reallygood}
Let $\mc{I}$ be a P-ideal, $\mc{J}$ an arbitrary ideal, and assume that $\mc{I}\nleq_{\mrm{K}}\mc{J}\upharpoonright Z$ for any $Z\in\mc{J}^+$. Then $\mbb{L}(\mc{J}^*)$ does not add a pseudointersection of $\mc{I}^*$.
More precisely, if $\vd_{\mbb{L}(\mc{J}^*)}$``$\dot{x}\in\om^{\uparrow\om}$ and $\dot{\ell}\leq \dot{x}$'' where $\dot{\ell}$ stands for the canonical $\mbb{L}(\mc{J}^*)$-generic real, then there is an $A\in\mc{I}$ such that $\forall$ $T\in\mbb{L}(\mc{J}^*)$ $\forall^\infty$ $n\in\om$ $T\nVdash \dot{x}(n)\notin A$.
\end{thm}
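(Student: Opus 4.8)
The plan is to exploit the hypothesis $\mc{I}\nleq_{\mrm{K}}\mc{J}\upharpoonright Z$ for every $Z\in\mc{J}^+$ together with a fusion-type argument on the Laver-Prikry tree. First I would set up the combinatorics: given the name $\dot x$ (which without loss of generality dominates the generic real $\dot\ell$), for each condition $T$ and each node $t\in T$ above $\mrm{stem}(T)$ and each $n$, we may read off a ``possible value'' $v(T,t,n)$ by thinning $T$ below $t$ to decide $\dot x(n)$; the point is that as $t$ ranges over the successors $t^\frown(k)$ of a node, and $k$ ranges over $\mrm{ext}_T(t)\in\mc{J}^*$, the set of values we are forced into is controlled. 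I would try to define, for a fixed condition $T_0$, a function $f$ on (a positive subset of) $\om$ — essentially $f$ sends a node/level-index to the finite set of values $\dot x$ can be steered to — and argue that if the conclusion failed, i.e.\ if for every $A\in\mc{I}$ there were some $T$ and infinitely many $n$ with $T\Vdash\dot x(n)\notin A$, then $f$ (or rather $f^{-1}$ of the ideal $\mc{I}$, suitably interpreted) would witness a Katětov reduction $\mc{I}\le_{\mrm K}\mc{J}\upharpoonright Z$ for some $Z\in\mc{J}^+$, contradicting the hypothesis.

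More concretely, the key step is to build a single ``master'' condition $T^*\le T_0$ and a single set $Z\subseteq\om$ with $Z\in\mc{J}^+$, together with a map $g:Z\to\om$, such that for every $n$ and every node of $T^*$ at the appropriate level, the value forced for $\dot x(n)$ lies in $g^{-1}$ of a fixed element; then $g$ witnesses $\mc{I}\le_{\mrm K}\mc{J}\upharpoonright Z$ unless the $A\in\mc{I}$ we are looking for already exists. This is the standard pattern (cf.\ the fact in the excerpt relating $\mc{ED}_\mrm{fin}\le_{\mrm{KB}}\mc{I}$ to weak Q-ideals, and the characterizations of forcing indestructibility via Katětov order): non-preservation of $\mc{I}^*$ by $\mbb{L}(\mc{J}^*)$ is essentially equivalent to a Katětov reduction of $\mc{I}$ into a restriction of $\mc{J}$. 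I would use that $\mc{I}$ is a P-ideal to assemble the single $A\in\mc{I}$ from the countably many ``level-wise'' finite pieces: at level $n$ the tree $T_0$ only offers boundedly/$\mc{I}$-controllably many values, and P-ness lets us take an $\subseteq^*$-upper bound $A$ of these, so that outside a finite initial segment every forced value of $\dot x(n)$ lands in $A$ — which is exactly the displayed conclusion $\forall T\ \forall^\infty n\ T\nVdash\dot x(n)\notin A$.

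The main obstacle, I expect, is the bookkeeping in the fusion: an $\mbb{L}(\mc{F})$ condition has infinite branching at every node above the stem, so to extract a ``Katětov witness'' one must simultaneously handle all nodes at a given level, and the level structure of the tree is not aligned with the index $n$ of $\dot x(n)$. I would organize this by recursion on the nodes of $T_0$ in a fixed enumeration, at each step shrinking $\mrm{ext}_T(t)$ within $\mc{J}^*$ so as to stabilize (or at least $\mc{I}$-bound) the set of decidable values of $\dot x$ restricted to a growing finite set of coordinates, using that $\mc{F}=\mc{J}^*$ is a filter so that countably many such shrinkings on disjoint nodes can be amalgamated into one condition. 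The delicate point is ensuring the resulting $Z$ (the union of the surviving branching sets, viewed as a subset of $\om$ via a fixed coding of $T_0$) is genuinely $\mc{J}$-positive and that $g$ is well-defined and finite-to-one enough to be a legitimate Katětov (not necessarily Katětov–Blass) reduction — here only $\le_{\mrm K}$ is claimed, which is what makes the argument go through. Once the contradiction with $\mc{I}\nleq_{\mrm K}\mc{J}\upharpoonright Z$ is reached, the desired $A$ is produced and the ``more precise'' formulation follows immediately.
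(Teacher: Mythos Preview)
Your high-level strategy --- apply the Kat\v{e}tov non-reducibility to produce witnesses in $\mc{I}$, then combine via the P-ideal property --- is correct, but the organization you propose (fusion to a master condition $T^*$, a \emph{single} $Z\in\mc{J}^+$ and a \emph{single} $g:Z\to\om$) is not how the argument is set up, and the obstacle you yourself flag (the tree levels do not align with the coordinate $n$ of $\dot x(n)$) is real and is not resolved by your sketch. The paper builds no master condition. Instead it introduces, for each $m$, a rank $\varrho_m$ on $\om^{<\om}$: $\varrho_m(s)=0$ iff $s$ favors $\dot x(m)=k$ for some $k$, and $\varrho_m(s)=\al>0$ iff $\{n:\varrho_m(s^\frown(n))<\al\}\in\mc{J}^+$. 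The assumption $\dot\ell\le\dot x$ gives $\varrho_m(s)\ge 1$ for $m\ge|s|$. For each $s$ with $\varrho_m(s)=1$ one gets a $\mc{J}$-positive set $Y_{s,m}=\{n:\varrho_m(s^\frown(n))=0\}$ and a function $f_{s,m}:Y_{s,m}\to\om$ such that $s^\frown(n)$ favors $\dot x(m)=f_{s,m}(n)$; one checks $f_{s,m}^{-1}(\{k\})\in\mc{J}$ for every $k$ (again using $\varrho_m(s)\ne 0$), and \emph{then} invokes the hypothesis on $Y_{s,m}$ to get $A_{s,m}\in\mc{I}$ with $f_{s,m}^{-1}(A_{s,m})\in\mc{J}^+$. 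The P-ideal hypothesis now combines the \emph{countably many} $A_{s,m}$ into one $A$, and a short density argument finishes.

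Two concrete gaps in your plan: (i) there is no single $(Z,g)$ --- one needs a separate positive set and value-function for each rank-$1$ pair $(s,m)$, and the rank is precisely what absorbs the misalignment between tree level and coordinate that you worried about; your fusion recipe does not produce this. (ii) The verification that $f_{s,m}^{-1}(\{k\})\in\mc{J}$ is indispensable for the final step (to pass from $A_{s,m}$ to $A_{s,m}\cap A$ while keeping the preimage $\mc{J}$-positive), and this is where $\dot\ell\le\dot x$ is actually used. Your outline never isolates this point; the phrase ``at level $n$ the tree $T_0$ only offers boundedly/$\mc{I}$-controllably many values'' is not correct --- the values are unbounded, and the control comes solely from the Kat\v{e}tov hypothesis applied to each $f_{s,m}$ once you know point-preimages lie in $\mc{J}$.
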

\begin{proof} (Sketch.)
First of all, notice that the ``more precisely'' part in the statement implies the theorem: If $\dot{x}$ is the increasing enumeration of a (rare enough) $\dot{X}\in [\om]^\om$ and $A\in\mc{I}$ is as above, then no condition $T\in\mbb{L}(\mc{J}^*)$ can force that $\dot{X}\cap A$ is finite.

Now assume that $\vd$``$\dot{x}\in \om^{\uparrow\om}$ and $\dot{\ell}\leq\dot{x}$'' and let $\dot{X}$ be an $\mbb{L}(\mc{J}^*)$-name for $\ran(\dot{x})$.

We will need a ranking argument on $\om^{<\om}$.
If $s\in\om^{<\om}$ and  $\psi=\psi(\dot{a}_0,\dots,\dot{a}_{n-1})$ is a formula of the forcing language (where $\dot{a}_i\in V^{\mbb{L}(\mc{J}^*)}$ for every $i$, of course),  then we say that

\begin{itemize}
\item $s$ {\em forces} $\psi$, $s\Vvdash\psi$, iff there is a $T\in\mbb{L}(\mc{J}^*)$ with $\mrm{stem}(T)=s$ such that $T\vd\psi$.
\item $s$ {\em favors} $\psi$ iff $s\not\Vvdash\neg\psi$, i.e. $\forall$ $T\in\mbb{L}(\mc{J}^*)$ $(\mrm{stem}(T)=s\rightarrow\exists$ $T'\leq T$ $T'\vd\psi)$.
\end{itemize}
Clearly, if $s\Vvdash\psi$ then $s$ favors $\psi$; and there are three possibilities: $s$ forces $\psi$, $s$ forces $\neg\psi$, or $s$ favors both $\psi$ and $\neg\psi$.
We define the $m$-{\em rank} on $\om^{<\om}$ for every $m\in\om$ as follows:
\begin{itemize}
\item[(i)] $\varrho_m(s)=0$ iff there is a $k\in\om$ such that $s$ favors $\dot{x}(m)=k$;
\item[(ii)] $\varrho_m(s)=\al>0$ iff $\varrho_m(s)\not<\al$ and $\{n:\varrho_m(s^\frown(n))<\al\}\in\mc{J}^+$.
\end{itemize}
It is straightforward to see that $\mrm{dom}(\varrho_m)=\om^{<\om}$ for every $m$.
We will need that $\varrho_m(s)\geq 1$ for every $s\in\om^{<\om}$ and $m\geq |s|$:
If $T_k\in\mbb{L}(\mc{J}^*)$ with $\mrm{stem}(T_k)=s$ and $\mrm{ext}_{T_k}(s)\subseteq\om\setminus (k+1)$ then $T_k\vd\dot{x}(m)\geq \dot{\ell}(m)> k$, in particular no extension $T'$ of $T_k$ can force $\dot{x}(m)=k$. In other words, $s$ cannot favor $\dot{x}(m)=k$ for any $k$, i.e. $\varrho_m(s)\ne 0$.

\smallskip
If $\varrho_m(s)=1$ then $Y_{s,m}=\{n:\varrho_m(s^\frown(n))=0\}\in\mc{J}^+$ and we can define a function $f_{s,m}:Y_{s,m}\to\om$ such that $s^\frown(n)$ favors $\dot{x}(m)=f_{s,m}(n)$, i.e.
\[ \forall\;T\in\mbb{L}(\mc{J}^*)\;\big(\mrm{stem}(T)=s^\frown(n)\rightarrow\exists\; T'\leq T\;T'\vd\dot{x}(m)=f_{s,m}(n)\big).\]
Notice that
$f_{s,m}^{-1}(\{k\})\in\mc{J}$ for every $k$: $\varrho_m(s)\ne 0$, i.e. for every $k$ there is a $T_k\in\mbb{L}(\mc{J}^*)$ with $\mrm{stem}(T_k)=s$ such that $T_k\vd\dot{x}(m)\ne k$. If $f_{s,m}^{-1}(\{k\})\in\mc{J}^+$ for some $k$, $n\in \mrm{ext}_{T_k}(s)\cap f_{s,m}^{-1}(\{k\})$, and $T=T_k\upharpoonright s^\frown(n)=\{t\in T_k:t\subseteq s^\frown(n)$ or $t\supseteq s^\frown(n)\}$, then applying that $s^\frown(n)$ favors $\dot{x}(m)=f_{s,m}(n)=k$, there is a $T'\leq T$ such that $T'\vd\dot{x}(m)=k$. A contradiction because $T'\leq T_k$.

\smallskip
Applying our assumption on non-Kat\v{e}tov-reducibility, for every $s$ and $m$ with $\varrho_m(s)=1$, there is an $A_{s,m}\in\mc{I}$ such that $f_{s,m}^{-1}(A_{s,m})\in\mc{J}^+$. We also know that $\mc{I}$ is a P-ideal, let $A\in\mc{I}$ such that $A_{s,m}\subseteq^* A$ for every such $s,m$.

\smallskip
To finish the proof, we show that if $T\in\mbb{L}(\mc{J}^*)$, $\mrm{stem}(T)=s$, and $m\geq |s|$ (hence $\varrho_m(s)\geq 1$), then there  is a $T'\leq T$ such that $T'\vd\dot{x}(m)\in A$. It is easy to see that there is a $t\in T$, $t\supseteq s$ with $\varrho_m(t)=1$. Therefore $Y_{t,m}$, $f_{t,m}$, and $A_{t,m}$ are defined. $f^{-1}_{t,m}(A_{t,m}\cap A)\in\mc{J}^+$ because $f^{-1}_{t,m}(A_{t,m})\in\mc{J}^+$ and $|A_{t,m}\setminus A|<\om$ hence $f^{-1}_{t,m}(A_{t,m}\setminus A)\in\mc{J}$.

Pick an $n\in \mrm{ext}_T(t)\cap f^{-1}_{t,m}(A_{t,m}\cap A)$, and let $k=f_{t,m}(n)\in A_{t,m}\cap A$. Then by the definition of $Y_{t,m}$ and $f_{t,m}$, we know that $t^\frown (n)$ favors $\dot{x}(m)=k$ hence we can find a $T'\leq T\upharpoonright t^\frown (n)\leq T$ which forces $\dot{x}(m)=k\in A$.
\end{proof}

Why is this theorem not enough to solve the general case? Assume $\mrm{CH}$ holds in $V$ and fix a cofinal cotower $\mc{T}$ in $\mc{I}$. Then $\mc{T}$ remains a cotower in $V^{\mbb{L}(\mc{J}^*)}$ but it is not cofinal in $\mc{I}$ anymore. Therefore if we would like to apply the previous theorem at the successor stage, then we would need that in the extension the ideal $\mrm{id}(\mc{T})$ generated by $\mc{T}$ is not Kat\v{e}tov-below $\mc{J}\upharpoonright Z$ for every $Z\in\mc{J}^+$ but this is unclear, we probably need a stronger version of the theorem, for example:

\begin{que}
Let $\mc{I}$ be a P-ideal, $\mc{J}$ be a tall analytic P-ideal, and assume that $\mc{I}\nleq_\mrm{K}\mc{J}\upharpoonright Z$ for every $Z\in\mc{J}^+$ (or any similar reasonable formalization that $\mc{I}$ is ``bigger'' than $\mc{J}$). Does it imply that $\mbb{L}(\mc{J}^*)$ preserves this property, that is, $V^{\mbb{L}(\mc{J}^*)}\models$``$\mrm{id}(\mc{I}) \nleq_\mrm{K}\mc{J}\upharpoonright Z$ for every $Z\in\mc{J}^+$''? ($\mc{J}$ is definable, it refers to the family $\subseteq\mc{P}(\om)$ given by this defintion in the extension.)
\end{que}

\begin{rem}
It seems that if $\mc{I}$ is an $F_\sigma$ ideal, then in the iterated $\mbb{M}(\mc{I}^*)$ model ($\om_2$ stage with finite support over a model of $\mrm{CH}$) we have $\mrm{cov}^*(\mc{Z})=\om_1$ and hence there is a cotower in $\mc{Z}$ (and of course, there are no cotowers in $\mc{I}$). Why? In this model $\mf{b}=\mf{s}=\om_1$ (see \cite{jorgmob} and \cite{judahshelah}), and D. Raghavan recently claimed that $\mrm{cov}^*(\mc{Z})\leq\min\{\mf{b},\mf{s}\}$ holds in $\mrm{ZFC}$. Unfortunately, Raghavan's result is still unpublished.
\end{rem}

\begin{que}
Assume that $\mc{F}$ is filter of measure zero. Does $\mc{F}^+$ contain towers? (Though, the statement ``All meager filters are null.'' is independent of $\mrm{ZFC}$, we know that there are nonmeager null filters in $\mrm{ZFC}$, see \cite{bgjs} and \cite{Tala}.)
\end{que}

Theorem \ref{seven} raises many interesting questions. Let us call an ultrafilter KMZ (Kunen-Medini-Zdomskyy) if every subset either has a pseudointersection or has a countable subset with no pseudointersection in the ultrafilter. Let us call an ultrafilter PSM (``P Subfilters are Meager'') if it does not contain non-meager P-subfiletrs. It is easy to see that KMZ implies PSM, we know (see \cite{seven}) that the existence of PSM ultrafilters is independent of $\mrm{ZFC}$.

\begin{que} Does the existence of PSM ultrafilters imply the existence of KMZ ultrafilters?
\end{que}

\begin{que}
Is $\mrm{CH}$ enough to construct a PSM (or even KMZ) ultrafilter?
\end{que}

\begin{que}
Is there any reasonable characterisation of those (Borel) ideals which are Kat\v{e}tov(-Blass) below $\mc{G}_\mrm{fc}$? Is $\mc{W}\leq_\mrm{K}\mc{G}_\mrm{fc}$?
\end{que}

\begin{que}
What can we say about the possible cuts of the diagram summarizing implications between existence of $\mc{I}$-Luzin familes, inequalities between cardinal invariants, and existence of cotowers in $\mc{I}$ in the case of $\mc{I}=\mc{W}$, $\mc{G}_{\mrm{fc}}$, and $\mc{Z}_u$?
\end{que}

In the next few questions we summarize the main open problems from sections \ref{countnon}, \ref{edfin}, and \ref{analpindiag}.
\begin{que} ((Q4) for $\mc{S}$) Is it consistent that there are $\mc{S}$-Luzin sets but there are no $\mc{S}$-Luzin sets of size $\mrm{cov}^*(\mc{S})=\om_2=\mf{c}$?
\end{que}

\begin{que} ((Q1') for $\mc{ED}_\mrm{fin}$) Is it consistent that $\mrm{non}^*(\mc{ED}_\mrm{fin})=\om_1$,  $\mrm{cov}^*(\mc{ED}_\mrm{fin})=\mf{c}=\om_2$, and there are no $\mc{ED}_\mrm{fin}$-Luzin sets?
\end{que}

\begin{que} ((Q6) for $\mc{ED}_\mrm{fin}$)
Is it consistent that $[\Delta]^\om$ is $\mc{ED}_\mrm{fin}$-inaccessible but $\mrm{cov}^*(\mc{ED}_\mrm{fin})=\om_1$?
\end{que}

\begin{que} ((Q1') for tall analytic P-ideals) Let $\mc{I}=\mc{I}_{1/n},\mrm{tr}(\mc{N})$, or $\mc{Z}$ (or any tall analytic P-ideal). Is it consistent that there are no $\mc{I}$-Luzin sets and $\mrm{non}^*(\mc{I})=\om_1<\mrm{cov}^*(\mc{I})=\mf{c}$?
\end{que}

\begin{que} ((Q6') for tall analytic P-ideals) Is it consistent that $[\om]^\om$ is $\mc{I}$-inaccessible but $\mrm{non}^*(\mc{I})\geq\mrm{cov}^*(\mc{I})$?
\end{que}

\begin{que} ((Q7') for tall analytic P-ideals) Let $\mc{I}=\mrm{tr}(\mc{N}),\mc{Z}$ or an arbitrary non $F_\sigma$ tall analytic P-ideal.
Is it consistent that $[\om]^\om$ is $\mc{I}$-accessible, yet $\mrm{non}^*(\mc{I})<\mf{c}$ and there are no cotowers in $\mc{I}$ (in particular, $\mrm{add}^*(\mc{I})<\mrm{cov}^*(\mc{I})$)?
\end{que}

\begin{que}
Let $\mc{I}\leq_\mrm{KB}\mc{J}$ be two of our examples from the diagram of KB-reducibilities. What can we say about the cuts of the other diagram concerning $\mc{I}$ and $\mc{J}$ at the same time? More precisely, for instance one can ask whether it is consistent that there is an $\mc{ED}_\mrm{fin}$-Luzin family of size $\om_2$ but $\mc{I}_{1/n}$ contains a cotower, clearly many questions of the same type can be studied.
\end{que}

We already know that the existence of a $\mrm{tr}(\mc{N})$-Luzin family does not imply the existence of a Sierpi\'nski (i.e. $\mc{N}$-Luzin) set (see Example  \ref{trluzinnosier}). However, in our example the $\mrm{tr}(\mc{N})$-Luzin set was of size $\om_1$.

\begin{que} Is it consistent that there is a $\mrm{tr}(\mc{N})$-Luzin family of size $\om_2$ but there are no (large, i.e. of size $\geq\om_2$) Sierpi\'nski sets?
\end{que}


\begin{thebibliography}{KKKK}
\bibitem{ujref} B. Balcar, R. Frankiewicz, Ch. Mills: {\em More on nowhere dense closed P-sets}, Bull. Acad. Polon. Sci. S\'er. Sci. Math. \textbf{28} (1980), no. 5-6, pages 295-299.
\bibitem{covnwd} B. Balcar, F. Hern\'andez-Hern\'andez, M. Hru\v{s}\'ak: {\em Combinatorics of dense subsets of the rationals}, Fund. Math. \textbf{183}  (2004), pages 59-80.
\bibitem{covprop}  M. Balcerzak, B. Farkas, S. G{\l}\c{a}b: {\em Covering properties of ideals}, Arch. Math. Logic \textbf{52}, No. 3-4 (2013), pages 279-294.
\bibitem{BFMS} P. Barbarski, R. Filipow, N. Mro\.zek, P. Szuca: {\em Uniform density $u$ and $I_u$-covergence on a big set}, Math. Commun. \textbf{16} (2011), no. 1, pages 125-130.

\bibitem{BaJu} T. Bartoszy\'nski, H. Judah: {\em Set Theory: On the Structure of the Real Line}, A.K.
Peters, 1995.
\bibitem{bgjs} T. Bartoszy\'nski, M. Goldstern, H. Judah, S. Shelah: {\em All meager filters may be null}, Proc. Amer. Math. Soc. \textbf{117} (1993), No. 2, pages 515-521.
\bibitem{baumdord} J.E. Baumgartner, P. Dordal:  {\em Adjoining Dominating Functions}, J. Symbolic Logic \textbf{50} (1985), Issue 1, pages 94-101.
\bibitem{tristan} T. Bice: {\em Filters in $C^*$-algebras}, Canad. J. Math. \textbf{65} (2013), pages 485-509.
\bibitem{ncf1} A. Blass: {\em Near coherence of filters I. Cofinal equivalence of models of arithmetic}, Notre Dame J. Formal Logic \textbf{27} (1986), pages 579-591.
\bibitem{ncf3} A. Blass, S. Shelah: {\em Near coherence of filters III. A simplified consistency proof.}, Notre Dame J. Formal Logic \textbf{30} (1986), pages 530-538.
\bibitem{ncf2} A. Blass: {\em Near coherence of filters II. Applications to operator ideals, the Stone-\v{C}ech remainder of a half-line, order ideals of sequences, and slenderness of groups}, Trans. Amer. Math. Soc. \textbf{300} (1987), pages 557-581.
\bibitem{blassuf} A. Blass: {\em Ultrafilters and Set Theory}, in {\em Ultrafilters Across Mathematics}, Contemporary Mathematics \textbf{530}, editors: V. Bergelson, A. Blass, M. Di Nasso, R Jin, Amer. Math. Soc., 2010.
\bibitem{Bl} A. Blass: {\em Combinatorial Cardinal Characteristics of the Continuum}, Handbook of Set Theory, eds. M. Foreman, M. Magidor, and A. Kanamori, Springer-Verlag, Berlin (2011), pages 395-490.
\bibitem{BoCh} P. Borodulin-Nadzieja: {\em Hausdorff gaps and towers in $\mc{P}(\om)/\mrm{Fin}$}, to appear in Fund. Math.
\bibitem{jorgmob} J. Brendle: {\em Mob families and mad families}, Arch. Math. Logic, \textbf{37} (1997), no. 3, pages 183-197.
\bibitem{jorgmichael} J. Brendle, M. Hru\v{s}\'ak: {\em Countable Fr\'echet boolean
groups: An independence result}, J. Symbolic Logic \textbf{74} (2009), no. 3, pages 1061-1068.
\bibitem{janajorg} J. Brendle, J. Fla\v{s}kov\'a: {\em Generic existence of ultrafilters on the natural numbers}, to appear in Fund. Math.
\bibitem{shattered} J. Brendle: {\em Shattered iteration}, in preparation.
\bibitem{Cal85} J. Calbrix: {\em Classes de Baire et espaces d'applications continues}, C.R. Acad. Sc. Paris, S\'erie \textbf{I}, \textbf{301} (1985), pages 759-762.
\bibitem{Cal88} J. Calbrix: {\em Filtres Bor\'eliens sur l'ensemble des entiers et espaces d'applications continues}, Revue Roumaine de Math. Pures et Appl. \textbf{33} (1988), pages 655-661.
\bibitem{elekes} M. Elekes: {\em A covering theorem and the random-indestructibility of the density zero ideal},
Real Anal. Exchange, \textbf{37} (2011-12), no. 1, pages 55-60.
\bibitem{Eng} F.v. Engelen: {\em On Borel ideals}, Ann. Pure Appl. Logic \textbf{70}, (1994), pages 177-203.
\bibitem{Farah} I. Farah: {\em Analytic quotients: theory of liftings for quotients over analytic ideals on the integers},
Memoirs of the American Mathematical Society \textbf{148}, no. 702, 2000.
\bibitem{mythesis} B. Farkas: {\em Combinatorics of Borel ideals}, PhD thesis, Budapest University of Technology and Economics, 2011. \url{https://repozitorium.omikk.bme.hu/bitstream/handle/10890/1137/ertekezes.pdf?sequence=1&isAllowed=y}
\bibitem{FaSo} B. Farkas, L. Soukup: {\em More on cardinal invariants of analytic P-ideals}, Comment. Math. Univ. Carolin. \textbf{50} (2009), no. 2, pages 281-295.
\bibitem{adrmix} B. Farkas, Y. Khomskii, and Z. Vidny\'anszky: {\em Almost disjoint refinements and mixing reals}, submitted.
\bibitem{jana1} J. Fla\v{s}kov\'a: {\em Ideals and sequentially compact spaces}, Top. Proc. \textbf{33} (2009), pages 107-121.
\bibitem{jana2} J. Fla\v{s}kov\'a: {\em The relation of rapid ultrafilters and Q-points to Van der Waerden ideal}, Acta Univ. Carolinae. Mathematica et Physica \textbf{51},  (2010), no. 4, pages 19-27.
\bibitem{Fr51} D. H. Fremlin: {\em Measure Theory, Vol. 5, Set-theoretic
  Measure Theory, Part 1}, Torres Fremlin, 2008.

\bibitem{Her} F. Hern\'andez-Hern\'andez, C.J. Gonz\'alez-Hern\'andez, and C.E. Villarreal: {\em When is $\mbb{R}$
the union of an increasing family of null sets?},
Comment.Math.Univ.Carolin. \textbf{48}, no. 4 (2007), pages
623-630.
\bibitem{cardinvanalp} F. Hern\'andez-Hern\'andez, M. Hru\v{s}\'ak: {\em Cardinal invariants of analytic P-ideals}, Canad. J. Math. \textbf{59}(3) (2007), pages 575-595.
\bibitem{fq} M. Hru\v{s}\'ak, J. Zapletal: {\em Forcing with quotients}, Arch. Math. Logic \textbf{47} (2008), pages 719-739.
\bibitem{hrusaksummary} M. Hru\v{s}\'ak: {\em Combinatorics of filters and ideals}, Contemporary Mathematics \textbf{533} (2011), pages 29-69.
\bibitem{HuMi} M. Hru\v{s}\'ak, H. Minami: {\em Mathias-Prikry and Laver-Prikry type forcing}, Ann. Pure Appl. Logic \textbf{165} (2014), Issue 3, pages 880-894.
\bibitem{hrufrag} M. Hru\v{s}\'ak, D. Rojas-Rebolledo and J. Zapletal: {\em  Cofinalities of Borel ideals}, Math.
Logic Quart. \textbf{60} No. 1-2 (2014), pages 31-39.
\bibitem{hrusakkatetov} M. Hru\v{s}\'ak: {\em Kat\v{e}tov order on Borel ideals}, submitted.
\bibitem{Jech}  T. Jech: {\em Set Theory, The Third Millenium Edition, revised and expanded}, Springer, Berlin, 2003.
\bibitem{judahshelah} H. Judah, S. Shelah: {\em Souslin forcing}, J. Symbolic Logic \textbf{53} (1988), pages 1188-1207.
\bibitem{kmm} K. Kunen, J.v. Mill, C.F. Mills: {\em On nowhere dense closed P-sets}, Proc. Amer. Math. Soc. \textbf{78} (1980), no. 1, pages 119-123.
\bibitem{seven} K. Kunen, A. Medini, L. Zdomskyy: {\em Seven characterizations of non-meager P-filters}, to appear in Fund. Math.
\bibitem{tp} M. Malliaris, S. Shelah: {\em Cofinality spectrum theorems in model theory, set theory, and general topology}, to appear in J. Amer. Math. Soc.
\bibitem{Mazur} K. Mazur: {\em $F_\sigma$ ideals and $\om_1\om_1^*$-gaps in the Boolean algebra $\mc{P}(\om)/I$}, Fund. Math. \textbf{138} (1991), pages 103-111.
\bibitem{Meza} D. Meza-Alc\'{a}ntara: {\em Ideals and filters on countable sets}, Ph.D. thesis,
Universidad Nacional Aut\'{o}noma M\'{e}xico, M\'{e}xico, 2009.
\bibitem{dilipshelah} D. Raghavan, S. Shelah: {\em Two inequalities between cardinal invariants}, submitted.
\bibitem{shdiam} S. Shelah: {\em Diamonds}, Proc. Amer.
Math. Soc. \textbf{138} (2010), pages 2151-2161.

\bibitem{So} S. Solecki: {\em Analytic P-ideals and their applications}, Ann. Pure Appl. Logic \textbf{99} (1999), pages
51-72.
\bibitem{Solomon} R.C. Solomon: {\em Families of sets and functions}, Czechoslovak Math. J. \textbf{27}(102) (1977), no. 4,
pages 556-559.
\bibitem{szemeredi} E. Szemer\'edi: {\em On Sets of Integers Containing No $k$ Elements in Arithmetic Progression}, Acta Arith. \textbf{27} (1975), pages 199-245.
\bibitem{Tala} M. Talagrand: {\em Compacts de fonctions mesurables et filtres non mesurables}, Studia Mathematica \textbf{67} (1980), no. 1, pages 13-43.
\bibitem{zaf} S. Zafrany: {\em Analytic filters and prefilters}, J. Symbolic Logic \textbf{55}, no. 1 (1990), pages 315-322.
\end{thebibliography}
\end{document}